\numberwithin{equation}{section}
\newtheoremstyle{my}{1.5em}{0.5em}{\em}{}{\sc}{.}{0.5em}{}
\newtheorem{thm}{Theorem}[section]
\newtheorem{Theorem}[thm]{Theorem}
\newtheorem*{Theorem*}{Theorem}
\newtheorem{Corollary}[thm]{Corollary}
\newtheorem*{corollary*}{Corollary}
\newtheorem{Lemma}[thm]{Lemma}
\newtheorem*{conjecture*}{Conjecture}
\newtheorem*{question*}{Question}
\newtheorem{Definition}[thm]{Definition}
\newtheorem*{definitions*}{Definitions}
\newtheorem*{rem*}{Remark}
\newtheorem{Remark}[thm]{Remark}
\newtheorem*{remark*}{Remark}
\newtheorem*{remarks*}{Remarks}
\newtheorem*{example*}{Example}
\newtheorem{Example}[thm]{Example}
\newtheorem*{examples*}{Examples}
\newtheorem*{convention*}{Convention}
\newtheorem*{conventions*}{Conventions}
\newtheorem*{exercise*}{Exercise}
\newtheorem*{bibliographical-note*}{Bibliographical note}
\newcommand{\Acknowledgements}{{\em Acknowledgements.} }
\newcommand{\scrA}{\EuScript{A}}
\newcommand{\scrX}{\EuScript{X}}
\newcommand{\scrK}{\EuScript{K}}
\newcommand{\scrY}{\EuScript{Y}}
\newcommand{\scrC}{\EuScript{C}}
\newcommand{\calO}{\mathcal{O}}
\newcommand{\bR}{\mathbb{R}}
\newcommand{\bZ}{\mathbb{Z}}
\newcommand{\bC}{\mathbb{C}}
\newcommand{\bP}{\mathbb{P}}
\newcommand{\cdbar}{\mathrm{\overline{\partial}}}
\newcommand{\Sym}{\mathrm{Sym}}
\newcommand{\id}{\mathrm{id}}
\renewcommand{\ker}{\mathrm{ker}}
\newcommand{\Hom}{\mathrm{Hom}}
\renewcommand{\index}{\mathrm{index} \, }
\newcommand{\Zer}{\mathrm{Zer}}
\newcommand{\Aut}{\mathrm{Aut}}
\newcommand{\Symp}{\mathrm{Symp}}
\newcommand{\scrM}{\EuScript{M}}
\newcommand{\scrF}{\EuScript{F}}
\newcommand{\scrJ}{\EuScript{J}}
\newcommand{\scrG}{\EuScript{G}}
\newcommand{\A}{\mathcal{A}}
\numberwithin{equation}{section}
\renewcommand{\leq}{\leqslant}
\renewcommand{\geq}{\geqslant}
\newcommand{\isom}{\cong}
\newcommand{\tensor}{\otimes}
\newcommand{\lra}{\longrightarrow}
\newcommand{\blank}{-}
\newcommand{\D}{{\mathcal{D}}}
\newcommand{\PP}{\operatorname{\mathbb P}}
\renewcommand{\L}{\operatorname{L}}
\newcommand{\CC}{\mathcal C}
\newcommand{\OO}{\mathcal O}
\newcommand{\Z}{\mathbb{Z}}
\newcommand{\Tw}{\operatorname{Tw}}
\renewcommand{\sc}{\operatorname{sc}}
\newcommand{\Quad}{\operatorname{Quad}}
\newcommand{\Stab}{\operatorname{Stab}}
\newcommand{\CY}{CY$_3$ }
\newcommand{\M}{\mathbb{M}}
\renewcommand{\S}{\mathbb{S}}
\newcommand{\T}{{T}}
\renewcommand{\Im}{\operatorname{Im}}
\newcommand{\Pol}{\mathrm{Pol}}
\title{Quiver algebras as Fukaya categories}
\author{Ivan Smith}
\thanks{The author was partially supported by grant ERC-2007-StG-205349 from the European Research Council.}
\address{Ivan Smith, Centre for Mathematical Sciences, University of Cambridge, England.}
\begin{document}
\maketitle \thispagestyle{empty}

\parindent0em
\parskip1em

\begin{abstract} We embed triangulated categories defined by quivers with potential arising from ideal triangulations of marked bordered surfaces into Fukaya categories of quasi-projective 3-folds associated to meromorphic quadratic differentials.  
Together with previous results, this yields non-trivial computations of spaces of stability conditions on  Fukaya categories of symplectic six-manifolds.
\end{abstract}





\section{Introduction}

A marked bordered surface $(\S,\M)$ comprises a compact, connected oriented surface $\S$, perhaps with non-empty boundary, together with a non-empty set $\M \subset \S$ of marked points, such that every boundary component of $\S$ contains at least one marked point.  We always assume that $(\S,\M)$ is not a sphere with fewer than five marked points.   An ideal triangulation of  $(\S,\M)$ gives rise, via work of Labardini-Fragoso \cite{LF1}, to a quiver with potential.  The \CY-triangulated category $\D(\S,\M)$ of finite-dimensional modules over the corresponding Ginzburg algebra depends only on the underlying data $(\S,\M)$. This paper embeds these categories, under mild  hypotheses on $(\S,\M)$, into Fukaya categories of quasi-projective 3-folds. The 3-folds are the total spaces of affine conic fibrations over $\S$; whilst these spaces have not appeared previously in the literature, they are close cousins of those studied in \cite{DDDHP}. Together with the main results of \cite{BrSm}, we therefore obtain  computations of spaces of stability conditions on (distinguished subcategories of) Fukaya categories of symplectic six-manifolds.

We work over an algebraically closed field $k$ of characteristic zero.  For much of the paper, we take $k$ to be the single variable Novikov field (with formal parameter $q$), 
\begin{equation} \label{Eqn:NovikovField}
\Lambda_{\bC} \ = \ \left\{ \sum c_i q^{m_i} \ \big| \ c_i \in \bC, \, m_i \in \bR, \, \lim_{i\rightarrow \infty} m_i = +\infty\right\}
\end{equation}
which is algebraically closed by \cite[Lemma 13.1]{FO3:toric}.

\subsection{Surfaces and differentials}

Let $(\S,\M)$ be a marked bordered surface.  An ideal triangulation $T$ of $\S$ with vertices at $\M$ has an associated quiver with potential $(Q(T), W(T))$, which can be defined over any algebraically closed field $k$. The construction is indicated schematically in the case where $T$ is non-degenerate in Figure \ref{fig1} and Equation \eqref{Eqn:PotentialForNondegenQuiver}, and defined more generally in \cite{LF1}. There is  a  triangulated \CY category $\D(T)$ of finite type (i.e. cohomologically finite) over $k$ defined by the Ginzburg algebra construction \cite{Ginzburg}; this has a distinguished heart, equivalent to the category of finite-dimensional modules for the complete Jacobi algebra of the quiver with potential $(Q(T),W(T))$.  Results of Keller-Yang and Labardini-Fragoso imply that $\D(T)$  depends up to quasi-isomorphism only on the underlying marked bordered surface $(\S,\M)$. We write $\D(\S,\M)$ for any category in this equivalence class.

 A meromorphic quadratic differential $\phi$ on a Riemann surface $S$ has an associated marked bordered surface $(\S,\M)$.  The surface $\S$ is obtained as the real blow-up of $S$ at poles of $\phi$ of order $\geq 3$; the distinguished tangent directions of the horizontal foliation of $\phi$ define boundary marked points in $\M$, and poles of order $\leq 2$ define the punctures (interior marked points) $\PP \subset \M$.  We will write $\Pol(\phi)$ for the set of poles, and $\Pol_{\geq i}(\phi)$, $\Pol_{=2}(\phi)$ for poles of constrained or specified orders.  For $q\in \Pol(\phi)$ let $ord(q)$ denote the order of the corresponding pole. 
 
 All quadratic differentials considered in this paper have simple zeroes.

 Let $\Quad(\S,\M)$ denote the complex orbifold parametrizing equivalence classes of pairs comprising a Riemann surface $S$ and a meromorphic quadratic differential $\phi$ with simple zeroes whose associated marked bordered surface is diffeomorphic to $(\S,\M)$. This has an open dense subset $\Quad(\S,\M)_0$ of pairs where the differential $\phi$ has poles of order exactly $2$ at $\PP$, or equivalently for which the flat metric defined by $\phi$ is complete.  There is an unramified $2^{|\PP|}:1$ cover $\Quad^{\pm}(\S,\M)_0 \rightarrow \Quad(\S,\M)_0$ whose points are signed meromorphic differentials, meaning that we fix a choice of sign of the residue of the differential $\phi$ at each double pole; the cover extends as a ramified cover $\Quad^{\pm}(\S,\M)\rightarrow \Quad(\S,\M)$.

\subsection{The threefolds}   

Fix a signed complete  differential $(S,\phi) \in \Quad^{\pm}(\S,\M)_0$.   Denote by $M$ the divisor $\sum_{p\in \mathrm{Pol}(\phi)} \lceil ord(p)/2 \rceil p$ on $S$, where $\lceil \alpha \rceil$ denotes the smallest integer greater than or equal to $\alpha$, and write
\begin{equation} \label{Eqn:divisorsplits}
M = M_{=2} + M_{\geq 3}
\end{equation}
corresponding to the obvious decomposition $\Pol(\phi) = \Pol_{=2} \amalg \Pol_{\geq 3}$.  Take a rank two holomorphic vector bundle $V$ on $ S$ with $\det(V) = K_S(M)$. We perform an elementary modification of the vector bundle $\Sym^2(V)$ at each double pole $p$ of $\phi$, to obtain a rank three  bundle $W$  fitting into a short exact sequence
\[0\lra W\lra \Sym^2(V)\lra \OO_{M_{=2}}\lra 0.\] 
We then remove from $S$ all poles of $\phi$ of order $\geq 3$.

The determinant map $\det\colon \Sym^2(V) \lra K_S(M)^{\otimes 2}$ restricts to a quadratic map $\det_W$ on the bundle $W$ which is rank one at points of $M_{=2} \subset S$.  The threefold
\[
Y_\phi = \big\{{\det}_W - \pi^*\phi = 0 \big\} \subset W|_{S-\Pol_{\geq 3}(\phi)}
\]
is an affine conic fibration over $S- \Pol_{\geq 3}(\phi)$, with nodal fibres over the zeroes of $\phi$,  fibres singular at infinity over the double poles, and empty fibres over higher order poles.  The topology of the fibre over a double pole $p$ depends on the choice of elementary modification. A choice of line in the fibre of $V$ at $p$ determines a distinguished elementary modification, with the property that the resulting fibre of $Y_{\phi}$ at $p$ is isomorphic to the disjoint union of two planes $\bC^2_{p,+} \amalg \bC^2_{p,-}$.   We will always consider elementary modifications with this property.

\subsection{The result}

We fix a linear K\"ahler form on $\bP(W \oplus \OO)$. This induces a K\"ahler form $\omega$ on $Y_{\phi}$.  A Moser-type argument, see Lemma \ref{Lem:WellDefined},  shows that (having fixed the  parameters determining the cohomology class of the K\"ahler form appropriately) the  symplectic manifold underlying $Y_\phi$  depends up to isomorphism only on the pair $(\S,\M)$.  $Y_{\phi}$ has vanishing first Chern class, and $\phi$ determines a distinguished homotopy class $\eta(\phi)$ of trivialisation of the  canonical bundle of $Y_\phi$.    

When $\partial \S \neq \emptyset$, the K\"ahler form $\omega$ is exact, and $Y_{\phi}$ has a well-defined exact Fukaya category $\scrF(Y_{\phi})$, which may be constructed over any field $k$, see \cite{FCPLT}.  More generally, 
we will consider Lagrangian submanifolds $L \subset Y_{\phi}$ with the following property: there is an almost complex structure $J_L$ on $Y_{\phi}$, taming the symplectic form and co-inciding with the given integrable structure at infinity, for which $L$ bounds no $J_L$-holomorphic disk and does not meet any $J_L$-holomorphic sphere.  For ease of notation, we will refer to such $(L, J_L)$ as \emph{strictly unobstructed}.  When $\S$ is closed, $Y_{\phi}$ has a strictly unobstructed Fukaya category (which we again denote by) $\scrF(Y_{\phi})$, now  defined over the Novikov field $\Lambda_{\bC}$.   This version of the Fukaya category appears, for instance, in \cite{AbouzaidSmith} and \cite{Seidel:Flux}.  The strictly unobstructed hypothesis rules out bubbling of holomorphic disks, which simplifies the technical construction of $\scrF(Y_{\phi})$, see \cite[Sections 3b, 3c]{Seidel:Flux} for a detailed discussion under slightly weaker hypotheses.  

Let $Y$ be a symplectic manifold with well-defined Fukaya category $\scrF(Y)$.  For each $b \in H^2(Y;\bZ_2)$, there is a category $\scrF(Y;b)$, the $b$-twisted strictly unobstructed Fukaya category, which for $b=0$ recovers the category considered previously.   Objects of $\scrF(Y;b)$ are closed oriented graded strictly unobstructed Lagrangians $L$, which are equipped with 
a relative spin structure\footnote{In other words, $b|_L = w_2(TL)$ and we fix a trivialisation of $\xi_B \oplus TL$ over the 2-skeleton of $L$, where $\xi_b\rightarrow Y$ is the unique real 2-plane bundle with $w_1(\xi_b)=0$ and $w_2(\xi_b)=b$.}, relative to the background class $b$. 
The choice of background class $b \in H^2(Y;\bZ_2)$ serves to change the signs with which holomorphic polygons contribute to the $A_{\infty}$-operations $\mu^d_{\scrF}$, and can be seen as fixing a particular coherent orientation scheme for the Fukaya category; compare to \cite[Section 11 \& Remark 12.1]{FCPLT}.

Each category $\scrF(Y_\phi;b)$ is a $\bZ$-graded $A_{\infty}$-category, linear over the appropriate field $k$.  Let $\D\,\scrC$ denote the  cohomological category $H^0(\mathrm{Tw}\,\scrC)$ of the category of twisted complexes over an $A_{\infty}$-category $\scrC$.

Now consider the threefold $Y_{\phi} \rightarrow S$. For each $p\in M \subset S$, the fibre $\pi^{-1}(p)$ is reducible. Let $\bC^2_{+,p}$ denote one component of this fibre. We fix the background class $b_0 \in H^2(Y_\phi;\bZ_2)$ represented  by the locally finite cycle 
\begin{equation} \label{Eqn:BackgroundCycle}
b_0 = \sum_{p \in M_{=2}} \bC^2_{p,+} \ \in H_4^{\operatorname{lf}}(Y_\phi;\bZ_2) \cong H^2(Y_\phi;\bZ_2)
\end{equation}
given by (either) one of the components of the reducible fibre at each point of $\Pol_{=2}(\phi)$, or equivalently each point of $\PP$.  The class is non-trivial by Lemma \ref{Lem:DivisorsIndpt}. Different choices of cycle representative for $b_0$ are related by monodromy by Lemma \ref{Lem:MonodromySwaps}.

\begin{Theorem} \label{Thm:Main2}
Let $(\S,\M)$ be a marked bordered surface, with $\M \neq \emptyset$.    Suppose either  
\begin{enumerate}
\item $\S$ is closed, $g(\S) > 0$, $|\M| \geq 3$, and $k=\Lambda_{\bC}$;   or 
\item $\partial \S \neq \emptyset$, and $\S$ is not a sphere with fewer than five punctures. 
\end{enumerate}
There is a $k$-linear fully faithful embedding  $\D(\S,\M) \hookrightarrow \D\scrF(Y_{\phi};b_0).$
\end{Theorem}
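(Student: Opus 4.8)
The plan is to reduce everything to a single conveniently chosen ideal triangulation $T$ of $(\S,\M)$ and to realise the simple objects of $\D(T)$ as an explicit configuration of Lagrangian $3$-spheres inside $Y_{\phi}$ whose Floer-theoretic endomorphism algebra recovers the Ginzburg dg algebra of $(Q(T),W(T))$. By the cited results of Keller--Yang and Labardini-Fragoso, $\D(\S,\M)\iso\D(T)$ for \emph{any} ideal triangulation $T$, and by Lemma~\ref{Lem:WellDefined} the symplectic manifold $Y_{\phi}$, together with its grading $\eta(\phi)$ and the background class $b_{0}$, depends only on $(\S,\M)$; so the choices are free. Under the hypotheses of the theorem $(\S,\M)$ admits a non-degenerate ideal triangulation $T$ (no self-folded triangles), so that $(Q(T),W(T))$ is as in Figure~\ref{fig1} and \eqref{Eqn:PotentialForNondegenQuiver}, with $W(T)$ a sum of one $3$-cycle per triangle; I would take a signed complete differential $(S,\phi)\in\Quad^{\pm}(\S,\M)_{0}$ positioned so that, in the sense of \cite{BrSm}, the arcs $\gamma_{1},\dots,\gamma_{n}$ of $T$ are dual to a distinguished collection of finite-length saddle trajectories $\delta_{1},\dots,\delta_{n}$ of $\phi$: each $\delta_{i}$ crosses $\gamma_{i}$ once and runs between the zeros of the two triangles adjacent to $\gamma_{i}$, so that $\delta_{i}$ and $\delta_{j}$ share an endpoint precisely when $\gamma_{i},\gamma_{j}$ are sides of a common triangle, and a triple $\delta_{i},\delta_{j},\delta_{k}$ surrounds a zero precisely when the $\gamma$'s bound a triangle. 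This is exactly the combinatorics of $Q(T)$ (shared endpoints $\leftrightarrow$ arrows) and of $W(T)$ (surrounded zeros $\leftrightarrow$ cubic terms).

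Next, one constructs the Lagrangians. Away from the zeros of $\phi$ the conic fibration $\pi\colon Y_{\phi}\to S-\Pol_{\geq 3}(\phi)$ has smooth fibres, each a $2$-dimensional affine quadric symplectomorphic to $T^{*}S^{2}$, and over each simple zero of $\phi$ the fibre is a quadric cone with vanishing cycle $S^{2}$. Since each $\delta_{i}$ is an embedded path between two distinct zeros, the standard matching-cycle construction over $\delta_{i}$ --- the family of vanishing $2$-spheres in the fibres over the interior of $\delta_{i}$, collapsed at the two endpoints --- yields a Lagrangian $3$-sphere $L_{i}\subset Y_{\phi}$. The trivialisation $\eta(\phi)$ makes the $L_{i}$ canonically graded, and one checks they carry relative spin structures relative to $b_{0}$; this is where $b_{0}$ intervenes, fixing the coherent orientation scheme for the $A_{\infty}$-operations. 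In case~(1), where $k=\Lambda_{\bC}$, I would additionally verify that each $(L_{i},J_{L_{i}})$ is strictly unobstructed for a suitable fibered almost complex structure: a holomorphic disk or sphere meeting $L_{i}$ would project under $\pi$ to the flat surface $(S,\phi)$, and since $\delta_{i}$ is a length-minimising saddle trajectory the open mapping theorem, combined with control of $\pi$ near its conical and polar fibres, excludes this. In case~(2) the K\"ahler form is exact, the $L_{i}$ are automatically exact (being $3$-spheres), and strict unobstructedness is automatic.

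The heart of the argument is the Floer computation for $\{L_{i}\}$: one shows that the $A_{\infty}$-algebra $\End_{\scrF(Y_{\phi};b_{0})}\big(\bigoplus_{i}L_{i}\big)$ has a minimal model isomorphic to that of the Ginzburg dg algebra $\Gamma(Q(T),W(T))$. Concretely, $HF^{*}(L_{i},L_{i})\iso H^{*}(S^{3};k)$; for $i\neq j$ each arrow of $Q(T)$ between $i$ and $j$ contributes a one-dimensional summand to $HF^{1}(L_{i},L_{j})$ or to $HF^{1}(L_{j},L_{i})$ according to its orientation, with the complementary summand in degree $2$ forced by the $3$-Calabi--Yau duality $HF^{*}(L_{i},L_{j})\otimes HF^{3-*}(L_{j},L_{i})\to k$; and the triple product $\mu^{3}$ on $\bigoplus_{i,j}HF^{1}(L_{i},L_{j})$ reproduces the cubic potential $W(T)$, one triangle at a time. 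These are local model computations: the first two reduce to $\pi^{-1}(\text{disk around a zero})$, where two matching $3$-spheres over segments through a common endpoint meet transversely in a single point whose sign and degree are dictated by the orientation of $S$; the last reduces to $\pi^{-1}(\text{triangle of }T)$, where the small holomorphic triangle is visible fibrewise and contributes $+1$. The reason no other polygons contribute --- this is where the fibration is used crucially --- is that for a fibered almost complex structure a holomorphic polygon with boundary on the $L_{i}$ projects to an immersed polygon in $S$ with sides among the $\delta_{i}$, and, the $\delta_{i}$ being length-minimising, the only possibilities are the expected small triangles; in case~(1) a maximum principle near the polar fibres also prevents polygons from escaping to infinity. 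The remaining products are then pinned down by the $A_{\infty}$-relations: since a non-degenerate $T$ has a \emph{cubic} potential there is no room for nonzero $\mu^{\geq 4}$ in the minimal model, so one obtains an $A_{\infty}$-quasi-isomorphism onto $\Gamma(Q(T),W(T))$ (compare the reconstruction arguments of \cite{BrSm}).

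Finally, $\D(T)$ is by definition the cohomologically finite Calabi--Yau triangulated category of the Ginzburg algebra $\Gamma(Q(T),W(T))$, and this is generated by the simple modules $S_{i}$. Passing to twisted complexes and $H^{0}$, the quasi-isomorphism of the previous paragraph therefore identifies the triangulated subcategory of $\D\scrF(Y_{\phi};b_{0})$ generated by $L_{1},\dots,L_{n}$ with $\D(T)=\D(\S,\M)$, and the inclusion of this full subcategory is the desired $k$-linear fully faithful embedding; by Lemma~\ref{Lem:WellDefined} it is independent of the auxiliary choices, and the two cases of the theorem are treated uniformly once strict unobstructedness in case~(1) is in place. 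I expect the principal difficulty to be the third step: executing the Floer computation with the correct signs (the precise role of $b_{0}$), and, above all, \emph{excluding} unwanted holomorphic polygons and higher $A_{\infty}$-products --- together with strict unobstructedness in case~(1) --- all of which rest on pushing Floer-theoretic questions down the conic fibration $\pi\colon Y_{\phi}\to S$ to the flat surface $(S,\phi)$.
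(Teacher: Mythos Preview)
Your overall architecture is right --- choose a good triangulation, build matching spheres, compute their Floer $A_\infty$-algebra, identify it with the Ginzburg category --- but the Floer computation you sketch has a genuine gap that would make the argument fail.

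The central error is your claim that ``a non-degenerate $T$ has a \emph{cubic} potential'' and hence ``there is no room for nonzero $\mu^{\geq 4}$''. Look again at \eqref{Eqn:PotentialForNondegenQuiver}: the potential is $W(T,\epsilon)=\sum_f T(f)-\sum_p \epsilon(p)C(p)$, where each $C(p)$ is an anticlockwise cycle of length equal to the valency of the puncture $p$, which is $\geq 3$ (indeed $\geq 4$ under the hypotheses of Theorem~\ref{Thm:GLFS1}). So the potential is \emph{not} cubic, and the category $\D(T)$ genuinely involves higher $A_\infty$-products. Correspondingly, on the symplectic side there \emph{are} non-constant holomorphic polygons: the matching paths $\delta_i$ bound faces of the dual cellulation, one per puncture, and over each such face (which contains a double pole) there are rigid holomorphic sections contributing to $\mu^k$. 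Your exclusion argument (``the $\delta_i$ being length-minimising, the only possibilities are the expected small triangles'') is simply wrong --- a polygon in $S$ with sides on the $\delta_i$ going once around a puncture is perfectly allowed, and its lift to $Y_\phi$ is exactly what produces the $C(p)$ term.

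This is also where $b_0$ actually matters, not just as a bookkeeping device for orientations. The paper's Lemmas~\ref{Lem:LocalNonzero} and~\ref{Lem:LocalNonzero2} show that over a disk around a double pole there are exactly two rigid holomorphic sections with boundary on a Lagrangian cylinder; with $b=0$ they cancel, but twisting by $b_0$ makes them count with the same sign, so the $C(p)$ contribution is \emph{nonzero}. Without this the Floer potential would miss the $C(p)$ terms entirely and you would land in the wrong category (see Section~\ref{Sec:Untwisted}). Finally, even after establishing the $T(f)$ and $C(p)$ contributions, there are further polygons (multiple covers of cells, concatenations of adjacent cells, \ldots) giving a remainder $W'$; the paper does not enumerate these but instead invokes the finite determinacy Theorem~\ref{Thm:GLFS1} of Geiss--Labardini-Fragoso--Schr\"oer to show that any potential of the form $\sum_f T(f)-\sum_p\lambda_pC(p)+W'$ is weakly right-equivalent to $W(T)$. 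You will need that step; there is no way to simply rule $W'$ out geometrically. (A minor point: the cubic terms of the potential correspond to $\mu^2$, not $\mu^3$, in the paper's conventions --- $c_n$ encodes $m_{n-1}$.)
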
 

The untwisted Fukaya category $\scrF(Y_{\phi})$, which differs from $\scrF(Y_{\phi};b_0)$ by certain signs, is different, and is discussed in Section \ref{Sec:Untwisted}.

One drawback of Theorem \ref{Thm:Main2} is that it does not give a symplectic characterisation of the image of the embedding $\D(\S,\M) \hookrightarrow \D\scrF(Y_{\phi};b_0)$.  There is an obvious candidate for such a characterisation, which we now explain.

Any embedded path $\gamma: [0,1] \rightarrow S$, with end-points distinct zeroes of $\phi$ and otherwise disjoint from the zeroes and poles of $\phi$, defines a Lagrangian 3-sphere $L_{\gamma} \subset Y_\phi$, fibred over the arc $\gamma \subset \Sigma$ via Donaldson's ``matching cycle" construction \cite[III, Section 16g]{FCPLT}.  The matching spheres  $L_{\gamma}$ are exact if $\partial \S \neq \emptyset,$ and strictly unobstructed (with $J_L$ the canonical complex structure on $Y_{\phi}$) when $\S$ is closed and $g(\S) > 0$.     A  Lagrangian sphere is relatively spin for any choice of background class $b$,  hence equipped with a grading defines a Lagrangian brane in $\scrF(Y_{\phi};b)$.  A non-degenerate ideal triangulation $T$ of $S$ defines a full subcategory $\scrA(T;b) \subset \scrF(Y_{\phi};b)$, generated by the matching spheres associated to the edges of the cellulation dual to $T$.   Theorem \ref{Thm:Main2}  is proved by showing that $\D\scrA(T;b_0) \simeq \D(T)$ for particularly well-behaved triangulations $T$.  Since the category $\D(T)$ does not depend on $T$,  it follows that $\D\scrA(T;b_0)$ also depends only on the pair $(\S,\M)$, up to triangulated equivalence.

 Let $\scrK(Y_\phi;b_0)\subset \scrF(Y_\phi;b_0)$ be the full $A_{\infty}$-subcategory  generated by Lagrangian matching spheres.  This manifestly depends only on the pair $(\S,\M)$.  It seems likely that $\D(\S,\M) \simeq \D\scrK(Y_{\phi};b_0)$. We outline one tentative approach to proving that, which amounts to proving that $\scrA(T;b_0)$ generates $\scrK(Y_{\phi};b_0)$, in Section \ref{Sec:Generation}, but elaborating the details of that sketch would be a substantial task.    Going further, it seems likely that all Lagrangian spheres in $Y_{\phi}$ are quasi-isomorphic to matching spheres (the result is proved in special cases in \cite{AbouzaidSmith, Seidel:C*equivt}), in which case the category $\D\scrK(Y_{\phi};b_0)$  would be a symplectic invariant of $Y_\phi$, carrying an action of the subgroup of $\pi_0\Symp(Y_{\phi})$ preserving the class $b_0 \in H^2(Y_{\phi};\bZ_2)$.  The question of whether the embedding $\scrK(Y_{\phi};b_0) \hookrightarrow \scrF(Y_{\phi};b_0)$ itself split-generates is also open, though here there seems to be less evidence either way.  We hope to return to these questions elsewhere.

The imposed condition $\M \neq \emptyset$ gives a substantial simplication since holomorphic polygons are constrained for grading considerations that do not pertain when $\phi$ is globally holomorphic, cf. Remarks \ref{Rem:NoTrivalentChance} and \ref{Rem:GradingMiracle}.   The constraint on the number of punctures $|\M| \geq 3$ when $\S$ is closed arises from a similar constraint in work of Geiss, Labardini-Fragoso and Schr\"oer \cite{GLFS}, who study the action of right equivalences on potentials on the quivers $Q(T)$, cf. Theorem \ref{Thm:GLFS1}.  We conjecture that, for a closed surface $\S$ of genus $g(\S)>0$, Theorem \ref{Thm:Main2} holds under the weaker hypothesis $|\M| > 1$.  (Once-punctured surfaces are special: not every pair of signed ideal triangulations are related by pops and flips, and when $g=1$ the analogue of Theorem \ref{Thm:GLFS1} is actually false, cf. \cite{GLFS}.) 

 One can relax the strict unobstructedness hypothesis at the cost of invoking the deep obstruction theory of \cite{FO3} in the construction of $\scrF(Y_{\phi};b)$. If $\S$ is closed and $g(\S) = 0$ then $Y_\phi$ may contain rational curves, so only the more complicated construction is available. This is the reason for the genus constraint in the first part of Theorem \ref{Thm:Main2}.   

\begin{Remark}
The elementary modifications appearing in the specific construction of $Y_{\phi}$ play a definite role in reproducing $\D(\S,\M)$, which seems somewhat less natural from the viewpoint of the symplectic topology of the original  bordered surface $(\S,\M)$, not least because of the \CY property.  The particular choice of elementary modification that we employ was motivated by two considerations: first, to yield the Calabi-Yau property of Lemma \ref{Lem:FirstChernZero}, and second, to ensure the non-vanishing of a certain count of local holomorphic sections over discs centred on double poles in Lemma \ref{Lem:LocalNonzero}.  (The latter result would fail if instead one took the threefold $Y_{\phi}$ to have smooth, nodal, higher multiplicity or empty fibres over the double poles, and also accounts for the appearance of the twisting class $b_0$.)  At a more technical level, the appearance of a reducible fibre whose components are exchanged by the local monodromy of the family of threefolds obtained by allowing the residue at a double pole to wind once around the origin, cf. Lemma \ref{Lem:MonodromySwaps}, fits well with the appearance of ``signed" quadratic differentials in \cite[Section 6.2]{BrSm}.
\end{Remark}

\subsection{Context}

In many cases, the paper \cite{BrSm} computes the space of stability conditions on the category $\D(\S,\M)$ in terms of moduli spaces of quadratic differentials. If either $\S$ is closed with $g(\S)>0$ and with at least two punctures, or $\partial \S \neq \emptyset$ and $\S$ is not a sphere with fewer than six punctures, there is a connected component $\Stab_{\Delta}(\S,\M)$ of $\Stab(\D(\S,\M))$ and a subgroup $\Aut_{\Delta}$ of autoequivalences which preserve this component modulo those which act trivially upon it, with
\[
\Stab_{\Delta}(\S,\M) / \Aut_{\Delta} \ \cong \ \Quad_{\heartsuit}(\S,\M)
\]
where $\Quad_{\heartsuit}(\S,\M)$ has the same coarse moduli space as $\Quad(\S,\M)$ but additional orbifolding along the incomplete locus, see \cite{BrSm} for details. This gives a non-trivial computation of the space of stability conditions on (a subcategory of) the Fukaya category of a symplectic six-manifold, and enables one to understand the Donaldson-Thomas invariants of these categories.  

One can construct a complex (3,0)-form $\Omega$ on $Y_\phi$ with the following property: if the path $\gamma \subset \Sigma$ is a saddle connection for $\phi$, the  associated matching sphere $L_{\gamma}$ is \emph{special}, i.e. $\Omega$  has constant phase on $L_{\gamma}$. (We are not working with the Ricci-flat metric, so these are not strictly  special in the traditional sense.)  There are similarly $\Omega$-special Lagrangian submanifolds $S^1\times S^2$ of $Y_\phi$ associated to homotopically non-trivial closed geodesics for the flat metric defined by $\phi$. Theorem 1.4 of \cite{BrSm} implies that the Donaldson-Thomas invariants of $Y_{\phi}$, defined with respect to the stability condition associated to $\phi$, count such special Lagrangian submanifolds, the existence and numerics of which are therefore governed by the Joyce-Song and Kontsevich-Soibelman wall-crossing formulae \cite{JS,KS}. This confirms, in this special case, a long-standing expectation of Joyce and others.  

It is natural to conjecture that moduli spaces of \emph{holomorphic} quadratic differentials with simple zeroes arise as spaces of stability conditions (modulo autoequivalences) on the Fukaya categories of the local threefolds of \cite{DDDHP} (see Equation \ref{Eqn:Holo3fold}). However, these categories do not appear to admit descriptions in terms of quivers, and different techniques would be required to analyse them and the corresponding spaces of stability conditions.  

\subsection{Higher rank}

The threefolds $Y_{\phi}$ are associated to meromorphic maps of Riemann surfaces into the versal deformation space of the $A_1$-surface singularity $\bC^2/\bZ_2$.  There are also local threefolds associated to meromorphic maps to the versal deformation spaces of other ADE singularities: in the case of a holomorphic map, the relevant threefolds are studied in \cite{DDDHP, Szendroi}.  Already for the $A_2$-surface, however, the geometry is substantially more complicated, and the relationship to stability conditions rather less clear.

The natural data required to write down a quasi-projective Calabi-Yau threefold fibred over a Riemann surface $S$ in the $A_n$-case is a tuple comprising (perhaps meromorphic) sections of $K_{S}^{\otimes i}$ for $2 \leq i \leq n$. For instance, when $n=2$ and supposing one is working with holomorphic rather than meromorphic differentials, one takes a vector bundle $V = L_1 \oplus L_2$ with $det(V) \cong K_S$, and considers the hypersurface
\begin{equation} \label{Eqn:HigherRank}
\left\{(x,y,z) \in L_1^3 \oplus L_2^3 \oplus L_1L_2 \ | \ xy + z^3 = \phi\cdot z + \psi \right\}
\end{equation}
where $\phi \in H^0(K_S^{\otimes 2})$ and $\psi \in H^0(K_S^{\otimes 3})$. For generic choices of $(\phi, \psi)$ this is a smooth Calabi-Yau, the total space of a Lefschetz fibration over $S$, with generic fibre the Milnor fibre of $\bC^2/\bZ_3$ (symplectically, the plumbing of two copies of $T^*S^2$). 

However, the moduli space of such data -- a complex structure on $\S$ and a tuple of differentials -- has dimension smaller than the dimension of the space of stability conditions on the corresponding category, or more mundanely smaller than the rank of $H_3$ of the associated threefold (compare to Remark \ref{Rem:H3}).  This is a familiar problem: whilst one expects spaces of complex structures on a symplectic Calabi-Yau to embed into the space of stability conditions on the Fukaya category, there is no reason to expect that embedding to be onto an open subset.  
 The fortunate accident in the $A_1$-case is that Teichm\"uller space has the same dimension as the space of quadratic differentials.  At least for $A_n$-fibred threefolds, it seems natural following \cite{Labourie} to expect the ``higher Teichm\"uller space", i.e. Hitchin's contractible component \cite{Hitchin} of the variety of flat $\bP SL(n,\bR)$-connections, to play a role in resolving this discrepancy.  

 Finally, we note that computations in \cite{GLMMN} indicate that the  \CY-categories arising in higher rank (even when one restricts to meromorphic differentials) are appreciably more complicated; whereas the Donaldson-Thomas invariants of the categories studied in this paper are always $+1$ or $-2$, cf. \cite{BrSm}, and non-vanishing only for primitive classes, in the higher rank case one expects there are classes $[\gamma]$ for which the DT-invariants (conjecturally related to counts of special Lagrangian submanifolds in the corresponding threefold) in class $k[\gamma]$ grow exponentially with $k$.   The symplectic topology of \eqref{Eqn:HigherRank} is the subject of work in progress by the author.

\subsection{Standing assumptions} The arguments for the two cases of Theorem \ref{Thm:Main2} are rather similar. For definiteness, for the rest of the paper we consider the (more complicated) case when $\S$ is closed, leaving the required modifications for the case with non-empty boundary to the interested reader.   

In the closed case all the marked points $\M=\PP \subset \S$ are punctures. To avoid transversality issues arising from rational curves and their multiple covers, we also exclude the (interesting) case of threefolds fibring over the 2-sphere.  Therefore the discrete topological data will henceforth be indexed by a pair $(\S,\M)$ with $g(\S) > 0$ and  $\# \M > 0$. 

\vspace{0.5em}

\noindent \Acknowledgements     Tom Bridgeland was originally to be a co-author; his influence is pervasive. I am indebted to Daniel Labardini-Fragoso for explanations of his joint work with Geiss and Schr\"oer \cite{GLFS}, on which we rely essentially. 
Thanks  also to Mohammed Abouzaid, Bernhard Keller, Yankl Lekili, Andy Neitzke, Oscar Randal-Williams, Tony Scholl, Paul Seidel, Balazs Szendroi and Richard Thomas for helpful conversations; Abouzaid and Seidel provided assistance with a crucial sign computation.  Finally, I am grateful to the anonymous referee for  suggesting many improvements to the exposition.  


\section{Background}

\subsection{Quivers with potential} \label{Sec:QuiverPotential}

 Let $Q$ be a quiver, specified by sets of vertices and arrows $Q_0$, $Q_1$, and source and target maps $s,t\colon Q_1\to Q_0$.  We write $kQ$ for the path algebra of $Q$ over the field $k$, and $\widehat{kQ}$ for the completion of  $kQ$ with respect to path length. A potential on $Q$ is an element $W\in \widehat{kQ}$ of the closure of the  subspace of $\widehat{kQ}$ spanned by all cyclic paths in $Q$ of length $\geq 2$. A  potential  is called reduced if it lies in the closure of the subspace  spanned by cycles of length $\geq 3$.

We say that two potentials $W$ and $W'$ are \emph{cyclically equivalent} if $W-W'$ lies in the closure of the subspace generated by differences $a_1\ldots a_s - a_2\ldots a_sa_1$, where $a_1\ldots a_s$ is a cycle in the path algebra.  $W$ and $W'$ are \emph{right-equivalent} if there is an automorphism $\phi: \widehat{kQ} \rightarrow \widehat{kQ}$ of the completed path algebra which fixes the zero-length paths and such that $\phi(W)$ and $W'$ are cyclically equivalent.   Following \cite{GLFS}, we say $W$ and $W'$ are \emph{weakly right equivalent} if $W$ and $t W'$ are right-equivalent, for some invertible scalar $t \in k^*$.

Consider minimal $A_\infty$  categories $\CC$ whose  objects $S_i$  are indexed by the vertices of $Q$, and such that, as a graded vector space
\[\Hom_{\CC}(S_i,S_j)=k^{\delta_{ij}}\oplus V^*_{ij}[-1]\oplus V_{ji}[-2]\oplus k^{\delta_{ij}}[-3],\]
where $V_{ij}$ is the space with basis consisting of arrows in $Q$ connecting vertex $i$ to vertex $j$.
There is an obvious non-degenerate pairing
\[\langle\blank,\blank\rangle\colon \Hom_\CC(S_i,S_j)\times \Hom_\CC(S_j,S_i)\to k[-3].\]
Thus, if we define \[c_{n}(f_n,\cdots, f_{1})= \langle f_n, m_{n-1}(f_{n-1},\cdots,f_{1})\rangle,\]
an $A_\infty$ product of degree $2-(n-1)$,
\[
m_{n-1}\colon \Hom_\CC(S_{j_{n-1}},S_{j_{n}})\tensor \cdots \tensor \Hom_\CC(S_{j_{1}},S_{j_{2}})\lra \Hom_\CC(S_{j_1},S_{j_{n}}), 
\]
is equivalently described by a linear map of degree $-n$
\begin{equation} \label{Eqn:CyclicVersion}
c_{n}\colon \Hom_\CC(S_{j_{n}},S_{j_{1}})\tensor \Hom_\CC(S_{j_{n-1}},S_{j_{n}}) \cdots \tensor \Hom_\CC(S_{j_{1}},S_{j_{2}})\lra k.
\end{equation}
Let us insist that $\CC$ is cyclic as an $A_\infty$ category, meaning that the tensors $c_n$ are cyclically invariant in the graded sense. If we further insist that the $A_\infty$ structure on $\CC$ is strictly unital then the whole structure is  determined by the elements $c_n(f_n,\cdots,f_1)$ when all the $f_i$ have degree 1.  For background on this construction, see \cite{Segal:DefPt}.

Let $W$ be a reduced potential on $Q$. Decomposing the potential into homogeneous pieces, and cyclically symmetrising, we obtain linear maps
\[W_{n}\colon V^*_{j_{n},j_1}\tensor V^*_{j_{n-1},j_{n}}  \tensor \cdots \tensor V^*_{j_{1},j_2} \lra k.\]
Setting $c_{n}=W_{n}$ gives a well-defined  $A_\infty$ category  $\CC(Q,W)$.
Define $\D(Q,W)$ to be the homotopy category of the category of twisted complexes over $\CC(Q,W)$
\[\D(Q,W)=H^0(\Tw(\CC(Q,W)).\]
The associated graded category  of $\D(Q,W)$ contains  $\CC$ as a full subcategory.

The same category $\D(Q,W)$ admits an alternative (Koszul dual) description in terms of the derived category of a dg algebra $A(Q,W)$ called the complete Ginzburg algebra (see for instance \cite[Theorem 9]{KS} or \cite[Section 5]{Keller:CY}).  To define it, first double $Q$, adding a dual edge $a^*$ for each $a\in Q$, and then add loops $t_i$ based at each vertex $i$ of $Q$.  The resulting quiver $Q^*$ has a grading given by
\[
\deg(x) = 0; \quad \deg(x^*) = -1; \quad \deg(t) = -2.
\]
Let $A(Q,W)$ be the completion of the path algebra $\widehat{kQ^*}$ as a graded algebra, with respect to the ideal generated by the arrows of $Q^*$.  There is a unique continuous differential $d$ satisfying
\[
d(t)= \sum_{a \in Q_1} e_i\cdot [a,a^*]\cdot e_i, \quad d(a^*)= \partial_a W, \quad d(a)= 0.
\]
Thus $A(Q,W)$ is a dg algebra.  
The category $\D(Q,W)$ can then be equivalently described as the full subcategory of the derived category of the dg algebra $A(Q,W)$ consisting of finite-dimensional modules.  By a general result of Keller and Van den Bergh \cite{Keller:Deformed}, this description shows that $\D(Q,W)$ is a \CY triangulated category. 

Keller and Yang \cite[Lemma 2.9]{KY} showed that  if $W$ and $W'$ are right-equivalent potentials, they have isomorphic complete Ginzburg algebras, and hence yield equivalent categories $\D(Q,W) \simeq \D(Q,W')$.  Ladkani \cite[Proposition 2.7]{Ladkani}, see also \cite[Lemma 8.5]{GLFS}, showed that the same conclusion holds when $W$ and $W'$ are only weakly right-equivalent. Indeed, there is a natural $k^*$-action on the set of minimal $A_{\infty}$-structures on the category $\CC$, where $\lambda \in k^*$ acts by rescaling the operation $m_n$ by $\lambda^{n-2}$.    $A_{\infty}$-structures related by the $k^*$-action are $A_{\infty}$-equivalent even though not gauge-equivalent in the usual sense (the required equivalence does not act by the identity on cohomology but by a multiple of the Euler vector field).  The  potentials $W$ and $tW$ on $Q$ give rise to $A_{\infty}$-categories $\CC(Q,W)$ and $\CC(Q,tW)$ which are related by the $k^*$-action.  Since $A_{\infty}$-equivalences induce equivalences on categories of twisted complexes by \cite[Lemma 3.20]{FCPLT},  the category $\D(Q,W)$ depends only on the weak right equivalence class of $W$.

 The fact that $A(Q,W)$ is  concentrated in non-positive degrees implies \cite[Lemma 5.2]{KY} that $\D(Q,W)$ is equipped with a canonical bounded t-structure, whose heart $\A(Q,W)$  is  equivalent to the category of nilpotent representations of the completed Jacobian algebra
\[
J(Q, W)=H^0(A(Q,W))=\widehat{kQ}\big/ (\partial_a W: a\in Q_1).
\]  
In particular, $\A(Q,W)\subset \D(Q,W)$ is a finite-length heart.  Since it admits a bounded t-structure, the category $\D(Q,W)$ is split-closed, i.e. agrees with its own idempotent completion, see \cite{LeChen}.

\subsection{Quivers from triangulated surfaces} \label{Sec:QuiverTriangulated}

 Suppose again that $\S$ is a closed oriented surface of genus $g(\S) > 0$, now equipped with a non-empty  set of $d\geq 2$  marked points $\M\subset \S$.   

By a non-degenerate ideal triangulation of $(\S,\M)$ we mean  a triangulation  of $\S$ whose vertex set is precisely $\M$, and in which every vertex has valency at least $3$ (this implies that every triangle has three distinct edges). A \emph{signed triangulation} is a triangulation equipped with a function
\[
\epsilon: \M \rightarrow \{\pm 1\}.
\]
We can associate a quiver with potential $(Q(T),W(T, \epsilon))$ to a signed  non-degenerate triangulation $T$ as follows. 
\begin{figure}[ht]
\begin{center}
\includegraphics[scale=0.4]{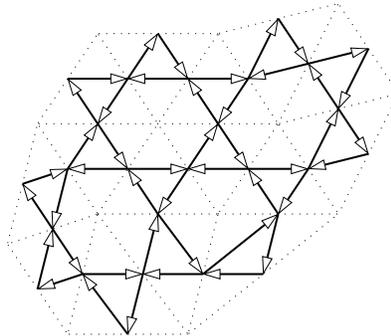}
\end{center}
\caption{Quiver associated to a triangulation\label{fig1}}
\end{figure}
The quiver $Q(T)$ has vertices at the midpoints of the edges of $T$, and is obtained by inscribing a small clockwise 3-cycle inside each face of $T$, as in Figure \ref{fig1}. There are two  obvious systems of cycles in $Q(T)$, namely a clockwise 3-cycle $T(f)$ in each face $f$, and an anticlockwise cycle $C(p)$ of length at least 3 encircling each point $p\in \M$. Define a potential
\begin{equation}\label{Eqn:PotentialForNondegenQuiver}
W(\T,\epsilon)=\sum_{f} T(f) - \sum_{p}  \epsilon(p) C(p).\end{equation}
When $\epsilon \equiv 1$ we will sometimes omit it from the notation.

Consider the derived category of  the  completed Ginzburg  algebra of the quiver with potential $(Q(T),W(T,\epsilon))$ over  $k$, and let $\D(T,\epsilon)$ be the full subcategory consisting of modules with finite-dimensional cohomology. As a special case of the discussion of Section \ref{Sec:QuiverPotential}, this is a \CY triangulated category of finite type over $k$, and comes equipped with a canonical t-structure, whose  heart $\A(T,\epsilon)\subset \D(T,\epsilon)$   is equivalent to the category of finite-dimensional modules for the completed Jacobi algebra of  $(Q(T),W(T,\epsilon))$.

Suppose two non-degenerate ideal triangulations $T_i$ are related by a flip,  in which the diagonal of a quadilateral is replaced by its opposite diagonal. The  resulting quivers with potential $(Q(T_i),W(T_i, \epsilon))$ (in which the signing is unchanged) are related by a mutation at the vertex corresponding to the edge being flipped. It follows from general results of Keller and Yang \cite{KY} that there exist distinguished $k$-linear triangulated equivalences $\Phi_\pm\colon \D(T_1, \epsilon)\isom \D(\T_2,\epsilon)$. 

Labardini-Fragoso \cite{LF1} extended the above definitions so as  to encompass a larger class of signed ideal triangulations (ones containing self-folded triangles, in which two of the three edges co-incide; in this case there may be punctures of valency one, and the mutation operation can change the signing).  He moreover  proved that flips also  induce right-equivalences  in this more general context.  There is another operation on signed ideal triangulations, which involves changing the signing at a given puncture without changing the triangulation, and a corresponding ``pop" equivalence which relates the associated categories. Under our hypothesis on $(\S,\M)$ that $d = |\M| \geq 2$, any two of these more general signed ideal triangulations are related by a finite chain of flips and pops. It follows  that up to $k$-linear triangulated equivalence, the category $\D(T,\epsilon)$ is independent of the chosen triangulation and of the choice of signing; see \cite[Sections 8 \& 9]{BrSm} for a more detailed discussion.  We denote by  $\D(\S,\M)$ any triangulated category in this quasi-equivalence class. 

Given the quiver $Q(T)$, one can define a \CY category by taking any potential on $Q(T)$, not necessarily the potential $W(T,\epsilon)$ described above. We will say that two potentials $W_1$ and $W_2$ are \emph{disjoint} if no cycle occuring in $W_1$ is cyclically equivalent to a cycle appearing in $W_2$.  The following result is due to Geiss, Labardini-Fragoso and Schr\"oer \cite{GLFS}. 

\begin{Theorem} \label{Thm:GLFS1}
Let $T$ be a triangulation of $(\S,\M)$ which contains no self-folded triangles or loops and in which every vertex has valency at least 4. Suppose that the associated quiver $Q(T)$ contains no double arrows.  Any two potentials on $Q(T)$ of the form
\begin{equation} \label{Eqn:General}
\sum_f T(f) - \sum_p \lambda_p C(p) + W'
\end{equation}
(for scalars $\lambda_p \neq 0$ and  $W'$ disjoint from the $T(f)$ and $C(p)$) are weakly right equivalent.
\end{Theorem}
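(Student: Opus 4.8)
The plan is to reduce the statement to a combinatorial rigidity result about the relations in the Jacobian algebra, exploiting the structure of $Q(T)$ under the stated hypotheses. First I would record what the hypotheses buy: since $T$ has no self-folded triangles or loops, no double arrows, and every vertex has valency $\geq 4$, the quiver $Q(T)$ is ``generic'' in the sense that the only short cycles (length $\leq$ a suitable bound) are exactly the $T(f)$ and the $C(p)$, and these are pairwise non-overlapping in a precise sense — no two of them share a subpath of length $\geq 2$. The valency $\geq 4$ condition is what forces the cycles $C(p)$ to have length $\geq 4$, which in turn means that the $3$-cycles $T(f)$ are the \emph{only} cycles of length $3$, and it is the interaction of $3$-cycles with the rest that controls the automorphism.

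Next I would set up the right-equivalence as a fixed-point / formal-power-series argument, building $\varphi: \widehat{kQ}\to\widehat{kQ}$ order by order in path length. Write $W = \sum_f T(f) - \sum_p \lambda_p C(p) + W'$ and $W_0 = \sum_f T(f) - \sum_p \mu_p C(p) + W''$ for two such potentials; after rescaling by a scalar $t\in k^*$ (this is where the \emph{weak} right equivalence, rather than strict, enters) we may normalise so that the leading ($3$-cycle) terms agree. The key step is then: given that $\varphi$ has been constructed so that $\varphi(W)$ and $W_0$ agree modulo cyclic equivalence up to path-length $n$, correct $\varphi$ by a degree-$n$ term $\varphi \mapsto \varphi\circ(\id + \psi_n)$ to kill the discrepancy in length $n+1$. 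This requires solving a linear equation of the form ``$[\text{discrepancy in length }n+1]$ lies in the image of the map $\psi_n \mapsto \sum_a (\partial_a W_0)\,\psi_n(a)$ modulo cyclic equivalence'' — i.e. the discrepancy cocycle must be a coboundary for the relevant complex. Here is where the disjointness hypothesis and the absence of double arrows are used: they guarantee that the span of the cyclic derivatives $\partial_a(\sum_f T(f) - \sum_p\mu_p C(p))$ is large enough (the $T(f)$ and $C(p)$ cut out a ``complete intersection''–like system) that every excess cycle can be absorbed. This is precisely the content borrowed from the analysis of Geiss–Labardini-Fragoso–Schröer \cite{GLFS}.

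I would then verify convergence: each correction $\psi_n$ is supported in path-length $\geq n$, so the infinite composite $\varphi = \cdots\circ(\id+\psi_3)\circ(\id+\psi_2)$ converges in the path-length-adic topology to a continuous automorphism fixing the idempotents, as required. Finally I would observe that the scalar $t$ introduced at the normalisation stage is exactly the ambiguity allowed in the definition of weak right equivalence, so no further bookkeeping is needed — and conversely the statement cannot be upgraded to genuine right equivalence, since the coefficient $\lambda_p$ of a given $C(p)$ is not individually adjustable once the $3$-cycle normalisation is imposed (only the overall scaling is free).

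The main obstacle is the coboundary/surjectivity step: showing that the discrepancy at each order actually lies in the image of the relevant multiplication map modulo cyclic equivalence. This is genuinely where the combinatorics of $Q(T)$ matters — one must check that no ``unexpected'' cycle survives, which is false in general (e.g. for once-punctured tori, or when $Q(T)$ has double arrows, or when a vertex has valency $3$, where $C(p)$ becomes a $3$-cycle and collides with the $T(f)$'s). I expect this to be handled not by a direct computation but by invoking Theorem~\ref{Thm:GLFS1} itself — that is, the present statement \emph{is} that theorem, whose proof in \cite{GLFS} is exactly this rigidity argument carried out in detail — so in the body of the paper one would simply cite \cite{GLFS} rather than reprove it.
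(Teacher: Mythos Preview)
Your proposal is correct and matches the paper's treatment: the paper does not prove Theorem~\ref{Thm:GLFS1} but attributes it to Geiss, Labardini-Fragoso and Schr\"oer \cite{GLFS}, and your final paragraph correctly anticipates this. The brief description the paper gives of the method in \cite{GLFS} --- ``a delicate, iterative construction of a suitable right-equivalence by hand, obtained as an infinite composition of equivalences which, to first approximation, increase the minimal length of any cycle appearing in the remainder term $W'$'' --- is exactly the order-by-order scheme you sketch, though the paper warns that ``the actual proof is more complicated than this suggests,'' so your coboundary/surjectivity step would indeed require the full combinatorial analysis of \cite{GLFS} rather than the clean linear-algebra picture you outline.
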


Geiss, Labardini-Fragoso and Schr\"oer furthermore prove that every pair $(\S,\M)$ with $g(\S)>0$ and with $|\M| \geq 3$ admits some triangulation $T$ which satisfies the hypotheses, i.e. which contains no self-folded triangle or loop,  in which every vertex has valency at least 4, and for which the associated quiver has no double arrow. (In the case when $\S$ has non-empty boundary, the same result holds without further hypotheses on the number of punctures.) The proof of Theorem \ref{Thm:GLFS1} involves a delicate, iterative construction of a suitable right-equivalence by hand, obtained as an infinite composition of equivalences which, to first approximation, increase the minimal length of any cycle appearing in the remainder term $W'$; the actual proof is more complicated than this suggests.

For our purposes, these results yield  a finite-determinacy theorem for $A_{\infty}$-structures on the total endomorphism algebra of the category $\CC$ of Section \ref{Sec:QuiverPotential} in the special case $(Q,W) = (Q(T), W(T,\epsilon))$.  Considering the description of the category $\D(Q,W)$ as a category of twisted complexes over an  $A_{\infty}$-algebra given in Section \ref{Sec:QuiverPotential},  Theorem   \ref{Thm:GLFS1} implies in particular that different choices of scalars $\{\lambda_p\}_{p \in \PP}$ for the potential \eqref{Eqn:General} yield equivalent categories $\D(Q,W)$, whilst $A_{\infty}$-products encoded by the ``remainder" term $W'$ can be gauged away.

\subsection{Quadratic differentials and flat metrics}

Let $(S,\phi)$ denote a pair comprising a Riemann surface $S$ and meromorphic quadratic differential $\phi$ with poles of order precisely 2 at the points of a divisor $M\subset S$ comprising $d$ reduced points, and with simple zeroes.  Thus, the marked bordered surface associated to $(S,\phi)$ is diffeomorphic to $(\S,\M)$.
  Let $\Zer_{\phi} \subset S$ denote the set of zeroes, so 
\[
| \Zer_{\phi}| \, = \, 4g(S)-4+2|M|.
\]  At a point of $S \backslash \{M \cup \Zer_{\phi}\}$ there is a distinguished local co-ordinate $z$ with respect to which
\[
\phi(z)\,  = \, dz \otimes dz.
\]
This local co-ordinate is uniquely defined up to changes $z \mapsto \pm z + constant$. At simple zeroes, respectively double poles, there is a canonical co-ordinate with respect to which
\begin{equation} \label{Eqn:LocalCoord}
\phi(z) \, = \ \begin{cases} z \,dz^{\otimes 2} \quad \mathrm{at \ simple \ zeroes}; \\
m \, \frac{dz^{\otimes 2}}{z^2} \quad \mathrm{at \ double \ poles, \ where} \ m\in \bC^*. \end{cases}
\end{equation}
We refer to the value $m$ in the second case as the \emph{residue} at the double pole.

The surface $S\backslash M$ inherits a flat metric $|\phi|^2$ with singularities;  at each  $p\in \Zer_{\phi}$, the metric has a cone angle of $3\pi$.  The length element of the metric is defined by
\[
\sqrt{\phi(w)} dw
\]
in an arbitrary local parameter $w$, so the length of a curve $\gamma \subset S\backslash M$ is given by
\[
|\gamma|_{\phi} \ = \ \int_{\gamma} \, |\phi(w)|^{1/2} \, |dw|.
\]
This is well-defined for curves passing through zeroes of $\phi$, but diverges to infinity for curves through double poles. The area of the flat surface
\[
\int_S \, |\phi(w)| \, dx\wedge dy
\]
is  infinite, since a neighbourhood of each point of $M$ is isometric to a semi-infinite flat cylinder of circumference $2\pi|m|$, with $m$ as in Equation \ref{Eqn:LocalCoord}.  

The differential $\phi \in H^0(K_S(M)^{\otimes 2})$
  defines a \emph{horizontal foliation} of $S\backslash M$, given by the lines along which $arg(\phi) = 0$. In the natural local co-ordinate,  the horizontal foliation is given by lines $\Im(z)=constant$.  The local trajectory structure at a zero shows the horizontal foliation is not transversely orientable. The natural $S^1$-action by rotation, $\phi \mapsto e^{i\theta}\phi$, does not change the underlying flat surface, but changes which in the circle of foliations defined by $\arg\, \phi(z) = constant$ is regarded as horizontal.

A \emph{saddle connection} is a finite length maximal horizontal trajectory.  Any such has both end-points at (not necessarily distinct) zeroes of $\phi$.  

\subsection{WKB triangulations}

Suppose the quadratic differential $\phi$ is complete and saddle-free, meaning that it has no finite-length maximal horizontal trajectory. It then defines a canonical isotopy class of triangulation of $S$ with vertices at $M$, 
called the WKB-triangulation, see \cite[Section 10]{BrSm}. There is a dual ``Lagrangian cellulation", with trivalent vertices the zeroes of $\phi$ and which has exactly one face for each point of $M$.   

In general, the WKB-triangulation may contain self-folded triangles.
Given a quadratic differential $\psi$ whose associated WKB-triangulation contains a self-folded triangle, there is an edge in the Lagrangian cellulation which goes from a zero to itself.
The quiver prescription of Labardini-Fragoso differs in this case \cite{LF1}. 
For simplicity we will restrict attention to the non-degenerate case:

\begin{Lemma} \label{Lem:GoodWKB}
For every $g>0$ and $d>0$ there is a complete saddle-free differential whose associated WKB-triangulation contains no self-folded triangles. If $d>3$ one can assume that the triangulation satisfies the further hypotheses of Theorem \ref{Thm:GLFS1}.  Every edge of the dual cellulation then has distinct end-points. \end{Lemma}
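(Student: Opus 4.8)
The plan is to produce the required differential by an explicit combinatorial-to-geometric argument: first build a suitable triangulation of $(\S,\M)$ by hand, then realise it as a WKB-triangulation. The starting point is the result of Geiss, Labardini-Fragoso and Schr\"oer quoted after Theorem \ref{Thm:GLFS1}: every $(\S,\M)$ with $g(\S)>0$ and $|\M|\geq 3$ carries an ideal triangulation $T_0$ with no self-folded triangle or loop, with every vertex of valency at least $4$, and whose quiver $Q(T_0)$ has no double arrow; for $d=1,2,3$ one still has, by a direct construction, \emph{some} non-degenerate ideal triangulation with no self-folded triangle (one can, for instance, start from a standard triangulation of the once-punctured surface of genus $g$ and refine it by adding punctures inside faces). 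So in all cases one has a triangulation $T$ with no self-folded triangle; when $d>3$ one takes $T=T_0$ satisfying the stronger hypotheses.

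Next I would pass from the topological triangulation to a geometric one. The classical fact here is that every isotopy class of ideal triangulation of a punctured surface is the WKB-triangulation (equivalently, the horizontal separatrix triangulation) of some complete, saddle-free meromorphic quadratic differential with double poles at $\M$ and simple zeroes. One way to see this directly: take any complete flat structure with cone points of angle $3\pi$ at the would-be zeroes and half-infinite cylinders at the punctures realising the given triangulation combinatorially (place a zero at the centre of each triangle-pair of the dual cellulation and develop the $|\phi|$-metric), and perturb within $\Quad(\S,\M)_0$ so that no horizontal trajectory is a saddle connection — saddle-free differentials form an open dense subset of each stratum, since the condition of having a finite-length horizontal trajectory is a countable union of positive-codimension conditions (it forces a period to be real). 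One should cite \cite[Section 10]{BrSm} here, where the dictionary between WKB-triangulations and saddle-free differentials is set up; the point is simply that the WKB-triangulation map is surjective onto isotopy classes of non-degenerate ideal triangulations, and that "no self-folded triangle" is exactly the condition that every edge of the dual Lagrangian cellulation has distinct endpoints (a self-folded triangle corresponds to a loop-edge of the dual cellulation based at a single zero). Combining, the differential realising $T$ has the stated properties.

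The last clause — that every edge of the dual cellulation has distinct endpoints — is then automatic: an edge of the Lagrangian cellulation dual to $T$ joins the two zeroes sitting in the two faces of $T$ adjacent to the corresponding edge of $T$, and these two faces are distinct precisely because $T$ has no self-folded triangle (a self-folded triangle is the unique configuration in which an edge of $T$ is adjacent to the same triangle on both sides). So this follows formally from the construction in the previous step, and needs no extra work beyond unwinding the duality.

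\textbf{Main obstacle.} The genuinely delicate point is the \emph{existence} input for $d\leq 3$: Theorem \ref{Thm:GLFS1} and the accompanying existence statement of \cite{GLFS} only guarantee the stronger (valency $\geq 4$, no double arrows) triangulation when $|\M|\geq 3$, so for $d=1,2$ one must separately exhibit a non-degenerate ideal triangulation of the genus-$g$ surface with $d$ punctures that avoids self-folded triangles — this is where low-genus and low-puncture sporadic cases can misbehave and must be checked by an explicit picture (e.g. building on the standard $4g$-gon fundamental domain, or on a pants decomposition, and verifying the valency condition after adding $d$ vertices). The realisation-as-WKB step is comparatively soft, but one must be a little careful that the perturbation making $\phi$ saddle-free does not change the isotopy class of the WKB-triangulation — this holds because the WKB-triangulation is locally constant on the saddle-free locus and the perturbation can be taken inside a single chamber, but it is the step most in need of a precise reference to \cite{BrSm}.
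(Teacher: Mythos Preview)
Your overall strategy---produce a suitable triangulation combinatorially, then realise it geometrically as a WKB triangulation---is exactly the paper's, and your handling of the final clause on the dual cellulation matches as well. Two points of divergence are worth noting.

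First, the ``main obstacle'' you identify for small $d$ is not an obstacle at all: the existence of a non-degenerate ideal triangulation (no self-folded triangles) for \emph{every} $g>0$, $d>0$ follows directly from \cite[Corollary 3.9]{FST}, which says any ideal triangulation can be flipped to one with no self-folded triangles. No bespoke constructions on $4g$-gons or pants decompositions are needed; the stronger \cite{GLFS} input is only invoked for the ``further hypotheses'' clause.

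Second, and more substantively, your realisation step is softer than the paper's and contains the gap you yourself half-identify. You propose to build a flat structure ``realising the triangulation combinatorially'' and then perturb to become saddle-free, but a differential with saddle connections lies on a wall, and a small perturbation lands in \emph{some} adjacent chamber with \emph{some} WKB triangulation---nothing in your argument selects the chamber that returns $T$. Saying ``the perturbation can be taken inside a single chamber'' presupposes that the chamber for $T$ is nonempty, which is precisely what is to be shown. The paper avoids this circularity entirely: it passes to the bipartite \emph{quadrilation} $\tilde{T}$ associated to $T$ (add a vertex at the centre of each face of $T$ and join it to the three corners), and then invokes \cite[Section 4.9]{BrSm}, which shows directly that any quadrilation of $(\S,\M)$ arises as the horizontal strip decomposition of some quadratic differential. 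The output of that construction is saddle-free by design, with WKB triangulation exactly $T$, so no perturbation step is required. Your citation of \cite[Section 10]{BrSm} is to the dictionary in the forward direction (differential $\to$ triangulation); the surjectivity you actually need lives in Section 4.9.
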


\begin{proof}
According to \cite[Corollary 3.9]{FST}, any ideal triangulation can be transformed via a sequence of flips to a non-degenerate triangulation (one containing no self-folded triangles), whilst \cite{GLFS} constructs non-degenerate triangulations satisfying the hypotheses of Theorem \ref{Thm:GLFS1} whenever $d>3$.  Any non-degenerate triangulation $T$ has an associated bipartite quadrilation $\tilde{T}$, whose vertices are the vertices of $T$ together with the mid-points of all faces of $T$, and which has three edges for each face of $T$, which join the vertices of that face to its centre, see Figure \ref{Fig:Quadrilation}.  Section 4.9 of \cite{BrSm} shows that any quadrilation of the marked surface $(\S,\M)$ may be realised as the ``horizontal strip decomposition" of a quadratic differential $\phi$, meaning that $\phi$ has  double poles at the vertices of $T$,  zeroes at the additional (necessarily trivalent) vertices of $\tilde{T}$, and that the edges of $\tilde{T}$ are exactly the trajectories of $\phi$ which contain a zero.  It follows that the non-degenerate triangulation obtained in \cite{GLFS} is realised as a WKB triangulation.  The final statement is an immediate consequence of non-degeneracy. \end{proof}

\begin{center}
\begin{figure}[ht]
\includegraphics[scale=0.3]{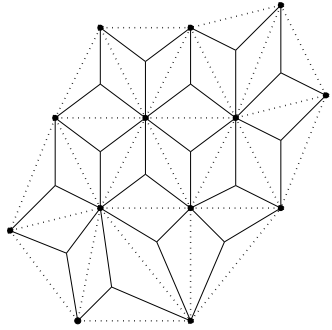}
\caption{The quadrilation associated to a triangulation\label{Fig:Quadrilation}}
\end{figure}
\end{center}

For brevity, 
we will call a triangulation as provided by Lemma \ref{Lem:GoodWKB} a non-degenerate WKB triangulation.

\section{Threefolds}\label{Sec:3folds}

We now associate quasi-projective Calabi-Yau 3-folds  to meromorphic quadratic differentials.  By way of motivation, if $(S,\psi)$ comprises a Riemann surface of genus $g\geq 2$ and a \emph{holomorphic} quadratic differential $\psi$ on $S$, there is a quasi-projective 3-fold which is a Lefschetz fibration over $S$, namely
\begin{equation} \label{Eqn:Holo3fold}
Y'_{\psi} \ = \ \left\{ (q_1, q_2, q_3) \in K_S^{\oplus 3} \, | \, q_1^2 + q_2^2 + q_3^2 = \psi\right\} \ \subset \mathrm{Tot}(K_S^{\oplus 3}).
\end{equation}
This is Calabi-Yau, and its geometry (Abel-Jacobi map, cycle theory etc) have elegant interpretations in terms of geometry on $S$, see \cite{DDDHP}.  Our spaces are cousins of these, adapted to the case of meromorphic differentials.

\subsection{Elementary modification} \label{Sec:TotalSpaces}

Let $S$ be a closed Riemann surface of genus $g\geq 1$, equipped with a reduced divisor $M$ comprising $d=|M|$ points.   Fix a rank two holomorphic vector bundle
\[
V \rightarrow S \qquad \textrm{with  an  isomorphism} \ \eta: \det(V) \cong K_S(M)
\]
The determinant defines a fibrewise quadratic map
\begin{equation} \label{Eqn:determinant}
\det: S^2(V) \rightarrow K_S(M)^{\otimes 2}.
\end{equation}
Consider an elementary modification of the symmetric square $S^2(V)$ along $M$, fitting into a short exact sequence of sheaves 
\begin{equation} \label{Eqn:ElemMod}
0 \rightarrow W \longrightarrow S^2V \stackrel{\alpha}{\longrightarrow} (\iota_M)_* \bC \rightarrow 0.
\end{equation}

\begin{Lemma} \label{Lem:ElemMod}
$W$ is locally free of rank $3$, and $c_1(W) = 3K_S +2\mathrm{PD}[M]$.
\end{Lemma}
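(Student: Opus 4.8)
The statement is purely local and multiplicative in the vector bundle, so I would prove it by separating the two assertions (local freeness, and the Chern class computation) and reducing each to a computation in a trivialising chart near a point $p\in M$ together with a general fact about elementary modifications along a reduced divisor.

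\textbf{Local freeness.} The sheaf map $\alpha\colon S^2V\to (\iota_M)_*\bC$ is, by definition of an elementary modification, surjective, and at each $p\in M$ it is given in a local frame by a nonzero functional on the fibre $S^2(V_p)$; equivalently, having chosen the distinguished modification associated to a line $\ell_p\subset V_p$, the functional is (a nonzero multiple of) evaluation against a complementary rank-two subspace. Thus $W=\ker\alpha$ is the sheaf of sections of $S^2V$ whose value at each $p$ lies in a fixed codimension-one subspace of $S^2(V_p)$. Working in a local coordinate $z$ centred at $p$ with $S^2V$ trivialised, this kernel is freely generated by the three sections obtained by keeping the two generators that lie in the subspace and multiplying the remaining generator by $z$; concretely, if $e_1,e_2$ is a local frame of $V$ and the functional is dual to $e_1^2$, then $W$ is locally free on $e_1e_2,\ e_2^2,\ z\,e_1^2$. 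Away from $M$ the map $\alpha$ is zero and $W=S^2V$, which is locally free of rank $3$. Patching these local frames shows $W$ is a rank-three locally free sheaf; alternatively, one can cite that the kernel of a surjection from a vector bundle onto a torsion sheaf supported on a reduced divisor is always locally free of the same rank, since one may check it chart by chart.

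\textbf{Chern class.} From the short exact sequence \eqref{Eqn:ElemMod}, multiplicativity of the total Chern class (equivalently, additivity of $c_1$ on short exact sequences, using $c_1$ of a torsion sheaf supported on a divisor) gives
\[
c_1(W)=c_1(S^2V)-c_1\big((\iota_M)_*\bC\big)=c_1(S^2V)-\mathrm{PD}[M].
\]
For the first term, for a rank-two bundle $V$ one has $c_1(S^2V)=3\,c_1(V)$ (e.g.\ by the splitting principle: if $V$ splits as $L_1\oplus L_2$ then $S^2V=L_1^2\oplus L_1L_2\oplus L_2^2$, whose first Chern class is $3(c_1(L_1)+c_1(L_2))=3c_1(V)$). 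Combined with the given isomorphism $\det V\cong K_S(M)$, so $c_1(V)=c_1(K_S)+\mathrm{PD}[M]=K_S+\mathrm{PD}[M]$, this yields
\[
c_1(W)=3\big(K_S+\mathrm{PD}[M]\big)-\mathrm{PD}[M]=3K_S+2\,\mathrm{PD}[M],
\]
as claimed.

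\textbf{Main obstacle.} Neither half is deep; the only point requiring a little care is the local-freeness claim, specifically checking that the elementary modification used here really does produce a rank-three \emph{bundle} rather than a sheaf with torsion — i.e.\ that $\alpha$ restricted to each fibre $S^2(V_p)$ is genuinely surjective onto $\bC$ and not somehow degenerate. This is exactly the content of the explicit local model above (the functional dual to $e_1^2$ is nonzero on the fibre), and it is also what pins down that the fibre of $Y_\phi$ over a double pole becomes a disjoint union of two planes once one fixes a line $\ell_p\subset V_p$; so I would make the local model explicit enough to see both facts simultaneously, and then the Chern class follows formally.
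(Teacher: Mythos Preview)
Your proof is correct. The Chern class computation is exactly the paper's argument (``Whitney sum formula'' unpacked), so there is nothing to add there.

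For local freeness you take a slightly different route than the paper. The paper's argument is a one-liner: $W$ is a subsheaf of the locally free sheaf $S^2V$, hence torsion-free, and any torsion-free coherent sheaf on a smooth curve is locally free. This avoids any local computation and makes no use of the particular shape of $\alpha$. Your explicit local model is also fine, and has the merit that it simultaneously sets up the determinant computation needed in the next lemma (the $\bC^2\amalg\bC^2$ fibre), but for the bare statement of this lemma the paper's approach is cleaner and does not require checking that $\alpha$ is fibrewise nonzero.
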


\begin{proof}
The sheaf $W$ is torsion-free on a smooth curve, hence locally free.  The first Chern class is given by the Whitney sum formula.
\end{proof}

Elementary modifications along $M$ are not unique, but depend on the choice of $\alpha$ in \eqref{Eqn:ElemMod}.   It will be important for us to choose the elementary modification compatibly with the quadratic map \eqref{Eqn:determinant}.

\begin{Lemma} \label{Lem:ElemMod2}
There is an elementary modification $W$ as above with the property that for each $x\in M$, the induced map
\[
\det: W \rightarrow K_S(M)^{\otimes 2}
\]
has fibre $\bC^2 \amalg \bC^2 \subset W_x$ over any non-zero point of $K_S(M)^{\otimes 2}_x$.
\end{Lemma}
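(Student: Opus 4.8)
The plan is to work locally near a single point $x\in M$ and exhibit the correct elementary modification $\alpha$ explicitly in a trivialisation, then observe that the local constructions glue. Choose a local coordinate $z$ centred at $x$ so that $M$ is cut out by $z=0$, and trivialise $V$ by a local frame $e_1, e_2$. Then $S^2(V)$ is trivialised by $e_1^2, e_1 e_2, e_2^2$; write a section as $(a,b,c)$ meaning $a e_1^2 + b e_1e_2 + c e_2^2$. Under a suitable trivialisation of $K_S(M)^{\otimes 2}$ compatible with $\eta$, the determinant map \eqref{Eqn:determinant} is (up to a nonvanishing factor) the classical discriminant-type expression $\det(a,b,c) = ac - \tfrac14 b^2$ (equivalently $ac-b^2$ after rescaling $b$), the quadratic form governing whether the binary quadratic form $ax^2+bxy+cy^2$ factors. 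First I would take as the local model of the modification the map $\alpha(a,b,c) = a(0)$, i.e. evaluation of the $e_1^2$-coefficient at $z=0$; thus $W$ is locally spanned by $z\,e_1^2,\ e_1e_2,\ e_2^2$. A local section of $W$ has the form $(za', b, c)$, and on it $\det_W = z a' c - \tfrac14 b^2$.

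The key computation is then to identify the fibre $W_x$ (set $z=0$) and the restriction of $\det_W$ to it. At $z=0$ the coordinate $a'$ survives as the coordinate on the $z e_1^2$-direction, call it $u$; the fibre is $\{(u,b,c)\}\cong\bC^3$, and $\det_W|_{W_x}(u,b,c) = -\tfrac14 b^2$, a rank-one quadratic form in the three fibre variables. Hence over a point $p\in K_S(M)^{\otimes 2}_x$ with $p\neq 0$, the fibre of $\det_W\colon W\to K_S(M)^{\otimes 2}$ is $\{b^2 = -4p\}$, which for $p\neq 0$ is the disjoint union of the two affine planes $\{b = +2\sqrt{-p}\}$ and $\{b = -2\sqrt{-p}\}$ in the $(u,b,c)$-space — exactly the asserted $\bC^2\amalg\bC^2 \subset W_x$. (Over $p=0$ one instead gets the doubled plane $\{b=0\}$, consistent with the ``rank one at points of $M_{=2}$'' statement recorded earlier in the excerpt.) This also matches the claim that a choice of line in $V_x$ — here the line spanned by $e_1$ — determines this distinguished modification, since the modification is defined by singling out the $e_1^2$ direction.

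Finally I would check well-definedness and globalisation. The subsheaf $W\subset S^2V$ I have described depends on the choice of local frame only through the line $\bC e_1\subset V_x$: changing the frame by $\mathrm{GL}_2$ of the bundle rescales/mixes coordinates but, provided the new frame's first vector still spans the same line at $x$, sends $\{z e_1^2, e_1e_2, e_2^2\}$ to another local generating set of the same subsheaf, as one sees from how $S^2$ of an upper-triangular change of basis acts; and the quadratic form $\det_W|_{W_x}$ is intrinsically the restriction of a fixed quadratic form, so its rank-one-ness and the planar structure of its nonzero fibres are coordinate-independent. Thus, fixing once and for all a line $\ell_x\subset V_x$ for each $x\in M$ (a finite amount of data), the local modifications agree on overlaps with the locally free $S^2V$ away from $M$ and hence glue to a global coherent subsheaf $W\subset S^2V$ with quotient $(\iota_M)_*\bC$, as in \eqref{Eqn:ElemMod}, with the required fibrewise behaviour. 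The main obstacle is purely bookkeeping: getting the trivialisation of $K_S(M)^{\otimes 2}$ compatible with $\eta$ so that $\det$ really is the standard discriminant form in the chosen frame, and tracking the scalar $-\tfrac14$ (or absorbing it by rescaling $b$) carefully enough that the two branches are genuinely affine planes rather than merely a conic; everything else is formal.
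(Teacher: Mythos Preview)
Your proposal is correct and follows essentially the same approach as the paper's proof: work locally, trivialise $S^2V$ so that the determinant is the standard discriminant quadratic, kill the $e_1^2$-direction in the modification, and observe that the restricted quadratic on $W_x$ has rank one, giving two disjoint affine planes over a nonzero value. The only cosmetic difference is that the paper labels the basis so that $\det = ab - c^2$ (with $c$ the cross term) rather than your $ac - \tfrac14 b^2$; your more explicit discussion of globalisation and of the dependence only on the line $\bC e_1 \subset V_x$ is exactly the remark the paper makes immediately after its proof.
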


\begin{proof}
The statement is obviously local at a given $x \in M$.  Let $x$ be defined by an equation $f$. Near $x$ we fix a trivialisation  $S^2V \cong \calO \oplus \calO \oplus \calO$ in which $S^2V$ is spanned by holomorphic sections $a,b,c$ with respect to which the determinant map is given by the fibrewise quadratic
\[
(a,b,c) \mapsto ab-c^2.
\]  Such a local basis of sections arises naturally from a choice of local basis of sections $s, s'$ for $V$ near $x$, with $a = s\otimes s$, $b = s'\otimes s'$ and $c = s\otimes s' + s'\otimes s$.  The proof of Lemma \ref{Lem:ElemMod} implies there is an elementary modification $W$ which is spanned by local holomorphic sections $(fa,b,c)$, and the determinant map on $W$ is then given by 
\[
(fa)b - c^2.
\]
At $x$, where $f=0$, the fibre $\det^{-1}(y) = \{c^2 = -y\} \subset W_x$ is isomorphic to the disjoint union of two planes $\{c = \pm \sqrt{-y}\} \subset \bC^3$, provided $y\neq 0$.
\end{proof}

Note that a choice of complex line $\ell$ in the fibre $V_p$ of $V$ at $p$, equivalently of a parabolic structure on $V$ at $p$, induces an elementary modification as in Lemma \ref{Lem:ElemMod2}, where the subspace $W_p \subset S^2V_p$ is identified with the quadratic forms on $V_p^*$ vanishing on the annihilator of $\ell \subset V_p$.

\subsection{A quasiprojective Calabi-Yau 3-fold}

Let $\phi\in H^0(K_S(M)^{\otimes 2})$ be a meromorphic quadratic differential on $S$ with simple zeroes and a pole of order exactly $2$ at each $x\in M$. Define the hypersurface
\[ Y_\phi = \{det_{W} - \pi^*\phi = 0\} \subset W\] 
inside the total space of the vector bundle $W$.  We shall write
\[
X_{\phi} \subset \bP(W\oplus \calO)
\]
for the fibrewise projective completion of $Y_{\phi}$.  Being fibred in quadrics, this is the zero-locus of a section of $\pi^*K_S(M)^{\otimes 2}(2) \rightarrow \bP(W\oplus\calO)$, where $\pi: \bP(W\oplus\calO)\to S$ denotes projection.

The previous description of the determinant map $\det_W$ shows that the natural map $X_{\phi} \rightarrow S$ is a fibration by projective quadrics, with generic fibre $\bP^1\times \bP^1$, nodal fibres over zeroes of $\phi$, and reducible  fibre $\bP^2 \cup_l \bP^2$ at each point of $M$, i.e. the union of two planes joined along a line $l$.  The locus of reducible quadrics has codimension $>1$ in the space of all quadric hypersurfaces in $\bP^3$, so a generic one-parameter family of quadrics would have no such singular fibres.  The singular fibre $\bP^2 \cup_l \bP^2$ is not locally smoothable (i.e. not the $0$-fibre of a smooth three-fold $\scrX \rightarrow D$); an infinitesimal smoothing is determined by a section of the tensor product $\nu_{l/\bP^2}^* \otimes \nu_{l/\bP^2}^* \cong \mathcal{O}(-2)$ of the normal bundles to the normal crossing locus $l$ in the two components of the singular fibre.  

\begin{Lemma}
 $X_{\phi}$  has two isolated singularities at infinity (i.e. in the complement of $Y_{\phi} \subset X_{\phi}$) over each point of $M \subset S$, which are 3-fold ordinary double points.  These are the only singularities of $X_{\phi}$.
\end{Lemma}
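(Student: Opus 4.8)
The plan is to work entirely in a local model near a point $x\in M$, since the statement is manifestly local over $S$, and away from $M$ the fibration $X_\phi\to S$ is a fibration by smooth quadrics (generic $\bP^1\times\bP^1$, nodal over $\Zer_\phi$) and hence $X_\phi$ is smooth there. So fix $x\in M$, a local coordinate (equation) $f$ on $S$ vanishing at $x$, and the local trivialisation of $W$ by sections $(fa,b,c)$ furnished by the proof of Lemma \ref{Lem:ElemMod2}, so that $\det_W$ is the quadratic $(fa)b-c^2$ and $\phi$ is locally $m\,f^{-2}$ with $m\in\bC^*$ (Equation \eqref{Eqn:LocalCoord}). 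Thus near the fibre over $x$ the hypersurface $Y_\phi$ is cut out in $\mathrm{Tot}(W)$, with fibre coordinates $(u,v,w)$ on the trivialised $W$, by
\[
uv - w^2 - \tfrac{m}{f^2} \;=\; 0 .
\]
Clearing the pole, introduce the homogeneous fibre coordinates $[u:v:w:s]$ on $\bP(W\oplus\calO)$; the projective completion $X_\phi$ is the zero-locus of $uv - w^2 - \tfrac{m}{f^2}s^2$, i.e.\ of $f^2(uv-w^2) - m\,s^2$ after clearing denominators (this is the section of $\pi^*K_S(M)^{\otimes2}(2)$ mentioned in the text). The locus at infinity $X_\phi\setminus Y_\phi$ is $\{s=0\}$, where the equation becomes $f^2(uv-w^2)=0$; over $x$ (i.e.\ $f=0$) this is all of the hyperplane at infinity $\bP^2=\{s=0\}$, consistent with the reducible fibre $\bP^2\cup_l\bP^2$ being entirely at infinity.

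First I would identify the two candidate singular points: they are the points of $X_\phi$ lying over $x$, at infinity, which are singular points of the total space. Work in the affine chart $u=1$ (the other charts $v=1$, $w=1$ are handled symmetrically, and one checks the candidate points lie in this chart after relabelling), with affine coordinates $(v,w,s)$ on the fibre and $f$ on the base, so that $X_\phi=\{F=0\}$ with
\[
F \;=\; f^2(v - w^2) - m\,s^2 .
\]
Then compute the differential: $\partial_f F = 2f(v-w^2)$, $\partial_v F = f^2$, $\partial_w F = -2f^2 w$, $\partial_s F = -2ms$. The Jacobian vanishes exactly when $f=0$ and $s=0$; combined with $F=0$ this forces only $f=s=0$, with $(v,w)$ arbitrary on the line $\{f=s=0\}$ — but one must remember this chart is glued to the others, and the locus $\{f=s=0\}$ is precisely the normal-crossing line $l$ between the two $\bP^2$'s in the fibre over $x$. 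So naively the singular locus looks one-dimensional; the resolution is that over $l$ the three affine charts overlap, and I would check that in fact only two points of $l$ fail to be smooth once all charts are combined — equivalently, that the reducible quadric $\{uv-w^2=0\}$ in the fibre $\bP^3$ is singular only along $l$ in the fibre, but the total space $X_\phi$ is smooth at points of $l$ where the quadric cone is ``transverse'' to the degeneration and singular only where it is not. Concretely, the honest computation is to re-homogenise: near a point of $l$, the fibre quadric $uv-w^2$ has its own rank drop, and the two three-fold singularities are the two points $[1:0:0:0]$ and $[0:1:0:0]$ (in $[u:v:w:s]$) over $x$ — the vertices of the two planes, i.e.\ the points where the line $l$ meets the ``coordinate'' structure — while at a generic point of $l$ the $f^2$ and the quadric terms combine to give a smooth point. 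I would verify this by the explicit Hessian computation at $[1:0:0:0]$: in the chart $u=1$ with coordinates $(v,w,s,f)$, $F=f^2 v - f^2 w^2 - m s^2$ vanishes to order $2$ at the origin, its quadratic part is $-m s^2$ only, which has rank $1$ — so I must instead take a more careful local model, changing coordinates $v\mapsto v' = f^2 v$ is not admissible ($f$ is a base coordinate), so the right move is to observe that near $[1:0:0:0]$ the function $v$ is a coordinate and we may \emph{solve} $F=0$ for... no — rather, set $\tilde v = f^2 v$; since $v$ is a free fibre coordinate and $f\ne 0$ generically this is a valid coordinate away from $\{f=0\}$, which is not good enough. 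The correct approach: blow up or simply compute in the chart where $l$ is visible, namely $w=1$.

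The main obstacle, then, is pinning down the \emph{ordinary double point} normal form at exactly the two points, and ruling out any further or worse singularities. Here is how I would finish. In the chart $w=1$ with fibre coordinates $(u,v,s)$ and base coordinate $f$, $X_\phi=\{G=0\}$ with $G = f^2(uv-1)-ms^2$. Then $dG = (f^2 v, f^2 u, -2ms, 2f(uv-1))$, which vanishes iff $f=s=0$ and $uv=1$ — but $uv=1$ together with $f=0$ is automatically forced by $G=0$ only if... $G=0$ and $f=0$ gives $s=0$, with $u,v$ free; so again the critical locus in this chart is $\{f=0,s=0\}$, a curve. The point is that $\{f=0,s=0\}$ in the chart $w=1$ is the conic $\{uv \text{ arbitrary}\}$... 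I now see the genuinely delicate point: the naive first-derivative test gives a whole curve of critical points of the \emph{defining equation}, but $X_\phi$ is not a hypersurface in a \emph{smooth} ambient variety globally — $\bP(W\oplus\calO)$ is smooth, so it \emph{is}; hence if $dF$ vanishes along a curve, $X_\phi$ really is singular along a curve, contradicting the claim. The resolution must be that the curve $\{f=0,s=0\}$ is \emph{not contained in} $X_\phi$ except at isolated points — indeed $\{f=0,s=0\}\subset X_\phi$ requires $f^2(uv-w^2)=0$ which holds identically when $f=0$, so it \emph{is} contained. Therefore I have set up the local model wrongly: the elementary modification must be incorporated so that the fibre over $x$ is $\bP^2\cup_l\bP^2$ with the $\calO(-2)$ non-smoothability, and the total space near $l$ is modelled on $\{xy=z^2 t^2\}\subset\bC^4$-type equations whose singular locus genuinely is $\{x=y=z=0,\ t=0\}$... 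I would therefore spend the bulk of the proof carefully re-deriving the correct local equation — starting from Lemma \ref{Lem:ElemMod2}'s $(fa)b-c^2$, passing to the projective completion with the twist, and expressing everything in the chart adapted to $l$ — and then show that after an analytic change of coordinates the singularity at each of the two special points is $\{x_1x_2+x_3x_4=0\}$ (the $3$-fold ODP), while all other points of $l$ (and of $X_\phi$) are smooth because there the leading quadratic form of the local equation has full rank $4$. The honest check that the two distinguished points are ODPs and everything else is smooth is the crux; once the correct local normal form is in hand it is a direct Hessian-rank computation, and the finiteness of the singular set then follows since we have covered all of $X_\phi$ by: (i) the smooth locus over $S\setminus M$, and (ii) over each $x\in M$, the explicit local model exhibiting exactly two ODPs.
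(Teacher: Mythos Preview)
Your overall plan is the right one, and it is the paper's plan too: the question is local over $S$, smooth away from $M$, so work in a local model near $x\in M$. But the local equation you write down is incorrect, and that error is exactly what produces the spurious curve of singularities you get stuck on.

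There are two related slips. First, in the trivialisation of $W$ by the sections $(fa,b,c)$, a fibre point with coordinates $(u,v,w)$ is $u(fa)+vb+wc=(uf)a+vb+wc\in S^2V$, so its determinant is $(uf)v-w^2=f\,uv-w^2$, \emph{not} $uv-w^2$. That factor of $f$ from the elementary modification is precisely what makes $\det_W$ drop rank on the fibre over $x$, as in Lemma~\ref{Lem:ElemMod2}. Second, $\phi$ is to be viewed as a holomorphic section of $K_S(M)^{\otimes 2}$; in the natural local trivialisation of that line bundle near a double pole it takes a \emph{nonzero finite} value, not $m/f^2$. The pole has already been absorbed into the twist by $M$, so there is nothing to clear. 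With both corrections the affine equation for $Y_\phi$ is $f\,uv-w^2=\phi(f)$ with $\phi(0)\neq 0$, and the projective completion is
\[
G \;=\; f\,uv - w^2 - \phi(f)\,s^2 \;=\;0,
\]
which is the paper's model \eqref{Eqn:Model} up to relabelling $(w,u,v,s,f)\leftrightarrow(x,y,z,t,\delta)$.

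Now the Jacobian computation is clean. One has
\[
dG=\big(fv,\;fu,\;-2w,\;-2\phi(f)s,\;uv-\phi'(f)s^2\big),
\]
and vanishing forces $f=w=s=0$ (using $\phi(0)\neq 0$) and then $uv=0$: exactly the two points $[1{:}0{:}0{:}0]$ and $[0{:}1{:}0{:}0]$ over $x$, both lying on the line $l=\{w=s=0\}$ at infinity. In the chart $u=1$ the equation reads $fv-w^2-\phi(f)s^2=0$, whose quadratic part at the origin is $fv-w^2-\phi(0)s^2$, a nondegenerate rank-$4$ form, hence an ordinary double point; the chart $v=1$ is symmetric. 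Your equation $f^2(uv-w^2)-ms^2$ has every monomial of order $\geq 2$ in $(f,s)$, which is why its singular locus is the whole line $\{f=s=0\}$; but that equation does not actually cut out $X_\phi$.
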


\begin{proof}
Away from $M$, the map $X_{\phi}\rightarrow S$ is a Lefschetz fibration, and smoothness of the total space is clear. 
Given the description of the determinant map in Lemma \ref{Lem:ElemMod2}, a local model for the behaviour near the singular fibres over $M$ is given by a neighbourhood of the $(\delta=0)$ fibre in the quadric pencil 
\begin{equation} \label{Eqn:Model}
\left\{ x^2- \delta yz = \phi(\delta) t^2 \right\} \ \subset \ \bP^3_{[x:y:z:t]} \times \bC_{\delta}; \qquad \phi(\delta) \neq 0 \, \forall \, \delta
\end{equation}
The subspace $\{t=1\}$ defines the vector bundle $W \subset \bP(W\oplus\calO)$ in the given trivialisation, and by hypothesis  the holomorphic function $\phi(\delta)$ vanishes away from $M$.   Without loss of generality, we can suppose $\phi(0)=1$.  Under projection to the second factor $\bC_{\delta}$, the 0-fibre is $\{x^2=t^2\}$ which is a union of two planes, whose line of intersection $\{x=0=t\}$ lies in the hyperplane at infinity $\Pi = \{t=0\}$.  The complement of $\Pi$ is the affine variety
\[
\left\{\, x^2 - \delta yz = \phi(\delta) \, \right\} \ \subset \ \bC^4.
\]
Under the projection to the plane $\bC_{\delta}$, this has generic fibre an affine conic $T^*S^2$, and these degenerate at $\delta = 0$ to a singular fibre $\bC^2 \amalg \bC^2$.  The singularities of the total space of \eqref{Eqn:Model} are the points 
\[
y=1, \, 0 \in \{x^2-\phi(\delta)t^2 = \delta z\} \qquad \textrm{and} \ z=1,\,  0 \in \{x^2-\phi(\delta) t^2=\delta y\}
\]
in the given affine charts $\{y=1\}$ respectively $\{z=1\}$, which since $\phi$ is locally non-vanishing are both 3-fold ordinary double points.
\end{proof}

\begin{Corollary}
$Y_{\phi}$ is smooth.
\end{Corollary}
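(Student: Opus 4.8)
The plan is to deduce smoothness of $Y_\phi$ directly from the preceding Lemma. That Lemma asserts that the only singularities of the projective completion $X_\phi$ are the two ordinary double points lying over each $x \in M$, and that these singular points lie at infinity, i.e. in $X_\phi \setminus Y_\phi$. Since $Y_\phi = X_\phi \cap \{\text{affine part}\}$ is an open subset of $X_\phi$ disjoint from the singular locus $\mathrm{Sing}(X_\phi)$, it is a smooth variety. This is essentially a one-line observation; the only thing worth making explicit is that $Y_\phi$ is genuinely contained in the smooth locus.

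Concretely, I would argue as follows. The complement $X_\phi \setminus Y_\phi$ is the intersection of $X_\phi$ with the hyperplane at infinity $\bP(W) \subset \bP(W \oplus \calO)$ in each fibre; by the Lemma the singular points of $X_\phi$ all lie in this complement (they are the points $(y{=}1, x{=}t{=}z{=}0)$ and $(z{=}1, x{=}t{=}y{=}0)$ of the local model \eqref{Eqn:Model}, which sit on $\Pi = \{t = 0\}$). Therefore $Y_\phi = X_\phi \setminus \bP(W)$ is contained in the smooth open locus $X_\phi \setminus \mathrm{Sing}(X_\phi)$. An open subset of a variety that avoids the singular locus is smooth, so $Y_\phi$ is smooth.

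There is really no main obstacle here: the corollary is an immediate corollary, and all the work has already been done in classifying $\mathrm{Sing}(X_\phi)$ via the local quadric-pencil model \eqref{Eqn:Model}. The one point one might want to double-check — and which I would verify if I wanted to be careful — is that away from the fibres over $M$ the total space of $Y_\phi \to S \setminus M$ really is smooth, but this is also covered: over $S \setminus M$ the map $X_\phi \to S$ is a Lefschetz fibration with smooth total space, so its open subset $Y_\phi$ is smooth there too, and over $M$ smoothness of $Y_\phi$ follows because the affine part $\{x^2 - \delta y z = \phi(\delta)\} \subset \bC^4$ of the local model is smooth (its singularities, as computed, all have $t = 0$). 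Hence $Y_\phi$ is smooth everywhere.
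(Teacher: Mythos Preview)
Your argument is correct and is essentially the same as the paper's: the preceding Lemma shows all singularities of $X_\phi$ lie on the divisor at infinity $\Delta_\infty = X_\phi \setminus Y_\phi$, so the open complement $Y_\phi$ is smooth. The paper phrases this as ``removing the divisor $\Delta_\infty$ removes the line $l$ from each reducible fibre, hence removes all the nodes,'' which is exactly your observation stated more tersely.
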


\begin{proof}
Removing the section of $\mathcal{O}_{\bP}(-1)$ defining the divisor $\Delta_{\infty} = X_{\phi} \backslash Y_{\phi}$ at infinity removes the line $l$ from each reducible fibre, hence removes all the nodes.
\end{proof}

The quasi-projective variety $Y_\phi$ comes with a natural projection map $\pi: Y_{\phi} \rightarrow S$. 
\begin{itemize}
\item The generic fibre of $\pi$ is a smooth affine quadric $\{ab-c^2 = t\}$ with $t\neq 0$,  abstractly diffeomorphic to the cotangent bundle $T^*S^2$;
\item At a zero $p$ of $\phi$, $\pi^{-1}(p)$ is defined by the quadratic $\{ab-c^2 = 0\} \subset \bC^3$, which has an isolated nodal singularity;
\item At a point $x\in M$, recalling that by hypothesis $\phi(x)=t' \neq 0$, the fibre $\pi^{-1}(x)$ is given by an affine quadric $\{c^2 = t'\} \subset \bC^3$, a disjoint union of two planes.
\end{itemize}

\begin{Lemma} \label{Lem:FirstChernZero}
$Y_{\phi}$ has holomorphically trivial canonical bundle, hence  $c_1(Y_{\phi})$ = 0.  The choice of isomorphism $\eta: \det(V) \rightarrow K_S(M)^{\otimes 2}$ defines a canonical homotopy class of trivialisation of the canonical bundle $K_{Y_{\phi}}$.
\end{Lemma}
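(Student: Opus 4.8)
\textbf{Proof proposal for Lemma \ref{Lem:FirstChernZero}.}

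The plan is to write down an explicit nowhere-vanishing holomorphic $3$-form on $Y_\phi$ by adjunction, using the ambient structure of $W \to S$. The point is that $Y_\phi$ is a hypersurface in the total space of the rank-three bundle $W$, cut out by the section $\sigma = \det_W - \pi^*\phi$ of the line bundle $\pi^*(K_S(M)^{\otimes 2})$ (pulled back from $S$ to $\mathrm{Tot}(W)$). So by adjunction we have
\[
K_{Y_\phi} \;\cong\; \big(K_{\mathrm{Tot}(W)} \otimes \pi^*(K_S(M)^{\otimes 2})\big)\big|_{Y_\phi}.
\]
First I would compute $K_{\mathrm{Tot}(W)}$. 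For the total space of a rank-$r$ vector bundle $E \to S$ over a curve, the relative canonical bundle of $p\colon \mathrm{Tot}(E)\to S$ is $p^*\det(E)^{\vee}$ (the fibre directions contribute $\Lambda^r$ of the cotangent along the fibre, which is the pullback of $\det(E)^\vee$), so $K_{\mathrm{Tot}(E)} \cong p^*(K_S \otimes \det(E)^{\vee})$. With $E = W$ and $\det(W) = 3K_S + 2\,\mathrm{PD}[M] = K_S(M)^{\otimes 2}\otimes K_S$ by Lemma \ref{Lem:ElemMod} and the isomorphism $\eta$ (note $\det(W) \cong \det(S^2 V)(-M) \cong (K_S(M))^{\otimes 3}(-M) \cong K_S^{\otimes 3}(M^{\otimes 2})$, which matches), we get
\[
K_{\mathrm{Tot}(W)} \;\cong\; \pi^*\big(K_S \otimes \det(W)^{\vee}\big) \;\cong\; \pi^*\big(K_S(M)^{\otimes 2}\big)^{\vee}.
\]
Substituting into the adjunction formula, the two twists cancel and $K_{Y_\phi}$ is trivial. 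Concretely, the trivialising section is the Poincaré residue $\mathrm{Res}_{Y_\phi}\big(\Theta / \sigma\big)$, where $\Theta$ is the tautological holomorphic $3$-form trivialising $K_{\mathrm{Tot}(W)}\otimes \pi^*(K_S(M)^{\otimes 2})$: in a local trivialisation of $W$ over a coordinate patch $z$ on $S$, with fibre coordinates $(a,b,c)$ and $\phi = \phi(z)\,dz^{\otimes 2}$, one takes $\Theta = \frac{dz\wedge da\wedge db\wedge dc}{d(ab-c^2 - \phi(z))}$ and checks it patches, using $\eta$, to a global form, so its residue is a global nowhere-zero holomorphic volume form on $Y_\phi$.

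The main point to verify carefully is that the local forms glue: transition functions of $W$ act on $(a,b,c)$, multiplying $dz\wedge da\wedge db\wedge dc$ by $\det(g)$ where $g$ is the fibre transition matrix, while $\det_W$ transforms by $\det(g)^2$ relative to the transformation of $\phi \in H^0(K_S(M)^{\otimes 2})$; the compatibility is exactly the statement $\det(W) = K_S(M)^{\otimes 2}\otimes K_S$ made canonical by $\eta$, so the ratio giving $\Theta$ is a genuine section of $K_{\mathrm{Tot}(W)}\otimes\pi^*(K_S(M)^{\otimes 2})$. The residue is then automatically global and nowhere vanishing on $Y_\phi$ (nowhere vanishing because $d\sigma \neq 0$ along $Y_\phi$, which is the smoothness established in the preceding Corollary). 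This gives holomorphic triviality of $K_{Y_\phi}$ and hence $c_1(Y_\phi) = 0$.

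For the last sentence, the data of the trivialising section $\Omega_\eta := \mathrm{Res}_{Y_\phi}(\Theta/\sigma)$ depends on $\eta$ only through the scaling used to identify $\det(W)$ with $K_S(M)^{\otimes 2}\otimes K_S$; since the space of holomorphic trivialisations of $K_{Y_\phi}$ over a connected variety is a torsor over $\mathcal{O}^*(Y_\phi) = k^*$ (as $Y_\phi$ is connected — it fibres over the connected $S$ with connected generic fibre and the reducible fibres do not disconnect the total space), any two such differ by a nonzero constant, so all lie in one homotopy class of trivialisation of $K_{Y_\phi}$ as a complex line bundle, and that homotopy class is the canonical one determined by $\eta$. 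I expect the only mild subtlety is bookkeeping the twist by $K_S$ versus $K_S(M)$ and keeping $\eta$ explicit throughout; there is no deep obstacle.
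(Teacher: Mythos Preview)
Your argument is correct and takes a somewhat different route from the paper. You apply adjunction directly inside the total space of the vector bundle $W\to S$: since $Y_\phi$ is cut out by a section of $\pi^*K_S(M)^{\otimes 2}$ and $K_{\mathrm{Tot}(W)}\cong \pi^*(K_S\otimes\det(W)^{\vee})\cong \pi^*(K_S(M)^{\otimes 2})^{\vee}$, the twist cancels and $K_{Y_\phi}$ is trivial, with an explicit Poincar\'e residue form as witness. The paper instead works in the projective completion $\bP(W\oplus\calO)$, computes $K_{X_\phi}=\calO_{\bP}(-2)$ via the standard formula for $K_{\bP}$, and then observes that the square of the hyperplane section at infinity trivialises $K_{X_\phi}$ on the complement $Y_\phi$.

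Your approach is more direct and, as a bonus, essentially proves the subsequent Lemma~\ref{Lem:Gradable} (existence of the nowhere-zero holomorphic volume form) in the same breath. What the paper's projective computation buys is the explicit identification $K_{X_\phi}=\calO_{\bP}(-2)$, which is invoked later (e.g.\ in showing $\hat{\Delta}_\infty$ supports an effective anticanonical divisor, and in the remark on pole orders of $\kappa_\phi$). One small remark: your final parenthetical that $\calO^*(Y_\phi)=k^*$ is not needed and is not entirely obvious for this quasi-projective variety; the point you actually use is just that two choices of $\eta$ differ by a scalar in $k^*$, hence the induced trivialisations $\Omega_\eta$ differ by that same scalar and are therefore homotopic.
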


\begin{proof}
Consider the  $\bP^3$-bundle $\pi: \bP = \bP(W \oplus \mathcal{O}) \rightarrow S$.  The determinant map
\[ det: S^2(V) \rightarrow K_{S}(M)^{\otimes 2} \]
restricted to $W \subset S^2(V)$ is fibrewise quadratic, hence can be viewed as an element of the space of global sections $\pi^*(K_{S}(M)^{\otimes 2})(2)$, which pushes forward to give 
\[
\Gamma\,\big( S^2(W \oplus \mathcal{O})^* \otimes  \pi^*K_{S}(M)^{\otimes 2}\big).
\]
The projective completion $X_{\phi}$  has canonical class $K_X = K_{\bP} K_{S}(M)^{\otimes 2} (2)$. Since
\begin{equation} \label{Eqn:CompletionCanonical}
K_{\bP} = det(W^*\oplus\mathcal{O})(-4)\otimes \pi^*K_{S}
\end{equation}
and there is an isomorphism $\tilde{\eta}: det(W^*) \rightarrow K_{S}(M)^{\otimes -3}(M)$, one sees 
\[
K_X = \mathcal{O}_{\bP}(-2).
\]
The quasi-projective subvariety $Y_{\phi} \subset X_{\phi}$ is  the complement of the section of $\mathcal{O}_{\bP}(1)$ at infinity, and the square of that section is a canonical divisor on $X_{\phi}$. The last statement follows from \eqref{Eqn:CompletionCanonical} since $\eta$ induces $\tilde{\eta}$. 
\end{proof}

\begin{Lemma} \label{Lem:Gradable}
There is a nowhere zero holomorphic volume form $\kappa_{\phi}$ on $Y_{\phi}$.
\end{Lemma}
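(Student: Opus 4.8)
The plan is to read $\kappa_\phi$ off directly from Lemma \ref{Lem:FirstChernZero}. By the Corollary that $Y_\phi$ is smooth, the canonical sheaf $K_{Y_\phi}=\Omega^3_{Y_\phi}$ is an honest holomorphic line bundle, and a nowhere-zero holomorphic volume form on $Y_\phi$ is by definition the same thing as a holomorphic trivialisation of $K_{Y_\phi}$. Lemma \ref{Lem:FirstChernZero} asserts precisely that $K_{Y_\phi}$ is holomorphically trivial, with a preferred trivialisation determined by $\eta$; so the present statement is a repackaging of the previous one, and $\kappa_\phi$ is simply the volume form corresponding to that trivialisation.

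For later use in constructing the $(3,0)$-form $\Omega$ and the grading $\eta(\phi)$, I would also make $\kappa_\phi$ explicit. Following the proof of Lemma \ref{Lem:FirstChernZero}, on the projective completion one has $K_{X_\phi}\cong\OO_\bP(-2)|_{X_\phi}$, and the divisor at infinity $\Delta_\infty=X_\phi\cap\bP(W)$ is the zero locus of the tautological section $s\in H^0(\OO_\bP(1))$ associated to the $\OO$-summand of $W\oplus\OO$. Since $2\Delta_\infty$ is a canonical divisor on $X_\phi$, the meromorphic section $s^{-2}$ of $K_{X_\phi}$ is holomorphic and nowhere vanishing on the complement $Y_\phi=X_\phi\setminus\Delta_\infty$, and one sets $\kappa_\phi:=s^{-2}|_{Y_\phi}$. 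Equivalently, in a local trivialisation of $W$ over a disc $U\subset S$ with base coordinate $z$ and fibre coordinates $(a,b,c)$ normalising $\det_W$ as in Lemma \ref{Lem:ElemMod2}, $\kappa_\phi$ agrees up to a nowhere-zero holomorphic factor with the Poincar\'e (Gelfand--Leray) residue of $(dz\wedge da\wedge db\wedge dc)/(\det_W-\pi^*\phi)$ along $Y_\phi$.

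I do not expect a real obstacle here: all the content is in Lemma \ref{Lem:FirstChernZero}. The only points that need (routine) verification are that $Y_\phi$ is smooth, so that $K_{Y_\phi}$ is a genuine line bundle rather than merely a dualising sheaf — this is the Corollary above — and, if one wants the explicit residue description, that the residue form has neither zeros nor poles along the nodal fibres over $\Zer_\phi$ or along the reducible two-plane fibres over $M$; both are local computations in the model pencil \eqref{Eqn:Model}, where $\det_W-\pi^*\phi$ has non-vanishing differential along $Y_\phi$ exactly because the earlier lemma places all singularities of $X_\phi$ at infinity.
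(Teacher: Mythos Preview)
Your proof is correct and follows essentially the same approach as the paper: both use the identification $K_{X_\phi}\cong\OO_\bP(-2)$ from Lemma \ref{Lem:FirstChernZero} together with the section of $\OO_\bP(1)$ vanishing on $\Delta_\infty$ to produce the nowhere-zero volume form on the complement $Y_\phi$. Your treatment is more explicit (identifying $\kappa_\phi$ as $s^{-2}|_{Y_\phi}$ and giving the Poincar\'e residue description), but the paper's proof is just the same idea stated tersely.
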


\begin{proof}
Up to rescaling, there is a unique  section of $\calO_{\bP}(1)$ vanishing on the divisor $\Delta_{\infty}$ at infinity, and since $K_{X_{\phi}} = \calO_{\bP}(-2)$, the complement $Y_{\phi}$ admits a canonical holomorphic volume form up to scale.  
\end{proof}

\begin{Remark}
The form $\kappa_{\phi}$ has poles of order 2 at infinity.  For a heuristic discussion of the relevance of the pole order being $\geq 2$ to constructions of stability conditions on the Fukaya category starting from pairs comprising a complex structure and such a non-vanishing volume form, see  \cite[Section 7.3]{KontSoi:WC}.
\end{Remark}

\subsection{Resolution}

The 3-fold ordinary double point 
\[
\{z_0^2+z_1^2+z_2^2+z_3^2 = 0\} \subset \bC^4
\]
admits two distinct small resolutions, in which the singular point is replaced by a smooth $\bP^1$ with normal bundle $\calO(-1)\oplus \calO(-1)$.  The resolutions are obtained by collapsing either one of the two rulings of the exceptional divisor $\bP^1\times \bP^1$ resulting from blowing up the singularity; the passage from one resolution to the other is the simplest example of a 3-fold flop.  
 
\begin{Lemma} \label{Lem:SmallRes}
There is a projective small resolution $\chi: \hat{X}_{\phi} \rightarrow X_{\phi}$.
\end{Lemma}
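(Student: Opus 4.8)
The plan is to obtain $\hat X_\phi$ by blowing up $X_\phi$ along one of the two components of each reducible fibre of $X_\phi\to S$ over a point of $M$, using the classical fact that the blow-up of a $3$-fold ordinary double point $\{uv = zw\}\subset\bC^4$ along the non-Cartier Weil divisor $\{u = z = 0\}$ is one of its two small resolutions, with exceptional curve a $\bP^1$ of normal bundle $\calO(-1)\oplus\calO(-1)$.

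First I would record the local geometry at the nodes. By the preceding Lemma the singular locus of $X_\phi$ consists of $2|M|$ ordinary double points, two over each $x\in M$, all lying in the divisor at infinity (so that nothing done below will affect $Y_\phi$); moreover both nodes over $x$ lie on the line $l_x$ along which the two components $\bP^2_{x,+}$ and $\bP^2_{x,-}$ of the reducible fibre of $X_\phi$ over $x$ meet. Starting from the local model \eqref{Eqn:Model}, with $\phi(0)$ normalised to $1$, an analytic coordinate change $s = t\sqrt{\phi(\delta)}$ (legitimate since $\phi(\delta)\neq 0$ locally) followed by $u = x - s$, $v = x + s$ identifies a neighbourhood of such a node in $X_\phi$ with $\{uv = \delta z\}\subset\bC^4$, in which $\delta$ is a local coordinate on $S$ at $x$, the node sits at the origin, and the local branch of $\bP^2_{x,+}$ through the node is the plane $\{u = \delta = 0\}$; thus on $X_\phi$ the ideal of $\bP^2_{x,+}$ is locally $(u,\delta)$. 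Globally $\bP^2_{x,+}$ is a closed $2$-dimensional subvariety of $X_\phi$ that meets $\mathrm{Sing}(X_\phi)$ exactly in the two nodes over $x$, and away from those it is a Cartier divisor because $X_\phi$ is smooth, hence locally factorial, there.

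Next I would fix a function $\epsilon\colon M\to\{+,-\}$ — for definiteness the one implicit in the cycle \eqref{Eqn:BackgroundCycle} — put $Z = \bigsqcup_{x\in M}\bP^2_{x,\epsilon(x)}\subset X_\phi$, and let $\chi\colon\hat X_\phi = \mathrm{Bl}_Z X_\phi\to X_\phi$ be the blow-up along the ideal sheaf $\I_Z$ of $Z$. Over $X_\phi\setminus\mathrm{Sing}(X_\phi)$ the subscheme $Z$ is a Cartier divisor, so $\chi$ is an isomorphism there; in particular it is an isomorphism over $Y_\phi$. Near a node one computes $\mathrm{Bl}_{(u,\delta)}\{uv = \delta z\}$ directly: the chart $\delta = ua$ gives the smooth variety $\{v = az\}\cong\bC^3_{u,a,z}$, the chart $u = \delta b$ gives $\{z = bv\}\cong\bC^3_{\delta,b,v}$, and the exceptional sets of these two charts glue to a single $\bP^1$ with normal bundle $\calO(-1)\oplus\calO(-1)$. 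Hence $\hat X_\phi$ is smooth and $\chi$ replaces each of the finitely many nodes by a rational curve, so $\chi$ is small; and $\hat X_\phi = \mathrm{Proj}_{X_\phi}\big(\bigoplus_{n\geq0}\I_Z^n\big)$ is projective over $X_\phi$, which is itself projective, so $\chi$ is a projective small resolution.

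The main obstacle is the identification in the middle step: checking that the globally defined component $\bP^2_{x,+}$ restricts, near each of the two nodes lying on it, to exactly the local Weil divisor whose blow-up is small — rather than accidentally producing a divisorial blow-up, such as that of the reduced singular locus — so that a single global choice of component over $x$ simultaneously resolves both nodes over $x$; and matching the abstract normal form $\{uv = zw\}$ to the explicit equation for $X_\phi$ coming from Lemma \ref{Lem:ElemMod2}. Granting this, the remainder is the two-chart blow-up computation together with standard facts about blow-ups of projective schemes.
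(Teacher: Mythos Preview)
Your proposal is correct and takes essentially the same approach as the paper: blow up a Weil divisor comprising one component $\bP^2_{x,\epsilon(x)}$ of each reducible fibre, and use that blowing up a non-Cartier Weil divisor through a $3$-fold ordinary double point gives a small resolution. The paper's proof is terser---it simply notes that such a Weil divisor contains all the nodes, asserts the blow-up is small, and defers the local computation to \cite{Clemens, STY}---whereas you carry out the two-chart calculation explicitly and verify that the local ideal of the chosen component is $(u,\delta)$ at each node, which is exactly the content the paper outsources to the references.
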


\begin{proof}
Blow-ups of projective varieties are necessarily projective. Let $\hat{X}_{\phi}$ be given by blowing up a Weil divisor comprising exactly one component $\bP^2 \cong H_x \subset \pi^{-1}(x)$ of each reducible fibre, $x\in M$.   Such a Weil divisor contains all of the nodes, hence the blow-up is a small resolution.  See \cite{Clemens}, \cite{STY} for further discussion. 
\end{proof}

Let $\hat{\Delta}_{\infty} \subset \hat{X}_{\phi}$ be the total transform of $\Delta_{\infty} \subset X_{\phi}$.
 \begin{center}
 \begin{figure}[ht]
 \includegraphics[scale=0.5]{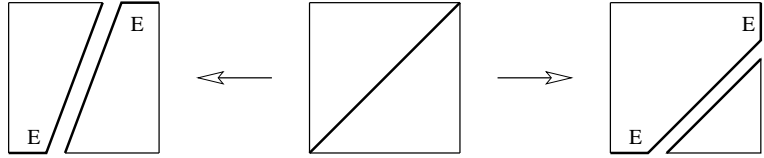}
 \caption{Reducible degenerations of a quadric surface with smooth total space. The divisor $\hat{\Delta}_{\infty}$ is marked in  bold (sum of components with multiplicity one); small resolution curves are labelled $E$. \label{Fig:ToricDegen}}
 \end{figure}
 \end{center}

\begin{Lemma} \label{Lem:SmoothInfinity}
The divisor $\hat{\Delta}_{\infty} \subset \hat{X}_{\phi}$ is smooth, and supports an effective anticanonical divisor.
\end{Lemma}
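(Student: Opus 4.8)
The plan is to work in the explicit local model \eqref{Eqn:Model} after performing the small resolution of Lemma \ref{Lem:SmallRes}, verify smoothness of $\hat{\Delta}_{\infty}$ fibrewise over $S$, and then exhibit the effective anticanonical divisor using the formula $K_X = \calO_{\bP}(-2)$ from Lemma \ref{Lem:FirstChernZero} together with the explicit geometry of the resolution.

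\emph{Smoothness.} Away from $M$ the completion $X_\phi \to S$ is a Lefschetz fibration and $\Delta_\infty = X_\phi \cap \{t=0\}$ is, fibre by fibre, a smooth conic in $\bP^3$ (the hyperplane section at infinity of the affine quadric $\{ab-c^2=\phi\}$), hence a smooth surface there which $\chi$ does not touch. Over a point $x \in M$ the reducible fibre is $\bP^2 \cup_l \bP^2$, and in the model \eqref{Eqn:Model} with $\phi(0)=1$ the 0-fibre is $\{x^2=t^2\}$; its intersection with $\Pi = \{t=0\}$ is the doubled line $\{x=t=0\}$, so na\"ively $\Delta_\infty$ looks non-reduced there. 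This is exactly where the small resolution enters: blowing up one component $H_x \cong \bP^2$ of the reducible fibre separates the two planes along $l$, replacing the node of each affine conic degeneration by an exceptional $\bP^1$ with normal bundle $\calO(-1)\oplus\calO(-1)$, and a direct computation in the blow-up chart shows the total transform $\hat{\Delta}_\infty$ becomes a normal crossings (indeed locally smooth) union of the proper transform of $\Delta_\infty$ and an exceptional component; I would read off from Figure \ref{Fig:ToricDegen} that over each $x \in M$ the fibre of $\hat{\Delta}_\infty$ is a chain of two $\bP^1$'s meeting the resolution curve $E$ transversally, with smooth total space. A local toric description of the reducible degeneration of $\bP^1\times\bP^1$ with smooth total space (which is what $\hat X_\phi$ is, locally near $\pi^{-1}(x)$) makes this completely transparent.

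\emph{Effective anticanonical divisor.} Here I would use that $Y_\phi = X_\phi \setminus \Delta_\infty$ has trivial canonical bundle by Lemma \ref{Lem:FirstChernZero}, with the trivialising volume form $\kappa_\phi$ of Lemma \ref{Lem:Gradable} having a pole of order $2$ along $\Delta_\infty$. Concretely $K_{X_\phi} = \calO_{\bP}(-2)|_{X_\phi}$ and $\Delta_\infty$ is cut out by a section of $\calO_{\bP}(1)|_{X_\phi}$, so $2\Delta_\infty$ is an effective anticanonical divisor on $X_\phi$. Pulling back under the crepant small resolution $\chi$ (small resolutions of ordinary double points are crepant, so $K_{\hat X_\phi} = \chi^* K_{X_\phi}$), one gets that $-K_{\hat X_\phi} = \chi^*(2\Delta_\infty)$, which is supported on $\hat{\Delta}_\infty$; one then checks that $\chi^* \Delta_\infty$, up to the exceptional contributions recorded in Figure \ref{Fig:ToricDegen}, gives an effective divisor whose support is exactly $\hat{\Delta}_\infty$ — equivalently, that $\kappa_\phi$ extends to a meromorphic volume form on $\hat X_\phi$ with poles only (of order $\leq 2$) along the components of $\hat{\Delta}_\infty$. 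Since $\hat{\Delta}_\infty$ is smooth, adjunction then identifies this with an effective anticanonical divisor supported on it.

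\emph{Main obstacle.} The routine part is the global cohomological bookkeeping via Lemma \ref{Lem:FirstChernZero}; the delicate point is the \emph{local} behaviour over $M$ — precisely matching the total transform $\hat\Delta_\infty$ to the picture in Figure \ref{Fig:ToricDegen}, checking that the chosen small resolution (blowing up $H_x$, one component of each reducible fibre) really does make $\hat\Delta_\infty$ smooth rather than merely normal crossings, and tracking the multiplicities of the exceptional components so that the pulled-back anticanonical class is genuinely effective with the stated support. I would handle this by writing down the two affine charts $\{y=1\}$, $\{z=1\}$ of \eqref{Eqn:Model}, blowing up the Weil divisor $H_x$ in coordinates, and computing the proper and total transforms of $\{t=0\}$ explicitly; this is the step where an error would most easily creep in.
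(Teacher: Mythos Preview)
Your proposal is correct in outline, and for the anticanonical statement it is essentially identical to the paper's argument: both invoke crepancy of the small resolution $\chi$ together with $K_{X_\phi} = \calO_{\bP}(-2)$ from Lemma~\ref{Lem:FirstChernZero} to conclude that $-K_{\hat X_\phi} = 2\hat\Delta_\infty$. Two small corrections are worth flagging. First, a small resolution has no exceptional \emph{divisor}, so $\hat\Delta_\infty$ is not a ``union of the proper transform of $\Delta_\infty$ and an exceptional component'' as you write; it is irreducible, and since $\Delta_\infty = \{t=0\}$ is Cartier through the ODPs one has $\chi^*\Delta_\infty = \hat\Delta_\infty$ on the nose, with no ``exceptional contributions'' to track. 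Second, the fibre of $\hat\Delta_\infty \to S$ over a point of $M$ is a chain of \emph{three} rational curves (the proper transform of $l$ together with the two small-resolution curves, one for each ODP), as recorded later in the proof of Lemma~\ref{Lem:DivisorsIndpt}; your ``chain of two $\bP^1$'s meeting the resolution curve $E$'' undercounts.

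For smoothness the paper takes a more structural route than your proposed chart-by-chart computation: it identifies the two possible models for the singular fibre of $\hat X_\phi \to S$ over a point of $M$ (either two first Hirzebruch surfaces glued along a ruling fibre, or $\mathrm{Bl}_2\bP^2$ glued to $\bP^2$ along a line, the two being related by a flop), and then simply observes that in either model $\hat\Delta_\infty$ is a conic bundle over $S$ with nodal fibres, hence has smooth total space. Your explicit blow-up of the Weil divisor $H_x$ in the affine charts $\{y=1\}$, $\{z=1\}$ of \eqref{Eqn:Model} would of course reach the same conclusion and would in fact produce these models directly; the paper's argument just packages the answer more geometrically and avoids the coordinate bookkeeping you flag as the ``main obstacle''.
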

 
 \begin{proof}
 There are two possible models for the singular fibre of $\hat{X}_{\phi} \rightarrow S$ over a point of $M$, depending on whether the small resolution curves lie in the same or distinct components of the fibre. Either the generic fibre $\bP^1\times \bP^1$ degenerates to a union of two first Hirzebruch surfaces meeting along a fibre; or it degenerates to a copy of the blow-up of $\bP^2$ at two distinct points $p,q$, meeting a second copy of $\bP^2$ along the curve which is the proper transform of the line between $p,q$, see Figure \ref{Fig:ToricDegen}.  More explicitly, the local model given by the rightmost of the degenerations of Figure \ref{Fig:ToricDegen} can be obtained by taking the trivial fibration $(\bP^1\times\bP^1) \times D$ and blowing up a point $((q,q),0)$ lying on the diagonal of the central fibre.
The two models are related by flopping one of the $(-1,-1)$-curves $E$. In either case, $\hat{\Delta}_{\infty}$ is a conic bundle over $S$ with Lefschetz-type singularities, hence the total space of the divisor is smooth. Since $\hat{X}_{\phi} \rightarrow X_{\phi}$ is crepant and $\Delta_{\infty} \subset X_{\phi}$ supports an effective anticanonical divisor on $X_{\phi}$, by Lemma \ref{Lem:FirstChernZero}, the last statement holds.
 \end{proof}

\begin{Lemma} \label{Lem:NoCurve}
 $\hat{\Delta}_{\infty} \cdot C \geq 0$ for every rational curve $\bP^1 \cong C\subset \hat{\Delta}_{\infty} \subset \hat{X}_{\phi}$. 
\end{Lemma}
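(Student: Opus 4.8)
The plan is to verify the inequality componentwise, using the explicit local models for the singular fibres of $\hat{X}_\phi \to S$ furnished by Lemma \ref{Lem:SmoothInfinity} and the toric degenerations of Figure \ref{Fig:ToricDegen}. Since $\hat{\Delta}_\infty$ is smooth (Lemma \ref{Lem:SmoothInfinity}) and is a conic bundle over $S$ with Lefschetz-type singularities, any rational curve $C \subset \hat{\Delta}_\infty$ either maps to a point of $S$ — so lies inside a fibre of $\hat{\Delta}_\infty \to S$ — or maps onto $S$, forcing $g(S)=0$, which is excluded by our standing assumption $g(\S)>0$; so I may assume $C$ lies in a single fibre of $\hat{\Delta}_\infty$. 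Generic fibres of $\hat{\Delta}_\infty \to S$ are smooth conics, and reducible only over $\Zer_\phi$ and over $M$, so the only curves to worry about are the irreducible components of those finitely many reducible fibres, together with the small-resolution curves $E$ lying in $\hat\Delta_\infty$.

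First I would treat the generic-type fibres: if $C$ is a smooth conic fibre of $\hat\Delta_\infty$, then $\hat\Delta_\infty \cdot C$ is computed from the normal bundle of $\hat\Delta_\infty$ restricted to such a fibre, which is a smooth $\bP^1$ of self-intersection zero inside the smooth threefold $\hat\Delta_\infty$; one checks $\hat\Delta_\infty|_C$ is the degree of the (relatively) positive line bundle cutting out $\hat\Delta_\infty$ in $\bP(\hat W\oplus\OO)|$-terms, which is nonnegative. Second, over a node of $\phi$ the fibre of $\hat\Delta_\infty$ is a pair of lines meeting at a point; for each such line $C$ one computes $\hat\Delta_\infty\cdot C$ from the local toric model (the node's fibre in $\Delta_\infty$ is a degenerate conic in $\bP^3$), again obtaining a nonnegative value. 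Third, and most delicately, over a point $x\in M$ one uses the two explicit local models from the proof of Lemma \ref{Lem:SmoothInfinity}: either two first Hirzebruch surfaces meeting along a fibre, or the blow-up of $\bP^2$ at two points meeting a second $\bP^2$ along a proper transform. In each model I would enumerate the irreducible components $C$ of the fibre of $\hat\Delta_\infty$ lying at infinity — these are the bold curves in Figure \ref{Fig:ToricDegen} — and the small-resolution curves $E$ contained in $\hat\Delta_\infty$, and compute the intersection number $\hat\Delta_\infty\cdot C$ from the combinatorics of the toric picture (reading the normal bundle of each torus-invariant curve off the fan), checking it is $\geq 0$ in every case; crucially, $\hat\Delta_\infty$ is by construction the \emph{sum of components with multiplicity one}, which is what makes these local numbers work out nonnegatively even though $\Delta_\infty$ itself has reducible fibres.

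The main obstacle will be the third step: one must confirm that in both toric models the chosen anticanonical-supporting divisor $\hat\Delta_\infty$ — specifically the multiplicity-one reduced structure indicated in Figure \ref{Fig:ToricDegen} — pairs nonnegatively with \emph{every} invariant curve, including the flopping curve $E$ and the curves lying in the boundary at infinity, and that flopping $E$ (which passes between the two models) does not violate the inequality. The point to watch is that a $(-1,-1)$-curve $E\subset\hat\Delta_\infty$ could a priori have $\hat\Delta_\infty\cdot E<0$; one shows instead that the small-resolution curves are arranged by the choice of Weil divisor $H_x$ in Lemma \ref{Lem:SmallRes} so that each $E$ either is disjoint from $\hat\Delta_\infty$ or meets it transversally in a nonnegative number of points. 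Once the finitely many local computations are in hand, the lemma follows by additivity of intersection numbers over the components of a reducible fibre, since every rational curve in $\hat\Delta_\infty$ is a nonnegative combination of the curves enumerated above.
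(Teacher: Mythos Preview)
Your approach is workable in principle but is considerably more laborious than the paper's, and your handling of the small-resolution curves contains a genuine confusion.

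The paper avoids all explicit toric bookkeeping by exploiting the map $r:\hat X_\phi \to X_\phi$. In $X_\phi$ the divisor $\Delta_\infty$ is Cartier (it is cut out by a section of $\calO_\bP(1)|_{X_\phi}$), hence relatively ample over $S$. Since $g(S)>0$, any rational curve in $\Delta_\infty$ lies in a $\bP^3$-fibre and is a positive multiple of a line, so $\Delta_\infty\cdot C>0$ there. For the small-resolution curves $E$ the paper uses crepancy: $K_{\hat X_\phi}=r^*K_{X_\phi}=-2\hat\Delta_\infty$, while $K_{\hat X_\phi}\cdot E=0$ for any $(-1,-1)$-curve, so $\hat\Delta_\infty\cdot E=0$ with no computation. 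A general $C\subset\hat\Delta_\infty$ then has $\hat\Delta_\infty\cdot C=\Delta_\infty\cdot r_*C\geq 0$, with the nonnegativity of the line-coefficient in $r_*C$ coming from area. This sidesteps any fibrewise case analysis.

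Your proposal, by contrast, enumerates fibre types and reads intersection numbers off toric pictures. That can be made to work, but note that your final paragraph is internally inconsistent: you correctly list ``the small-resolution curves $E$ contained in $\hat\Delta_\infty$'' among the curves to check, yet then propose to show ``each $E$ either is disjoint from $\hat\Delta_\infty$ or meets it transversally in a nonnegative number of points''. Since $E\subset\hat\Delta_\infty$, neither alternative applies; the intersection number $\hat\Delta_\infty\cdot E$ is $\deg(N_{\hat\Delta_\infty/\hat X_\phi}|_E)$, which is not obviously nonnegative and cannot be read off from transversal meeting points. The crepancy argument is exactly what pins this down to zero, and you should use it rather than attempt a direct normal-bundle computation in each toric model.
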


\begin{proof}
Inside $X_{\phi}$, the divisor at infinity is a conic bundle over $S$, with an integrable complex structure for which projection to $S$ is holomorphic.  Since $g(S) > 0$, any rational curve in $\Delta_{\infty}$ is contained in a fibre of projection, hence is a rational curve in some quadric surface in $\bP^3$.  Any such curve is in the homology class of some multiple of a line in $\bP^3$, hence deforms inside the $\bP^3$-fibre, and meets $\Delta_{\infty}$ strictly positively.

The small resolution $r: \hat{X}_\phi \rightarrow X_\phi$ contracts, for each point $x\in M$, two smooth $\bP^1$'s with normal bundle $\calO(-1)\oplus\calO(-1)$. For such a curve $E$, the intersection $E \cdot \hat{\Delta}_{\infty} = 0$; indeed, $K_{\hat{X}_{\phi}} = r^*K_{X_{\phi}} = r^*(-2 \Delta_{\infty}) = -2\hat\Delta_{\infty}$, but the canonical class is trivial near a $(-1,-1)$-curve. The result for a general $C=\bP^1 \subset \hat{\Delta}_{\infty}$ then follows by linearity; the coefficient of a line in the homology class of $C$ must be non-negative by considering the area (with respect to a suitable K\"ahler form, see Section \ref{Sec:SymplForms} below) of the image of $C$ after blowing down.
\end{proof}

At a double pole of $\phi$, there is a  canonical local complex co-ordinate on $S$ in which a quadratic differential can be expressed as
\[
\phi = m \, \frac{dz^{\otimes 2}}{z^2}.
\]
We refer to $m\in \bC^*$ as the \emph{residue} of $\phi$ at the double pole.

For each $p\in M \subset S$, the fibre $\pi^{-1}(p)$ is reducible. Let $\bC^2_{+,p}$ denote one component of this fibre. 

\begin{Lemma} \label{Lem:DivisorsIndpt}
The divisors $\bC^2_{+,p}$ are linearly independent in $H^2(Y_{\phi};\bZ_2) \cong H_4^{lf}(Y;\bZ_2)$.
\end{Lemma}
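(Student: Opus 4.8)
The plan is to exhibit, for each $p \in M$, a locally finite $4$-cycle $Z_p$ in $Y_\phi$ whose mod-$2$ intersection number with the class $[\bC^2_{+,q}]$ is $\delta_{pq}$; the existence of such a dual system of cycles immediately gives linear independence in $H^2(Y_\phi;\bZ_2)$. The natural candidate for $Z_p$ is the preimage $\pi^{-1}(\gamma_p)$ of a small embedded arc $\gamma_p \subset S$ passing transversally once through $p$ and otherwise avoiding $M$ and $\Zer_\phi$. Over such an arc the fibration $\pi$ is (away from the single reducible fibre) a fibration by smooth affine quadrics $T^*S^2$, and $\pi^{-1}(\gamma_p)$ is a locally finite $4$-manifold in $Y_\phi$; one should check it is closed as a locally finite cycle, i.e.\ that the non-compact ends match up, using that the arc is properly embedded in $S - \Pol_{\geq 3}(\phi)$ and the fibre is connected with two ends over each double pole.

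The heart of the argument is the local intersection computation at $p$. Working in the local model of Lemma~\ref{Lem:ElemMod2}, where $\pi^{-1}(p)$ is the affine quadric $\{c^2 = t'\} = \bC^2_{p,+} \amalg \bC^2_{p,-} \subset \bC^3$ with $\bC^2_{p,\pm} = \{c = \pm\sqrt{t'}\}$, the cycle $\pi^{-1}(\gamma_p)$ meets $\pi^{-1}(p)$ in exactly this fibre. I would choose $\gamma_p$ to be a short transverse slice so that $\pi^{-1}(\gamma_p)$ is, near $p$, the total space of a one-parameter family of affine quadrics degenerating to $\bC^2_{p,+}\amalg \bC^2_{p,-}$; then $Z_p \cdot \bC^2_{+,q} = 0$ for $q \neq p$ since the supports are disjoint (the arcs are disjoint and miss each other's poles), and for $q = p$ one computes $Z_p \cdot \bC^2_{+,p} = 1 \in \bZ_2$ by a direct local count — $\bC^2_{+,p}$ sits inside $\pi^{-1}(\gamma_p)$ as a component of the central fibre, and its self-intersection inside the smooth $4$-fold $\pi^{-1}(\gamma_p)$ can be read off from the normal crossing structure of the reducible fibre (the two components meet the ambient cycle, and the local intersection multiplicity with a single component is odd). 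Alternatively one can deform $\bC^2_{+,p}$ slightly inside $Y_\phi$ and count transverse intersection points with $\pi^{-1}(\gamma_p)$ directly in the model \eqref{Eqn:Model}.

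The main obstacle is the bookkeeping at infinity: $\bC^2_{+,p}$ and $\pi^{-1}(\gamma_p)$ are both non-compact, so one must be careful that the intersection pairing $H^{lf}_4 \times H^{lf}_4 \to H^{lf}_{?}$ — or rather the pairing of a locally finite $4$-cycle with a compactly-supported class, or the intersection of two locally finite cycles whose supports meet in a compact set — is actually well-defined here. The clean way is to note that $\pi^{-1}(\gamma_p)$ can be chosen so that $\bC^2_{+,p} \cap \pi^{-1}(\gamma_p)$ is compact (indeed, after a small perturbation, a finite set of points), which is guaranteed because $\gamma_p$ meets $M$ only at $p$ and $\bC^2_{+,q}$ lies over $q$; then the $\bZ_2$-intersection number is legitimately defined and one is back to the local computation above. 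A secondary technical point is verifying that $[\bC^2_{+,p}] = [\bC^2_{-,p}]$ in $H^2(Y_\phi;\bZ_2)$ so that the statement is independent of the chosen component, which follows from Lemma~\ref{Lem:MonodromySwaps} (the two components are exchanged by monodromy) — though for the bare linear independence statement this is not strictly needed. I expect the topological argument to be short once the duality setup is in place; the only genuine care is in the identification $H^2(Y_\phi;\bZ_2) \cong H^{lf}_4(Y_\phi;\bZ_2)$ (Poincaré–Lefschetz duality for the open manifold $Y_\phi$, using that it is an oriented $6$-manifold) and in making the local model \eqref{Eqn:Model} precise enough to extract the parity of the intersection number.
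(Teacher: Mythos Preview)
There is a genuine dimension error that undermines the argument. The fibres of $\pi\colon Y_\phi\to S$ are affine conics of \emph{complex} dimension $2$, hence real dimension $4$; a real arc $\gamma_p\subset S$ has real dimension $1$, so $\pi^{-1}(\gamma_p)$ has real dimension $5$, not $4$. Intersecting a locally finite $5$-cycle with the $4$-dimensional $\bC^2_{+,p}$ inside the real $6$-manifold $Y_\phi$ gives (generically) something $3$-dimensional, not a number. The phrases ``locally finite $4$-manifold'' and ``smooth $4$-fold $\pi^{-1}(\gamma_p)$'' are simply off by one; and once this is corrected, the sentence about $\bC^2_{+,p}$ having a ``self-intersection inside $\pi^{-1}(\gamma_p)$'' no longer makes sense, since $\bC^2_{+,p}$ is a divisor in the $5$-manifold, not a middle-dimensional cycle.

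The underlying strategy --- produce dual cycles and read off independence from intersection numbers --- is sound, but the correct dual objects are compact $2$-cycles, pairing $H_4^{lf}(Y_\phi;\bZ_2)\times H_2(Y_\phi;\bZ_2)\to\bZ_2$. One route is to work locally: Lemma~\ref{Lem:LocalTop} gives $H_2(\pi^{-1}(D_p);\bZ)\cong\bZ^2$, and you could try to exhibit a compact $2$-cycle in $\pi^{-1}(D_p)$ whose mod-$2$ intersection with $\bC^2_{+,p}$ is odd; since the $\pi^{-1}(D_p)$ are pairwise disjoint, restriction kills the other $[\bC^2_{+,q}]$ and the argument goes through. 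Identifying that $2$-cycle explicitly in the local model requires some work, however.

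The paper sidesteps this by passing to the small resolution $\hat{X}_\phi$: the small resolution curves are compact $\bP^1$'s in $\hat{X}_\phi$ whose intersection numbers with the reducible-fibre components are visibly independent, giving independence in $H^2(\hat{X}_\phi;\bZ_2)$. One then runs Mayer--Vietoris for the decomposition $\hat{X}_\phi = Y_\phi \cup U(\hat{\Delta}_\infty)$ together with the Gysin sequence for the circle bundle $\partial U(\hat{\Delta}_\infty)\to\hat{\Delta}_\infty$, and a rank count shows that independence survives under the restriction map $H^2(\hat{X}_\phi;\bZ_2)\to H^2(Y_\phi;\bZ_2)$. This is less direct than a dual-cycle argument but avoids having to name the $2$-cycles in $Y_\phi$ itself.
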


\begin{proof}
By considering intersections with the small resolution curves, the $d$ divisors defined by taking one component of each reducible fibre are linearly independent in $H^2(\hat{X}_{\phi};\bZ)$; general properties of small resolutions \cite{STY} further imply that $b_2(\hat{X}_{\phi}) = b_2(X_{\phi}) + d$.   $Y_{\phi}$ is the complement of a smooth divisor $\hat{\Delta}_{\infty} \subset \hat{X}_{\phi}$, by Lemma \ref{Lem:SmoothInfinity}. The complex surface $\hat{\Delta}_{\infty}$ is a ruled surface over $S$, with fibres comprising a chain of 3 rational curves over each point of $M$ and smooth fibres elsewhere. Mayer-Vietoris gives an exact sequence with $\bZ_2$-coefficients
\begin{equation} \label{Eqn-mv}
0 \rightarrow H^2(\hat{X}_{\phi}) \rightarrow H^2(Y_{\phi}) \oplus H^2(\hat{\Delta}_{\infty}) \rightarrow H^2(\partial U(\hat{\Delta}_{\infty})) \rightarrow 0
\end{equation}
with $\partial U(\hat{\Delta}_{\infty})$ the smooth five-manifold which is the circle normal bundle to $\hat{\Delta}_{\infty}\subset \hat{X}_{\phi}$.  The Gysin sequence for the cohomology of this five-manifold shows the map 
\[
H^2(\hat{\Delta}_{\infty}) \rightarrow H^2(\partial U(\hat{\Delta}_{\infty}))
\]
is a surjective map $\bZ_2^{2+2d} \rightarrow \bZ_2^{1+2d}$ with rank one kernel spanned by the Euler class. The group $H^2(\hat{X}_{\phi})$ has rank $2+d$,  and dimension counting shows that $H^2(Y_{\phi})$ has rank $1+d$ and that the map between them in \eqref{Eqn-mv} has full rank. It follows that the components of the reducible fibres in $H^2(\hat{X}_{\phi})$ are linearly independent in the image.
\end{proof}

\begin{Lemma} \label{Lem:MonodromySwaps}
Consider a loop $\gamma$ of quadratic differentials $\{\phi_t\}_{t\in S^1}$ on $S$ with the property that the residue at a given double pole $p \in S$ has winding number $+1$ about $0$. Let $f: \scrX\rightarrow S^1$ denote the corresponding family of relative quadrics, with fibre $X_{\phi_t}$ over $\phi_t$.  The monodromy of $f$ on $H_*(X_{\phi_1})$ exchanges the homology classes of the two components of the reducible singular fibre $\pi^{-1}(p) \subset X_{\phi_1}$.
\end{Lemma}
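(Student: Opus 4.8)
The plan is to reduce to the explicit local model \eqref{Eqn:Model} near the double pole $p$, where everything can be computed by hand, and then check that nothing in the global family obstructs transporting the local computation. First I would observe that the monodromy of $f$ acts on $H_*(X_{\phi_1})$ and fixes everything supported away from a neighbourhood of $\pi^{-1}(p)$: the loop $\gamma$ can be chosen so that only the residue at $p$ varies, all other residues and the zeroes of $\phi_t$ staying fixed, so $f$ is a product family over $S^1$ outside an arbitrarily small disk bundle $\pi^{-1}(D_p)$. Hence the monodromy is concentrated in $\pi^{-1}(D_p)$, and it suffices to understand the local model
\[
\mathcal{X}_{loc} = \left\{ x^2 - \delta y z = \phi(\delta) t^2 \right\} \subset \bP^3_{[x:y:z:t]} \times D_\delta,
\]
where now $\phi(\delta)$ is allowed to wind once around $0$ as the base loop parameter $s$ traverses $S^1$: concretely, take $\phi_s(\delta) = e^{2\pi i s}\cdot c(\delta)$ with $c$ a fixed non-vanishing holomorphic function, so the residue at $\delta = 0$ winds once.

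The key step is the computation of this local monodromy. Over the central fibre $\delta = 0$ the equation is $x^2 = \phi(0) t^2$, a union of two planes $\Pi_+ = \{ x = \sqrt{\phi(0)}\, t\}$ and $\Pi_- = \{ x = -\sqrt{\phi(0)}\, t\}$ meeting along $\{x = t = 0\}$. As $s$ runs around the loop, $\sqrt{\phi_s(0)} = e^{\pi i s}\sqrt{c(0)}$ picks up a sign: at $s=1$ we have $\sqrt{\phi_1(0)} = -\sqrt{\phi_0(0)}$, so the labelling of the two planes is interchanged, i.e. the ambient path $[x : y : z : t] \mapsto [x : y : z : t]$ combined with the continuous choice of branch of $\sqrt{\phi_s}$ carries $\Pi_+$ to $\Pi_-$ and vice versa. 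Translating back through the identification of $\{t=1\}$ with the bundle chart $W \subset \bP(W\oplus\calO)$, the two planes here are exactly the two components $\bC^2_{+,p}$ and $\bC^2_{-,p}$ of the reducible fibre $\pi^{-1}(p)$ of $X_{\phi_1}$ (cf.\ Lemma \ref{Lem:ElemMod2}), so the geometric monodromy diffeomorphism of $\mathcal{X}_{loc}$ exchanges them, and therefore exchanges their homology classes. One should also check that this diffeomorphism extends over the rest of $\pi^{-1}(D_p)$ compatibly — it does, because the family $\mathcal{X}_{loc} \to D_\delta \times S^1$ is, after the substitution absorbing $e^{2\pi i s}$, pulled back from a family over $D_\delta$ alone away from $\delta = 0$, so one builds the extension from the obvious symplectic parallel transport on the generic affine conic fibres $T^*S^2$ and glues.

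The main obstacle I anticipate is book-keeping the square root carefully: one must track the monodromy not of the total space $X_{\phi_1}$ abstractly but of the \emph{pair} $(X_{\phi_1}, \text{marking of the two fibre components})$, and verify that the sign ambiguity $z \mapsto \pm z$ in the natural coordinate at the double pole (Equation \eqref{Eqn:LocalCoord}) — the same ambiguity that underlies the definition of the signed cover $\Quad^\pm(\S,\M)$ — is precisely what gets resolved by the winding of the residue. Concretely, the residue $m$ of $\phi$ at the pole enters the local model through $\phi(\delta)$ evaluated near $\delta = 0$, and the two components $\bC^2_{p,\pm}$ are distinguished by the sign of $c$ on them, i.e.\ by a choice of $\sqrt{m}$; winding $m$ once around $0$ swaps the sheets of this double cover, hence swaps the components. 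Once this is pinned down, the global statement follows immediately since, as noted, the monodromy acts trivially on the part of $H_*(X_{\phi_1})$ coming from outside $\pi^{-1}(D_p)$ and the only ambiguity introduced is the transposition of $[\bC^2_{p,+}]$ and $[\bC^2_{p,-}]$.
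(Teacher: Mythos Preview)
Your proposal is correct and follows essentially the same approach as the paper: reduce to the local model of Equation \eqref{Eqn:Model}, track the family $\phi_\theta(0) = e^{i\theta}$, and observe that the two components $\{x = \pm e^{i\theta/2}t\}$ of the reducible fibre are exchanged as $\theta$ traverses $[0,2\pi]$. Your write-up is more detailed in justifying why the local computation suffices, but the core computation is identical.
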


\begin{proof}
In the local model $\{x^2-\delta yz = \phi(\delta)t^2\}$ of Equation \eqref{Eqn:Model}, consider a family of differentials $\phi_{\theta}$ with $\phi_{\theta}(0) = e^{i\theta}$.  The components of the fibre over $\delta = 0$ of the affine piece $Y_{\phi_{\theta}}$ are given by $\{x = \pm e^{i\theta/2}t\}$, which are exchanged by the monodromy corresponding to varying  $\theta$ in $[0,2\pi]$.\end{proof}

Since the choice of small resolution $\hat{X}_{\phi}$ depends on a choice of component of the reducible fibre along which to blow up, there is no obvious universal family of small resolutions over any such loop $\gamma$ in the space of quadratic differentials. A universal family of small resolutions does exist over the space $\Quad^{\pm}(\S,\M)_0$ of signed complete differentials introduced in the Introduction.

\subsection{Topology}  We consider the algebraic topology of the threefold $Y_{\phi}$.

\begin{Lemma} \label{Lem:LocalTop}
If $\pi: Y_{\phi} \rightarrow S$ is the natural projection and $D_p \subset S$ is a small disk encircling a pole $p\in M = \Pol(\phi)$, then $\pi^{-1}(D_p) \subset Y_{\phi}$ is simply connected, and has reduced homology groups 
\[
H_2(\pi^{-1}(D_p)) \cong \bZ^2; \quad H_3(\pi^{-1}(D_p)) \cong \bZ.
\] 
\end{Lemma}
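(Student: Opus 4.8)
The plan is to reduce to the explicit local model of Equation~\eqref{Eqn:Model} and compute directly. Since $p$ is a pole of order exactly two, $\phi$ has non-zero residue at $p$, so by the discussion around \eqref{Eqn:Model} (with $\det_W = \delta\cdot ab - c^2$ in the trivialisation of $W$ furnished by Lemma~\ref{Lem:ElemMod2}, for a coordinate $\delta$ on $S$ vanishing at $p$) one has, for $D_p$ a sufficiently small disk, a biholomorphism of $\pi^{-1}(D_p)$ with $\{x^2 - \delta yz = \phi(\delta)\}\subset \bC^3_{x,y,z}\times D_p$, where $\phi(\delta)$ is holomorphic and nowhere zero. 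Choosing a branch of $\sqrt{\phi}$ on $D_p$ and substituting $x = \sqrt{\phi(\delta)}\,x'$, $y = \phi(\delta)\,y'$ brings the defining equation to $(x')^2-\delta y'z'=1$. Thus
\[
\pi^{-1}(D_p)\ \cong\ \mathcal{Y}\ :=\ \{\,x^2-\delta yz=1\,\}\ \subset\ \bC^3_{x,y,z}\times D_p,
\]
with $\pi$ corresponding to $(x,y,z,\delta)\mapsto\delta$. Set $Z:=\pi^{-1}(0)=\{x=\pm1,\ \delta=0\}\cong\bC^2\amalg\bC^2$ and $U:=\mathcal{Y}\setminus Z=\mathcal{Y}\cap\{\delta\neq0\}$. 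The map $(x,y,z,\delta)\mapsto(x,\delta y,z,\delta)$ is a biholomorphism from $U$ onto $Q\times D_p^{*}$, where $Q=\{x^2-y'z=1\}\subset\bC^3$ is the smooth affine quadric surface (diffeomorphic to $T^*S^2$, so $Q\simeq S^2$) and $D_p^{*}=D_p\setminus\{p\}\simeq S^1$; in particular $U\simeq S^2\times S^1$.

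The key elementary observation is that $\pi$ is a submersion at every point of $Z$: at $(\pm1,y,z,0)$ one has $d(x^2-\delta yz)=\pm2\,dx-yz\,d\delta$, so $T\mathcal{Y}$ there is $\{\pm2\,dx=yz\,d\delta\}$, on which $d\delta$ is surjective. Hence $Z$ is a smooth complex hypersurface in $\mathcal{Y}$ with normal bundle $\nu_Z\cong\pi^*T_0D_p$, which is trivial; thus $Z$ has a tubular neighbourhood $N\cong Z\times D^2$ with $N\setminus Z\simeq Z\times S^1\simeq S^1\amalg S^1$. Simple connectivity now follows: $Z$ has real codimension two, so $\pi_1(U)\twoheadrightarrow\pi_1(\mathcal{Y})$; and $\pi_1(U)=\pi_1(Q\times D_p^{*})=\bZ$, generated by the loop $t\mapsto(1,0,0,\varepsilon e^{2\pi it})$, which bounds the holomorphic disk $\{(1,0,0,s):|s|\le\varepsilon\}\subset\mathcal{Y}$ (contained in $\mathcal{Y}$ since $1-s\cdot0\cdot0=1$). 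Hence $\pi_1(\pi^{-1}(D_p))=1$.

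For the homology I would run the Mayer--Vietoris sequence for $\mathcal{Y}=U\cup N$, with $U\cap N\simeq S^1\amalg S^1$, using just two facts about the maps. First, $H_0(U\cap N)=\bZ^2\to H_0(U)\oplus H_0(N)=\bZ\oplus\bZ^2$ is injective (each of the two circles maps to its connected-component class on each side, giving the independent vectors $(1,e_i)$). Second, $H_1(U\cap N)=\bZ^2\to H_1(U)\oplus H_1(N)=\bZ\oplus0$ is surjective, since a ``normal circle'' $\{\mathrm{pt}\}\times S^1\subset N\setminus Z$ corresponds, under $U\cong Q\times D_p^{*}$, to a loop winding once around $p$ in the $D_p^{*}$-factor, a generator of $H_1(U)\cong H_1(D_p^{*})=\bZ$. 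Granting these, the sequence gives $H_1(\mathcal{Y})=0$ (surjectivity of the first map, together with injectivity of the second which forces the relevant connecting map to vanish); a short exact sequence $0\to H_2(U)\oplus H_2(N)\to H_2(\mathcal{Y})\to\ker\!\big(H_1(U\cap N)\to H_1(U)\big)\to0$, i.e.\ $0\to\bZ\to H_2(\mathcal{Y})\to\bZ\to0$, so $H_2(\mathcal{Y})\cong\bZ^2$; and $H_3(\mathcal{Y})\cong H_3(U)=H_3(S^2\times S^1)=\bZ$, with all higher reduced groups vanishing. This is the assertion of the Lemma.

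The one step requiring genuine care is the claim that $Z=\pi^{-1}(0)$ is a smooth submanifold with trivial normal bundle, equivalently that $\pi$ is submersive along $Z$; this is not obvious a priori, because $\pi$ is manifestly not a fibre bundle near $Z$ --- the generic fibre $Q\simeq S^2$ is not diffeomorphic to the central fibre $\bC^2\amalg\bC^2$ (a copy of $S^2$ inside $\pi^{-1}(\delta)$ escapes to infinity as $\delta\to0$), reflecting the non-properness of $\pi$. Once the tangent-space computation above is in hand, the remaining arguments are formal.
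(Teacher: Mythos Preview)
Your argument is correct and is genuinely different from the paper's proof. The paper instead constructs an explicit smooth compactification: it describes $\pi^{-1}(D_p)$ (up to homotopy) as the complement $Z\setminus W$, where $Z$ is the blow-up of $(\bP^1)^3$ at a point and $W$ is a certain divisor built from the proper transforms of a fibre and a family of conics; the computation is then reduced via Poincar\'e--Lefschetz duality to $H^{6-*}(Z,W)$, which is evaluated using the cohomology of these projective pieces.

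Your approach stays entirely inside the affine local model and uses the decomposition $\mathcal{Y}=U\cup N$ with $U\cong Q\times D_p^{*}$ and $N$ a tubular neighbourhood of the central fibre. The observation that $\pi$ is submersive along $Z=\pi^{-1}(0)$ (despite not being a fibre bundle, by non-properness) is exactly the right point to isolate, and it makes the Mayer--Vietoris computation completely transparent. This route is more elementary and self-contained; the paper's compactification argument, by contrast, ties the local topology directly to the toric degeneration picture of Figure~\ref{Fig:ToricDegen} and to the small resolution $\hat{X}_\phi$ that is used repeatedly elsewhere in Section~\ref{Sec:3folds}. One minor cosmetic slip: you introduce $z'$ without defining it --- presumably $z'=z$ and the change of variables is only in $x$ and $y$.
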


\begin{proof}
Via the right side of Figure \ref{Fig:ToricDegen}, a neighbourhood of the reducible fibre at a point of $M$ is described topologically as follows.  Let $u: \bP^1 \times \bP^1 \times \bP^1 \rightarrow \bP^1$ denote the third projection.  Let $H \subset (\bP^1)^3$ be a divisor which is a smooth conic in every fibre of $u$ over $\bP^1 \backslash \{0\}$, but meets the $0$-fibre of $u$ in a union of two lines.  Let   $Z$ denote the blow-up of $\bP^1 \times \bP^1 \times \bP^1$ at the unique intersection point of those two lines, and let $W \subset Z$ denote the divisor which is the proper transform of $u^{-1}(\infty) \cup H$.    Then $\pi^{-1}(D_p) \simeq Z \backslash W$, and hence $H_*(\pi^{-1}(D_p)) \cong H^{6-*}_{ct}(Z\backslash W) = H^{6-*} (Z,W)$. The computation is then straightforward.
 \end{proof}
 
The Lemma implies that $Y_{\phi}$ contains homotopically non-trivial 2-spheres which are not contained in a fibre of projection to $S$,  which is one source of delicacy in the subsequent construction of its Fukaya category.
 
\begin{Lemma} \label{Lem:H3}
$H_3(Y_{\phi}; \bZ)$ has rank $6g-6+3d$; the intersection form has kernel of rank $d$. 
\end{Lemma}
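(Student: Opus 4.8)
The plan is to compute $H_3(Y_\phi;\bZ)$ via a Mayer–Vietoris argument relative to the fibration $\pi\colon Y_\phi\to S$, building up the total space from the local pieces $\pi^{-1}(D_p)$ over disks around poles, treated in Lemma \ref{Lem:LocalTop}, and the restriction of $\pi$ over $S\setminus\bigcup_p D_p$, which is a Lefschetz fibration over a surface with boundary whose generic fibre is $T^*S^2\simeq$ the $A_1$ Milnor fibre. First I would pin down the base: set $S_0 = S\setminus\bigcup_{p\in M} D_p$, so $S_0$ is a genus-$g$ surface with $d$ boundary circles; the fibration $\pi\colon Y_\phi|_{S_0}\to S_0$ has exactly $|\Zer_\phi| = 4g-4+2d$ nodal fibres, one over each simple zero of $\phi$. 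Over $S_0$ the relevant topology is governed by the vanishing cycles: the fibre $T^*S^2$ has $H_2\cong\bZ$ generated by the zero-section, and each nodal degeneration collapses a Lagrangian 2-sphere; the Leray/Wang-type analysis (as in the standard picture for Lefschetz fibrations over surfaces, cf.\ the analogous computation in \cite{DDDHP}) gives $H_3(Y_\phi|_{S_0})$ built from the first homology of $S_0$ tensored with the fibre $H_2$, corrected by the vanishing-cycle relations. Concretely $b_1(S_0) = 2g + d - 1$, and I expect the contribution $(2g+d-1) + (\text{extra relations/generators from the }4g-4+2d\text{ critical points})$ to combine to give the first summand of the rank.

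Next I would glue in the local pieces. By Lemma \ref{Lem:LocalTop} each $\pi^{-1}(D_p)$ is simply connected with $H_2\cong\bZ^2$ and $H_3\cong\bZ$, and the boundary $\pi^{-1}(\partial D_p)$ is a circle bundle over the generic fibre $T^*S^2$ — more precisely the mapping torus of the (trivial, since $\phi$ has a genuine double pole and the monodromy around a double pole is a Dehn twist squared which acts trivially on homology… I should be careful here) monodromy. The Mayer–Vietoris sequence
\[
\cdots \to H_3\bigl(\textstyle\coprod_p \pi^{-1}(\partial D_p)\bigr)\to H_3(Y_\phi|_{S_0})\oplus\textstyle\bigoplus_p H_3(\pi^{-1}(D_p))\to H_3(Y_\phi)\to H_2\bigl(\textstyle\coprod_p \pi^{-1}(\partial D_p)\bigr)\to\cdots
\]
then computes $H_3(Y_\phi)$. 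Counting ranks: each boundary five-manifold $\pi^{-1}(\partial D_p)\cong S^1\times T^*S^2$ (up to the monodromy twist) has $H_3\cong\bZ$, $H_2\cong\bZ$, so the $d$ copies contribute, and bookkeeping the maps should yield total rank $(6g-6+2d) + d = 6g-6+3d$. An alternative, possibly cleaner, route is to use $H_3(Y_\phi)\cong H_c^3(Y_\phi)^\vee$ and the long exact sequence of the pair $(\hat X_\phi, \hat\Delta_\infty)$ from Lemma \ref{Lem:SmoothInfinity}, together with the small-resolution Betti number relation $b_2(\hat X_\phi) = b_2(X_\phi)+d$ already invoked in the proof of Lemma \ref{Lem:DivisorsIndpt}; the class count on the compact side ($X_\phi$ fibred in quadrics over $S$, so its cohomology is largely that of a $\bP^1\times\bP^1$-bundle plus Lefschetz vanishing-cycle corrections) plus the ruled-surface cohomology of $\hat\Delta_\infty$ (a conic bundle over $S$ with $d$ degenerate fibres, $b_2 = 2+2d$, $b_3 = 2g+\ldots$) feeds the same arithmetic. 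I would present whichever of the two gives the vanishing-cycle count most transparently, probably the Lefschetz-fibration-over-$S_0$ version since it makes the intersection form visible.

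For the statement about the intersection form, the point is that the radical has rank exactly $d$, matching the $d$ boundary poles. The cleanest argument: the intersection pairing on $H_3(Y_\phi)$ is the restriction of the Poincaré pairing on the closed-up/compactly-supported theory, and a class is in the radical iff it dies under $H_3(Y_\phi)\to H_3(\hat X_\phi)$ paired against $H_3^c$, i.e.\ iff it comes from the boundary at infinity $\hat\Delta_\infty$. The boundary contribution to $H_3$ is controlled by the Gysin sequence for the circle bundle $\partial U(\hat\Delta_\infty)\to\hat\Delta_\infty$ (exactly as used in Lemma \ref{Lem:DivisorsIndpt}), and the image of $H_3(\partial U(\hat\Delta_\infty))$ in $H_3(Y_\phi)$ — equivalently the "vanishing at infinity" directions, one per pole, detected by the local $H_3(\pi^{-1}(D_p))\cong\bZ$ generated by the matching $S^1\times S^2$ over a loop around the pole — is isotropic and of rank $d$. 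Dually, those $d$ classes are precisely the ones that pair to zero with everything because the corresponding cycles can be pushed to infinity. I would make this precise by exhibiting the $d$ classes explicitly (the $S^1\times S^2$'s of Lemma \ref{Lem:LocalTop}, or residual "$S^1\times$ zero-section" classes near the poles) and computing that their pairing with any closed $H_3$ class vanishes for degree/localization reasons, then a half-lives-half-dies count — $\dim H_3 - \dim(\text{radical})$ is even, $= 6g-6+2d$, consistent with a nondegenerate pairing on the quotient — confirms the radical is exactly rank $d$, not smaller.

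\textbf{Main obstacle.} The delicate part is the vanishing-cycle bookkeeping over $S_0$: precisely accounting for how the $4g-4+2d$ Lefschetz critical points interact with $H_1(S_0)$ and the fibre class, and verifying that no extra relations appear (i.e.\ that the vanishing cycles don't over-constrain $H_3$), so that the rank comes out exactly $6g-6$ before adding the $3d$ boundary/pole contributions. Equivalently, on the compact side, correctly computing $b_3(X_\phi)$ and $b_3(\hat\Delta_\infty)$ and checking the connecting maps in the pair sequence have the expected ranks. I expect this to require either a careful spectral-sequence computation or an appeal to the general structure theory of quadric-bundle threefolds over curves (as in \cite{DDDHP}); the rest — gluing the local pieces, identifying the radical — is then relatively routine linear algebra with Mayer–Vietoris.
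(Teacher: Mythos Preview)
Your framework (Mayer--Vietoris plus Leray--Serre over the punctured base $S_0$) is the same one the paper uses, but you miss the key organising idea and consequently your bookkeeping goes astray. The paper first observes, via Mayer--Vietoris and Lemma~\ref{Lem:LocalTop}, that $H_3(Y_\phi)\cong H_3(Y_\phi^{op})$ where $Y_\phi^{op}=\pi^{-1}(S_0)$: the local pieces over the $D_p$ contribute \emph{nothing} new to $H_3$. So your expected decomposition ``$(6g-6+2d)+d$'' is wrong; the full rank $6g-6+3d$ already lives over $S_0$.

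The crucial step you are missing is the \emph{spectral cover}. The monodromy of the local system $R^2\pi_*\bZ$ for the affine quadric fibration over $S\setminus M$ is the same as the monodromy in $R^0$ of the associated double cover $C\to S$ branched at $\Zer_\phi$. This gives a clean isomorphism
\[
H_3(Y_\phi^{op};\bZ)\ \cong\ H_1(S\setminus M;R^2\pi_*\bZ)\ \cong\ H_1(C^0;\bZ)^-,
\]
where $C^0$ is the preimage of $S\setminus M$ and $(-)^-$ denotes the anti-invariant part for the covering involution. Riemann--Hurwitz then gives the rank immediately (this is \cite[Lemma~2.2]{BrSm}), and the intersection form on $H_3(Y_\phi)$ is identified with the intersection form on $H_1(C^0)^-$, whose radical is computed in \cite[Section~2]{BrSm} to have rank $d$. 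This single identification dissolves exactly what you flag as your ``main obstacle'': there is no need for vanishing-cycle bookkeeping, no need to analyse $b_3(\hat X_\phi)$ or $b_3(\hat\Delta_\infty)$, and no half-lives-half-dies argument. Your alternative route through the compactification and the Gysin sequence might be made to work, but it is substantially more laborious and the paper does not take it.
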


\begin{proof}
Let $Y_{\phi}^{op}$ denote $\pi^{-1}(S\backslash \cup_p D_p)$, with $D_p$ a small disk enclosing $p\in M$ and no other critical point of $\phi$. A Mayer-Vietoris argument and (the proof of) Lemma \ref{Lem:LocalTop} implies  $H_3(Y_{\phi}) \cong H_3(Y^{op}_{\phi})$.  We now apply the Leray-Serre spectral sequence to the projection $Y^{op}_{\phi} \rightarrow S\backslash M$. The monodromy in $R^2\pi_* \bZ$  of a projective fibration of quadric surfaces can be canonically identified with the monodromy in $R^0 \pi_* \bZ$ of the associated double covering of Riemann surfaces, see e.g. \cite{Reid}.  Let $C\rightarrow S$ be the double cover branched at the zeroes of $\phi$, and $C^0 \subset C$ the preimage in $C$ of $S\backslash M$. We next identify the 2-dimensional homology of an affine quadric with the anti-invariant 0-dimensional homology of the corresponding pair of points. Then
\[
H_3(Y^{op}_{\phi};\bZ) \ \cong \ H_1(S\backslash M; R^2 \pi_* \bZ) \ \cong \ H_1(C^0; \bZ)^-.
\]
The last group was computed by Riemann-Hurwitz in \cite[Lemma 2.2]{BrSm}, and has rank $6g-6+3d$.  Matching paths in $S$ between zeroes of $\phi$ define circles $\gamma \subset C^0$ and 3-spheres $L_{\gamma} \subset Y_{\phi}$, cf. Section \ref{Sec:LagSphere} below. By considering a basis of either group associated to matching paths of a cellulation of $S$, one sees that the intersection forms $\langle \bullet, \bullet \rangle_{C^0}$ and $\langle \bullet, \bullet \rangle_{Y_{\phi}}$ agree, which means that the kernel of the intersection form can be computed on $C^0$.  The last statement then follows from  \cite[Section 2]{BrSm}.
\end{proof}

\begin{Remark} \label{Rem:H3}
The space of stability conditions  $\Stab(\D\scrC)$ on any triangulated category $\scrC$  is locally homeomorphic to $\Hom_{\bZ}(K^0(\scrC),\bC)$. The $K^0$-group of the \CY category $\D(Q,W)$ defined by a quiver with potential is freely generated by the vertices of the quiver, and for the quivers $Q(T)$ arising from ideal triangulations of $(\S,\M)$, the number of vertices is $6g-6+3d$. On the other hand, for any symplectic manifold $Y^{2n}$ for which the Fukaya category is well-defined and $b\in H^2(Y;\bZ_2)$, there is always a natural homomorphism
\[
K^0(\scrF(Y;b)) \longrightarrow H_n(Y;\bZ)
\]
which associates to a Lagrangian submanifold its homology class (note we have not passed to split-closures).  From  Lemma \ref{Lem:H3} and Theorem \ref{Thm:Main2} one can show that this map is an isomorphism for $Y_{\phi}$ if one restricts to the $K^0$-group of the subcategory $\scrA(T;b_0) \subset \scrF(Y_{\phi};b_0)$ introduced after Theorem \ref{Thm:Main2}.  It is interesting to compare this to Abouzaid's computation \cite{Abouzaid:Ktheory}  for $K^0(\scrF(\Sigma_g))$, with $\Sigma_g$ a closed surface of genus $g\geq 2$.  
\end{Remark}

\subsection{Symplectic forms} \label{Sec:SymplForms}
The divisor $\Delta_{\infty} \subset X_{\phi}$ is relatively ample over $S$, and its pullback $\hat{\Delta}_{\infty} \subset \hat{X}_{\phi}$ is relatively nef, and relatively ample over $S\backslash M$.  Fix a Hermitian metric $\| \cdot \|$ in $\calO(\hat{\Delta}_{\infty})$ for which the curvature form $iF_{\nabla}$ is a semipositive $(1,1)$-form. Denote by $t$ the section of $\calO_{\hat{X}_{\phi}}(\hat{\Delta}_{\infty})$ defining the divisor at infinity. The 2-form
\begin{equation} \label{Eqn:VerticalForm}
\omega_v = -dd^c(log \|t\|^2)
\end{equation}
is weakly plurisubharmonic and vertically non-degenerate over $S\backslash M$; for each $x\in S\backslash (M \cup \Zer_{\phi})$ the fibre $(\pi^{-1}(x), \omega_v)$ is a finite type Stein manifold, symplectomorphic to a Stein subdomain of the cotangent bundle $(T^*S^2,dp\wedge dq)$.  In particular, the fibres of $Y_{\phi}$ over $S-M$ have contact type at infinity.

Fix an area form $\omega_S$ on $S$ of total area $d=|M|$.  The class $[\pi^{-1}(pt)] + \lambda \Delta_{\infty}$ lies in the interior of the ample cone of $X_{\phi}$ for any $\lambda > 0$, and the form 
\[
\lambda \omega_v +  \pi^* \omega_S
\] 
is symplectic away from the small resolution curves  $\{E_i\} \subset \hat{X}_{\phi}$.   
Recall that $\chi: \hat{X}_{\phi} \rightarrow X_{\phi}$ is the blow-up of a (not necessarily connected) Weil divisor $H_M \subset X_{\phi}$ which passes through all the ordinary double points. Flopping the small resolution curves appropriately, we can ensure that the pullback $H \subset \hat{X}_{\phi}$ of $H_M$ meets every $E_i$ strictly positively.  Direct consideration of the blow-up, cf. the proof of \cite[Theorem 2.9]{STY},  implies that 
\begin{equation} \label{Eqn:SymplecticForm}
\omega_{res} = \lambda \omega_v +  \pi^*\omega_S + \delta \sigma_H
\end{equation}
is a  K\"ahler form on $\hat{X}_{\phi}$, 
where $\sigma_H$ is a 2-form Poincar\'e dual to  $H_M$, pointwise positive on each of the $E_i$, and $\delta >0$ is sufficiently small. 

\begin{Lemma} \label{Lem:Parallel}
Let $\gamma: [0,1] \rightarrow S-(M \cup \Zer_{\phi})$ be a $C^1$-smooth embedded path. For any $\lambda > 0$, there is a well-defined symplectic  parallel transport map for $\pi: Y_{\phi} \rightarrow S$ over $\gamma$, which induces an exact symplectomorphism of the fibres $\pi^{-1}(\gamma(i))$, $i=0,1$ over the end-points.
\end{Lemma}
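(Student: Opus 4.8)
\emph{Plan.} The transport map will be the time‑one flow of the symplectic connection on $\pi$; the whole content of the lemma is that this flow exists, i.e. that the horizontal lift of $\dot\gamma$ does not escape to infinity inside a fibre. Over $S-(M\cup\Zer_\phi)$ the fibres of $\pi$ are symplectic submanifolds: on a regular fibre $F=\pi^{-1}(x)$ the form $\pi^*\omega_S$ restricts to $0$ and $\omega_v|_F$ is the K\"ahler form of the Stein fibre, so $\omega|_F$ is non‑degenerate. Hence $TY_\phi=\ker d\pi\oplus H$ with $H:=(\ker d\pi)^{\perp_\omega}$ is an Ehresmann connection there. I note two simplifications. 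First, for $v\in T_xS$ and $\xi\in\ker d\pi$ one has $\pi^*\omega_S(v^h,\xi)=\omega_S(v,0)=0$, so $\omega(v^h,\xi)=\lambda\,\omega_v(v^h,\xi)$ and the $\omega$‑horizontal lift agrees with the $\omega_v$‑horizontal lift for every $\lambda>0$; second, $\sigma_H$ may be chosen supported over a neighbourhood of $M$, so over $\gamma$ we simply have $\omega=\lambda\omega_v+\pi^*\omega_S$ and the relevant connection is that of $(Y_\phi,\omega_v)\to S$. Write $X_s$ for the $\omega_v$‑horizontal lift of $\dot\gamma(s)$ along $\pi^{-1}(\gamma(s))$; the sought transport is the flow of the time‑dependent vector field $X_s$, which exists as soon as this flow is complete over the compact set $\pi^{-1}(\gamma([0,1]))$, the only possible failure being escape to infinity within a fibre.

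\emph{The a priori bound near infinity.} This is the real point. Put $h=-\log\|t\|^2$, a proper exhaustion of $Y_\phi$ (it tends to $+\infty$ along $\hat\Delta_\infty$) with $\omega_v=dd^ch$ near infinity; by Section \ref{Sec:SymplForms} the hypersurfaces $\{h=c\}$ are, for $c\gg 0$ over $S-M$, fibrewise of contact type. Using the freedom in the Hermitian metric on $\calO(\hat\Delta_\infty)$ (constrained in Section \ref{Sec:SymplForms} only by semipositivity of curvature) and the explicit normal‑crossings local models for $\hat X_\phi$ near $\hat\Delta_\infty$ over $S-M$ set up in Sections \ref{Sec:TotalSpaces}--\ref{Sec:SymplForms}, I would arrange that over a neighbourhood of the compact arc $\gamma([0,1])$ and for $h\geq h_0$ the $\omega_v$‑horizontal distribution is tangent to the level sets of $h$, i.e. $dh(X_s)\equiv 0$ in $\{h\geq h_0\}$ — equivalently, the symplectic connection respects the convexity of the fibres at infinity. (A weaker bound $dh(X_s)\leq C$ in $\{h\geq h_0\}$, with $C$ depending only on $\gamma$ and the metric, already suffices.) Granting this, a flow line starting in $\{h\leq h_0\}$ stays there, while one entering $\{h> h_0\}$ has $h$ non‑increasing (resp. growing at most linearly in $s$); in every case it remains in $\{h\leq h_0\}\cap\pi^{-1}(\gamma([0,1]))$ (resp. $\{h\leq h_0+C\}\cap\pi^{-1}(\gamma([0,1]))$), which is compact since $h$ is proper. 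Hence the flow is defined for all $s\in[0,1]$ and yields diffeomorphisms $\mathrm{PT}_\gamma\colon\pi^{-1}(\gamma(0))\to\pi^{-1}(\gamma(1))$, independent of the parametrisation of $\gamma$ by the usual reparametrisation argument.

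\emph{Exactness.} Let $j_s\colon F_s\hookrightarrow Y_\phi$ be the fibre inclusions and $\Phi_s$ the restriction of the flow to the fibre, so $j_s\circ\Phi_s=\psi_s\circ j_0$ with $\psi_s$ the flow on $Y_\phi$. Since $d\psi_s$ sends vertical vectors to vertical vectors (as $\pi\circ\psi_s$ is constant on each fibre) and $\iota_{X_s}\omega_v$ annihilates vertical vectors, Cartan's formula gives $\tfrac{d}{ds}\Phi_s^*(\omega_v|_{F_s})=d\big(j_0^*\psi_s^*\iota_{X_s}\omega_v\big)=0$, so $\mathrm{PT}_\gamma^*(\omega_v|_{F_1})=\omega_v|_{F_0}$: a symplectomorphism. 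Running the same computation with a primitive $\theta=d^ch$ of $\omega_v$ yields $\Phi_1^*(\theta|_{F_1})-\theta|_{F_0}=d\big(\int_0^1 j_0^*\psi_s^*\iota_{X_s}\theta\,ds\big)$, an explicit exact primitive; by the tangency of Step 2 the flow preserves the conical structure near infinity, so this primitive is supported away from the end and $\mathrm{PT}_\gamma$ is an exact symplectomorphism in the Liouville sense. The $b$‑fold composition along a cellulation in the applications then behaves functorially as usual.

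\emph{Main obstacle.} Everything except Step 2 is the standard fibered‑symplectomorphism argument; the crux is verifying that the symplectic connection of $(Y_\phi,\omega_v)\to S$ is compatible with the convexity of the fibres at infinity over $S-M$, which is where the specific geometry of $\hat X_\phi$ near $\hat\Delta_\infty$ and the K\"ahler data of Section \ref{Sec:SymplForms} must be brought to bear, and which is precisely why $\gamma$ is required to avoid $M$ (over $M$ the fibre structure, and the geometry of $\hat\Delta_\infty$ as a chain of three rational curves, degenerates).
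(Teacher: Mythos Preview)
Your overall strategy is the paper's: reduce to the $\omega_v$-connection (since $\sigma_H$ can be taken supported near $\pi^{-1}(M)$ and $\pi^*\omega_S$ does not affect horizontality), then control the derivative of the exhaustion $h=-\log\|t\|^2$ along the horizontal lift to rule out escape to infinity in a fibre. Your exactness paragraph is actually more thorough than the paper's.

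The only substantive gap is Step~2, which you flag yourself: you never prove either $dh(X_s)=0$ or $|dh(X_s)|\leq C$; ``I would arrange'' and ``Granting this'' are placeholders. The paper does not attempt to force tangency by a clever choice of Hermitian metric. Instead it computes directly: the horizontal lift with respect to the vertical K\"ahler metric is $\partial_z^\sharp = \nabla\pi/\|\nabla\pi\|^2$, and in local holomorphic coordinates near a point of $\Delta_\infty$ over a disk $D\subset S-M$ one may take $\Delta_\infty = \{x_1=0\}$ and $\pi(\mathbf{x}) = x_n$. Writing $\|\cdot\| = e^\sigma|\cdot|$ so that $h = -\log|t|^2 - \sigma$, one estimates
\[
|\partial_z^\sharp\cdot h| \ \leq \ \frac{|\langle\nabla\pi,\nabla\sigma\rangle|}{\|\nabla\pi\|^2} \ + \ \frac{2|t|\,|\langle\nabla t,\nabla\pi\rangle|}{\|\nabla\pi\|^2\,|t|^2},
\]
which is bounded over the compact arc since $\pi$ is a submersion there. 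This is exactly your ``weaker bound'', and once you have it the rest of your argument goes through unchanged. You should replace the appeal to an unspecified metric by this estimate.
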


\begin{proof}
Since by hypothesis $\gamma$ avoids $M$, and the perturbing form $\sigma_H$ can be chosen to be supported near the preimage of $M$, it suffices to work with the form $\lambda \omega_v + \pi^*\omega_S$, which in turn defines the same symplectic connection as $\omega_v$.

Let $D\subset S-M$ be a small disk. Local parallel transport maps for $X_{\phi}|_D \rightarrow D$ are clearly well-defined since the map is proper, but it is not obvious that these preserve the divisor at infinity and hence restrict to give maps on $Y_{\phi}$. One can appeal to a relative version of Moser's theorem to deform the parallel transport vector fields so that they preserve the divisor at infinity, or one can estimate the horizontal vector fields on the open part directly.   With respect to the vertical K\"ahler metric on $Y_{\phi}$ induced by $\|\cdot\|$, the horizontal lift of the vector $\partial_z \in TD$ is
\begin{equation} \label{Eqn:HorizLift}
\partial_z^{\sharp} \ = \ \frac{\nabla \pi}{\| \nabla \pi\|^2}
\end{equation}
since 
\[
D\pi (\partial_z^{\sharp}) = \frac{D\pi(\nabla \pi)}{\| \nabla \pi \|^2}  = 1 \qquad \textrm{and} \ \langle \partial_z^{\sharp}, v\rangle = \frac{D\pi(v)}{\| \nabla \pi \|^2} = 0 \quad \textrm{for} \ v\in T^{vt}(X_{\phi})|_D = \ker(d\pi).
\]
Over $D$, the divisor $\Delta_{\infty}$ is smooth, irreducible and of multiplicity $1$. Choose local co-ordinates $\bf{x}$ near a point $0\in\Delta_{\infty}$ with
\[
\Delta_{\infty} = t^{-1}(0) = \{x_1 = 0\} ; \ \pi(\textbf{x}) = x_n
\]
and write $\|\cdot\| = e^{\sigma}|\cdot|$ as a multiple of the standard metric, for some locally bounded positive function $\sigma$. Then $\omega_{res} = dd^c h$ for a K\"ahler potential
\[
h = -log\|t\|^2 = -log|t|^2-\sigma
\]  
and
\[
|\partial_z^{\sharp}\cdot h| = \frac{|\langle \nabla \pi, \nabla \sigma \rangle|}{\|\nabla \pi\|^2} + \frac{2|t|\, |\langle \nabla t, \nabla \pi \rangle|}{\|\nabla \pi\|^2 \, |t|^2} \ \leq \ \frac{const.}{|\pi|}
\]
which ensures integrability of the horizontal vector field on $Y_{\phi}$ itself.
\end{proof}

The parallel transport maps are not compactly supported, but the image under parallel transport along $\gamma$ of any closed exact Lagrangian submanifold of $\pi^{-1}(\gamma(0))$ is well-defined up to compactly supported Hamiltonian isotopy inside $\pi^{-1}(\gamma(1))$.  One can slightly generalise the story to allow parallel transport along \emph{vanishing} paths $\gamma_0$ which end at a zero of $\phi$, i.e. critical point of the Lefschetz fibration $\pi|_{\pi^{-1}(S-M)}$. In particular, there are well-defined Lefschetz thimbles associated to such paths, in the usual way; see \cite{Seidel:LES} for details.

\subsection{Universal families}\label{Sec:Families}

Recall from the Introduction the finite $2^d$-fold cover 
\begin{equation} \label{Eqn:SignedQuad}
\Quad^{\pm}(\S,\M)_0 \longrightarrow \Quad(\S,\M)_0
\end{equation}
of signed quadratic differentials. Geometrically on $S$, one interprets the sign at $p$ as a choice of residue $\pm\int_{\beta_p} \sqrt{\phi}$, where $\beta_p$ is a small loop encircling $p$ on $S$. The unbranched cover \eqref{Eqn:SignedQuad} extends to $\Quad(\S,\M)$ as a branched cover, with isotropy group $\Z_2^s$ over differentials with $s$ simple poles. 

Both $\Quad(\S,\M)_0$ and its finite cover are complex analytic orbifolds, excluding a handful of exceptional cases where there is a non-trivial generic isotropy group; when $g(\S)>0$ this only occurs if $g(\S)=1$ and $|\M|=1$, when differentials have generic $\bZ_2$-stabiliser. One can also consider \emph{framed} quadratic differentials $\Quad^{\Gamma}(\S,\M)_0$ in which one fixes a framing of the group $H_3(Y_{\phi})$.  A framed quadratic differential determines a signed differential, see \cite{BrSm}, and $\Quad^{\Gamma}(\S,\M)_0$ is smooth.   We will view the choice of sign at a double pole which enters into a signed differential as a choice of component $\bC^2_{+,x} \subset \pi^{-1}(x) \subset Y_{\phi}$ of the reducible fibre over $x$ (making this association canonical will not be required in the sequel).  There is then a universal smooth family
\begin{equation} \label{Eqn:UnivFamily}
\widehat{\scrX} \rightarrow \Quad^{\Gamma}(\S,\M)_0
\end{equation}
of small resolutions over framed quadratic differentials.

\begin{Lemma}\label{Lem:WellDefined}
For any fixed $\lambda > 0$ and $\delta > 0$ sufficiently small, the symplectic six-manifold $(Y,\omega)$ underlying $(Y_{\phi}, \omega_{res}^{\lambda, \delta})$ depends up to symplectic diffeomorphism only on the underlying smooth data $(\S,\M)$.\end{Lemma}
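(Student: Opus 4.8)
The plan is to exhibit all the threefolds $(Y_{\phi},\omega_{res}^{\lambda,\delta})$, for fixed $\lambda$ and $\delta$, as fibres of one smooth family over a connected parameter space and then to run a parametrised Moser argument, arranged so as to respect the divisor at infinity of the fibrewise compactification and hence to descend to the quasi-projective threefolds. So first I would enlarge the parameter space to record \emph{all} the choices made in Section \ref{Sec:3folds}: a complete differential $(S,\phi)$ with simple zeroes, a rank two bundle $V$ with $\eta\colon\det V\cong K_S(M)$, a line $\ell_p\subset V_p$ at each pole $p$, and a choice of component $\bC^2_{+,p}\subset\pi^{-1}(p)$ for the small resolution. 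Call the resulting space $\mathcal B$. It is connected: it maps onto the (connected) moduli space of curves of the given topological type with $|\M|$ marked points, with fibre built from the affine space of holomorphic structures on $V$, the product $\prod_p\bP(V_p)$ of choices of $\ell_p$ and the complement of the discriminant hypersurface in $H^0(K_S(M)^{\otimes 2})$ (the complement of a proper analytic subset of a complex vector space, hence connected); and the choices of $\bC^2_{+,p}$ form a $2^{|\M|}$-fold covering of the rest which is itself connected, since by Lemma \ref{Lem:MonodromySwaps} a loop winding the residue at one pole once around $0$ acts on the sheets by transposing the two components over that pole, and such loops act transitively.

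Next, over any smooth path $\mathbf b_t$, $t\in[0,1]$, in $\mathcal B$, the construction of Section \ref{Sec:3folds} yields a smooth family of compact K\"ahler threefolds $(\hat X_{\mathbf b_t},\hat\omega_t)$, where $\hat\omega_t=\omega_{res}^{\lambda,\delta}$, carrying smooth families of divisors $\hat\Delta_\infty$ and $H_M$. Since the zeroes of $\phi$ remain simple and distinct and the poles remain fixed along the path, the topological type of the fibration $\hat X_{\mathbf b_t}\to S_t$ is constant, so by Ehresmann's theorem the family is smoothly trivial; I would pick the trivialisation $\Psi_t\colon\hat X_{\mathbf b_0}\to\hat X_{\mathbf b_t}$ to carry the sub-bundles $\hat\Delta_\infty$ and $H_M$ to their fibres and to cover a trivialisation of the family of base curves. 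Put $\eta_t=\Psi_t^{*}\hat\omega_t$, a smooth family of K\"ahler forms on $\hat X_{\mathbf b_0}$. By \eqref{Eqn:SymplecticForm} the class $[\hat\omega_t]$ is a fixed combination of $\mathrm{PD}[\hat\Delta_\infty]$, $\mathrm{PD}[H_M]$ and the fibre class $[\pi^{-1}(\mathrm{pt})]$ — and on $Y_\phi=\hat X\setminus\hat\Delta_\infty$ the vertical term $\omega_v=-dd^c\log\|t\|^2$ is even exact, so there $[\eta_t]=|\M|\,[\pi^{-1}(\mathrm{pt})]+\delta\sum_p\mathrm{PD}[\bC^2_{+,p}]$ — and all of these are flat over the path and carried by $\Psi_t^{*}$ to their $t=0$ values. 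Hence $[\eta_t]$ is independent of $t$, both on $\hat X_{\mathbf b_0}$ and on $\hat\Delta_{\infty,0}$.

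Finally I would run a relative Moser argument (compare the one invoked in the proof of Lemma \ref{Lem:Parallel} to make parallel transport preserve the divisor at infinity). Since the $\eta_t$ are symplectic forms on the compact fourfold $\hat\Delta_{\infty,0}$ lying in a constant class, a first Moser step, together with the symplectic neighbourhood theorem and the constancy of the normal-bundle data along the path, produces an isotopy of $\hat X_{\mathbf b_0}$ fixing $\hat\Delta_{\infty,0}$ after which $\eta_t$ is independent of $t$ on a fixed tubular neighbourhood of $\hat\Delta_{\infty,0}$. The class $[\eta_t]$ being then constant relative to that neighbourhood, primitives of $\tfrac{d}{dt}\eta_t$ can be chosen supported in $Y_\phi$, and solving the Moser ODE on $\hat X_{\mathbf b_0}$ gives a family of diffeomorphisms $\hat\rho_t$ fixing $\hat\Delta_{\infty,0}$ with $\hat\rho_t^{*}\eta_t=\eta_0$. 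Then $\Psi_1\circ\hat\rho_1$ restricts to a symplectomorphism $(Y_{\phi_0},\omega_{res}^{\lambda,\delta})\cong(Y_{\phi_1},\omega_{res}^{\lambda,\delta})$, and connectedness of $\mathcal B$ completes the proof; this simultaneously shows that the symplectic manifold is independent of the auxiliary data $V$, $\eta$ and $\{\ell_p\}$.

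\textbf{Main obstacle.} The crux is the relative Moser step: one is really comparing symplectic forms on the non-compact $Y_\phi$, and the symplectomorphism must preserve the divisor $\hat\Delta_\infty$ at infinity of the compactification — equivalently be cylindrical near infinity — in order to descend to $Y_\phi$ at all. This forces one to control cohomology classes not just on $\hat X_\phi$ but also on $\hat\Delta_\infty$ and relative to a tubular neighbourhood of it, and to run Moser's theorem in its stratified form. A secondary subtlety is that the complex structure on the base curve $S$ varies along the path, so the primitives of $\tfrac{d}{dt}\eta_t$ should be produced by a parametrised construction — for instance using an auxiliary Riemannian metric on the total space of the trivialised family together with fibrewise Hodge theory — rather than by invoking a fixed K\"ahler identity.
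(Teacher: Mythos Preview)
Your approach is essentially the paper's: place the threefolds in a family over a connected parameter space and run a relative Moser argument that respects the compactification divisor $\hat\Delta_\infty$. The paper works over $\Quad^{\Gamma}(\S,\M)_0$ and uses symplectic parallel transport for a global K\"ahler form on the universal family $\widehat{\scrX}$, deferring the ``preserve the divisor at infinity'' issue to Lemma~\ref{Lem:Parallel}; you instead enlarge the base to a space $\mathcal B$ recording all the auxiliary choices $V$, $\eta$, $\{\ell_p\}$ and the component $\bC^2_{+,p}$, trivialise by Ehresmann, and then Moser. The one genuine gain in your version is that independence from $V$, $\eta$ and the parabolic lines $\{\ell_p\}$ is made explicit---the paper's proof silently fixes these in constructing the universal family. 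One point you assert a little quickly: after standardising $\eta_t$ near $\hat\Delta_{\infty,0}$, the statement that primitives of $\dot\eta_t$ may be chosen supported in $Y_\phi$ amounts to $[\dot\eta_t]=0$ in $H^2_c(Y_\phi)$, not just in $H^2(\hat X)$; this does hold here, because $H^1(\hat X)\to H^1(\hat\Delta_\infty)$ is the identity on $H^1(S)$ (both spaces fibre over $S$ with simply connected fibres), so the connecting map $H^1(\hat\Delta_\infty)\to H^2_c(Y_\phi)$ vanishes and $H^2_c(Y_\phi)\hookrightarrow H^2(\hat X)$.
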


\begin{proof}
Fix $\lambda > 0$ and $\delta>0$ sufficiently small. Note $\omega_{res}^{\lambda,\delta}$ is K\"ahler on $\hat{X}_{\phi}$ for every $\phi$. The period map on $\Quad^{\Gamma}(\S,\M)_0$ equips that space with a flat K\"ahler structure, cf. \cite[Theorem 1]{BrSm}, and  there is a K\"ahler form on the total space of $\widehat{\scrX} \rightarrow \Quad^{\Gamma}(\S,\M)_0$ which restricts to $\omega_{res}$ on each fibre.  Since both varieties $\hat{\Delta}_{\infty} \subset \hat{X}_{\zeta}$ are smooth for every $\zeta$, we can now apply a relative version of Moser's theorem (or an argument as in Lemma \ref{Lem:Parallel})  to the universal family \eqref{Eqn:UnivFamily} to symplectically identify the complements $\hat{X}_{\zeta} \backslash \hat{\Delta}_{\infty} = Y_\zeta$ for different $\zeta$. \end{proof}

To simplify notation, we will write $\omega$ or $\omega_{\lambda}$ to denote a symplectic (K\"ahler) form on $Y_{\phi}$ induced as above by the ample divisor $[\pi^{-1}(pt)] + \lambda \Delta_{\infty}$ on $X_{\phi}$.

\subsection{Lagrangian spheres} \label{Sec:LagSphere}
The general fibre of $(Y_{\phi}, \omega)$ is a finite type Stein manifold, symplectomorphic to a disk cotangent subbundle $DT^*S^2 \subset T^*S^2$, equipped with the restriction of the canonical symplectic form.   A well-known theorem of Hind \cite{Hind} asserts that there is a unique Lagrangian 2-sphere in $T^*S^2$  up to Hamiltonian isotopy. That uniqueness leads to various constructions of 3-dimensional Lagrangian submanifolds in $Y_{\phi}$.

  Pick a path $\gamma: [0,1] \rightarrow S$ with $\{\gamma(0), \gamma(1)\} \subset \Zer_{\phi}$, $\gamma(0)\neq \gamma(1)$, and with $\gamma|_{(0,1)} \subset S\backslash \{M \cup \Zer_{\phi}\}$.  We require the tangent vector of $\gamma$ to be non-trivial at each end-point. 

\begin{Lemma} \label{Lem:Matching}
Such a $\gamma$ defines a Lagrangian sphere $L_{\gamma} \subset Y_{\phi}$, well-defined up to Hamiltonian isotopy.   \end{Lemma}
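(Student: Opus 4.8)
The plan is to construct $L_\gamma$ as a Donaldson matching cycle, following \cite[III, Section 16g]{FCPLT}, and then to verify that the construction is insensitive to the choices made. First I would use Lemma \ref{Lem:Parallel}: over each point of $\gamma|_{(0,1)}$ the fibre of $\pi$ is symplectomorphic to the disk cotangent bundle $DT^*S^2$, and parallel transport along subarcs gives well-defined (up to compactly supported Hamiltonian isotopy) exact symplectomorphisms between fibres. At each of the two endpoints $\gamma(i)\in\Zer_\phi$ the fibre acquires an isolated nodal singularity $\{ab-c^2=0\}\subset\bC^3$, and the requirement that $\gamma$ have non-vanishing tangent vector there ensures that $\gamma$ is a vanishing path for the Lefschetz fibration $\pi|_{\pi^{-1}(S-M)}$; the associated vanishing cycle in a nearby fibre is a Lagrangian $2$-sphere $V_i\subset DT^*S^2$.

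The key input is Hind's theorem \cite{Hind}: the Lagrangian $2$-sphere in $T^*S^2$ is unique up to Hamiltonian isotopy. Therefore, transporting $V_0$ along $\gamma$ from $\gamma(0)$ towards $\gamma(1)$, the image in the fibre over $\gamma(1)$ is Hamiltonian isotopic to $V_1$. Fixing such an isotopy, the union of the Lefschetz thimble over $\gamma([0,\tfrac12])$, the (deformed) transported family of vanishing cycles, and the Lefschetz thimble over $\gamma([\tfrac12,1])$ glues to a Lagrangian submanifold diffeomorphic to the union of two $3$-disks along their common boundary $S^2$, hence to $S^3$. This is $L_\gamma$. (Smoothing the corner where the vanishing-cycle family meets each thimble is a standard local construction, carried out e.g. in \cite[Section 16]{FCPLT}.)

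Finally I would check well-definedness up to Hamiltonian isotopy. There are three choices: the Hamiltonian isotopy matching the transported $V_0$ to $V_1$, the parametrisation of $\gamma$, and the ambient isotopy class of $\gamma$ itself (we only want dependence on the endpoints and homotopy class rel endpoints, but the statement as phrased is for a fixed $\gamma$, so only the first two choices matter). Different choices of matching isotopy are themselves connected through a path of symplectomorphisms of $T^*S^2$, again by Hind's uniqueness statement applied parametrically (equivalently, the relevant space of Lagrangian spheres is connected), and reparametrising $\gamma$ visibly does not change $L_\gamma$ up to isotopy; in both cases the resulting $L_\gamma$'s sweep out a Lagrangian isotopy, which for a closed Lagrangian can be promoted to a Hamiltonian one since the flux obstruction vanishes ($L_\gamma\cong S^3$ has $H^1(L_\gamma;\bR)=0$). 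The main obstacle is the parametrised version of Hind's theorem needed for well-definedness: one must know not merely that any two Lagrangian $2$-spheres in $T^*S^2$ are Hamiltonian isotopic, but that the space of such matching data is connected in a way compatible with the fibred construction; this is where I would lean most heavily on the cited literature (\cite{Hind}, \cite{FCPLT}), and it is the step that requires genuine pseudoholomorphic-curve input rather than formal arguments.
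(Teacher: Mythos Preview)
Your approach via the Donaldson matching cycle construction is correct, but the paper takes a genuinely different route. Rather than building $L_\gamma$ fibrewise from two Lefschetz thimbles and invoking Hind's theorem to match the vanishing $2$-spheres in a common fibre, the paper realises $L_\gamma$ directly as the vanishing cycle of a one-parameter degeneration of the \emph{threefold} itself: one lets the two zeroes $\gamma(0),\gamma(1)$ of $\phi$ collide along $\gamma$, producing a differential $\phi_0$ with a double zero, and the corresponding $Y_{\phi_0}$ acquires a $3$-fold ordinary double point. The sphere $L_\gamma$ is then the Lagrangian vanishing cycle of that nodal degeneration. Well-definedness up to Hamiltonian isotopy follows from a Moser argument together with connectedness of the stratum of quadratic differentials with exactly one double zero (and all others simple); Hind's theorem is not used at all.

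The trade-off is instructive. Your construction is more hands-on and is essentially the fibred representative $L_\gamma'$ that the paper introduces immediately \emph{after} the Lemma, but---as you rightly flag---it leans on nontrivial input (Hind's uniqueness, the matching-cycle machinery of \cite{FCPLT}) for well-definedness, and the paper explicitly notes that the matching construction may in general require perturbing $\omega$ out of its cohomology class. The paper's degeneration argument is softer: it uses only Moser and the global topology of the moduli space of differentials, and produces a well-defined Hamiltonian isotopy class without ever analysing Lagrangian $2$-spheres in the fibre. Your concern about needing a ``parametric Hind'' statement is exactly the sort of issue the paper's approach sidesteps; the paper instead recovers the fibred picture separately in the next Lemma, using an $SO(3)$-symmetry argument rather than Hind.
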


\begin{proof}
Suppose $\phi_0$ is a meromorphic quadratic differential with a zero of multiplicity two at a point $p \in S$.  The corresponding  3-fold 
\[
Y_{\phi_0} \ = \ \{ det_W =  \pi^* \phi_0\}  
\]
is given locally by a family of quadrics
\[
\{ab-c^2 =  t^2\}_{|t|<\varepsilon} \subset \bC^3\times\bC_t
\]
which has a 3-fold ordinary double point at the origin.  
The sphere $L_{\gamma} \subset Y_{\phi}$ arises as a vanishing cycle for the associated nodal degeneration corresponding to a path of quadratic differentials from $\phi$ to $\phi_0$ for which two simple zeroes of $\phi$ coalesce to a double zero along $\gamma$.  The sphere is well-defined up to Hamiltonian isotopy by a Moser-type argument, starting from the fact that the stratum of quadratic differentials with one double zero (and all other zeroes simple) is connected.
\end{proof}

The path $\gamma$ defines a matching cycle $L_{\gamma}' \subset Y_{\phi}$, fibred over the arc $\gamma$ via Donaldson's construction, cf.  \cite[Section 16]{FCPLT} and \cite{AMP}. In general, the matching cycle construction requires a perturbation of  the symplectic connexion over $\gamma$ and thus of $\omega$, which may in general change its cohomology class (pulling back a non-trivial multiple of the area class on $S$). To avoid this issue, we impose additional symmetry. 

\begin{Lemma} \label{Lem:MatchingPath}
Given any open subset $M \subset U \subset S$,  there is a Hamiltonian isotopy $h_t$, $0\leq t\leq 1$ and $h_0=\id$,  of $Y_{\phi}$  for which $h_1(L_{\gamma})$ fibres over $\gamma$ for every matching path $\gamma \subset S-U$. \end{Lemma}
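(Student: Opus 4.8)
The plan is to realise $L_{\gamma}$, up to Hamiltonian isotopy, as an honest matching cycle for the projection $\pi: Y_{\phi}\to S$ once we have arranged the symplectic connexion to be ``horizontal'' along $\gamma$ in the appropriate sense. By Lemma \ref{Lem:Matching}, $L_{\gamma}$ is a priori defined only as a vanishing cycle for a nodal degeneration in which two zeroes of $\phi$ coalesce, and the matching cycle $L'_{\gamma}$ of Donaldson's construction is the candidate honest model; the content is to show the two agree up to Hamiltonian isotopy while controlling the cohomology class of $\omega$.

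First I would recall from Section \ref{Sec:SymplForms} that the form $\omega_{res}=\lambda\omega_v+\pi^*\omega_S+\delta\sigma_H$ has a symplectic connexion which, away from $M\cup \Zer_{\phi}$, agrees with that of $\omega_v$ (the $\pi^*\omega_S$ term does not change the connexion and the perturbing term $\sigma_H$ can be supported near $\pi^{-1}(M)$). Thus over $S-U$, with $U$ any neighbourhood of $M$, parallel transport for $\pi$ is already well-defined by Lemma \ref{Lem:Parallel}, and on each fibre $\pi^{-1}(x)\cong DT^*S^2$ Hind's theorem gives a unique Lagrangian $2$-sphere up to Hamiltonian isotopy. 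Donaldson's matching cycle construction over a path $\gamma\subset S-U$ then produces $L'_{\gamma}$ fibred over $\gamma$, provided parallel transport along $\gamma$ carries the vanishing cycle of one endpoint to the vanishing cycle of the other: this is automatic here because the fibre vanishing cycle is unique up to Hamiltonian isotopy, so the standard matching-cycle obstruction vanishes. No perturbation of $\omega$ changing its cohomology class is needed, since we never have to deform the connexion away from a neighbourhood of $M$, which we may take disjoint from $\gamma$.

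The remaining step is to move $L_{\gamma}$ onto $L'_{\gamma}$ by a global Hamiltonian isotopy $h_t$, simultaneously for all matching paths $\gamma\subset S-U$. For a single $\gamma$ this is the Moser-type argument already sketched in Lemma \ref{Lem:Matching}: both $L_{\gamma}$ and $L'_{\gamma}$ are vanishing cycles for the same family of nodal degenerations (coalescing two simple zeroes along $\gamma$), and the stratum of quadratic differentials with one double zero and all others simple is connected, so a parametrised Moser argument identifies them up to Hamiltonian isotopy of $Y_{\phi}$. To make the isotopy uniform in $\gamma$, I would note that the set of isotopy classes of such matching paths in $S-U$ is discrete, and that the construction of $L'_{\gamma}$ and the comparison with $L_{\gamma}$ depend continuously on $\gamma$; hence it suffices to produce, for each isotopy class, an ambient Hamiltonian isotopy, and these can be composed (or performed with disjoint supports, after an initial isotopy localising each $L_{\gamma}$ near $\pi^{-1}(\gamma)$).

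The main obstacle I expect is bookkeeping the non-compact support of the parallel transport maps (they are not compactly supported, by the remark after Lemma \ref{Lem:Parallel}) while still getting a genuine \emph{Hamiltonian} isotopy $h_t$ of $Y_{\phi}$ rather than merely a symplectomorphism: one must check that the relevant Moser vector fields integrate and can be taken to vanish at infinity, using the estimate $|\partial_z^{\sharp}\cdot h|\leq \mathrm{const}./|\pi|$ from the proof of Lemma \ref{Lem:Parallel}. The other delicate point is verifying that $L_{\gamma}$, originally described only as a vanishing cycle, is Hamiltonian isotopic to the fibred model $L'_{\gamma}$ uniformly as $\gamma$ varies — but this reduces to the connectedness of the relevant stratum of differentials together with Hind's uniqueness theorem, both of which are available.
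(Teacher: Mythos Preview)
Your approach handles a \emph{single} matching path $\gamma$ reasonably: Hind's theorem does kill the matching obstruction, and the resulting fibred sphere $L'_\gamma$ is Hamiltonian isotopic to $L_\gamma$ by a Moser argument. But the Lemma asks for a single Hamiltonian isotopy $h_t$ that works \emph{simultaneously} for every matching path $\gamma\subset S-U$, and here your argument breaks down. There are infinitely many isotopy classes of such paths, and they need not be disjoint (indeed in the applications they share endpoints at zeroes of $\phi$), so you cannot compose isotopies or arrange disjoint supports. Moving $L_{\gamma_1}$ onto its fibred model may well disturb an already-fibred $L_{\gamma_2}$. Discreteness of the set of isotopy classes is no help: an infinite composition of Hamiltonian isotopies is not a Hamiltonian isotopy.

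The paper's proof supplies the missing idea: an extra symmetry. Over $S\setminus M$ one can split $V\cong K_S^{1/2}\otimes\bC^2$, which yields a fibrewise holomorphic $SU(2)$-action on $Y_\phi$ (factoring through $SO(3)$ acting by rotations of $S^2$ in each fibre $T^*S^2$). One then deforms the K\"ahler form, away from $\pi^{-1}(U)$, to be $SO(3)$-invariant; the Hamiltonian isotopy $h_t$ is the Moser isotopy for this deformation. For the invariant form, parallel transport is $SO(3)$-equivariant, so every vanishing cycle is an $SO(3)$-invariant Lagrangian $2$-sphere in the fibre---and there is exactly one such sphere (any two would be disjoint orbits, contradicting non-triviality of their Floer pairing). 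Hence for \emph{every} matching path $\gamma\subset S-U$ at once, the two vanishing thimbles glue without any perturbation of the connexion, and $h_1(L_\gamma)$ is honestly fibred. This symmetry argument is what makes the statement uniform, and it also sidesteps any worry about changing $[\omega]$: no path-by-path perturbation is ever performed.
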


\begin{proof}
There is a distinguished trivialisation of $\mathcal{O}(M)$ over $S\backslash M$. Pick a spin structure on $S$ viewed as a square root  $K_S^{\frac{1}{2}}$ of the canonical bundle. We may then suppose that  the original vector bundle $V \rightarrow S\backslash M$ is a direct sum of line bundles
\[
V = K_S^{\frac{1}{2}} \oplus K_S^{\frac{1}{2}} = K_S^{\frac{1}{2}}\otimes \bC^2.
\]
Then there is a canonical action of $SU(2)$ by bundle automorphisms of $V|_{S-M}$, hence of $S^2V$, and the determinant map  is $SU(2)$-equivariant. Since $S^2V$ and $W$ are isomorphic over $S\backslash M$, the same holds for the determinant map on $W$.  It follows that, for $\phi \in H^0(K_S(M)^{\otimes 2})$, the hypersurface
\begin{equation} \label{Eqn:WrongHypersurface}
det^{-1}(\phi) \subset W
\end{equation}
has a holomorphic $SU(2)$-action fibrewise over $S\backslash M$ (which does not extend to the total space of $Y_{\phi}$). Away from $\pi^{-1}(U)$, the K\"ahler form on $W$ and hence $Y_{\phi}$ can be  made $SU(2)$-invariant (in fact the $SU(2)$-action factors through $SO(3)$).  

The $SO(3)$ action on any fibre $T^*S^2 \supset D^{< \mu}(S^2) = \pi^{-1}(x)$ is the canonical action induced by rotations of $S^2$.  Since the action is fibrewise, symplectic parallel transport maps are $SO(3)$-equivariant, which in turn means that the vanishing cycles for arbitrary matching paths contained in $S\backslash U$ are $SO(3)$-invariant Lagrangian spheres in $T^*S^2$. However, there is a unique such sphere (any one is an orbit of the action, so distinct ones would be disjoint).  Therefore, for the invariant symplectic form, the matching cycle $L_{\gamma}'$ can be constructed without perturbing the symplectic connection. The Hamiltonian isotopy $h_t$ of the Lemma arises from interpolating a given symplectic form with one which is $SO(3)$-invariant over $S\backslash U$.
  \end{proof}
  
  The previous construction of a Hamiltonian representative for $L_{\gamma}$ fibred over $\gamma$ depends on choices; fortunately, we will not need to carry this out in families.  A choice of orientation for the vanishing cycle in the fibre and of the matching path in $S$ defines an orientation of the Lagrangian $L_{\gamma}$.

\subsection{Lagrangian cylinders}\label{Sec:LagCyl}
Consider a loop $\sigma \subset S$ encircling a point $x\in M$.  Each fibre $\pi^{-1}(y)$ for $y\in \sigma$ contains a Lagrangian 2-sphere, unique up to Hamiltonian isotopy. In particular, if one parallel transports a given 2-sphere $V \subset \pi^{-1}(\sigma(0))$ around $\sigma$, the resulting monodromy image $h_{\sigma}(V)$ is Hamiltonian isotopic to $V$, and co-incident with $V$ if the symplectic form is $SO(3)$-invariant over $\sigma$ as in the previous section. This constructs a Lagrangian submanifold $L_{\sigma} \subset Y_{\phi}$ fibred over $\sigma$.

\begin{Lemma}
$L_{\sigma} \cong S^1\times S^2.$
\end{Lemma}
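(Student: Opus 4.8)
The plan is to show that the Lagrangian $L_\sigma$, constructed as the union of a $2$-sphere $V_y \subset \pi^{-1}(y)$ over the loop $\sigma$, fibres over $\sigma \cong S^1$ with fibre $S^2$, and then to trivialise this sphere bundle. First I would recall the local picture near a double pole: by Lemma \ref{Lem:MatchingPath} we may assume the K\"ahler form is $SO(3)$-invariant over a neighbourhood of $\sigma$, so that parallel transport around $\sigma$ is $SO(3)$-equivariant and carries the round Lagrangian $2$-sphere $V_{\sigma(0)}$ to itself (it is the unique $SO(3)$-invariant Lagrangian sphere in $T^*S^2 \supset \pi^{-1}(\sigma(0))$, being a single orbit). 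Hence the monodromy $h_\sigma$ fixes $V_{\sigma(0)}$ setwise, and $L_\sigma := \bigcup_{y \in \sigma} V_y$ is a genuine smooth submanifold, the total space of a fibre bundle over $S^1$ with fibre $S^2$ and structure group the mapping class group $\pi_0(\Diff(S^2))$.

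Next I would identify this $S^2$-bundle over $S^1$ as the trivial one. Since $\pi_0(\Diff(S^2)) \cong \bZ/2$ (generated by an orientation-reversing reflection), there are exactly two such bundles: $S^1 \times S^2$ and the twisted one (the unit sphere bundle of a non-orientable $\bR^3$-bundle). To rule out the twisted case it suffices to show the monodromy $h_\sigma|_{V_{\sigma(0)}}$ is isotopic to the identity, equivalently orientation-preserving on $V_{\sigma(0)}$. This follows because the monodromy is realised through $SO(3)$ (indeed the whole family of spheres is swept out $SO(3)$-equivariantly and the structure group reduces to $SO(3) \subset \Diff^+(S^2)$), so $h_\sigma$ restricted to the sphere lies in $SO(3)$, which is connected; alternatively, one can observe that $L_\sigma$ is an orientable $3$-manifold (it carries an almost complex normal structure inherited from $Y_\phi$, or one checks directly that the total space of any $SO(3)$-reduced $S^2$-bundle over $S^1$ is orientable), and the only orientable $S^2$-bundle over $S^1$ is the product. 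Either route gives $L_\sigma \cong S^1 \times S^2$.

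The main obstacle I anticipate is making precise the claim that the monodromy reduces to $SO(3)$ and not merely to $\Diff^+(S^2)$ — that is, that the identification of each fibre $V_y$ with a fixed round sphere can be made continuously in $y$ so that the transition is literally an element of $SO(3)$. The cleanest way around this is to use the $SO(3)$-invariance of the symplectic connection over $\sigma$ directly: the flow of the horizontal lift commutes with the fibrewise $SO(3)$-action, so parallel transport from $\pi^{-1}(\sigma(0))$ to $\pi^{-1}(\sigma(t))$ intertwines the $SO(3)$-actions, and composing once around the loop gives an $SO(3)$-automorphism of $\pi^{-1}(\sigma(0))$ preserving $V_{\sigma(0)}$, hence an element of $SO(3)$ acting on $V_{\sigma(0)} \cong S^2$. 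Connectedness of $SO(3)$ then immediately gives triviality of the bundle. I would present this as the core of the argument, with the mapping-class-group dichotomy above as a fallback sanity check, and close by noting that the smooth structure of $L_\sigma$ near the poles (where $\pi^{-1}(x)$ itself is reducible) is irrelevant since $\sigma$ avoids $M$.
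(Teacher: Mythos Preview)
Your argument has a genuine gap at the key step. You assert that since parallel transport is $SO(3)$-equivariant, the monodromy $h_\sigma|_{V_{\sigma(0)}}$ is ``an element of $SO(3)$ acting on $V_{\sigma(0)} \cong S^2$''. But an $SO(3)$-\emph{equivariant} diffeomorphism of $S^2$ (one commuting with the action) is not the same as an element of $SO(3)$: the centraliser of $SO(3)$ in $\Diff(S^2)$ is $\{\id,\,-\id\}$, since an equivariant map must send each point to a fixed point of its $SO(2)$-stabiliser, namely itself or its antipode. So equivariance alone tells you only that the monodromy is the identity \emph{or} the antipodal map, and the latter yields exactly the twisted non-orientable bundle you are trying to exclude. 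Your fallback via orientability does not close this gap either: the normal bundle to a Lagrangian is $T^*L$, not a complex bundle, so there is no automatic orientability; and the claim that the structure group reduces to $SO(3)$ is precisely what is at issue.

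The paper takes a different route. It passes to the compactified fibration $\hat{X}_\phi \to S$, where the singular fibre over the pole is a normal-crossing degeneration of $\bP^1 \times \bP^1$, and identifies the monodromy as a fibred Dehn twist in the sense of Perutz. Since the isotropic fibres of the vanishing coisotropic are circles in $\bP^1$, this fibred twist is Hamiltonian isotopic to the identity. Your approach can be repaired, but it needs an extra input: the rank-two bundle $V|_\sigma$, hence $S^2V|_\sigma$ and $K_S(M)^{\otimes 2}|_\sigma$, is trivial over the circle, and $\phi$ viewed as a section of $K_S(M)^{\otimes 2}$ is non-vanishing at the double pole, so has winding number zero around $\sigma$. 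In that trivialisation the Lagrangian spheres are $\sqrt{\phi(t)}\cdot S^2 \subset \bC^3$ with $\sqrt{\phi(t)}$ single-valued, so they visibly sweep out $S^1\times S^2$; equivalently, the continuous family of equivariant maps $P_t$ then takes values in the discrete set $\{\id,-\id\}$ and starts at $\id$, forcing the monodromy to be trivial.
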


\begin{proof}
We claim the monodromy around $x\in M$ is Hamiltonian isotopic to the identity. The smooth projective 3-fold $\hat{X}_{\phi} \rightarrow S$ has singular fibre  a union of two rational surfaces meeting along a smooth curve $C\cong \bP^1$, cf. Lemma \ref{Lem:SmallRes}.  This is a Morse-Bott Lefschetz degeneration of the generic smooth fibre $\bP^1\times \bP^1$, so the monodromy is a fibred Dehn twist in the vanishing cycle \cite{Perutz}. In this case, the isotropic fibres of the vanishing cycle are circles in $\bP^1$, so the fibred Dehn twist is Hamiltonian isotopic to the identity. \end{proof}

It will be helpful to be more explicit.  We keep the previous notation.

\begin{Lemma} \label{Lem:ExplicitCylinder}
Let $\Gamma$ be defined by 
\begin{equation} \label{Eqn:LagCylExplicit}
\Gamma = \left\{ x\in \bR, \, p \in e^{-i\theta/2} \bR, \, q\in e^{-i\theta/2}\bR, \, \delta=e^{i\theta} \right\}_{0\leq \theta\leq 2\pi}
\end{equation}
inside the affine variety 
\[
\scrX \ = \ \{ \, x^2 + \delta (p^2+q^2) = 1 \, \} \subset \bC^4.
\]
There is a symplectic structure $\omega_{\scrX}$ on $\scrX$, compatible with the standard complex structure,  for which $\Gamma$ is Lagrangian, and a symplectic embedding of an open subset of the preimage of $\{|\delta| = 1\} \subset (\scrX, \omega_{\scrX})$ into $(Y_{\phi},\omega)$ taking $\Gamma$ to  $L_\sigma$.
\end{Lemma}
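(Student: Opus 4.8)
The plan is to exhibit both sides of the claimed correspondence inside a single explicit local model near the reducible fibre over $x\in M$, and then to transport the resulting Lagrangian into $Y_\phi$ using the parallel transport machinery of Lemma \ref{Lem:Parallel} and the $SO(3)$-symmetry of Lemma \ref{Lem:MatchingPath}. First I would set up the local picture: by the local model \eqref{Eqn:Model} (with $t=1$), a neighbourhood of $\pi^{-1}(x)$ in $Y_\phi$ is a family of affine quadrics $\{x^2-\delta yz = \phi(\delta)\}$ over a punctured disk in the $\delta$-plane, and after the linear change of fibre coordinates $y = p+iq$, $z = p-iq$ (which puts the quadratic form in the shape $x^2+\delta(p^2+q^2)$) together with normalising $\phi(0)=1$, one recognises the affine variety $\scrX = \{x^2+\delta(p^2+q^2)=1\}\subset \bC^4$ as precisely this local model; so there is a biholomorphism between a neighbourhood of the preimage of $\{|\delta|=1\}$ in $\scrX$ and a neighbourhood of $L_\sigma$ in $Y_\phi$, restricting to the identity on the base circle $\sigma = \{|\delta|=1\}$.

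Next I would analyse $\Gamma$ intrinsically. The set $\Gamma$ of \eqref{Eqn:LagCylExplicit} fibres over the circle $\delta = e^{i\theta}$, $0\le\theta\le 2\pi$, and over each value of $\theta$ its fibre is $\{x\in\bR,\ p,q\in e^{-i\theta/2}\bR,\ x^2 + \delta(p^2+q^2)=1\}$; writing $p = e^{-i\theta/2}u$, $q = e^{-i\theta/2}v$ with $u,v\in\bR$, this fibre becomes $\{x^2+u^2+v^2=1\}$, a real $2$-sphere sitting inside the affine quadric fibre $\pi^{-1}(e^{i\theta})\cong T^*S^2$. One checks this real sphere is exactly the zero section (equivalently, the unique Lagrangian sphere of Hind's theorem, which is also the unique $SO(3)$-invariant one). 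As $\theta$ runs from $0$ to $2\pi$ the monodromy $\delta\mapsto e^{i\theta}$ rotates the fibre coordinates by $e^{-i\theta/2}$, so the family closes up after $\theta=2\pi$ only because the vanishing-cycle monodromy is (Hamiltonian isotopic to) the identity — consistent with the preceding Lemma identifying $L_\sigma\cong S^1\times S^2$. To see $\Gamma$ is Lagrangian, I would produce $\omega_{\scrX}$ explicitly: take the Kähler form on $\scrX$ obtained by restricting a suitable linear/plurisubharmonic potential on $\bC^4$ invariant under the obvious $O(3)$-action on $(x,p,q)$ and under the $S^1$-action $(x,p,q,\delta)\mapsto (x, e^{-i\alpha/2}p, e^{-i\alpha/2}q, e^{i\alpha}\delta)$; $\Gamma$ is a union of orbits of a real form of these symmetry groups, so its tangent spaces are spanned by real vector fields along which such an invariant two-form manifestly vanishes. (Concretely, along each fibre $\Gamma\cap\pi^{-1}(e^{i\theta})$ is the real zero section, which is Lagrangian for the fibrewise form; the horizontal direction $\partial_\theta$ pairs to zero against the vertical tangents of a Lagrangian section precisely because parallel transport is symplectic, so the total form vanishes on $\Gamma$.)

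Finally I would assemble the embedding statement: Lemma \ref{Lem:Parallel} gives a well-defined symplectic parallel transport for $\pi\colon Y_\phi\to S$ over the loop $\sigma$, and Lemma \ref{Lem:MatchingPath} lets us arrange the Kähler form near $\pi^{-1}(U)$ to be $SO(3)$-invariant, so that $L_\sigma$ is genuinely fibred over $\sigma$ with fibre the unique invariant $2$-sphere. Matching this against the fibred description of $\Gamma$ just obtained, the local biholomorphism of the first step can be upgraded — after a Moser/relative-Moser correction absorbing the discrepancy between $\omega_{\scrX}$ and the pullback of $\omega$, both being Kähler forms restricting to cohomologous (exact, since the fibres are Stein of contact type at infinity) forms on the fibres — to a symplectic embedding of an open neighbourhood of the preimage of $\{|\delta|=1\}$ in $(\scrX,\omega_{\scrX})$ into $(Y_\phi,\omega)$ carrying $\Gamma$ onto $L_\sigma$. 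I expect the main obstacle to be the last bookkeeping step: making sure the two Kähler forms can be matched near (but not on all of) the circle of quadric fibres without destroying the Lagrangian condition on $\Gamma$, i.e. carrying out the relative Moser argument in a neighbourhood that is noncompact in the fibre directions — exactly the kind of estimate handled in the proof of Lemma \ref{Lem:Parallel}, which I would invoke rather than redo. The symmetry reduces this to a one-parameter (in $\theta$) family of Moser problems on $T^*S^2$, where exactness of the relevant closed two-forms and equivariance make the interpolating Hamiltonians canonical.
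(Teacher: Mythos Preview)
Your approach differs from the paper's in the key step of showing $\Gamma$ is Lagrangian, and that step has a genuine gap.

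The paper's device is an antiholomorphic involution
\[
\iota:(x,p,q,\delta)\longmapsto(\bar x,\ \overline{\delta p},\ \overline{\delta q},\ 1/\bar\delta)
\]
of $\scrX$, whose fixed locus is exactly $\Gamma$. After deforming the $\delta$-part of the ambient form to $(1/r)\,dr\wedge d\theta$ near the unit circle, the paper argues that $\iota$ reverses the sign of the symplectic form on $\scrX|_{\{|\delta|=1\}}$, so its fixed set is automatically isotropic. Only \emph{after} establishing Lagrangianness does the paper identify $\Gamma$ with $L_\sigma$, by invoking \cite[Lemma 16.3]{FCPLT}: a Lagrangian fibred over a curve with fibrewise-Lagrangian fibres is necessarily swept by symplectic parallel transport.

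You invert this logic: you want $\Gamma$ swept by parallel transport, hence Lagrangian. But your justification does not establish the first clause. The sentence ``the horizontal direction $\partial_\theta$ pairs to zero against the vertical tangents \ldots\ because parallel transport is symplectic'' only says that the \emph{horizontal lift} of $\partial_\theta$ is symplectically orthogonal to the fibre; it does not say that the actual $\theta$-tangent of $\Gamma$ agrees with that lift modulo $T\Gamma_\theta$, which is precisely the Lagrangian condition you are trying to prove. The proposed $O(3)$-symmetry is also unavailable: the defining quadratic $x^2+\delta(p^2+q^2)$ is not invariant under $O(3)$ acting on $(x,p,q)$ unless $\delta=1$, so there is no fibrewise $SO(3)$-action on $\scrX$ forcing a unique invariant sphere to be preserved by parallel transport. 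Indeed, for the standard K\"ahler form restricted from $\bC^4$ one computes directly that $\omega_0(V,H)=\tfrac12\, x\dot x$ at a generic point of $\Gamma$ (with $V$ a vertical tangent to the $S^2$-fibre and $H$ the $\theta$-tangent to $\Gamma$), so $\Gamma$ is \emph{not} Lagrangian for the most obvious form; any successful argument must involve a specific modification, and your symmetry proposal does not single one out.

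A minor point: the local identification with $Y_\phi$ is only symplectic, not biholomorphic. The paper flattens the Hermitian metric on $W$ near $M$ and deforms $\phi$ to be constant there, which alters the complex structure on $Y_\phi$; the resulting model is explicitly described as ``symplectic (but not holomorphic)''.
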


\begin{proof}
After flattening the Hermitian metric on the vector bundle $W$ to look like a product near $M$, and deforming $\phi$ through smooth sections to be constant over a small neighbourhood of $M$, one obtains the following symplectic (but not holomorphic) local model for a neighbourhood of a reducible fibre of $Y_{\phi}$. Consider
\[\bP^3 \times \bC \ \supset \ \left\{\,  [x:y:z:t], \delta \, \big| \, x^2 - \delta yz = t^2 \, \right\} \qquad \textrm{with} \ \omega_{FS} \oplus \omega_{std}
\]
and the hyperplane section $\Pi = \{t=0\}$.  The subspace living over some small ball $0\in B_{\delta} \subset \bC_{\delta}$ embeds into $X_{\phi}$, with the local fibration given by projection $\pi$ to the $\delta$-plane, and a symplectic model for $Y_{\phi}$ near a double pole of $\phi$ is the affine complement of $\Pi$,
\begin{equation} \label{Eqn:LocalModel}
\bC^3 \times \bC \ \supset \ \left\{ \, x^2 - \delta yz = 1 \, \right\}.
\end{equation}
A unitary change of  co-ordinates and conformal rescaling gives the hypersurface
\[
\scrX \ = \ \{ \, x^2 + \delta (p^2+q^2) = 1 \, \} \subset \bC^4 
\]
which we equip with the restriction of the standard flat symplectic structure, scaled in the $\delta$-direction so $B_{\delta}$ contains the unit circle. We deform the standard symplectic form $\omega_{\bC^3} \oplus \omega_{\delta}$ by deforming $\omega_{\delta}$ to a form $\omega_{\delta}'$ which coincides with the form $(1/r)\,dr\wedge d\theta$ in a small open neighbourhood of the unit circle.  Since this co-incides with the usual form $r\,dr\,d\theta$ on $S^1$, a sufficiently small such perturbation will still tame the standard integrable complex structure.  There is an antiholomorphic involution
\[
(x,p,q,\delta) \mapsto (\bar{x}, \overline{\delta p}, \overline{\delta q}, 1/\overline{\delta})
\]
which preserves $\scrX$ and reverses the sign of the deformed symplectic form on the submanifold $\scrX|_{\{|\delta|=1\}}$. The fixed locus therefore defines a Lagrangian submanifold
\[
\Gamma = \left\{ x\in \bR, \, p \in e^{-i\theta/2} \bR, \, q\in e^{-i\theta/2}\bR, \, \delta=e^{i\theta} \right\}_{0\leq \theta\leq 2\pi}
\]
as given in the Lemma. This is an $S^2$-bundle over the unit $\delta$-circle; since it is globally Lagrangian, the 2-sphere fibres are  preserved by parallel transport by \cite[Lemma 16.3]{FCPLT}, which implies that $\Gamma = L_{\sigma}$ for $\sigma$ a parametrization of the unit $\delta$-circle. 
\end{proof}

\subsection{WKB collections of spheres} \label{Subsec:WKB}
Fix a complete saddle-free quadratic differential $\phi$ which defines a non-degenerate WKB triangulation in the sense of Lemma \ref{Lem:GoodWKB}. Lemma \ref{Lem:Matching} associates to every edge of the dual cellulation a Lagrangian sphere, which gives a collection of $n=6g-6+3d$ Lagrangian spheres $\{L_e\} \subset Y_{\phi}$.  We will call such a collection a \emph{WKB collection} of Lagrangian spheres.  

We remark that if the differential $\psi$ yielded a WKB triangulation $T$ which contained a self-folded triangle, the dual cellulation would contain an edge with goes from a zero to itself, and the matching sphere construction would yield an \emph{immersed} Lagrangian sphere in $Y_{\psi}$.  The quiver prescription of Labardini-Fragoso then involves replacing this immersed sphere with an embedded replacement, as in Figure \ref{Fig:ImmersedLag}.  In any case, the situation covered by Lemma \ref{Lem:GoodWKB} will suffice in this paper. 
\begin{center}
\begin{figure}[ht]
\includegraphics[scale=0.4]{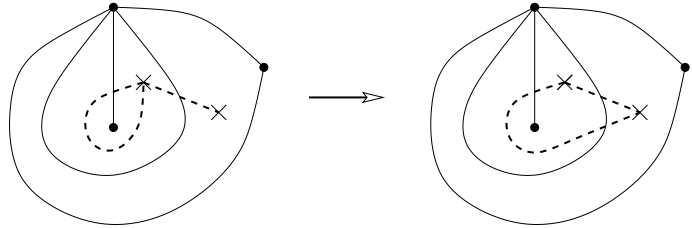}
\caption{A self-folded triangle, its enclosing triangle, and matching spheres (dotted). On the left, one sphere would be immersed; the right shows its embedded WKB replacement.\label{Fig:ImmersedLag}}
\end{figure}
\end{center}

Saddle-free differentials form chambers which are separated by walls on which, in the simplest instance, there is a unique saddle connection; the corresponding triangulations differ by a flip, and the Lagrangian cellulations and WKB collections differ as in Figure \ref{Fig:LagCellFlip}, assuming the WKB triangulations are non-degenerate on both sides of the wall. 
\begin{center}
\begin{figure}[ht]
\includegraphics[scale=0.25]{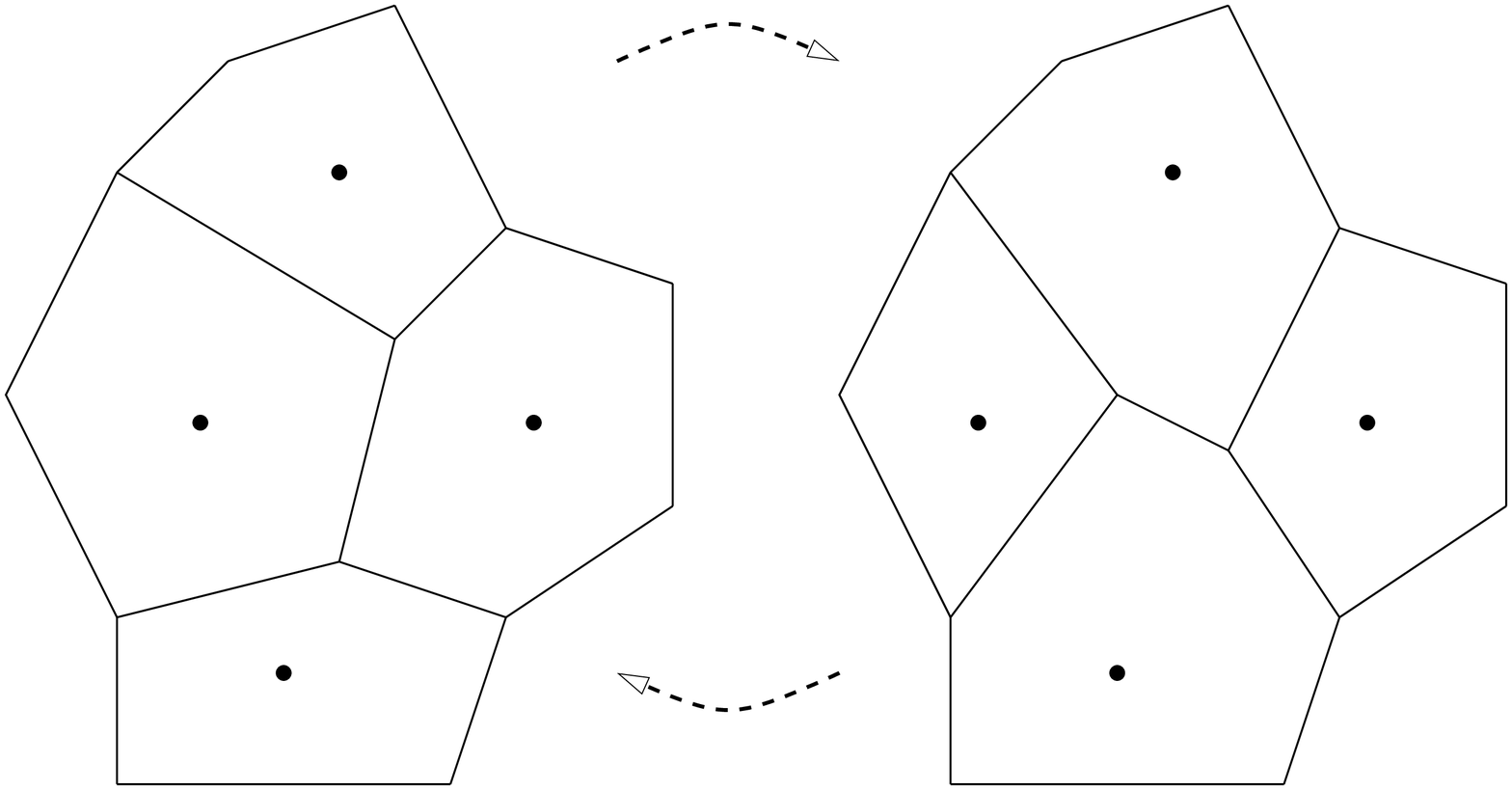}
\caption{Lagrangian cellulations differing by a flip\label{Fig:LagCellFlip}}
\end{figure}
\end{center}
Note that the Lagrangians $L_e$, $L_f$ for two distinct edges may meet at two points, cf. Figure \ref{Fig:WKBLagsg=1}. In this case, the union of the two matching paths $\gamma_e \cup \gamma_f$ is necessarily a homotopically non-trivial loop, by the uniqueness of geodesic representatives for homotopy classes in complete flat surfaces, see \cite{Strebel}.
\begin{center}
\begin{figure}[ht]
\includegraphics[scale=0.35]{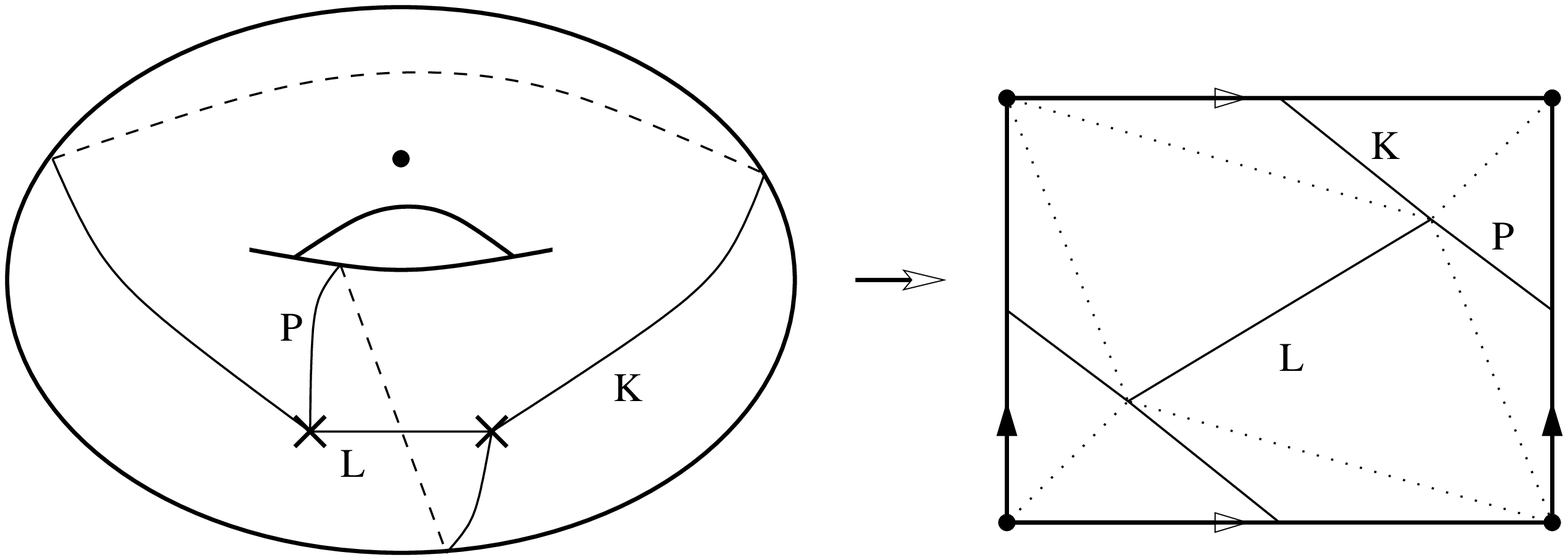}
\caption{WKB Lagrangians $\{L,K,P\}$ in the case $g=1$, $d=1$\label{Fig:WKBLagsg=1}}
\end{figure}
\end{center}

\begin{Remark} \label{Rem:NoTrivalentChance}
A holomorphic quadratic differential $\psi$  on $S$ has $4g-4$ zeroes. There is no trivalent cellulation of $S$ with vertices the zeroes of $\psi$, for reasons of Euler characteristic (trivalence implies the number of faces would be zero).  Thus, there is no direct analogue of the Lagrangian cellulation for the 3-fold $Y'_{\psi}$ of Equation \eqref{Eqn:Holo3fold}.
\end{Remark}

\subsection{Gradings}\label{Sec:Gradings}

Let $(Z,\omega)$ be a symplectic manifold with $2c_1(Z)=0$, so that $Z$ has trivial bicanonical bundle $K_Z^{\otimes 2} \cong \calO$, where $K_Z$ is defined with respect to any compatible almost complex structure. The space of possible homotopy classes of trivialisation of $K_Z^{\otimes 2}$ is given by $H^1(Z;\bZ)$.  Pick a quadratic volume form $\Theta \in H^0(K_Z^{\otimes 2})$ giving the trivialisation.  $\Theta$ defines a map from the Lagrangian Grassmannian to the circle
\[
\alpha: Gr_{Lag}(Z) \rightarrow S^1, \quad \Lambda \mapsto \frac{\eta(v_1 \wedge \ldots \wedge v_n)^2}{|\eta(v_1\wedge\ldots \wedge v_n)^2|}, \ \Lambda = \langle v_j\rangle \subset TZ_z.
\]
For any $L\subset Z$ there is an induced map $\alpha_L: L \rightarrow S^1$ and 
a \emph{grading} of $L\subset Z$ is given by a phase function $\tilde{\alpha}_L: L \rightarrow \bR$ with $exp(2i\pi \tilde{\alpha}_L) = \alpha_L$.  If $\tilde{L}$, $\tilde{L}'$ are graded Lagrangians, any isolated transverse intersection point  $p$ of $L$ and $L'$ acquires an absolute Maslov index $i(\tilde{L}, \tilde{L}'; p) \in \bZ$.  

\begin{Example} \label{Ex:GradeCotangent}
Suppose $L'$ is the graph of an exact one-form $df$ in $T^*L$, and $f$ is Morse with an isolated critical point at $0$.  Equip $L$ with the constant trivial phase function.  There is a distinguished choice of grading on $L'$ compatible with the canonical isotopy from $L'$ to $L$ via graphs of $\varepsilon df$, and with respect to this grading, $i(\tilde{L}, \tilde{L}'; 0)$ is given by the Morse index of $0$ as a critical point of $f$. 
\end{Example}

The Lagrangian submanifolds $L_\gamma$ admit gradings with respect to the holomorphic volume form $\kappa_{\phi}$ of Lemma \ref{Lem:Gradable}, since they are simply-connected.

Since $\phi$ equips the surface $S$ with a flat metric with singularities, a curve $\gamma: [0,1] \rightarrow S-M$ has a well-defined phase at each point with $\gamma(t) \not \in \Zer_{\phi}$.  Recall that $\gamma$ is a geodesic for the $\phi$-metric precisely  if this phase is constant on each connected component of $\gamma^{-1}(S\backslash \Zer_{\phi})$, and any primitive saddle connection for $\phi$ is a geodesic.  

\begin{Lemma} \label{Lem:Gradable2}
There is a volume form $\tilde{\kappa}_{\phi}$ homotopic to $\kappa_{\phi}$ with the property that the phase function of the matching sphere $L_{\gamma} \subset Y_{\phi}$, computed with respect to $\tilde{\kappa}_{\phi}$,  is equal to the $\phi$-phase of the curve $\gamma \subset S$. In particular, saddle connections define Lagrangian 3-spheres of constant phase.
\end{Lemma}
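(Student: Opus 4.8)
The plan is to compute the Maslov phase of $L_\gamma$ fibrewise over $\gamma$, by writing $\kappa_\phi$ on the smooth part of $Y_\phi$ as a product of a ``horizontal'' $1$-form pulled back from $S$ — which turns out to be a local square root of $\phi$ — and a ``vertical'' residue form on the affine-quadric fibres, which restricts to a form of constant phase on the Lagrangian $2$-sphere sitting in each fibre. Then the phase of $L_\gamma$ at a point over $\gamma(t)$ sees only the argument of $\sqrt\phi$ applied to $\dot\gamma(t)$, which is by definition the $\phi$-phase of $\gamma$.

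Concretely, over $S^\circ = S\setminus(M\cup\Zer_\phi)$ the map $\pi$ is a holomorphic fibre bundle with fibre the smooth affine quadric $\cong T^*S^2$, and $K_{Y_\phi}|_{S^\circ}\cong\pi^*K_{S^\circ}\otimes K_{Y_\phi/S}$, the relative canonical bundle being trivialised by the fibrewise Poincar\'e residue $\omega_{\mathrm{rel}}$ of $\frac{da\wedge db\wedge dc}{\det_W-\pi^*\phi}$ (in the local picture of Lemma \ref{Lem:ElemMod2}). I would check, in the local model \eqref{Eqn:Model}, that after the unitary rescaling normalising the fibre to $\{x^2-\delta yz=1\}$ — under which the $\phi$-dependent Jacobian factors cancel between numerator and denominator — the form $\kappa_\phi$ is a constant multiple of $\pi^*(\phi^{1/2})\otimes\omega_{\mathrm{rel}}$, where $\phi^{1/2}$ denotes the canonical-coordinate $1$-form $dz$ with $\phi=dz^{\otimes 2}$ (well-defined up to sign, which is harmless after squaring). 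The step needing care — and the main obstacle — is that this constant is \emph{global}: one uses that the structure group of $Y_\phi|_{S^\circ}\to S^\circ$ reduces, as in Lemma \ref{Lem:MatchingPath}, to $SU(2)$ acting on $\bC^3=\Sym^2\bC^2$ with determinant one, so that $\omega_{\mathrm{rel}}$ and $\det_W$ are preserved by the transition maps; then $\kappa_\phi/\omega_{\mathrm{rel}}$ is a globally defined section of $\pi^*K_{S^\circ}$ whose square is, by the equation $\det_W=\pi^*\phi$, a constant multiple of $\pi^*\phi$, the constant being identified by tracking $\eta\colon\det V\cong K_S(M)$ through Lemmas \ref{Lem:FirstChernZero} and \ref{Lem:Gradable}. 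Finally I take $\tilde\kappa_\phi$ to be $\kappa_\phi$ rescaled by this constant (and by a further unit scalar absorbing the constant phase of $\omega_{\mathrm{rel}}$ on the zero section, below); a constant rescaling is null-homotopic through maps $Y_\phi\to\bC^*$, so $\tilde\kappa_\phi$ is homotopic to $\kappa_\phi$.

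Second, I would use the $SO(3)$-symmetric representative of $L_\gamma$ provided by Lemma \ref{Lem:MatchingPath}: shrinking the neighbourhood $U\supset M$ so that $\gamma\subset S\setminus U$, one can take $L_\gamma$ fibred over $\gamma$ with each fibre $V_{\gamma(t)}\subset\pi^{-1}(\gamma(t))$ an $SO(3)$-orbit — that is, the zero section $S^2\subset T^*S^2$ in the normalised model. On the zero section $\omega_{\mathrm{rel}}$ restricts to a constant multiple of the round area form, hence has constant phase there; since $\omega_{\mathrm{rel}}$ is $SU(2)$-invariant and parallel transport is $SO(3)$-equivariant, this constancy persists along $\gamma$.

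Assembling: at $p\in L_\gamma$ over $\gamma(t)\in S^\circ$, split $T_pL_\gamma=\bR\langle v_1\rangle\oplus T_pV_{\gamma(t)}$ with $D\pi(v_1)=\dot\gamma(t)$ and $v_2,v_3$ spanning the vertical summand. The product decomposition of $\tilde\kappa_\phi$ makes $\tilde\kappa_\phi(v_1\wedge v_2\wedge v_3)^2$ a positive real multiple of $\phi(\dot\gamma(t))\cdot\omega_{\mathrm{rel}}(v_2\wedge v_3)^2$, and $\omega_{\mathrm{rel}}(v_2\wedge v_3)^2>0$ by the previous paragraph; hence $\alpha_{L_\gamma}(p)=\phi(\dot\gamma(t))/|\phi(\dot\gamma(t))|\in S^1$, which is independent of the point $p$ over $\gamma(t)$, extends continuously over the endpoints, and equals by construction the $\phi$-phase of $\gamma$ at $t$. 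If $\gamma$ is a saddle connection it is a geodesic for $|\phi|$, so this phase is constant along $\gamma$ and $\alpha_{L_\gamma}$ is constant on $L_\gamma$ — a Lagrangian $3$-sphere of constant phase. The only real difficulty is the bookkeeping of the second paragraph, namely pinning down the global constant relating $\kappa_\phi$ to $\pi^*(\phi^{1/2})\otimes\omega_{\mathrm{rel}}$; everything else follows routinely from the lemmas already established.
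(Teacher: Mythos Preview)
Your approach is essentially the same as the paper's: both use the $SO(3)$-symmetry from Lemma~\ref{Lem:MatchingPath} to reduce the phase function on $L_\gamma$ to a function on the matching path, and then identify that function with the $\phi$-phase via a Poincar\'e-residue computation on the affine-quadric fibres. The paper is considerably terser: rather than attempting a global factorisation $\kappa_\phi\sim\pi^*(\phi^{1/2})\otimes\omega_{\mathrm{rel}}$, it simply passes to a cover of $S$, reduces to the local Lefschetz model $\bC^3\to\bC$, $(z_1,z_2,z_3)\mapsto\sum z_i^2$ near a \emph{zero} of $\phi$, and cites \cite[eq.~(6.3)]{ThomasYau} for the fact that $\Theta=dz_1\wedge dz_2\wedge dz_3$ pushes forward on the thimble to $\sqrt{t}\,dt$, so $\Theta^{\otimes 2}$ pushes forward to $t\,dt^{\otimes 2}=\phi$.

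One point of care in your version: the literal factorisation you write is not quite correct as stated. In the local model near a zero one has $\kappa_\phi=\pi^*(dt)\wedge\omega_{\mathrm{rel}}$, \emph{not} $\pi^*(\sqrt{t}\,dt)\wedge\omega_{\mathrm{rel}}$; the missing $\sqrt{t}$ appears because the vanishing $2$-sphere in the fibre over $t$ is $\sqrt{t}\cdot S^2$, and $\omega_{\mathrm{rel}}$ restricted to it has phase $\arg\sqrt{t}$ rather than being real. Your ``unitary rescaling'' absorbs exactly this factor, so the final conclusion survives, but the intermediate claim that $\omega_{\mathrm{rel}}(v_2\wedge v_3)^2>0$ on the unrescaled zero section is the step where this must be tracked. (Also, the model you invoke, \eqref{Eqn:Model}, is the local model near a \emph{double pole}; the relevant computation for $L_\gamma$ takes place near a zero, as in the paper.) None of this affects the validity of the overall argument, which is sound and matches the paper's.
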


\begin{proof} Away from a neighbourhood $M \subset U \subset S$ we have a holomorphic $SU(2)$-action on $Y_{\phi}$ fixing the divisor at infinity, hence the associated holomorphic volume form is $SU(2)$-invariant in this subset. It follows that the phase function on a matching sphere $L_e$ is $SO(3)$-invariant, where $SO(3)$ rotates the $S^2$-fibres, hence defines a function on the underlying matching path $\gamma_e$.  We claim that  this function co-incides with the phase of $\gamma_e$ in the $\phi$-metric.

The result is local, so after passing to a cover of $S$ we can reduce to the case where $\gamma$ is an arc in the base of a Lefschetz fibration with fibres given by affine quadrics.    An explicit formula for the phase can then be obtained from the Poincar\'e residue theorem, compare to \cite[Section 6]{ThomasYau} or \cite[Section 5e]{KhovanovSeidel}.  In particular, for the local model 
\[
\bC^3 \rightarrow \bC, \qquad (z_1,z_2,z_3) \mapsto \sum z_i^2 = t
\]
equation (6.3) of \cite{ThomasYau} asserts that the phase function associated to $\Theta = dz_1 \wedge dz_2 \wedge dz_3$ at any tangent vector to the Lefschetz thimble defined by a path $\gamma(t)$, and projecting to  $\gamma'(t) \partial_t$, has phase $\gamma'(t) \sqrt{t}$, i.e. that $\Theta$ pushes forward to the one-form $\sqrt{t}\, dt$. Thus $\Theta^{\otimes 2}$ defines a quadratic differential $t \, dt^{\otimes 2}$ on the $t$-plane with a simple zero. 
\end{proof}

The previous Lemma can be used to fix the phase of $\kappa_{\phi}$ uniquely.

\section{Floer theory}

\subsection{Almost complex structures}

The manifold $(Y_{\phi}, \omega)$ is not convex or of contact type at infinity, because the divisor $\Delta_{\infty}$ is not ample.  Lemma \ref{Lem:NoCurve} nonetheless gives good control on holomorphic curve theory in $Y_{\phi}$.

\begin{Definition}
Let $\scrJ_{\phi}$ denote the space of almost complex structures on $Y_{\phi}$ which 
\begin{enumerate}
\item tame the symplectic form $\omega$;
\item  make projection $Y_{\phi}\rightarrow S$ holomorphic;
\item co-incide outside a compact set with the restriction of the integrable complex structure from the crepant resolution $\hat{X}_{\phi}$.
\end{enumerate}
\end{Definition}

\begin{Lemma} \label{Lem:NoRationalCurve}
For $J\in \scrJ_{\phi}$ there is  no non-constant $J$-holomorphic map $\bP^1 \rightarrow Y_{\phi}$, and if $L_\gamma \subset Y_{\phi}$ is a matching sphere, then $L_{\gamma}$ bounds no $J$-holomorphic disk.
\end{Lemma}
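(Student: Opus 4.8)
The plan is to push everything down to the holomorphic projection $\pi\colon Y_\phi\to S$, which every $J\in\scrJ_\phi$ makes holomorphic, using two elementary inputs: that $g(S)>0$ forbids non-constant holomorphic maps of $\mathbb{P}^1$ into $S$ (and, after lifting to the universal cover, of discs with the relevant boundary), and that the vertical form $\omega_v$ of \eqref{Eqn:VerticalForm} is \emph{exact} on $Y_\phi$, since its potential $-\log\|t\|^2$ is a globally defined smooth function away from $\hat\Delta_\infty$. For the statement about spheres: if $u\colon\mathbb{P}^1\to Y_\phi$ is $J$-holomorphic then $\pi\circ u$ is constant, so $u$ maps into a single fibre $F=\pi^{-1}(s)$, and one computes $\int_{\mathbb{P}^1}u^*\omega$ by writing $\omega|_F=\lambda\,\omega_v|_F+\pi^*\omega_S|_F+\delta\,\sigma_H|_F$. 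The second term vanishes because $\pi|_F$ is constant; the class $[\sigma_H]$ restricts to $0$ in $H^2(F;\mathbb{R})$ (it is Poincar\'e dual to $H_M\cap F$, which is empty unless $s\in M$, and for $s\in M$ one has $H^2(F)=0$ since $F\cong\mathbb{C}^2\amalg\mathbb{C}^2$); and the first term integrates to zero by Stokes. Hence $\int u^*\omega=0$ and $u$ is constant.

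For the disc statement, fix the matching path $\gamma$ to be a geodesic for a hyperbolic metric (when $g(S)\geq 2$), respectively a smooth flat metric (when $g(S)=1$), chosen so that $\gamma$ avoids a neighbourhood $U\supset M$, and take $L_\gamma$ to be the representative fibred over $\gamma$ provided by Lemma~\ref{Lem:MatchingPath}. Given a $J$-holomorphic $u\colon(D,\partial D)\to(Y_\phi,L_\gamma)$, put $v=\pi\circ u\colon(D,\partial D)\to(S,\gamma([0,1]))$; this is holomorphic with boundary on the embedded arc $\gamma([0,1])$, whose endpoints are two zeroes of $\phi$. The crux is to show that $v$ is constant. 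Since $D$ is simply connected, $v$ lifts through the universal covering $\widetilde S\to S$ to a holomorphic $\tilde v\colon(D,\partial D)\to(\widetilde S,\tilde\gamma)$, where $\widetilde S$ is $\mathbb{C}$ or the disc and $\tilde\gamma$ is a lift of the geodesic $\gamma$, hence contained in a complete geodesic line $\ell=\partial\overline H_+\cap\partial\overline H_-$, the common boundary of the two (Euclidean or hyperbolic) half-spaces it bounds, each of which is conformally a disc. As $\tilde v(\partial D)\subset\tilde\gamma\subset\overline H_+\cap\overline H_-$, the maximum principle applied to a harmonic function which vanishes on $\ell$ and is positive on the interior of each half-space gives $\tilde v(D)\subset\overline H_+\cap\overline H_-=\ell$; a holomorphic map into the totally real curve $\ell$ has vanishing differential, so $\tilde v$, and therefore $v$, is constant.

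With $v\equiv s\in\gamma([0,1])$, the disc $u$ maps into the single fibre $F=\pi^{-1}(s)$ with $u(\partial D)\subset L_\gamma\cap F$. If $s$ is an endpoint of $\gamma$, that is, a zero of $\phi$, then $L_\gamma$ pinches there and $L_\gamma\cap F$ is a single point, so $u|_{\partial D}$ is constant, $u$ descends to a $J$-holomorphic map $D/\partial D\cong\mathbb{P}^1\to Y_\phi$, and this sphere is constant by the first part. If $s$ lies in the interior of $\gamma$, then $F$ is a finite-type Stein domain in $T^*S^2$ and, by the construction of the matching cycle together with the uniqueness of Lagrangian $S^2$'s in $T^*S^2$ (Hind), $L_\gamma\cap F$ is an exact Lagrangian $S^2$ in $(F,\omega|_F)$; since $\omega|_F$ is exact (its class in $H^2(F;\mathbb{R})$ vanishes, as above) with a primitive that restricts to an exact one-form on $L_\gamma\cap F$ (it is closed and $H^1(S^2)=0$), Stokes forces $\int_D u^*\omega=0$, so $u$ is constant. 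The main obstacle is the middle step, forcing $\pi\circ u$ to be constant — the reduction to a maximum-principle statement on the universal cover via the choice of a geodesic representative is what makes it go through — together with the related need to control $v$ and the matching cycle near the zeroes of $\phi$, where the Lefschetz fibration degenerates; the flexibility in choosing the representative $(L_\gamma,J)$ within its Hamiltonian/almost-complex deformation class is what legitimises picking $\gamma$ geodesic and disjoint from $U$.
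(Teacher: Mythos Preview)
Your argument for spheres is correct and matches the paper's approach. For discs, your max-principle argument on the universal cover works cleanly when $\gamma$ is a geodesic for the uniformising metric, but the reduction to that case is not legitimate in the strength the lemma requires. The statement is for \emph{all} $J\in\scrJ_\phi$ and for the given fibred matching sphere $L_\gamma$. Replacing $\gamma$ by a hyperbolic (or flat) geodesic $\gamma'$ replaces $L_\gamma$ by the Hamiltonian-isotopic $L_{\gamma'}$; to transport the conclusion back you must push $J$ forward by the isotopy, and this in general destroys the defining condition of $\scrJ_\phi$ that $\pi$ be $J$-holomorphic. So what you actually prove is that $L_{\gamma'}$ bounds no $J$-disc for $J\in\scrJ_\phi$, not that the original $L_\gamma$ does. (The matching paths used later in the paper are edges of the $\phi$-cellulation, not hyperbolic geodesics, so this is not a vacuous distinction.)

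The paper's route --- and the simple fix for yours --- sidesteps this completely. Since $\gamma$ is an embedded arc it is contractible, so the homotopy long exact sequence of the pair gives $\pi_2(S,\gamma)\cong\pi_2(S)=0$, the last equality using $g(S)>0$. Hence the projected disc $v=\pi\circ u$ is null-homotopic rel $\gamma$, so $\int_D v^*\omega_S=0$; but $v$ is genuinely holomorphic for the integrable structure on $S$ and $\omega_S$ is K\"ahler, forcing $v$ constant. This is precisely ``the same argument'' the paper invokes, and it works for \emph{any} matching path $\gamma$, with no need to straighten $\gamma$, pass to the universal cover, or invoke Hind. Once $u$ lands in a single fibre, your exactness analysis (which agrees with the paper's one-line ``the intersection of $L_\gamma$ with any fibre it meets is exact'') finishes the proof.
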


\begin{proof}
Since projection $Y_{\phi} \rightarrow S$ is holomorphic and $g(S)>0$, the first statement follows.  The same argument implies that any $J$-holomorphic disk with boundary on $L_\gamma$ is contained in a fibre of the projection $Y_{\phi} \rightarrow S$, but the intersection of $L_{\gamma}$ with any fibre it meets is exact.
\end{proof}

A \emph{pseudoholomorphic disk} denotes the solution to a perturbed Cauchy-Riemann equation 
\begin{equation} \label{Eqn:CR}
(du - \gamma \otimes X_H)^{0,1} \, = \, 0
\end{equation}
defined on a disk with boundary punctures and Lagrangian boundary conditions $L_j \subset int(A)$, where $\gamma \in \Omega^1(D)$ is a 1-form and $X_H$ the Hamiltonian vector field  of a Hamiltonian function $H:\hat{X}_{\phi} \rightarrow \bR$  which vanishes to order at least 2 on the divisor $\hat{\Delta}_{\infty}$.  The $(0,1)$-part of the 1-form is taken with respect to a family of almost complex structures induced by a mapping of the domain of $u$ into $\scrJ_\phi$. Note that the map $u: D \rightarrow \scrJ_\phi$ has the property that $u(t)$ is some fixed integrable structure $J_0$ near $\Delta_{\infty}$.  In local co-ordinates Equation \ref{Eqn:CR} has the form
\[
\partial_s u + J_{u(z)} \partial_t u -  \gamma(\partial_t) \cdot J_{u(z)} \cdot X_H(u, t) \, = \, 0.
\]
Since $X_H \equiv 0$ near $\Delta_{\infty}$ and $J_{u(z)}$ is  constant  near infinity,  outside a relatively compact subset $U$ whose interior contains all the $\{L_j\}$, Equation \ref{Eqn:CR} reduces to the usual unperturbed holomorphic curve equation.   

\begin{Lemma} \label{Lem:Compactness}
Let $\{L_1,\ldots, L_k \} \subset Y_{\phi}$ be  compact Lagrangian submanifolds, and suppose $u_j$ is a sequence of pseudoholomorphic disks  with uniformly bounded energy and with boundary on $\cup_j L_j$. Then the $u_j$ are contained in some compact subset of $Y_{\phi}$.
\end{Lemma}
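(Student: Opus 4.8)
The plan is to confine the pseudoholomorphic disks $u_j$ using a maximum-principle argument with respect to the projection $\pi: Y_\phi \to S$ together with control at infinity in the fibre direction. First I would dispose of escape in the base: since each $u(t) \in \scrJ_\phi$ makes $\pi$ holomorphic and $X_H$ vanishes near $\hat\Delta_\infty$, the composite $\pi \circ u_j$ is (away from the compact support region $U$ of the perturbation) an honest holomorphic map from a disk with boundary on the compact set $\pi(\cup_j L_j) \subset S$. Because $g(S) > 0$ and $S \setminus \Pol_{\geq 3}(\phi)$ carries no non-constant holomorphic disks of bounded area with boundary in a fixed compact set escaping to the poles that were removed, one sees $\pi \circ u_j$ stays in a fixed compact subset $S_0 \subset S$; more carefully, the removed higher-order poles are at finite distance only along paths of infinite area, and the double poles are retained, so any bounded-energy holomorphic disk in the base with compact boundary constraint is confined. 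Thus it suffices to control the fibre direction, i.e. to bound $u_j$ inside $\pi^{-1}(S_0)$.

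The second and main step is to rule out escape to $\hat\Delta_\infty$ within $\pi^{-1}(S_0)$. Here I would invoke the geometry established in Lemma \ref{Lem:SmoothInfinity} and Lemma \ref{Lem:NoCurve}: near $\hat\Delta_\infty$ the almost complex structure is the fixed integrable $J_0$ from the crepant resolution $\hat X_\phi$, the perturbing Hamiltonian $H$ vanishes to order $\geq 2$ on $\hat\Delta_\infty$, and $\hat\Delta_\infty$ supports an effective anticanonical divisor and is nef. Following the standard argument (as in \cite{Seidel:Flux}), one constructs a plurisubharmonic function from $-\log\|t\|^2$, where $t$ is the defining section of $\calO(\hat\Delta_\infty)$: on the region where $H$ is supported this function is genuinely plurisubharmonic with respect to $J_0$, and the order-$2$ vanishing of $H$ ensures the Hamiltonian perturbation term does not destroy the subharmonicity of its pullback under $u_j$. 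The maximum principle then forces each $u_j$ to avoid a fixed neighbourhood of $\hat\Delta_\infty$, since its boundary lies on the compact Lagrangians $L_i$ which are a fixed distance from infinity. Combined with the base confinement from the first step and properness of $\pi$ over $S_0$, this places all $u_j$ in a fixed compact subset of $Y_\phi$.

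The step I expect to be the main obstacle is the interaction between the Hamiltonian perturbation and the maximum principle near $\hat\Delta_\infty$: one must check that on the overlap region — where the perturbation is switched on but the curve may already be close to infinity — the combination of the vertical plurisubharmonic exhaustion and the order-$2$ vanishing of $H$ still yields a genuine subsolution of the relevant differential inequality, so that no boundary-interior maximum of $-\log\|t\|^2\circ u_j$ can occur. This is where the specific normalization that $H$ vanishes to order at least $2$ on $\hat\Delta_\infty$ (rather than merely vanishing) is essential, exactly as in \cite[Sections 3b, 3c]{Seidel:Flux}; the remaining verifications (confinement in the base, properness over $S_0$) are routine given Lemmas \ref{Lem:NoCurve} and \ref{Lem:SmoothInfinity}.
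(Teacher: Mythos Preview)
Your maximum-principle approach is correct in outline but takes a genuinely different route from the paper. The paper argues by contradiction via Gromov compactness in the smooth projective variety $\hat X_\phi$: if the $u_j$ escaped, a subsequence would converge to a stable map in $\hat X_\phi$ meeting $\hat\Delta_\infty$; since the original disks have zero intersection with $\hat\Delta_\infty$ and the equation is unperturbed near infinity, positivity of intersections forces some rational bubble component to lie inside $\hat\Delta_\infty$ with strictly negative intersection number with it, contradicting Lemma~\ref{Lem:NoCurve}. Both arguments ultimately rest on the nef-ness of $\hat\Delta_\infty$ --- the paper via Lemma~\ref{Lem:NoCurve}, you via the semipositive curvature of $\calO(\hat\Delta_\infty)$ making $-\log\|t\|^2$ plurisubharmonic --- so neither is really more general. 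Your route avoids any bubbling analysis, while the paper's avoids needing a global exhaustion function. Two simplifications to your writeup: under the paper's standing assumption that $\S$ is closed the base $S$ is compact, so your Step~1 is vacuous; and since the paper arranges that the perturbed equation reduces to the unperturbed one outside a fixed compact $U \subset Y_\phi$, there is no interaction between the Hamiltonian term and subharmonicity near $\hat\Delta_\infty$ --- the weak maximum principle applies directly on $u_j^{-1}(Y_\phi \setminus U)$ with boundary values bounded by $\max_{\,\bigcup_i L_i \,\cup\, \partial U}(-\log\|t\|^2)$, and the order-$2$ vanishing of $H$ plays no role here.
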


\begin{proof}
Suppose the conclusion of the Lemma fails. By Gromov compactness in the smooth variety $\hat{X}_{\phi}$, some subsequence of the $u_j$ converges to a curve $u_{\infty}$ in $\hat{X}_{\phi}$ which has non-trivial intersection with the divisor $\hat{\Delta}_{\infty}$.  Since the Cauchy-Riemann equation is unperturbed near infinity, the image  of $u_{\infty}$ must meet $\hat{\Delta}_{\infty}$ locally positively, except for components contained inside the divisor.  More precisely, positivity of intersections applied to the principal component of the stable curve limit means that this limit curve must contain at least one bubble component which is a rational curve in $\hat{\Delta}_{\infty}$ with strictly negative intersection with $\hat{\Delta}_{\infty}$. No such curves exist by Lemma \ref{Lem:NoCurve}.
\end{proof}

\subsection{Fukaya category generalities}\label{Sec:Fukaya}

The strictly unobstructed Fukaya categories occuring in this paper belong to a technically manageable regime.  The relevant transversality theory is encompassed by material in \cite{FCPLT, Seidel:Flux}, to which we defer for essentially all details of the construction.   

 Let $(Y, \omega)$ be a symplectic manifold which admits a class of taming almost complex structures $\scrJ_Y$ which satisfy the first conclusion of  Lemma \ref{Lem:NoRationalCurve}.  We only consider strictly unobstructed Lagrangian submanifolds, and suppose furthermore that the conclusion of Lemma \ref{Lem:Compactness} is valid.   (A quasi-projective variety $Y$ admitting a compactification as in Lemma \ref{Lem:NoCurve} is the most relevant source of examples.)  For ach $b\in H^2(Y;\bZ_2)$ there is a $\bZ$-graded $A_{\infty}$-category $\scrF(Y;b)$, linear over $\Lambda_{\bC}$, called the strictly unobstructed ($b$-twisted) Fukaya category.  Objects of $\scrF(Y;b)$ are Lagrangian branes $L\subset Y$, namely:

\begin{itemize}
\item $L\subset Y$ is a closed oriented Lagrangian submanifold;
\item $J_L \in \scrJ_Y$ is an almost complex structure for which $L$ bounds no $J_L$-holomorphic disk and meets no $J_L$-holomorphic sphere;
\item $L$ carries a relative spin structure, relative to the class $b$;  
\item $L$ is graded, see Section \ref{Sec:Gradings}.
\end{itemize}

Morphisms in $\scrF(Y;b)$ are given by the Floer cochain complex $(CF^*(L, L'), \mu^1)$, which is freely generated by intersection points of $L$ and $L'$ if they intersect transversely.   More properly, index theory and the choice of gradings on $L$, $L'$ associate to any isolated transverse intersection point $x \in L \pitchfork L'$ an abstract one-dimensional $\Lambda_{\bC}$-vector space $\mathrm{or}_x$, see \cite[Section 11h]{FCPLT}, and the Floer complex
\[
CF^*(L,L') \ = \ \oplus_x \,  \mathrm{or}_x.
\]
There are higher order chain-level operations which comprise a collection of maps 
\begin{equation} \label{Eqn:Ainfty}
\mu_{\scrF}^d: CF(L_{d-1},L_d) \otimes \cdots \otimes CF(L_0,L_1) \rightarrow CF(L_0,L_d)[2-d]
\end{equation}
 of degree $2-d$, for $d\geq 1$, with $\mu_{\scrF}^1$ being the aforementioned differential and $\mu_{\scrF}^2$ the holomorphic triangle product.
The $\{\mu^d_{\scrF}\}$ have matrix coefficients which are defined by counting holomorphic disks with $(d+1)$-boundary punctures, whose arcs map to the Lagrangian submanifolds $(L_0,\ldots, L_d)$ in cyclic order and which converge in strip-like end co-ordinates at the punctures to intersection points.   The moduli spaces of disks are naturally oriented relative to the orientation lines occuring in \eqref{Eqn:Ainfty} (in a manner which depends on the choice of $b$), so the count of pseudo-holomorphic disks is a signed count.  The count of a disk $u$ is  weighted by the symplectic area $q^{\int_u \omega}$, with $q$ the Novikov parameter.

The construction of the operations $\mu^d_{\scrF}$ is rather involved, and we defer to \cite[Section 3]{Seidel:Flux} for details;  in particular, the coefficients $\mu^d(x_{d-1},\ldots,x_0)$ depend on additional perturbation data $(\EuScript{K}, \EuScript{J})$ (choices of Hamiltonian functions, domain-dependent almost complex structures, strip-like ends etc; these choices in part overcome the difficulty that a Lagrangian is never transverse to itself). The coefficients are not individually well-defined (the $\mu^k$ are not chain maps), but the entire structure is invariant up to a suitable notion of quasi-isomorphism.   Hamiltonian isotopic Lagrangian submanifolds, equipped with brane data compatible with the isotopy, define quasi-isomorphic objects of $\scrF(Y;b)$.

We denote by $\Tw\,\scrF(Y;b)$ the category of twisted complexes over $\scrF(Y;b)$, and by $\Tw^{\pi}\,\scrF(Y;b)$ its idempotent completion.  The corresponding cohomological categories  are denoted $\D\scrF(Y;b)$ and  $\D^{\pi}\scrF(Y;b)$.

We record one particular fact for later.

\begin{Lemma} \label{Lem:TwistEffect}
Let $b \in H^2(Y;\bZ_2)$ be supported by a locally finite cycle $F_b \subset Y$ disjoint from a collection of spin Lagrangian submanifolds $\{L_i\}_{0\leq i \leq k}$. Suppose the $L_i$ are pairwise transverse, and fix intersection points $x_i \in CF(L_i, L_{i+1})$ with cyclic indices.   If a rigid holomorphic disk $u$ contributes to the coefficient $\mu^k_{\scrF(Y)}(x_{k-1},\ldots, x_0)$ with value $\kappa$, then it contributes to the same coefficient in $\scrF(Y;b)$ with coefficient $(-1)^d\cdot\kappa$, where $d = u \cdot F_b$ is the algebraic intersection number of the disk with the cycle $F_b$. 
\end{Lemma}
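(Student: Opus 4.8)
The plan is to reduce the statement to the standard analysis of how orientations of moduli spaces of pseudoholomorphic polygons depend on the choice of relative spin structure, carried out in \cite{FO3}, matched against the orientation conventions of \cite{FCPLT} used to build $\scrF(Y;b)$. Throughout we keep the Lagrangians $L_i$, the intersection points $x_i$, the perturbation data, and the rigid disk $u$ fixed, and compare the two categories $\scrF(Y)=\scrF(Y;0)$ and $\scrF(Y;b)$.

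First I would record the consequences of the disjointness hypothesis. Since $F_b$ is disjoint from $L_i$, a Thom cocycle representative of $b=\mathrm{PD}[F_b]$ may be taken supported away from $L_i$, so $b|_{L_i}=0$ in $H^2(L_i;\bZ_2)$ for every $i$; together with $w_2(TL_i)=0$ (each $L_i$ being spin) and $w_1(\xi_b|_{L_i})=0=w_2(\xi_b|_{L_i})$, this means each $L_i$ carries a relative spin structure relative to the background class $b$ whose stable trivialisation of $TL_i\oplus\xi_b|_{L_i}$ may, moreover, be chosen to agree near each of the finitely many intersection points with the product of the given honest spin structure and a fixed trivialisation of $\xi_b|_{L_i}$. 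With these choices the orientation lines $\mathrm{or}_{x_j}$ — which are built from index data of Cauchy–Riemann operators on half-planes with boundary conditions interpolating $TL_j$ and $TL_{j+1}$ equipped with the trivialisations just described — are literally identical in $\scrF(Y)$ and in $\scrF(Y;b)$, since that construction only sees data in a neighbourhood of $x_j$. Hence the coefficient of $u$ in $\mu^k(x_{k-1},\dots,x_0)$ changes between the two categories only through the orientation of the zero-dimensional moduli space $\{u\}$, equivalently through the sign comparing the canonical trivialisation of $\det D_u$ (coming from $u$ being regular and rigid) with the tensor product of the unchanged lines $\mathrm{or}_{x_j}$ and the gluing data, the area weight $q^{\int_u\omega}$ being visibly unaffected.

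The core step is then to compute that orientation change. The orientation of the disk moduli space is produced by trivialising the boundary condition bundles $u^*TL_i$ along the boundary arcs by means of the relative spin data and extending over the disk; passing from background class $0$ to background class $b$ replaces $u^*TL_i$ by $u^*TL_i\oplus u^*\xi_b|_{L_i}$ along the boundary, and the discrepancy between the resulting orientations is governed by the obstruction to extending, over all of $D$, the trivialisation of $u^*\xi_b|_{\partial D}$ supplied by the relative spin structures. Because $\xi_b$ is a genuine bundle on $Y$, $u^*\xi_b$ extends over $D$, and this obstruction is exactly $\langle w_2(\xi_b),[u]\rangle=\langle b,[u]\rangle\in\bZ_2$. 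Since $b|_{L_i}=0$ for all $i$, the class $b$ lifts to $H^2\!\bigl(Y,\bigcup_i L_i;\bZ_2\bigr)$ and pairs with $[u]\in H_2\!\bigl(Y,\bigcup_i L_i;\bZ_2\bigr)$; this pairing is the mod-$2$ reduction of the algebraic intersection number $u\cdot F_b$, which is well defined precisely because $\partial u\subset\bigcup_i L_i$ avoids $F_b$. Therefore the orientation of $\{u\}$ is multiplied by $(-1)^{u\cdot F_b}$, and the coefficient of $u$ in $\mu^k_{\scrF(Y;b)}(x_{k-1},\dots,x_0)$ equals $(-1)^{u\cdot F_b}$ times its coefficient in $\mu^k_{\scrF(Y)}(x_{k-1},\dots,x_0)$, as claimed. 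I would also remark that the exponent's homological description makes it manifest that $(-1)^{u\cdot F_b}$ depends only on the class of $u$ rel boundary, consistent with the assertion.

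The main obstacle is bookkeeping rather than substance: one must carefully align the coherent orientation scheme of \cite[Sections 11--12]{FCPLT} with the relative-spin-structure dependence analysed in \cite{FO3}, and in particular verify the claim made above that the corner orientation lines can be arranged to be independent of $b$, so that the entire discrepancy is concentrated in the single global sign $(-1)^{\langle b,[u]\rangle}$ rather than being distributed among the corners. Once that identification is in place the rest is the standard index-theoretic computation.
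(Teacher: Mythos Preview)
Your proposal is correct and follows essentially the same approach as the paper: both reduce to the analysis in \cite{FO3} of how orientations of disk moduli spaces depend on the choice of relative spin structure, after noting that the disjointness hypothesis $F_b \cap L_i = \emptyset$ ensures each spin $L_i$ is relatively spin for $b$. The paper's proof is in fact briefer than yours---it simply cites \cite[Vol.~II, Proposition 8.1.16]{FO3} for the sign computation and records the relative spin observation---whereas you have unpacked the mechanism (the obstruction $\langle w_2(\xi_b),[u]\rangle = \langle b,[u]\rangle$ to extending the boundary trivialisation of $u^*\xi_b$ over the disk) that underlies that citation.
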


\begin{proof}
See \cite[Vol. II, Proposition 8.1.16]{FO3}. Since $F_b \cap L_i = \emptyset$, each of the spin Lagrangians $L_i$ is also relatively spin relative to $b$. Note that two Hamiltonian isotopic representatives for $L$ in $Y$ each lying in $Y \backslash F_b$, which define quasi-isomorphic objects of $\scrF(Y;b)$, may not be Hamiltonian isotopic in $Y\backslash F_b$. The trace on $L$ of the  isotopy with the cycle $F_b$  defines a $1$-cycle in $L$, Poincar\'e dual to a class in $H^2(L;\bZ_2)$ which twists its relative spin structure compatibly with the change in intersection number with $F_b$ of some given element in $\pi_2(Y, L)$. \end{proof}

If one encounters Lagrangians in some convenient geometric position (clean Morse-Bott intersections, matching cycles in a Lefschetz fibration) it is often useful to compute without perturbing them.   Given a finite set of Lagrangians $\{L_j\}$ which meet pairwise transversely, one can define the corresponding Fukaya category subject to this constraint, but the stucture coefficients are obtained from (virtual) counts of more general objects called  \emph{pearly trees}, see  \cite[Section 7]{Seidel:HMSgenus2} and \cite[Section 4]{Sheridan}.  

First, one defines $CF^*(L_i, L_i) = C^*_{Morse}(f_i)$ for a fixed Morse function $f_i: L_i \rightarrow \bR$ (Morse-Smale for an underlying Riemannian metric).   An abstract  pearly tree is a planar tree $\Gamma \subset \bR^2$ with one infinite incoming and several infinite outgoing edges, and  $d\geq 0$ finite-length internal edges, vertices of valence at least 3, the connected components of the complement  $\bR^2 \backslash \Gamma$ being labelled by Lagrangians $L_j$.  A holomorphic pearly tree comprises a collection of pseudoholomorphic disks and gradient flow-lines, satisfying obvious incidence and compatibility conditions; gradient flowlines arise when computing a higher product  $\mu^k(x_{k-1},\ldots,x_0)$ for which some inputs $x_i \in CF^*(L_i,L_i)$, see Figure \ref{Fig:PearlyTree}. 
\begin{center}
\begin{figure}[ht]
\includegraphics[scale=0.4]{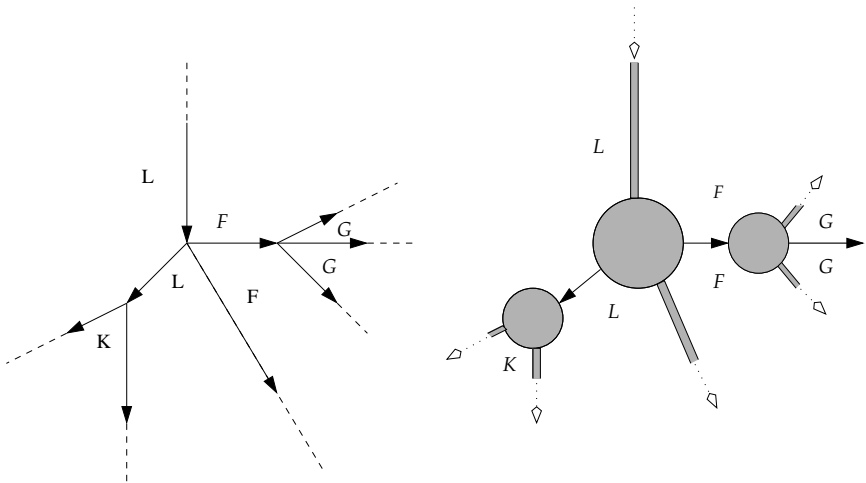}
\caption{An abstract planar tree, and associated pearly configuration\label{Fig:PearlyTree}}
\end{figure}
\end{center}

There are two important situations in which one can avoid pearls  for purposes of computing a coefficient of \eqref{Eqn:Ainfty}. 
\begin{enumerate}
\item If \emph{adjacent} boundary conditions $\{L_i, L_{i+1}\}$ (with cyclic indices) are always pairwise transverse, in particular never co-incide, there are no pearly contributions to this particular coefficient of $\mu^k$.  This relies on an important theorem due to Sheridan \cite[Proposition 4.6]{Sheridan}: moduli spaces of pearls can be made regular by generic choices of consistent perturbation data, and in the regular case pearls with $d$ internal edges form a stratum of real codimension $d$. Therefore  for isolated regular pearls, there are no internal (finite length Morse) edges of the underlying planar tree.

\item If there is exactly one adjacent pair of co-incident Lagrangians $L_i = L_{i+1}$, and the corresponding input or output $x_i \in HF^*(L_i,L_i) \cong H^*(L_i)$ is the class of top degree, then one can count pseudoholomorphic polygons which are smooth at the given corner but have an incidence condition, passing through a fixed generic point $q \in L_i$ Poincar\'e dual to $x_i$.  Compare to \cite[Section 7]{Seidel:HMSgenus2}.
\end{enumerate}

The choice of $b_0$ as background class is relevant in Lemma \ref{Lem:LocalNonzero2}, but much of the discussion in the next sections applies to the categories $\scrF(Y;b)$ uniformly.  We will sometimes omit the background class $b \in H^2(Y_{\phi};\bZ_2)$ from our notation when it plays no role.

\subsection{Grading the WKB algebra} \label{Subsec:Grading}

Now return to the 3-fold $Y_{\phi}$. Any Lagrangian matching sphere is strictly unobstructed and admits a unique spin structure.   Since $L$ bounds no holomorphic disks, the cohomology $H(CF^*(L,L)) \cong H^*(L)$, equipped with its classical $A_{\infty}$-structure.

Given a finite collection of Lagrangian spheres  $\{L_e\} \subset Y_{\phi}$, and a choice of $b \in H^2(Y_{\phi};\bZ_2)$, there is an associated total morphism $A_{\infty}$-algebra 
\begin{equation} \label{Eqn:WKBMorphismAlgebra}
\scrA = \scrA_b = \oplus_{e,e'} HF(L_e, L_{e'})
\end{equation}
Theorem \ref{Thm:Main2} involves computing the $A_{\infty}$-algebra $\scrA = \scrA(T;b_0)$, where the indices $e \in T$ are indexed by edges of a non-degenerate triangulation $T$ and correspond to a WKB-collection of Lagrangian spheres, and identifying it with the corresponding Ginzburg potential algebra (i.e. with the total endomorphism algebra of the category $\CC$ considered in Section \ref{Sec:QuiverPotential}).

Lemma \ref{Lem:MatchingPath} implies that each $L_e$ can be taken to fibre over the path $\gamma_e$, and hence it suffices to compute the Floer theory amongst such a collection of fibred matching spheres. By projecting to $S$, pseudoholomorphic disks are then constrained by the Riemann mapping theorem. Collections of matching spheres don't lie in general position (there are triple intersections at vertices of the Lagrangian cellulation), so in principle one must define $\scrA$ via pearls, but the remarks at the end of Section \ref{Sec:Fukaya} imply that in the case at hand the consequences are fairly benign.

 Consider the Lagrangian submanifolds $\{L_e\}$ which are matching spheres for the edges of the Lagrangian cellulation (dual to a non-degenerate WKB triangulation).  Any two distinct Lagrangians $L_e, L_f$ are either disjoint or meet at either one or two isolated points, lying over trivalent vertices of the cellulation (the nodal points of a fibre of $\pi$ lying over a zero of $\phi$).

\begin{Lemma} \label{Lem:ConcentrateGrading}
The $L_e$ admit gradings for which 
\begin{itemize}
\item the algebra $\scrA = \oplus_{e,e'} HF^*(L_e, L_{e'})$ is concentrated in degrees $0\leq \ast \leq 3$; 
\item the isolated intersection points have absolute Maslov index $+1$ clockwise and $+2$ anticlockwise, cf. Figure \ref{Fig:MaslovGrading}.
\end{itemize}
\end{Lemma}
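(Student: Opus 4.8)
The plan is to produce the gradings from Lemma \ref{Lem:Gradable2} and then read off all the relevant Maslov indices from a local computation at the nodes of the singular fibres. First I would fix, for each edge $e$ of the Lagrangian cellulation, the grading on $L_e$ whose phase function is the $\phi$-phase of the matching path $\gamma_e$; by Lemma \ref{Lem:Gradable2} this exists, and it is unique up to an overall integer shift (one integer per edge) which I will pin down at the end. With any such choice, $HF^*(L_e,L_e) \cong H^*(L_e) = H^*(S^3)$ with its classical ring structure, since $L_e$ bounds no holomorphic disk (Lemma \ref{Lem:NoRationalCurve}); so the diagonal summands are concentrated in degrees $0$ and $3$. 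For $e \neq e'$, the matching paths are disjoint except possibly at common endpoint zeroes (the Lagrangian cellulation is an embedded graph), so $L_e$ and $L_{e'}$ are either disjoint or meet in one or two points, each lying over a zero $z$ of $\phi$ at the nodal point of $\pi^{-1}(z)$. Hence the first bullet — $\scrA$ concentrated in $[0,3]$ — will follow once every such intersection point is shown to be transverse of Maslov index $1$ or $2$.

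Next I would localise at a zero $z$. By the proof of Lemma \ref{Lem:Gradable2}, $\pi^{-1}$ of a small disc about $z$ is, up to symplectomorphism, the standard $A_1$ Lefschetz fibration $\{z_1^2+z_2^2+z_3^2 = t\} \subset \bC^3 \times \bC_t$ over the $t$-disc, with $\Theta = dz_1\wedge dz_2\wedge dz_3$ pushing forward to a square root of the local form $t\, dt^{\otimes 2}$ of $\phi$; the central fibre is the nodal quadric and the node is the common point of the three thimbles over the three horizontal separatrices emanating from $z$. Because $z$ has valency $3$ in the cellulation and cone angle $3\pi$ in the $\phi$-metric, the incident edges leave $z$ at $w$-arguments differing by $2\pi/3$ — equivalently at $\phi$-phases differing by $\pi$ — cyclically ordered by the orientation of $S$. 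Using Lemma \ref{Lem:MatchingPath} I may take each $L_e$ near $z$ to be the thimble over the corresponding straight ray, i.e. the linear Lagrangian $e^{i\psi_e/2}\bR^3 \subset \bC^3$ in the central fibre; two of these span $\bC^3$ since $\psi_e \neq \psi_{e'}$, so the node is a transverse intersection. The Maslov index at the node is then a linear computation: one compares the phase functions, which by Lemma \ref{Lem:Gradable2} are the $\phi$-phases of $\gamma_e$, $\gamma_{e'}$ at $z$ (this is the residue/phase computation of \cite[\S6]{ThomasYau} or \cite[\S5e]{KhovanovSeidel}). With the $\phi$-phase difference equal to $\pi$ going clockwise and $2\pi$ going anticlockwise around $z$, the computation yields absolute Maslov index $+1$ clockwise and $+2$ anticlockwise, consistent with the Floer Poincar\'e duality $i(\tilde L_e,\tilde L_{e'};p) + i(\tilde L_{e'},\tilde L_e;p) = 3 = \dim_{\bR} L_e$; see Figure \ref{Fig:MaslovGrading}.

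It then remains to fix the overall per-edge integer shifts so that the index comes out exactly as $+1$/$+2$, rather than modified by the monodromy ambiguity of the chosen lift. The constraint at a node over $z$ fixes the difference of the two shifts of the incident edges; since at each trivalent vertex the three $\phi$-phases differ by $\pi$ in the cyclic order, the local constraints are mutually compatible, and propagating across the connected cellulation (after fixing one edge arbitrarily) determines a consistent global choice. Equivalently, the integer dictated by taking the $\phi$-phases of all the $\gamma_e$ to lie in a common fundamental domain already works. With this choice, every off-diagonal generator lies in degree $1$ or $2$ and every diagonal generator in degree $0$ or $3$, so $\scrA = \oplus_{e,e'} HF^*(L_e,L_{e'})$ is concentrated in degrees $0 \leq * \leq 3$.

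The main obstacle is carrying out the last two steps honestly: extracting the precise values $+1$ and $+2$ from the local model — this is the \emph{grading miracle} of Remark \ref{Rem:GradingMiracle}, which genuinely uses $\M \neq \emptyset$, since a globally holomorphic differential admits no trivalent cellulation (Remark \ref{Rem:NoTrivalentChance}) — and verifying that the per-edge integer ambiguities can be fixed coherently over all of $S$. Both require care with the residue and phase conventions and with the way the cone angle $3\pi$ at a simple zero interacts with the trivialisation $\tilde{\kappa}_{\phi}$ of the bicanonical bundle fixed at the end of the previous subsection.
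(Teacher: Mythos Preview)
Your strategy is the paper's: grade via Lemma~\ref{Lem:Gradable2}, localise at a zero in the model $\sum z_j^2 = t$, identify the three incident matching spheres with linear thimbles in $\bC^3$, and read off the Maslov index. Two things go wrong in the execution.

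First, the three thimbles have \emph{equal} phase, not phases differing by~$\pi$. In the paper's Darboux chart they are $\bR^3$, $e^{i\pi/3}\bR^3$, $e^{2i\pi/3}\bR^3$, and the quadratic volume form $(dz_1\wedge dz_2\wedge dz_3)^{\otimes 2}$ is invariant under multiplication by $e^{i\pi/3}$; equivalently the push-forward $t\,dt^{\otimes 2}$ of Lemma~\ref{Lem:Gradable2} is invariant under $t\mapsto e^{2\pi i/3}t$. (Your ``$\phi$-phases differing by $\pi$'' appears to come from tracking the argument of $\sqrt{\phi}$ rather than of $\phi$; the phase function $\alpha_L$ is defined via the squared form.) Since the phases agree, one simply takes the same constant grading on all three, and your entire third paragraph---propagating per-edge integer shifts across the cellulation and checking global coherence---is solving a problem that does not arise. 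That discussion would also be delicate if it were needed: local constraints on shift \emph{differences} at each trivalent vertex assemble into a $1$-cocycle on the cellulation, and you have not verified it is a coboundary.

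Second, you do not actually compute the index; you assert $+1/+2$ and cite references. The paper does it via Example~\ref{Ex:GradeCotangent}: $e^{i\pi/3}\bR^3$ is the graph over $\bR^3$ of $df$ for a function with an isolated minimum, so for the grading compatible with the obvious rotation isotopy back to $\bR^3$ the index would be $0$; the constant-phase grading differs from that one by the constant function $1$, whence the index is $+1$. (The paper also notes an alternative: the index is invariant under the $2\pi/3$ rotation of Figure~\ref{Fig:MaslovGrading}, and non-vanishing of the constant-triangle product in Lemma~\ref{Lem:LocalZero} then forces the common value to be $+1$.) This Morse-index step is short but is where the numbers actually come from, and it belongs in your argument.
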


\begin{center}
\begin{figure}[ht]
\includegraphics[scale=0.5]{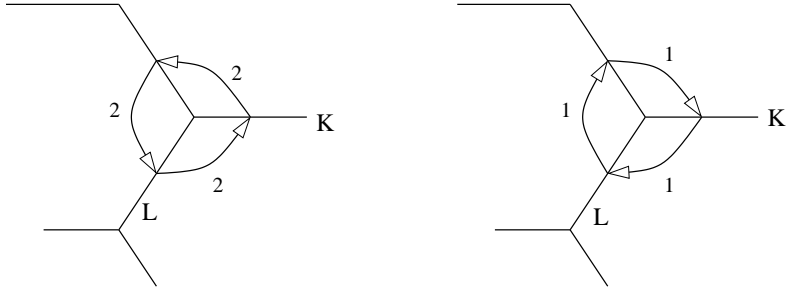}
\caption{A point contributes to $CF^2(L,K)$ and $CF^1(K,L)$.\label{Fig:MaslovGrading}} 
\end{figure}
\end{center}

\begin{proof}
The groups $HF^*(L,L) \cong H^*(S^3)$ carry their natural grading, and by Poincar\'e duality $HF^*(L,L') \cong HF^{3-*}(L',L)$ for any $L,L'$, so it suffices to determine the grading of an isolated intersection point lying over a  zero of $\phi$.  The Lagrangian matching paths of a WKB-type  Lagrangian cellulation are realised by geodesics for the $\phi$-metric on $S$, so the Lagrangians are locally given by transversely intersecting special Lagrangian thimbles.

More explicitly, working locally near a simple zero of the quadratic differential $\phi$, we consider the three straight arcs of the associated vertical foliation, which form the terminals of a trivalent vertex.  (Comparing to Figure \ref{Fig:LagCellFlip}, the leaves of the horizontal foliation at a zero fall into the double poles at the centres of  the three cells adjacent to the zero, and the edges of the Lagrangian cellulation are given, locally near the zero, by leaves of the vertical foliation.)  Each of the arcs defines by parallel transport a Lagrangian disk in the 3-fold $Y_{\phi}$, and Lemma \ref{Lem:Gradable2} implies these all have identical  phase.  There are Darboux co-ordinates in which these three Lagrangians are given by linear subspaces $\bR^3, e^{i\pi/3}\bR^3$ and $e^{2i\pi/3}\bR^3 \subset \bC^3$. (Note the quadratic volume form $\Theta = (dz_1 \wedge dz_2 \wedge dz_3)^{\otimes 2}$ is invariant under rotation by $\pi/3$, just as the quadratic volume form on $S$, locally given by $t\, dt^{\otimes 2}$ with $t= \sum z_j^2$, is locally invariant under rotation by $2\pi/3$.)

$L'=e^{i\pi/3}\bR^3$ is the graph of the differential of a function over $L=\bR^3$ with an isolated minimum, so a Morse critical point of index $0$.  Therefore, for the grading on $L'$ compatible with the obvious rotation isotopy back to $L$, the absolute index would be zero by Example \ref{Ex:GradeCotangent}.  The phase function $\alpha_{L'} \equiv \pi/2$ differs from the phase function compatible with that isotopy by the constant function $1$, hence the index of the intersection point is $+1$.   (An alternative for the last step is to use non-vanishing of the triangle product, Lemma \ref{Lem:LocalZero} below, to show that since the absolute indices are symmetric under rotation of Figure \ref{Fig:MaslovGrading} by $2\pi/3$, they must all equal $+1$.)
  \end{proof}

The local Morse-theoretic description of an isolated intersection $x$ of WKB spheres $L, L'$ given above also yields preferred isomorphisms $\mathrm{or}_x \cong k$ between the orientation lines and the ground field, coming from preferred trivialisations $\det(D_x) \cong k$ for the determinant line of a $\cdbar$-operator on a half-plane with linear Lagrangian boundary conditions which rotate by $\pi/3$. Via these trivialisations, Lemma \ref{Lem:ConcentrateGrading} shows that the WKB algebra is isomorphic, as a graded vector space, to the total morphism algebra of the category $\CC$ introduced in Section \ref{Sec:QuiverPotential}.  

\subsection{First constraints on polygons}

Let $\{L_e\}$ be matching spheres which are edges of a non-degenerate WKB cellulation. Appealing to \cite[Lemma 2.1]{FCPLT}, we can take the $A_{\infty}$-structure on the algebra $\scrA$ from \eqref{Eqn:WKBMorphismAlgebra} to be strictly unital. 

\begin{Lemma} \label{Lem:GradingInputs}
Let $k\geq 3$.  If the product
\[
\mu^k: HF^*(L_{k-1},L_k) \otimes \cdots \otimes HF^*(L_0,L_1) \rightarrow HF^*(L_0,L_k)[2-k]
\]
is non-zero, then either $L_0 \neq \L_k$ and all inputs have degree $1$, or $L_0 = L_k$, exactly one input has degree $2$ and all others have degree $1$.
\end{Lemma}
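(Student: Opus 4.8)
The statement is about degrees of the inputs (and output) of a nonzero higher product $\mu^k$ for $k \geq 3$, among WKB matching spheres. The key input is the grading computation of Lemma \ref{Lem:ConcentrateGrading}: the algebra $\scrA$ is concentrated in degrees $0 \leq \ast \leq 3$, each isolated intersection point of two WKB spheres contributes to $CF^1$ in the clockwise direction and $CF^2$ in the anticlockwise direction, and the self-Floer cohomology $HF^*(L_e,L_e) \cong H^*(S^3)$ sits in degrees $0,3$. I would combine two ingredients: first, the purely numerical degree constraint coming from the fact that $\mu^k$ has degree $2-k$; second, the geometric fact that a nonzero contribution comes from a rigid pseudoholomorphic disk, and such disks project holomorphically to $S$, so their corners cannot all be ``concave'' — in particular one cannot have all inputs of degree $2$ or involve too many degree-$0$ or degree-$3$ inputs.

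**First step: the numerical balance.** If $x_i \in HF^{d_i}(L_{i-1},L_i)$ for $i = 0,\dots,k-1$ and the output lies in $HF^{d}(L_0,L_k)$, then $\mu^k$ having degree $2-k$ forces $d = \sum_{i=0}^{k-1} d_i + 2 - k$, i.e. $\sum (d_i - 1) = d - 2$. Since $0 \leq d \leq 3$, we get $-2 \leq \sum_{i=0}^{k-1}(d_i - 1) \leq 1$. Each $d_i \in \{0,1,2,3\}$, so $d_i - 1 \in \{-1,0,1,2\}$. I would first dispose of inputs of degree $0$ and $3$ and the output of degree $0$ or $3$: an input of degree $0$ or an output of degree $3$ is the unit or a multiple of the fundamental class on some $L_i = L_{i-1}$ (resp. $L_0 = L_k$); strict unitality (available by \cite[Lemma 2.1]{FCPLT}, invoked just above the statement) kills any $\mu^k$, $k \geq 3$, with a degree-$0$ input. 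An input of degree $3$ means $L_{i-1} = L_i$ and $x_i$ is the top class; here I would argue via the remarks at the end of Section \ref{Sec:Fukaya} — one replaces that corner with a point incidence condition at a generic $q \in L_i$ — and then the projection-to-$S$ argument (a rigid disk projects to an immersed polygon in $S$ with geodesic edges, and a ``smooth corner with incidence'' contributes positively to the Euler-characteristic/area count in a way incompatible with the remaining corners all being concave). This should leave only inputs of degree $1$ and $2$, and an output of degree $1$ or $2$.

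**Second step: ruling out two degree-$2$ inputs.** With all $d_i \in \{1,2\}$ and $d \in \{1,2\}$, set $r = \#\{i : d_i = 2\}$. The balance $\sum(d_i-1) = r = d - 2 \in \{-1,0\}$ gives immediately $r \leq 0$?? — no: $r = d-2$ forces $d = 2, r = 0$ or $d=1,r=-1$ which is impossible, so something is off, meaning I have the grading convention of $\mu^k$ slightly misaligned. I would recompute carefully with the stated convention $\mu^k_{\scrF}: CF(L_{d-1},L_d)\otimes\cdots\otimes CF(L_0,L_1) \to CF(L_0,L_d)[2-d]$, giving output degree $d_{\mathrm{out}} = \sum d_i - (2-k) \cdot(-1)$? — i.e. $d_{\mathrm{out}} = \sum_{i} d_i + (2 - k)$ so $\sum (d_i - 1) = d_{\mathrm{out}} - 2$, as above; with all $d_i \geq 1$ we get $\sum(d_i - 1) = r' \geq 0$ where $r' = \sum(d_i-1)$ counts degree-$2$ inputs with multiplicity (a degree-$3$ input counting twice, already excluded). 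So $r' = d_{\mathrm{out}} - 2 \leq 1$, hence $r' \in \{0,1\}$: at most one degree-$2$ input. If $r' = 0$ all inputs have degree $1$ and $d_{\mathrm{out}} = 2$, but this forces $L_0 \neq L_k$ — indeed if $L_0 = L_k$ then $d_{\mathrm{out}} \in \{0,3\}$, excluded. If $r' = 1$ there is exactly one degree-$2$ input and $d_{\mathrm{out}} = 3$, forcing $L_0 = L_k$. These are exactly the two listed alternatives. The main obstacle is thus not this bookkeeping but justifying cleanly that outputs/inputs of degree $0$ or $3$ genuinely cannot occur for $k\geq 3$ — the degree-$0$ input case is immediate from strict unitality, but the degree-$3$ input and the $L_0 = L_k$ with $d_{\mathrm{out}} \in \{0,3\}$ cases require the geometric pearl/incidence argument and a careful check that the sphere $HF^*(L,L) = H^*(S^3)$ contributes nothing in degree $0$ beyond the unit and nothing in degree $3$ as an output of a length-$\geq 3$ product; I expect the degree-$3$ input exclusion, via the point-constraint reformulation and positivity of the projected holomorphic polygon, to be the delicate point, and I would lean on Lemma \ref{Lem:Gradable2} (edges are geodesics, phases add up) together with a Gauss–Bonnet/area argument in $S$ to finish it.
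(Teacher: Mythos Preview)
Your Step~2 already contains the complete proof, and it is exactly the paper's argument: once strict unitality kills degree-$0$ inputs, all $d_i \geq 1$, so $r' := \sum_i (d_i - 1) \geq 0$; but $r' = d_{\mathrm{out}} - 2 \leq 1$ since $d_{\mathrm{out}} \leq 3$. Hence $r' \in \{0,1\}$, which simultaneously forces every $d_i \in \{1,2\}$ (a degree-$3$ input would make $r' \geq 2$) and allows at most one $d_i = 2$. The case split on $L_0 = L_k$ then follows from the fact that $HF^*(L_e,L_e) \cong H^*(S^3)$ is concentrated in degrees $0,3$ while $HF^*(L_e,L_f)$ for $e \neq f$ is concentrated in degrees $1,2$.

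The unnecessary part of your proposal is Step~1's geometric treatment of degree-$3$ inputs (point-incidence, projection to $S$, Gauss--Bonnet). You flag this as ``the delicate point'', but it is not needed at all: the degree count already excludes such inputs, as you yourself parenthetically observe in Step~2. The paper's proof is four lines of pure bookkeeping with no holomorphic curve theory beyond the grading input from Lemma~\ref{Lem:ConcentrateGrading}. You may be conflating this lemma with the \emph{next} one (Lemma~\ref{Lem:All-inputs-degree-one}), where a genuine geometric argument is used to exclude the second case; but that is a separate statement.
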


\begin{proof}
Lemma \ref{Lem:ConcentrateGrading} implies that the degree $0$ subalgebra of the WKB algebra is spanned by the units $e_i \in HF^0(L_i, L_i)$ of the constituent WKB spheres.  It follows that 
since the $A_{\infty}$-structure is strictly unital, none of the inputs to a non-trivial operation $\mu^k$ with $k\geq 3$ has degree $0$. Hence every input has degree $\geq 1$, whilst $\mu^k$ has degree $2-k$. Since $\scrA$ is concentrated in degrees $0 \leq \ast \leq 3$, it follows that no input can have degree $3$, and at most one input can have degree $2$. Moreover, there is an input of degree $2$ if and only if the output has degree $3$, which is possible only if $L_0$ and $L_k$ co-incide. 
\end{proof}

\begin{Lemma} \label{Lem:All-inputs-degree-one}
The second case of Lemma \ref{Lem:GradingInputs} does not occur.
\end{Lemma}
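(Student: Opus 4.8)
The plan is to deduce the vanishing from a formal property of cyclic $A_\infty$-algebras, using essentially nothing about the geometry of $Y_\phi$ beyond the holomorphic volume form $\kappa_\phi$ of Lemma \ref{Lem:Gradable}. First observe what Lemma \ref{Lem:GradingInputs} gives in the hypothetical second case: $L_0 = L_k$, one input of degree $2$ and $k-1$ inputs of degree $1$, so (since $\mu^k$ has degree $2-k$) the output lies in $HF^3(L_0,L_0) \cong H^3(S^3) \cong k$ and is thus a scalar multiple of the top class. It therefore suffices to show that this scalar vanishes. Writing $e_{L_0} \in HF^0(L_0,L_0)$ for the unit and $\langle\,\cdot\,,\,\cdot\,\rangle$ for the non-degenerate Poincar\'e pairing $HF^*(L,L')\otimes HF^{3-*}(L',L) \to k$, the only class in $HF^*(L_0,L_0)$ that pairs non-trivially with $e_{L_0}$ is the top class, so it is equivalent to prove
\[
\langle\, e_{L_0},\ \mu^k(x_{k-1},\ldots,x_0)\,\rangle \ = \ 0 \qquad (k\geq 3).
\]

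The second ingredient is the cyclic structure. The $L_e$ are compact, strictly unobstructed, graded Lagrangians in the Calabi-Yau threefold $Y_\phi$, which carries the nowhere-zero holomorphic volume form $\kappa_\phi$; by the general theory of Fukaya categories of Calabi-Yau manifolds, the resulting Floer $A_\infty$-algebra $\scrA$ admits a model which is at once strictly unital (as already arranged via \cite[Lemma 2.1]{FCPLT}) and cyclic, in the sense that the forms $c_{k+1}(a_k,\ldots,a_0):=\langle \mu^k(a_k,\ldots,a_1),a_0\rangle$ are invariant, up to Koszul sign, under cyclic permutation of their $k+1$ arguments. Granting this, the conclusion is immediate: using the symmetry of the pairing and one cyclic rotation,
\begin{align*}
\langle\, e_{L_0},\ \mu^k(x_{k-1},\ldots,x_0)\,\rangle
&= \pm\, c_{k+1}(x_{k-1},\ldots,x_0,e_{L_0}) = \pm\, c_{k+1}(e_{L_0},x_{k-1},\ldots,x_0) \\
&= \pm\,\langle\, \mu^k(e_{L_0},x_{k-1},\ldots,x_1),\ x_0\,\rangle ,
\end{align*}
and $\mu^k(e_{L_0},x_{k-1},\ldots,x_1) = 0$ for $k\geq 3$ by strict unitality. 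Hence $\mu^k(x_{k-1},\ldots,x_0)=0$, and the second case of Lemma \ref{Lem:GradingInputs} cannot occur.

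The delicate point is the invocation of the cyclic structure at this stage: one has to know that a strictly unital minimal model of $\scrA$ can be chosen to be cyclic simultaneously, and that the pairing entering $c_{k+1}$ is the one induced by $\kappa_\phi$ -- this is precisely where $c_1(Y_\phi)=0$ and Lemmas \ref{Lem:FirstChernZero} and \ref{Lem:Gradable} enter. If one prefers to bypass that, the same identity can be read off geometrically: the coefficient of the top class in $\mu^k(x_{k-1},\ldots,x_0)$ counts rigid polygons passing through a generic point $q\in L_0$, and letting that marked point slide along the $L_0$-arc until it collides with an adjacent corner presents the count as a signed sum of products of lower $\mu$-operations, each of which vanishes for degree reasons (this is the same argument in its geometric dress). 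Either route closes the proof; the algebraic one is shorter to write down.
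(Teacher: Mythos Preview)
Your proof is correct, and in fact the paper explicitly acknowledges this route immediately after giving its own argument: ``According to \cite[Theorem 1.1]{Fukaya:cyclic}, over $\Lambda_{\bC}$ the Fukaya category can always be taken both cyclic and strictly unital, and Lemma \ref{Lem:All-inputs-degree-one} would also follow formally from that fact.'' So your algebraic argument via cyclic symmetry plus strict unitality is precisely the alternative the author had in mind.

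The paper's own proof is different and more geometric. Working in a pearly/Morse model with a perfect Morse function on each $L_e$, the output lying in $CF^3(L_0,L_0)$ means the output marked point must lie in the stable manifold of the maximum, which is all of $L_0$; the output marked point is therefore unconstrained, so any non-constant contributing polygon moves in a one-parameter family and cannot be rigid. Constant polygons are then dealt with separately. This has the advantage of being self-contained: it avoids invoking Fukaya's theorem on simultaneous cyclic and strictly unital models, which is a substantial result. Your approach, by contrast, is cleaner to write and makes the Calabi--Yau input transparent, at the cost of importing that black box. Your closing ``geometric'' paragraph gestures at a cobordism version of the paper's argument (sliding the marked point to a corner rather than observing directly that it is unconstrained); either dimension-count works, but the paper's phrasing is the more direct one.
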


\begin{proof}
Working with pearls, we take a Morse model $CF^*(L,L) = C^*_{Morse}(L)$  for self Floer cochains; without loss of generality, for each WKB sphere we can take a perfect Morse function so the Floer cochain group has rank 2.  If there is a degree two input to the product in Lemma \ref{Lem:GradingInputs}, the output is in the rank one space $CF^3(L_0,L_0)$, which means that the holomorphic disk should pass through the stable manifold for the gradient flow of the maximum of the Morse function on $L_0$. Therefore this marked point is unconstrained, and hence no non-constant disk can be rigid. It follows from the proof of Lemma \ref{Lem:LocalZero} below that constant polygons contribute non-trivially to $\mu^2$ but not to $\mu^k$ for $k\geq 3$. \end{proof}

According to \cite[Theorem 1.1]{Fukaya:cyclic}, over $\Lambda_{\bC}$ the Fukaya category can always be taken both cyclic and strictly unital, and Lemma \ref{Lem:All-inputs-degree-one} would also follow formally from that fact, compare to Section \ref{Sec:QuiverPotential}.  A helpful consequence of the previous result is that one can compute the Fukaya category using pseudoholomorphic disks rather than pearly trees.  Note that these results do not imply that all the $\{L_i\}$ in \eqref{Eqn:Ainfty} are pairwise distinct for the corresponding operation to be non-trivial, see Figure \ref{Fig:HoloT2} for an example.
\begin{center}
\begin{figure}[ht]
\includegraphics[scale=0.35]{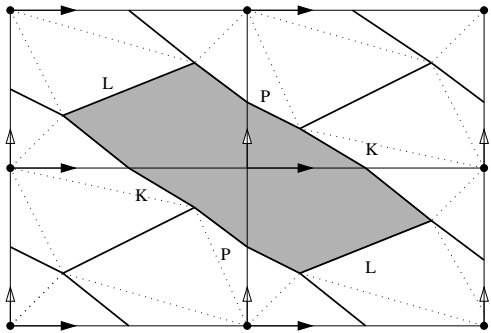}
\caption{A polygon contributing to $\mu^5$ in the universal cover when $g(S)=1, |M|=1$; note the holomorphic disk maps onto $S=T^2$. \label{Fig:HoloT2}}
\end{figure}
\end{center}

The boundary of any holomorphic polygon contributing to $\mu^k$ defines a (not necessarily embedded) parametrized closed path in the graph on the surface $S$ formed by the edges of the Lagrangian cellulation.  Inputs of degree $1$ correspond to turning clockwise in the graph defined by the cellulation edges, and inputs of degree $2$ correspond to turning anticlockwise, along the boundary of the disk; it is simplest to see this by lifting the disk to the universal cover, given by pulling back the fibration $\pi: Y_{\phi} \rightarrow S$ to the universal cover of $S$, where the Lagrangian cellulation edges give rise to a trivalent planar graph. Lemma \ref{Lem:GradingInputs} implies that there is at most one anticlockwise turn.  

\begin{Example}\label{Ex:PD} Suppose $L$ and $K$ meet transversely at a single point. The unique closed path of length $2$ gives rise to the product 
\begin{equation} \label{Eqn:Product}
HF^1(L,K) \otimes HF^2(K,L) \rightarrow HF^3(K,K)
\end{equation}
which is  non-trivial by Poincar\'e duality. 
\end{Example}

\subsection{Constant triangles}

Let $L_0,L_1,L_2 \subset Z$ be three graded Lagrangian submanifolds, intersecting pairwise transversely at a point $p$. There is a constant holomorphic triangle $u$ with boundary conditions (anticlockwise ordered) $L_0,L_1,L_2$. Let $D_u$ be the linearized operator at $u$, i.e. the $\bar\partial$-operator on the trivial vector bundle with fibre $TZ_p$ over a disk with three boundary punctures, with boundary values in $TL_{0,p}$, $TL_{1,p}$, $TL_{2,p}$. The index formula for such operators \cite[Proposition 11.13]{FCPLT} implies that 
\begin{equation} \label{eq:triangle}
\index(D_u) = i(\tilde{L}_0,\tilde{L}_2;p) - i(\tilde{L}_0,\tilde{L}_1;p) - i(\tilde{L}_1,\tilde{L}_2;p).
\end{equation}

We now return to the situation (and notation) arising in the proof of Lemma \ref{Lem:ConcentrateGrading}.  We label the linear Lagrangians $\bR^3, e^{i\pi/3}\bR^3$ and $e^{2i\pi/3}\bR^3 \subset \bC^3$ by $L_0$, $L_{2\pi/3}$ and $L_{4\pi/3}$, corresponding to the slopes in the complex plane of the arcs over which they fibre in the model Lefschetz fibration
\[
p: \bC^3 \rightarrow \bC, \quad (z_1, z_2, z_3) \mapsto z_1^2 + z_2^2 + z_3^2.
\]
For the zero phase functions of Lemma \ref{Lem:ConcentrateGrading}, the  indices appearing in \eqref{eq:triangle} are 
\[
 i(\tilde{L}_0,\tilde{L}_{2\pi/3};p) =2; \  i(\tilde{L}_0,\tilde{L}_{4\pi/3};p) =1; \  i(\tilde{L}_{4\pi/3},\tilde{L}_{2\pi/3};p) = 1,
\]
so the constant triangle has index $0$ and can in principle contribute non-trivially to the product in the algebra $\scrA_b$.  

\begin{Lemma} \label{Lem:LocalZero}
The constant holomorphic triangle at a trivalent zero contributes exactly $+1$ to a non-trivial multiplication
\[
\mu^2_{\scrF}: HF^1(L_{4\pi/3}, L_{2\pi/3}) \otimes  HF^1(L_0, L_{4\pi/3})  \ \longrightarrow \ \ HF^2(L_0, L_{2\pi/3}).
\]
\end{Lemma}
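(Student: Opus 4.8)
The plan is to carry out the entire computation inside the local Darboux model of the proof of Lemma~\ref{Lem:ConcentrateGrading}: the symplectic vector space $(\bC^3,\omega_{std})$ together with the three linear Lagrangians $L_0=\bR^3$, $L_{4\pi/3}=e^{2\pi i/3}\bR^3$ and $L_{2\pi/3}=e^{\pi i/3}\bR^3$, which are pairwise transverse and meet only at the origin $p=0$. Labelling the boundary components of a holomorphic triangle anticlockwise by $(L_0,L_{4\pi/3},L_{2\pi/3})$ matches the three corners of
\[
\mu^2_{\scrF}\colon HF^1(L_{4\pi/3},L_{2\pi/3})\otimes HF^1(L_0,L_{4\pi/3})\longrightarrow HF^2(L_0,L_{2\pi/3}),
\]
and by \eqref{eq:triangle} the constant triangle $u\equiv p$ has $\index(D_u)=2-1-1=0$. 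The first point I would establish is that $u$ is the \emph{only} holomorphic triangle with these boundary conditions: the standard primitive $\lambda=\tfrac{1}{2}\sum_j(x_j\,dy_j-y_j\,dx_j)$ of $\omega_{std}$ restricts to zero on every linear Lagrangian through the origin, so Stokes' theorem forces $\int_u\omega_{std}=0$ for any holomorphic triangle with all corners at $p$, hence $u$ is constant. Thus the coefficient of $\mu^2_{\scrF}$ in the statement is exactly the signed contribution of $u$.

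Next I would prove regularity. In the standard basis $e_1,e_2,e_3$ of $\bC^3$ the complex structure and all three Lagrangian subspaces split, so the linearised operator decomposes as $D_u\cong D_0^{\oplus 3}$, where $D_0$ is the rank-one $\bar\partial$-operator on the three-punctured disk with boundary conditions three real lines in $\bC$ lying pairwise at angle $\pi/3$. The index theory of \cite[Section~11]{FCPLT}, applied consistently with the grading normalisation of Lemma~\ref{Lem:ConcentrateGrading}, gives $\index(D_0)=0$, and since this is the standard equilateral configuration there is no nonzero holomorphic or antiholomorphic solution to the associated Riemann--Hilbert problem, so $D_0$ has trivial kernel and cokernel. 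Hence $D_u$ is bijective, $u$ is a regular isolated point of its moduli space, and $\mu^2_{\scrF}$ takes the two degree-one generators to $\pm$ the degree-two generator of $HF^2(L_0,L_{2\pi/3})$; in particular the multiplication is non-trivial.

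It remains to fix the sign to $+1$, which is the genuinely delicate step. By construction the contribution of $u$ equals $+1$ precisely when, under the gluing isomorphism $\det(D_u)\cong\mathrm{or}_{x_{\mathrm{out}}}\otimes(\mathrm{or}_{x_{\mathrm{in}}})^{\dual}\otimes(\mathrm{or}_{x'_{\mathrm{in}}})^{\dual}$ of \cite[Section~12]{FCPLT}, the canonical generator of $\det(D_u)$ coming from bijectivity maps to the product of the generators $\mathrm{or}_{x_i}\cong k$ fixed after Lemma~\ref{Lem:ConcentrateGrading} (the chosen trivialisations of the orientation lines of the half-plane problems whose boundary conditions rotate by $\pi/3$). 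Because all the determinant and orientation lines split over the three coordinate factors, this comparison reduces to the analogous statement for the rank-one model $D_0$, corrected by the Koszul signs produced by regrouping the three tensor factors; each coordinate contributes the \emph{same} sign $s\in\{\pm1\}$, so the contribution of $u$ is $s$ times an explicit Koszul sign. I would compute $s$ by fixing Seidel's orientation conventions once and for all---choosing the standard homotopy of the $\pi/3$-rotating half-plane problem to the trivial one, which pins down the generator of each rank-one orientation line---then trivialising $\det(D_0)$ from explicit kernel and cokernel data and comparing. The main obstacle is exactly this bookkeeping of determinant-line orientations under gluing; a sign error here would alter which coherent orientation scheme governs $\scrF(Y_\phi;b)$, and it is the point on which the whole construction is most sensitive.
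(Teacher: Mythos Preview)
Your strategy---work in the linear model $\bC^3$, show the constant map is the unique triangle by exactness, and split $D_u$ into three identical rank-one operators---is the same as the paper's. The Stokes argument for uniqueness is a clean addition the paper leaves implicit.

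The substantive difference is in how regularity (and implicitly the sign) is handled. Rather than analysing the kernel and cokernel of the rank-one Riemann--Hilbert problem directly, the paper perturbs the three concurrent lines in $\bC$ so that they bound a genuine small triangle. Depending on the cyclic order of the boundary labels, the Riemann map onto this triangle is either holomorphic or antiholomorphic; in the first case the perturbed moduli problem has a unique regular solution and index $0$, in the second it is empty and has index $-1$. Regularity of the constant triangle then follows by continuity from the perturbed picture, with no need to argue separately that the equilateral boundary value problem has trivial kernel. Your assertion ``there is no nonzero holomorphic or antiholomorphic solution'' is correct but is exactly the step the perturbation trick is designed to make visible.

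On the sign, your plan is more explicit than the paper's own proof, which says nothing beyond the index computation. The $+1$ ultimately rests on the trivialisations $\mathrm{or}_x\cong k$ fixed after Lemma~\ref{Lem:ConcentrateGrading}; these are chosen so that a single regular holomorphic triangle counts with coefficient $+1$, and the perturbation picture reduces the abstract determinant-line comparison you outline to the sign of one small embedded triangle in $\bC$. Your route through gluing isomorphisms and Koszul reorderings would reach the same conclusion, but it is the long way round.
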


\begin{proof}
The local model splits into a direct sum of 3 copies of the geometry given by 3 real lines in $\bC$ passing through the origin. 
\begin{figure}[ht]
\setlength{\unitlength}{3947sp}%
\begingroup\makeatletter\ifx\SetFigFont\undefined%
\gdef\SetFigFont#1#2#3#4#5{%
  \reset@font\fontsize{#1}{#2pt}%
  \fontfamily{#3}\fontseries{#4}\fontshape{#5}%
  \selectfont}%
\fi\endgroup%
\begin{picture}(3837,1861)(226,-1310)
\put(2520,-1261){\makebox(0,0)[lb]{\smash{{\SetFigFont{10}{12.0}{\rmdefault}{\mddefault}{\updefault}{Case 2: $\index D_u = 0$}%
}}}}
\thinlines
{\put(601,-811){\line( 2, 3){900}}
}%
{\put(301,-136){\line( 1, 0){1500}}
}%
{\put(2851,539){\line( 2,-3){900}}
}%
{\put(2851,-811){\line( 2, 3){900}}
}%
{\put(2551,-136){\line( 1, 0){1500}}
}%
\put(226,-306){\makebox(0,0)[lb]{\smash{{\SetFigFont{10}{12.0}{\rmdefault}{\mddefault}{\updefault}{$L_0$}%
}}}}
\put(451,-961){\makebox(0,0)[lb]{\smash{{\SetFigFont{10}{12.0}{\rmdefault}{\mddefault}{\updefault}{$L_1$}%
}}}}
\put(1351,-961){\makebox(0,0)[lb]{\smash{{\SetFigFont{10}{12.0}{\rmdefault}{\mddefault}{\updefault}{$L_2$}%
}}}}
\put(2476,-306){\makebox(0,0)[lb]{\smash{{\SetFigFont{10}{12.0}{\rmdefault}{\mddefault}{\updefault}{$L_0$}%
}}}}
\put(2701,-961){\makebox(0,0)[lb]{\smash{{\SetFigFont{10}{12.0}{\rmdefault}{\mddefault}{\updefault}{$L_2$}%
}}}}
\put(3601,-961){\makebox(0,0)[lb]{\smash{{\SetFigFont{10}{12.0}{\rmdefault}{\mddefault}{\updefault}{$L_1$}%
}}}}
\put(246,-1261){\makebox(0,0)[lb]{\smash{{\SetFigFont{10}{12.0}{\rmdefault}{\mddefault}{\updefault}{Case 1: $\index D_u = -1$}%
}}}}
{\put(601,539){\line( 2,-3){900}}
}%
\end{picture}%
\caption{Indices of constant holomorphic triangles\label{Fig:2-triangles}}
\end{figure}
The regularity of the constant triangle $u$ is then standard, provided the Lagrangians are taken in the appropriate order. For a constant triangle $u$ in $\bC$ contributing to a product 
\[
HF^*(L_1,L_2) \otimes HF^*(L_0,L_1) \longrightarrow HF^*(L_0,L_2)
\]
the indices are as given in Figure \ref{Fig:2-triangles} (consider perturbing the three lines to create a non-trivial triangle in $\bC$, which is either holomorphic or antiholomorphic depending on the cyclic order of the boundary conditions).
\end{proof}

Constant triangles and Poincar\'e duality do not always completely determine the algebra structure in $\scrA$; there may be additional products if some of the cells of the Lagrangian cellulation are themselves triangles.

\subsection{Holomorphic disks on totally real cylinders}

Constant holomorphic triangles provide the cubic terms in the superpotential associated to a triangulated surface arising from the ``inscribed triangles" of Labardini-Fragoso's quiver prescription for a potential \cite{LF1}, see \eqref{Eqn:PotentialForNondegenQuiver} and Figure \ref{fig1}.  
The higher order terms of the potential arise from higher order $A_{\infty}$-products in $\scrA$. En route to computing these, it will be helpful to study a simpler local model involving the Lagrangian cylinder of Section \ref{Sec:LagCyl}.

Recall the totally real model $S^1\times S^2 \cong \Gamma \subset \bC^4$ for the Lagrangian cylinder given in \eqref{Eqn:LagCylExplicit}.  We will consider holomorphic sections $D \rightarrow \bC^4$ with this boundary condition.

\begin{Lemma} \label{Lem:LocalNonzero}
In the notation of Lemma \ref{Lem:ExplicitCylinder}, there are exactly two rigid holomorphic sections of $\pi: \scrX \rightarrow D$ with boundary condition on $\Gamma$, and all other moduli spaces of sections are empty.
\end{Lemma}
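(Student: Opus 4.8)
The plan is to classify \emph{all} holomorphic sections of $\pi\colon\scrX\to D$ directly, by pulling back along a double cover of the base disk so as to reduce to a standard picture on the smooth affine quadric surface. A holomorphic section is the same datum as a triple of holomorphic functions $x,p,q$ on $D$, with $u(\delta)=(x(\delta),p(\delta),q(\delta),\delta)$, satisfying $x^2+\delta(p^2+q^2)=1$ identically on $D$, together with the totally real boundary conditions coming from $\Gamma$: for $\delta=e^{i\theta}$ one needs $x(e^{i\theta})\in\bR$ and $p(e^{i\theta}),q(e^{i\theta})\in e^{-i\theta/2}\bR$.

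The key step is to pull back along the branched double cover $\nu\colon\mu\mapsto\mu^2=\delta$ of the $\delta$-disk and to set $\tilde u(\mu)=\bigl(x(\mu^2),\,\mu\,p(\mu^2),\,\mu\,q(\mu^2)\bigr)$. A short computation, using the defining equation of $\scrX$ and the boundary conditions above, shows that $\tilde u$ is a holomorphic map into the standard affine quadric $Q_0=\{X^2+Y^2+Z^2=1\}\subset\bC^3$, that $\tilde u(\partial D)$ lies in the real locus $S^2=Q_0\cap\bR^3$, and that $\tilde u$ is equivariant for the deck involution, $\tilde u(-\mu)=\sigma\,\tilde u(\mu)$ with $\sigma(X,Y,Z)=(X,-Y,-Z)$. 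Conversely, a $\sigma$-equivariant holomorphic disk in $(Q_0,S^2)$ has $X$ even and $Y,Z$ odd as functions of $\mu$, hence descends to a holomorphic section of $\pi$ with boundary on $\Gamma$; I would check that these two constructions are mutually inverse, the only care needed being at the branch point $\mu=0$ and with the independence of the descended section from the chosen square root on $\partial D$. Thus holomorphic sections with boundary on $\Gamma$ are in bijection with $\sigma$-equivariant holomorphic disks in $(Q_0,S^2)$.

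Next I would invoke exactness to eliminate all non-constant contributions: the flat K\"ahler form on $\bC^3$ has a primitive $\lambda_0$ vanishing identically on $\bR^3$, so by Stokes and positivity of area for holomorphic curves every holomorphic disk in $Q_0$ with boundary on $S^2$ is constant. A constant disk $v\equiv c$ is $\sigma$-equivariant precisely when $c\in S^2\cap\mathrm{Fix}(\sigma)=\{(1,0,0),(-1,0,0)\}$. Descending, the only holomorphic sections of $\pi$ with boundary on $\Gamma$ are the two constant sections $u_\pm(\delta)=(\pm1,0,0,\delta)$, which lie respectively in the two components of the reducible fibre $\pi^{-1}(0)=\{x=\pm1\}\times\bC^2$; in particular there are no sections in any other relative homotopy class and no positive-dimensional families, which is the assertion that ``all other moduli spaces of sections are empty''.

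It remains to verify that $u_\pm$ are rigid. Solving $x=(1-\delta(p^2+q^2))^{1/2}$ near $(\pm1,0,0)$ trivialises the vertical tangent bundle of $\scrX$ along $u_\pm$ as $D\times\bC^2$, and the linearised boundary condition becomes the loop $\theta\mapsto(e^{-i\theta/2}\bR)^{\oplus 2}\subset\bC^2$ of totally real subspaces, of Maslov index $-2$; by the index formula \cite[Proposition 11.13]{FCPLT} the corresponding Cauchy-Riemann operator on the disk has index $2+(-2)=0$. A holomorphic solution of the scalar factor pulls back under $\mu\mapsto\mu^2$ to an odd holomorphic function on the $\mu$-disk with real boundary values, which vanishes by Schwarz reflection; hence the operator has trivial kernel, is therefore surjective, and $u_\pm$ are regular. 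The step I expect to be the main obstacle is the first one --- setting up the double-cover correspondence as a genuine bijection and tracking the boundary conditions carefully through it --- after which the exactness argument and the index computation are routine.
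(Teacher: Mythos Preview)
Your argument is correct and reaches the same conclusion as the paper, but the packaging is different. The paper works coordinate by coordinate: the $x$-coordinate is a holomorphic function on $D$ with real boundary values, hence constant; for each of $p,q$ separately the boundary condition is a M\"obius band $\{p\in e^{-i\theta/2}\bR\}$, and the paper invokes \cite[Lemma 2.16]{Seidel:LES} (classifying holomorphic maps $D\to\bC$ with $u(z)\in z^{1/2}\bR$ on $S^1$) to conclude that only the zero solution is regular at the origin. Your approach instead pulls back the whole picture at once along $\mu\mapsto\mu^2$, landing in the exact pair $(Q_0,S^2)$, and uses Stokes to kill all nonconstant disks; the $\sigma$-equivariance then singles out the two poles. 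These are closely related---indeed, Seidel's lemma is itself proved by exactly the double-cover-to-reality trick you use---so the underlying mechanism is the same. What your route buys is a cleaner geometric picture (disks in $T^*S^2$ with boundary on the zero section) and no black-box citation; what the paper's route buys is avoiding the bookkeeping of the equivariant correspondence at $\mu=0$. For regularity the paper just says ``since the sections are constant they are regular'', which amounts to observing that the linearised problem coincides with the problem already solved; your explicit index-and-kernel computation makes this more transparent, and your kernel argument is again the double-cover trick applied at the linear level.
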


\begin{proof}
Consider a section $D \rightarrow \scrX$ of $\pi$.   Since $\scrX \subset \bC^4$, each co-ordinate projection defines a holomorphic map $D \rightarrow \bC$. In the $x$ co-ordinate this is a map from the unit disk $D_{\delta}$ to $\bC$ which takes the boundary circle to the real line; any such is constant, so in fact the non-trivial geometry takes place in the $\bC^2$-bundle over the disk with co-ordinates $p,q$.  In each of these factors, the total space of the fibrewise Lagrangian of Equation \ref{Eqn:LagCylExplicit} over the boundary circle defines a M\"obius band, for instance
\[
\bC^2_{p,\delta} \ \supset \ \left\{ \, \delta = e^{i\theta}, p \in e^{-i\theta/2}\bR \,\right\}
\]
 Consider a holomorphic section of $\pi$ in this factor.  This defines a map $z \mapsto v(z)$ on the disk with the property that $v(z) \in z^{-1/2}\bR$ for $z\in S^1$.  \cite[Lemma 2.16]{Seidel:LES} shows that the only holomorphic maps $u: D\rightarrow \bC$ satisfying $u(z) \in z^{1/2}\bR$ on $S^1$ are 
\[
u: z\  \mapsto \  cz + \bar{c} \quad  \textrm{for \ some} \ c\in \bC,
\]
Therefore if $v$ is a holomorphic section of $\pi$, then $zv(z) = cz+\bar{c}$, but then $v(z) = (c+\frac{\bar{c}}{z})$ is meromorphic with a pole, unless $c=0$.  It follows that the only holomorphic sections of $\pi$ with boundary condition $\Gamma$ are the constant sections
\[ x(\delta) = \pm 1; \ p(\delta) = 0 =q(\delta).
\]
Since the sections are constant, they are both regular.   \end{proof}

We  equip $\Gamma$ with the unique spin structure which is bounding in the circle factor.  The moduli space of holomorphic sections of $\pi: \scrX \rightarrow D$ with boundary on $\Gamma$ inherits a canonical orientation.  (More precisely, we are interested in the space of sections with a single boundary marked point, which has a natural evaluation map to $\Gamma$.)  Note that this spin structure  is in particular a relative spin structure relative to the class $b_0$ of \eqref{Eqn:BackgroundCycle}, since ${b_0}|_{\Gamma} = 0$. 
The choice of background class $b\in H^2(Y_{\phi};\bZ_2)$ enters the computation of $\scrA(T;b)$ at  the following point.

\begin{Lemma} \label{Lem:LocalNonzero2}
 The two rigid sections of Lemma \ref{Lem:LocalNonzero} contribute to the moduli space of sections with opposite signs, with respect to the trivial background class $b=0$, and with the same sign, relative to the background class $b=b_0$.
 \end{Lemma}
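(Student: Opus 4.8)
The plan is to prove the statement for the trivial background class $b=0$ by exhibiting a holomorphic symmetry relating the two sections, and then to pass to $b=b_0$ by a direct application of Lemma \ref{Lem:TwistEffect}. The symmetry to use is the map $\iota\colon(x,p,q,\delta)\mapsto(-x,p,q,\delta)$. It is a holomorphic automorphism of $(\scrX,\Gamma)$ commuting with $\pi$: it preserves $\scrX=\{x^2+\delta(p^2+q^2)=1\}$, preserves $\Gamma$ (since $-x\in\bR$ whenever $x\in\bR$), and exchanges the two constant sections $u_+(\delta)=(1,0,0,\delta)$ and $u_-(\delta)=(-1,0,0,\delta)$. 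It acts as the identity on the fibre coordinates $(p,q,\delta)$, hence as the identity on the vertical tangent bundle, so it carries the linearised operator $D_{u_+}$ to $D_{u_-}$ by the identity map of operators; it also fixes the holomorphic volume form up to a sign and hence the phase data used to grade $\Gamma$. On $\Gamma\cong S^1\times S^2$, writing points as $(\theta,(x,p_0,q_0))$ with $p=e^{-i\theta/2}p_0$, $q=e^{-i\theta/2}q_0$ and $x^2+p_0^2+q_0^2=1$, the map $\iota$ is the product of the identity of $S^1$ with the reflection $(x,p_0,q_0)\mapsto(-x,p_0,q_0)$ of $S^2$ in the great circle $\{x=0\}$. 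That reflection reverses the orientation of $S^2$, so $\iota$ \emph{reverses} the orientation of $\Gamma$; on the other hand it acts trivially on $H^1(\Gamma;\bZ/2)$, which is carried by the $S^1$-factor, so it fixes the chosen spin structure on $\Gamma$.

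For $b=0$ I would then argue as follows. By Lemma \ref{Lem:LocalNonzero} both sections are regular, so each contributes a sign $\pm1$. Since $\iota$ is a biholomorphism carrying $u_+$ to $u_-$ and fixing the spin structure and grading, naturality of the orientations of determinant lines of $\cdbar$-operators with totally real boundary conditions (cf.\ \cite[Section 11]{FCPLT}) gives $\mathrm{sign}(u_+)=\mathrm{sign}_{\overline{\Gamma}}(u_-)$, where $\overline{\Gamma}$ denotes $\Gamma$ with the opposite orientation, the brane structure to which $\iota$ transports $\Gamma$. Because each of the disks $u_\pm$ has a single boundary circle, reversing the orientation of the boundary Lagrangian reverses the induced orientation of the determinant line, so $\mathrm{sign}_{\overline{\Gamma}}(u_-)=-\mathrm{sign}(u_-)$. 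Combining, $\mathrm{sign}(u_+)=-\mathrm{sign}(u_-)$: the two sections contribute with opposite signs for $b=0$. I expect the genuinely delicate point to be exactly this orientation bookkeeping — reconciling the reflection $\iota|_\Gamma$ with the orientation conventions of \cite{FCPLT,Seidel:Flux} and the ``bounding in the circle factor'' normalisation of the spin structure — which is the real content of the sign computation in the lemma.

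Finally, to pass to $b=b_0$, embed the local model $\scrX$ into $Y_\phi$ as a neighbourhood of the reducible fibre $\pi^{-1}(p)=\bC^2_{p,+}\amalg\bC^2_{p,-}$ over a double pole, with $\delta$ a local coordinate on $S$ vanishing at $p$, so that $\bC^2_{p,+}=\{x=1,\ \delta=0\}$ and $\bC^2_{p,-}=\{x=-1,\ \delta=0\}$ (relabelling the components if necessary). The cycle $F_{b_0}=\sum_q\bC^2_{q,+}$ meets this neighbourhood only in $\bC^2_{p,+}$, and is disjoint from $\Gamma$, which lies over $\{\delta\neq0\}$; so the hypotheses of Lemma \ref{Lem:TwistEffect} are satisfied. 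The section $u_+$ meets $\bC^2_{p,+}$ transversely in the single point $u_+(0)$ and is disjoint from $\bC^2_{p,-}$, while $u_-$ meets $\bC^2_{p,-}$ once and $\bC^2_{p,+}$ not at all, whence $u_+\cdot F_{b_0}=1$ and $u_-\cdot F_{b_0}=0$ modulo $2$. By Lemma \ref{Lem:TwistEffect}, passing from $b=0$ to $b=b_0$ multiplies the contribution of $u_+$ by $-1$ and leaves that of $u_-$ unchanged; combined with the previous paragraph, the two contributions then agree, which is the assertion.
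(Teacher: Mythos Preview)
Your overall strategy---exhibit a holomorphic symmetry exchanging the two sections, analyse its effect on orientations, and then invoke Lemma~\ref{Lem:TwistEffect}---is the same as the paper's, and your treatment of the passage from $b=0$ to $b=b_0$ matches the paper essentially verbatim. The difference, and the gap, is in the first step.

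You use the involution $(x,p,q,\delta)\mapsto(-x,p,q,\delta)$, which is \emph{orientation-reversing} on $\Gamma$. The paper instead uses $(x,p,q,\delta)\mapsto(-x,-p,q,\delta)$, which is orientation-\emph{preserving} on $\Gamma$ (on the $S^2$-fibre it reverses $x$ and reflects the $(y,z)$-circle, two reflections composing to a rotation). This choice is not incidental. With an orientation-preserving symmetry, the standard orientation machinery of \cite[Proof of Theorem 8.1.1]{FO3} reduces the sign comparison to a single clean question: does the symmetry preserve the stable framing of $T\Gamma$ along the boundary circles determined by the spin structure? The paper then shows the answer is no, because the induced action on tangent spaces traces out the generator of $\pi_1(SO(3))$, and this is the ``real content'' of the computation.

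Your route forces you to compare orientations across an orientation reversal. Two of your assertions then become problematic. First, ``$\iota$ fixes the chosen spin structure on $\Gamma$'' is not well-posed: a spin structure is a double cover of the \emph{oriented} frame bundle, so an orientation-reversing diffeomorphism does not act on the set of spin structures of $\Gamma$ but rather gives a map $\mathrm{Spin}(\Gamma)\to\mathrm{Spin}(\bar\Gamma)$; the fact that $\iota$ acts trivially on $H^1(\Gamma;\bZ/2)$ does not by itself pin down which spin structure on $\bar\Gamma$ you land on, nor how it compares to the one on $\Gamma$ under any canonical bijection. Second, and more seriously, the claim ``reversing the orientation of the boundary Lagrangian reverses the induced orientation of the determinant line'' is exactly the nontrivial sign computation you are trying to establish; it is not a general fact one can quote, and its truth depends on dimension, Maslov index, and the precise conventions relating spin structures on $L$ and $\bar L$. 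You identify this as the delicate point yourself, but you do not carry it out, so the argument as written does not prove the lemma. The paper's choice of an orientation-preserving involution is precisely what allows this step to be replaced by the concrete $\pi_1(SO(3))$ argument.
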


\begin{proof}
We return to the local model of Lemma \ref{Lem:ExplicitCylinder}, and the affine variety $\{x^2 + \delta(p^2+q^2)=1\} \subset \bC^4$.  
 There is a holomorphic involution $\iota$ of the model given by $x\mapsto -x$, $p\mapsto -p$. This acts fibrewise, preserves the Lagrangian $\Gamma$, and exchanges the two holomorphic sections.  The involution  preserves the orientation of $\Gamma$, which implies that the sections count with the same respectively opposite signs (for the trivial background class) depending on whether or not the involution does or does not preserve the stable trivialisation of the Lagrangian boundary condition along the sections determined by the spin structure on $\Gamma$, cf. \cite[Proof of Theorem 8.1.1]{FO3}.  We claim that the involution does \emph{not} preserve this stable framing.
  
Fix a model $S^1 \times S^2 \subset \bR^2 \times \bR^3 = \bR^5$, with the bounding spin structure.  Let $(x,y,z)$ denote co-ordinates on the $\bR^3$-factor. There is a natural parametrisation $S^1 \times S^2 \rightarrow \Gamma$ taking $(e^{i\theta}, (x,y,z))$ to 
\[
(e^{i\theta}, (x, e^{-i\theta/2}(\cos(\theta/2)y + \sin(\theta/2)z), e^{-i\theta/2}(\sin(\theta/2)y - \cos(\theta/2)z) = (e^{i\theta}, (x,p,q)).
\]
 The involution  $\iota$ acts fibrewise on the $S^2$-factor of $S^1 \times S^2$ by reversing the sign of $x$ and reflecting the circle in the $(y,z)$-plane along an axis of angle $\theta/2$.  Altogether, this means we act by the generator of $\pi_1(SO(3))$ and hence of $\pi_1(SO(5))$.  Put differently, the boundary values $x = \pm 1, p=0=q$ of the holomorphic sections, viewed as $S^1$-submanifolds of $\Gamma$, have an obvious ``constant" framing (constant in a twisted parametrisation)  from the tangent spaces to the $(p,q)$-plane.  These ``constant" framings are exchanged by the involution, but are not simultaneously compatible with a choice of spin structure on $\Gamma$.  Indeed, $\Gamma$ is really the mapping torus of a rotation by $\pi$ on $S^2$, and the two holomorphic sections have boundary values coming from the two rotation fixed points.  Homotoping the monodromy to the identity rotates the tangent spaces at the fixed points  by $\pi$ in opposite directions, hence can bring only one of the two ``constant" framings to the framing induced by the fixed spin structure. The upshot is that the two sections contribute with opposite signs to the moduli space oriented with respect to the trivial background class.
 
 Note that $b_0$ is supported by a locally finite cycle $H$ disjoint from the Lagrangian $\Gamma$. The two holomorphic sections of Lemma \ref{Lem:LocalNonzero} each hit one of the two components of the reducible fibre over the local point of $M$, hence exactly one of them intersects $H$, so turning on the background class $b_0$ changes the sign of exactly one of the disks by Lemma \ref{Lem:TwistEffect}.  \end{proof}
 
 
 \begin{Remark} \label{Rem:obstructed}
 After twisting by $b_0$, the boundary values of the sections of Lemma \ref{Lem:LocalNonzero} sweep the 1-cycle 
 \[
 \frak{m}_0(S^1 \times S^2) = \pm 2[\partial \sigma] \neq 0 \in H_1(S^1\times S^2),\]
 which is not the restriction of any central element of $QH^*(M;b_0)$. Therefore a Lagrangian cylinder $L_{\sigma}$  has obstructed $b_0$-twisted Floer cohomology, and does not define an object of $\scrF(Y_{\phi};b_0)$, even if one enlarges the category to allow unobstructed branes in the sense of \cite{FO3} rather than just weakly exact Lagrangian submanifolds.  
  \end{Remark}

\subsection{Nonconstant polygons}

Fix a finite collection $\{L_0,\ldots, L_k\}$ of matching spheres $L_i \subset Y_{\phi}$ arranged cyclically, bounding an open subset of $S$ containing a unique double pole of $\phi$.  The Lagrangians meet at a sequence of intersection points $x_j = L_j \pitchfork L_{j+1}$ for $0\leq j\leq k$ (with $L_{k+1} \equiv L_0$ by definition). Appealing to Lemma \ref{Lem:ConcentrateGrading}, we may suppose that  $x_j$ has Maslov index $1$. Let $\bar{x}_k$ denote the dual Maslov index 2 intersection point which is a generator of $HF^2(L_0, L_k)$.

\begin{Lemma} \label{Lem:HigherProduct}
If the $\{L_j\}_{1\leq j\leq k}$ are pairwise distinct, the coefficient  of $\kappa$ in the product 
\[
\mu^k_{\scrF(Y;b)}(x_{k-1},\ldots, x_0) \ = \ \kappa \,\bar{x}_k
\] is well-defined; it  is independent of the auxiliary choices made in constructing the Fukaya category. \end{Lemma}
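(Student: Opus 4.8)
The plan is to run the standard pseudo-holomorphic bordism argument: interpolate between two systems of auxiliary data defining $\scrF(Y_\phi;b)$ and show, using the grading constraints of Lemmas \ref{Lem:ConcentrateGrading}, \ref{Lem:GradingInputs} and \ref{Lem:All-inputs-degree-one}, that the corresponding parametrised moduli space of $(k+1)$-gons has no codimension-one boundary other than its two endpoint fibres. (In $A_\infty$-language this is the assertion that, although only the structure up to quasi-isomorphism is well-defined and individual structure constants are not, the grading constraints force the correction terms attached to this particular pair of inputs and output to vanish.)

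Fix perturbation data $(\EuScript{K}_0,\EuScript{J}_0)$, $(\EuScript{K}_1,\EuScript{J}_1)$ and a generic path $(\EuScript{K}_t,\EuScript{J}_t)_{t\in[0,1]}$ between them, also interpolating between two chosen fibred representatives of each $L_j$ in the sense of Lemma \ref{Lem:MatchingPath}. Let $\widetilde{\mathcal M}$ be the associated parametrised moduli space of pseudo-holomorphic $(k+1)$-gons with boundary on $(L_0,\dots,L_k)$, inputs $x_0,\dots,x_{k-1}$ and output $\bar x_k$. Since $\deg\bar x_k=2$ and $\deg x_j=1$ (Lemma \ref{Lem:ConcentrateGrading}), the unparametrised problem is $0$-dimensional, so for generic data $\widetilde{\mathcal M}$ is a $1$-manifold with compact Gromov compactification $\overline{\widetilde{\mathcal M}}$; the coefficient $\kappa$ computed from $(\EuScript{K}_i,\EuScript{J}_i)$ is the signed count of the fibre over $t=i$, so it suffices to prove $\partial\overline{\widetilde{\mathcal M}}$ consists only of those two fibres.

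By Lemma \ref{Lem:Compactness} no sequence escapes to infinity, by Lemma \ref{Lem:NoRationalCurve} there is no sphere bubbling, and strict unobstructedness of matching spheres rules out disk bubbling. Because $x_0,\dots,x_{k-1},\bar x_k$ lie in degrees $1$ and $2$, and $HF^*(L,L)\cong H^*(S^3)$ is concentrated in degrees $0,3$, no two adjacent boundary conditions agree; together with the hypothesis that $L_1,\dots,L_k$ are pairwise distinct this keeps the configuration inside an honest WKB collection, so no pearly-tree contributions occur and the moduli problem is regular for generic data. Hence the only possible codimension-one boundary stratum beyond the endpoint fibres is a single Floer breaking at an interior $t$ into polygons $P_1\sqcup P_2$, with $P_2$ carrying the output $\bar x_k$ and an input equal to the output $y$ of $P_1$, and $P_1$ having as inputs a consecutive block $x_i,\dots,x_{j-1}$; for a boundary stratum both pieces must be rigid for $(\EuScript{K}_t,\EuScript{J}_t)$. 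Since $\mu^1$ vanishes on cohomology, $j-i\ge2$; all inputs of $P_1$ have degree $1$, so rigidity forces $\deg y=2$. Then $P_2$ is a polygon among WKB spheres with exactly one degree-$2$ input and the rest of degree $1$, so by Lemma \ref{Lem:GradingInputs} with Lemma \ref{Lem:All-inputs-degree-one} (and, in the triangle case, the vanishing degree of $\mu^2$) a non-zero such product would output a degree-$3$ class, contradicting $\deg\bar x_k=2$. Therefore $\mathcal M(P_2)=\emptyset$ and no breaking occurs.

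Thus $\overline{\widetilde{\mathcal M}}$ is a compact oriented $1$-manifold whose boundary is precisely the two endpoint fibres, which therefore have equal signed count: $\kappa$ agrees for $(\EuScript{K}_0,\EuScript{J}_0)$ and $(\EuScript{K}_1,\EuScript{J}_1)$. Running the identical argument along a Hamiltonian isotopy through fibred representatives, and along a path of the preferred trivialisations $\mathrm{or}_{x_j}\cong k$ fixed in Section \ref{Subsec:Grading}, shows independence of those choices too. The only genuinely delicate part is the orientation bookkeeping needed to see that the two endpoint fibres of $\overline{\widetilde{\mathcal M}}$ contribute with opposite total sign, and the verification that the distinctness hypothesis really does keep everything in the transversally cut out, pearl-free regime; both are routine given the setup of \cite{FCPLT, Seidel:Flux}.
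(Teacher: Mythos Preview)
Your cobordism approach is morally the same as the paper's $A_\infty$-functor argument, but there is a genuine gap in the execution.

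The sentence ``for a boundary stratum both pieces must be rigid for $(\EuScript{K}_t,\EuScript{J}_t)$'' is not correct. In a one-parameter family, a codimension-one breaking of a virtual-dimension-zero polygon corresponds to pieces $P_1,P_2$ whose virtual dimensions sum to $-1$: one piece is rigid and the other has virtual dimension $-1$ (hence appears at isolated values of $t$). Your subsequent grading argument treats only the case $\mathrm{vdim}(P_1)=0$, forcing $\deg y=2$; you omit the case $\mathrm{vdim}(P_1)=-1$, which gives $\deg y=1$. More seriously, even in the case you do treat, Lemmas \ref{Lem:GradingInputs} and \ref{Lem:All-inputs-degree-one} concern the vanishing of the $A_\infty$-\emph{products} $\mu^m$, i.e.\ signed counts of virtual-dimension-zero polygons. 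They say nothing about whether a virtual-dimension-$-1$ moduli space $\mathcal M_t(P_2)$ can be nonempty at an isolated $t$; grading alone does not exclude this. So the inference ``$\mathcal M(P_2)=\emptyset$'' does not follow.

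What actually kills the breaking is a geometric observation about the specific configuration, namely that the node $y$ at which any such break would occur has to live in $HF^*(L_a,L_{a+s})$ for some $2\le s\le k-1$, and in the cyclic arrangement of distinct matching spheres around a single pole these Floer groups vanish (the underlying matching paths do not meet). This is exactly what the paper uses: in the $A_\infty$-functor equation relating the two structures, every correction term factors through some $\mu^j_{\scrF}$ or $\scrG^j$ applied to a proper consecutive block of the $x_i$, and these all land in a zero Floer group (equivalently, ``any contributing holomorphic disk would not have boundary defining a closed curve in the WKB-graph''). Once you insert this observation in place of the grading argument, your cobordism proof goes through; but as written, the degree bookkeeping does not suffice.
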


\begin{proof}
Recall that the construction of the Fukaya category relies on additional perturbation data $(\EuScript{K}, \EuScript{J})$ of Hamiltonian deformations, domain-dependent almost complex structures or Morse-Smale pairs made universally over moduli spaces of stable disks or pearly trees. 
As in \cite[Section 10e]{FCPLT}, two different choices of that auxiliary data $(\EuScript{K}_i,\EuScript{J}_i)$, $i=1,2$, for defining the $A_{\infty}$-operations can be related by a continuation $A_{\infty}$-functor $\scrG = (\scrG^i)_{i\geq 1}$ between the resulting Fukaya categories $\scrF_i$. Given a configuration of WKB Lagrangians,  $\scrG^1$ (the action on morphism groups) is necessarily the identity, whilst $\mu^1_{\scrF_i}$ also vanishes on $CF(L_i, L_j)$ (defining self-Floer cochains using perfect Morse functions).  The $A_{\infty}$-functor equations then imply that
\begin{equation}
\begin{aligned}
& \mu^k_{\scrF_2}(x_{k-1},\ldots,x_0) + \sum_{i<k,j} \mu^{i}_{\scrF_2}(x_{k-1},\ldots, \scrG^j(x_{i+j},\ldots,x_i), x_{i-1},\ldots, x_1))  \ = \\
\qquad&  \scrG^1(\mu^k_{\scrF_1}(x_{k-1},\ldots,x_0))  + \sum_{j, i<k} \pm \scrG^j(x_{k-1}, \ldots, \mu^i_{\scrF_1}(x_{i+j},\ldots, x_j),\ldots, x_0))
\end{aligned}
\end{equation}
The higher products $\mu^j_{\scrF}$ amongst the $\{x_i\}$ all vanish for $2<j<k$, since any contributing holomorphic disk would not have boundary defining a closed curve in the WKB-graph. That immediately implies the $\mu^k$-product is  invariant under $\scrG$.
\end{proof}

Note that the hypothesis that the $\{L_i\}_{1 \leq i\leq k}$ are pairwise distinct  can be weakened to the hypothesis that no cyclic subchain $\{L_i, \ldots, L_{i+j}\} \subset \{ L_0, \ldots, L_k\}$ arises as the ordered boundary of any non-trivial element of $\pi_2(Y_\phi, \cup_j L_j)$.  Since holomorphic disks lift uniquely to unramified covers, it would also be sufficient to realise either of these hypotheses after pullback to a  covering space, as in Figure \ref{Fig:HoloT2}. 

 Suppose one has a Lefschetz fibration $p: X \rightarrow B$ and a finite collection of $k+1$ cyclically ordered weakly exact matching spheres $\{K_j\} \subset X$ which enclose a disk containing no critical points of $p$. Suppose that the Lagrangians $\{K_j\}$ are pairwise distinct, and have the obvious affine $A_k$-intersection graph.  Let $y_j \in K_j \pitchfork K_{j+1}$ denote the intersection points, for $0\leq j\leq k$ with cyclic indices. The following is well-known, and reflects the existence of a relation between the classes $[K_j] \in K^0(\scrF(X))$.
  
\begin{Lemma} \label{Lem:Krelation}
 If $e_{K_0} \in HF^0(K_0, K_0)$ denotes the unit,
\begin{equation} \label{Eqn:Krelation}
\mu^{k+1}(y_{k}, \ldots, y_0) = \pm e_{K_0}.
\end{equation}
\end{Lemma}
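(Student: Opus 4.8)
The plan is to reduce the statement to a standard local computation in a plumbing of cotangent bundles, exploiting the fact that the cyclic configuration $\{K_0,\ldots,K_k\}$ bounds a disk in $B$ free of critical values, so that all the relevant Floer theory is governed by the base geometry via the Riemann mapping theorem. First I would pass to a neighbourhood of $p^{-1}(D)$, where $D\subset B$ is the disk enclosed by the union of the $k+1$ matching paths $\gamma_j\subset B$. Since $D$ contains no critical points of $p$, the restriction $p^{-1}(D)\to D$ is a trivial symplectic fibration with fibre an affine quadric $\simeq T^*S^2$, and a Moser-type argument (as in Lemma \ref{Lem:MatchingPath}) lets me arrange that each $K_j$ fibres over $\gamma_j$ with $SO(3)$-invariant profile. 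In this model, any holomorphic polygon contributing to the left side of \eqref{Eqn:Krelation} projects to a holomorphic polygon in $D$ with boundary on the $\gamma_j$; by the uniqueness of the totally-real boundary value problem (as in the proof of Lemma \ref{Lem:LocalNonzero}, using \cite[Lemma 2.16]{Seidel:LES}), the count of sections over such a base polygon is determined up to sign.

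The second, more structural, step is to identify $\mu^{k+1}(y_k,\ldots,y_0)$ using the known relation in $K$-theory. The affine $A_k$-chain of matching spheres $K_0 - K_1 - \cdots - K_k$, each pair $K_j, K_{j+1}$ meeting transversely at a single point $y_j$ and the pair $K_0, K_k$ also meeting at $y_k$, satisfies a $K^0$-relation $\sum_j (-1)^j[K_j]=0$ coming from the fact that the total configuration bounds; this is the standard "$(A_k)$-chain closes up to a sphere class" phenomenon. On the $A_\infty$-level, this relation is witnessed by the twisted complex whose underlying object is $\bigoplus_j K_j[j]$ with differential built from the $y_j$, and the statement that this complex is acyclic (equivalently, quasi-isomorphic to zero) is exactly the assertion that the total potential-type $\mu^{k+1}$-term evaluates to $\pm e_{K_0}$ on $HF^0(K_0,K_0)$. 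Concretely, I would invoke the local model of an $A_{k}$-plumbing of copies of $T^*S^n$ (here $n=2$): the matching spheres are the zero-sections / vanishing cycles $V_0,\ldots,V_k$ in the $A_{k+1}$-Milnor fibre restricted to the relevant subchain, and the relation $\mu^{k+1}(y_k,\ldots,y_0)=\pm e_{V_0}$ is the $n$-dimensional analogue of the well-known $A_\infty$-relation for $A_m$-chains, computed e.g. by a single rigid holomorphic $(k+1)$-gon (the "big" polygon filling the enclosed region) together with the constant-triangle contributions at the corners. The Riemann mapping theorem guarantees this $(k+1)$-gon is unique and regular, and Lemma \ref{Lem:ConcentrateGrading} pins down that the output lands in $HF^0(K_0,K_0)$ with the correct Maslov index.

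The main obstacle is the sign: showing the coefficient is genuinely $\pm e_{K_0}$ and nonzero, rather than something that could a priori vanish. Nonvanishing of the unique big polygon's count over $\Lambda_{\bC}$ follows because it is a regular rigid disk (so contributes $\pm 1$), but one must rule out cancellation against other contributions. Here Lemma \ref{Lem:All-inputs-degree-one} and the grading constraints of Lemma \ref{Lem:GradingInputs} are decisive: the only holomorphic polygons with the prescribed all-degree-one inputs and a degree-zero output (the unit) are those whose boundary, read in the WKB graph, traces the boundary of the enclosed cell exactly once, and there is a unique such cell here. The remaining bookkeeping — matching the orientation conventions so that the sign is $\pm1$ and independent of auxiliary data — is handled by Lemma \ref{Lem:HigherProduct} applied to the subchain (the pairwise-distinctness hypothesis holds by assumption), together with the index computation \eqref{eq:triangle} for the corner contributions. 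I would present the orientation verification by reducing, as in Lemma \ref{Lem:LocalNonzero2}, to the one-dimensional model of $k+1$ real lines through the origin in $\bC$, where the relation $\mu^{k+1}=\pm\mathrm{id}$ for the $A_\infty$-structure on the $A_m$-quiver is classical.
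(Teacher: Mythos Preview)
Your proposal reaches the right conclusion but by a different and substantially more elaborate route than the paper, and it leans on lemmas that do not apply in the general setting of this statement.

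The paper's proof is two sentences: first, the coefficient of $e_{K_0}$ is well-defined by Lemma~\ref{Lem:HigherProduct}; second, iterated application of Seidel's long exact triangle in Floer cohomology expresses $K_k$ as a twisted complex on $\{K_0,\ldots,K_{k-1}\}$, and the non-triviality of \eqref{Eqn:Krelation} follows from the Maurer-Cartan equation for the differential of that complex. No holomorphic polygons are counted directly; the argument is purely categorical, relying on the Dehn-twist exact triangle (which is where the enclosed disk with no critical points enters, since the iterated Dehn twist of $K_0$ around $K_1,\ldots,K_{k-1}$ returns $K_k$).

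Your approach instead attempts a direct geometric count of the unique rigid $(k+1)$-gon filling the enclosed cell. This is a legitimate alternative strategy, and indeed the paper uses the \emph{consequence} of this lemma (the degree-one evaluation map of \eqref{Eqn:Evaluate}, \eqref{Eqn:NonzeroDegree}) in that geometric form immediately afterwards. What your version buys is a more explicit picture of the contributing disk; what it costs is that you must actually prove uniqueness and regularity of sections over the base polygon, which you only gesture at. The paper sidesteps this entirely by importing the result from the exact triangle.

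There is also a contextual mismatch: Lemma~\ref{Lem:Krelation} is stated for a general Lefschetz fibration $p:X\to B$ with weakly exact matching spheres, not for the threefold $Y_\phi$. Your invocations of Lemmas~\ref{Lem:MatchingPath}, \ref{Lem:ConcentrateGrading}, \ref{Lem:GradingInputs}, \ref{Lem:All-inputs-degree-one}, and \ref{Lem:LocalNonzero2} are therefore out of place---those depend on the specific CY$_3$ volume form, the WKB grading, and the double-pole fibre structure of $Y_\phi$, none of which is available here. Finally, your twisted-complex paragraph asserts acyclicity of $\bigoplus_j K_j[j]$ as a ``standard phenomenon'' without deriving it; in the paper this is precisely what the iterated exact triangle supplies, and without that ingredient the circularity is not broken.
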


\begin{proof}
The coefficient of $e_{K_0}$ in the output is well-defined by Lemma \ref{Lem:HigherProduct}.  An iterated application of the long exact triangle in Floer cohomology expresses $K_k$ as a twisted complex on $\{K_0, \ldots, K_{k-1}\}$, and the non-triviality of \eqref{Eqn:Krelation} follows from the Maurer-Cartan equation for the differential in that complex.
\end{proof}

\begin{Remark} \label{Rem:GradingMiracle}
Suppose $\psi$ is a \emph{holomorphic} quadratic differential, and consider a cycle of matching spheres in the 3-fold $Y'_{\psi}$ of \eqref{Eqn:Holo3fold} enclosing a disk with no critical points of $\psi$, hence in the configuration of the previous Lemma. The non-vanishing of this higher product implies that $\sum_{i=0}^k |y_i| + 1-k = 0$,  which precludes $|y_i|=1$ for every $i$. It follows that $\scrF(Y'_\psi;b)$ is not described by a quiver with potential of WKB-type. Compare to Remark \ref{Rem:NoTrivalentChance}.
\end{Remark}

The higher product of \eqref{Eqn:Krelation} counts disks with $k+1$ punctures, which extend smoothly across the output puncture  but  are constrained at the corresponding marked point on the boundary of the disk to pass through a generic point  $p \in K_0$, the minimum of the Morse function used to define $CF^*(K_0, K_0)$.  In particular, consider the moduli space $\scrM_{\ast}(y_{k-1},\ldots,y_0)$ of Maslov zero holomorphic polygons with $k$ punctures at the given intersection points, and one boundary marked point on the boundary interval between the first and last inputs.  The $A_{\infty}$-relations then imply that for generic $J$, $\scrM_{\ast}$ is a closed manifold of dimension $n = \dim_{\bR}(K_j)$.  There is an evaluation map
\begin{equation} \label{Eqn:Evaluate}
ev_{\ast}: \scrM_{\ast}(y_{k-1},\ldots,y_0) \longrightarrow K_0
\end{equation}
which is transverse for generic choices of perturbation data, and  Lemma \ref{Lem:Krelation} implies that
\begin{equation}\label{Eqn:NonzeroDegree}
ev_{\ast} \ \textrm{has  degree}  \ \pm 1.
\end{equation}

\begin{Lemma} \label{Lem:LocalPole}
For background class $b_0$, the contribution to the multiplication
\[
\mu^k_{\scrF(Y;b_0)}: HF^1(L_{k-1},L_k) \otimes \cdots \otimes HF^1(L_0,L_1) \ \longrightarrow \ HF^2(L_0,L_k)
\]
from holomorphic disks which project to a single WKB cell with multiplicity one is non-trivial.
\end{Lemma}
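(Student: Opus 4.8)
The plan is to identify the disks in question with holomorphic sections of $\pi$ over the WKB cell surrounding $p$, to compute their signed count by comparison with the local model of Lemma~\ref{Lem:LocalNonzero}, and to extract the relevant sign from Lemma~\ref{Lem:LocalNonzero2}.

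\emph{Step 1: reduction to sections.} Let $D\subset S$ be the closed WKB cell bounded by the matching paths $\gamma_0,\dots,\gamma_k$ of $L_0,\dots,L_k$; it is a disk with the double pole $p$ in its interior, meeting $\Zer_\phi$ exactly in the cyclically ordered boundary vertices $v_0,\dots,v_k$, where $v_i=\gamma_i\cap\gamma_{i+1}$ with indices mod $k+1$. For $J\in\scrJ_\phi$ the projection $Y_\phi\to S$ is holomorphic, so a disk contributing to $\mu^k$ projects to a holomorphic map of its domain into $S$ with boundary in $\bigcup_i\gamma_i$; by the open mapping theorem a disk covering $D$ with multiplicity one is an honest holomorphic section $s\colon D\to Y_\phi$ of $\pi$, with boundary arcs on $L_0,\dots,L_k$ and converging at the corner $v_i$ to the nodal point of the reducible fibre $\pi^{-1}(v_i)$, which by Lemmas~\ref{Lem:GradingInputs} and \ref{Lem:All-inputs-degree-one} is the degree-one generator $x_i$ for $i<k$ and the degree-two generator $\bar x_k$ at $v_k$. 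So the asserted contribution is a signed count of such sections; by Lemma~\ref{Lem:HigherProduct} this count is independent of the auxiliary perturbation data, since a non-degenerate WKB cell has pairwise distinct edges (Lemma~\ref{Lem:GoodWKB}) and in any event no proper sub-chain $\{L_i,\dots,L_{i+j}\}$ bounds --- a proper sub-arc of $\partial D$ never closes up, and when two of the $\gamma$'s meet twice their union is homotopically essential, cf.\ the remark after Lemma~\ref{Lem:ConcentrateGrading} --- the same remaining true after pulling back to a cover as in Figure~\ref{Fig:HoloT2}.

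\emph{Step 2: counting, with signs.} Deform the boundary condition $\bigcup_i L_i$, which is supported over $\partial D$, through the Lagrangian cylinders $L_{\sigma_t}$ of Section~\ref{Sec:LagCyl} over a shrinking family of loops $\sigma_t\subset\mathrm{int}(D)$ encircling $p$: as $\sigma_t$ is pulled in across a vertex $v_i$, the $S^2$-fibre of $L_{\sigma_t}$ over $v_i$ --- which is the vanishing cycle of the node of $\pi^{-1}(v_i)$ --- collapses to the point $x_i$, so $\bigcup_i L_i$ is precisely the pinched limit of $L_{\sigma_t}$. For $\sigma_t$ small this sits inside the local model of Lemma~\ref{Lem:ExplicitCylinder}, where Lemma~\ref{Lem:LocalNonzero} gives \emph{exactly two} rigid holomorphic sections over the disk bounded by $\sigma_t$ and no other non-empty moduli of sections. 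Viewing the resulting parametrised moduli problem as a cobordism --- with no sphere or disk bubbling by Lemmas~\ref{Lem:NoRationalCurve} and \ref{Lem:Compactness}, and no breaking into sections over proper sub-cells by the argument of Step~1 --- one concludes that the signed count over $D$ equals the signed count over the small disk. The orientation of this moduli space is assembled from the determinant lines at the strip-like ends, trivialised as in Section~\ref{Subsec:Grading}, together with the index bundle, all carried along the deformation; so by Lemma~\ref{Lem:LocalNonzero2}, relative to the background class $b_0$ of \eqref{Eqn:BackgroundCycle} the two sections contribute with the \emph{same} sign (each meets exactly one of the two components of $\pi^{-1}(p)$, hence exactly one meets the cycle representing $b_0$, reversing that one's sign by Lemma~\ref{Lem:TwistEffect}). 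Therefore the contribution to $\mu^k_{\scrF(Y;b_0)}$ is $\pm 2\,\bar x_k$, non-zero because $\mathrm{char}(k)=0$; for the trivial background class the two signs cancel, which is exactly why $b_0$ is needed (compare Section~\ref{Sec:Untwisted}).

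\emph{The main obstacle} is the degeneration in Step~2: making rigorous the cobordism relating holomorphic sections over $D$ with boundary on the pinched necklace $\bigcup_i L_i$ to sections over a small disk with boundary on the smooth cylinder $L_\sigma$. One must in particular rule out extra bubbles forming in the nodal fibres over the $v_i$ as $\sigma_t$ sweeps across them, and check that orientations transport correctly through the pinching. The projection-to-$S$ constraint, the grading restrictions (Lemmas~\ref{Lem:GradingInputs}, \ref{Lem:All-inputs-degree-one}) and the compactness results (Lemmas~\ref{Lem:NoRationalCurve}, \ref{Lem:Compactness}) are precisely what keeps this controllable, but the simultaneous bookkeeping of asymptotics, index and signs through the degeneration is the delicate point.
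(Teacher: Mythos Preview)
Your strategy is genuinely different from the paper's. The paper does not deform the Lagrangian boundary toward the pole; instead it degenerates the \emph{domain}, pinching the WKB cell $R$ into two discs joined at a boundary node: the local disc $D_{loc}$ around the pole (with Lagrangian boundary the cylinder of Lemma~\ref{Lem:ExplicitCylinder}) and a polygon $R_{triv}$ enclosing \emph{no} singular fibre. Seidel's gluing theorem \cite[Proposition~2.14]{Seidel:LES} then expresses $\mathcal{M}^0(R)$ as a fibre product $\mathcal{M}^0(D_{loc},e)\times_{L}\mathcal{M}^2(R_{triv},e)$ over evaluation into the $S^2$-fibre $L$ at the node. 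Lemma~\ref{Lem:LocalNonzero} handles the first factor; the second is controlled by Lemma~\ref{Lem:Krelation} (through \eqref{Eqn:NonzeroDegree}), which shows that evaluation from $\mathcal{M}^2(R_{triv},e)$ sweeps $L$ with degree~$\pm1$. You never invoke Lemma~\ref{Lem:Krelation}, and that is precisely the content your cobordism is tacitly absorbing.

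The gap is exactly where you locate it, but it is sharper than you acknowledge. For $t\in(0,1)$ your parametrised problem concerns \emph{unpunctured} discs with boundary on a single smooth Lagrangian $L_{\sigma_t}$; at $t=1$ you want a $(k{+}1)$-punctured disc with prescribed asymptotics at transverse intersection points of distinct Lagrangians. These are not two fibres of one parametrised family of moduli problems of the same type: the domain changes topological type at the limit, and the determinant lines at the strip-like ends you wish to transport simply do not exist for $t<1$. Making your limit rigorous would require a gluing theorem for holomorphic discs under a Lagrangian nodal degeneration (the $S^2$-fibres of $L_{\sigma_t}$ collapsing onto the nodes of the $\pi^{-1}(v_i)$), together with a comparison of the resulting orientation with the one coming from the corner asymptotics --- this is substantially harder than, and in effect subsumes, the standard domain-degeneration gluing the paper uses. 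The paper's route via Seidel's theorem and Lemma~\ref{Lem:Krelation} is the efficient path here.
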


\begin{proof}
We are essentially in the situation of Lemma \ref{Lem:HigherProduct}, bearing in mind the remark after that Lemma. Note that our standing hypotheses imply that $|\M| > 1$, so the number of faces of the Lagrangian cellulation associated to a non-degenerate WKB triangulation is at least 2. In particular, given any face of the cellulation, there is at least one boundary edge $\gamma$ on that face  which occurs exactly once in the boundary, separating two distinct embedded open faces on the surface $S$.   (This need not be true when $|\M|=1$, see Figure \ref{Fig:HoloT2}.)

Following a well-known strategy going back  at least to \cite{Seidel:LES}, we then apply a degeneration and  gluing argument to the situation depicted on the left of  Figure  \ref{Fig:GluingArgument}, which is a schematic for a polygon $R$ of matching spheres encircling a double pole in $S$. 
\begin{figure}[ht]
\begin{center}
\includegraphics[scale=0.5]{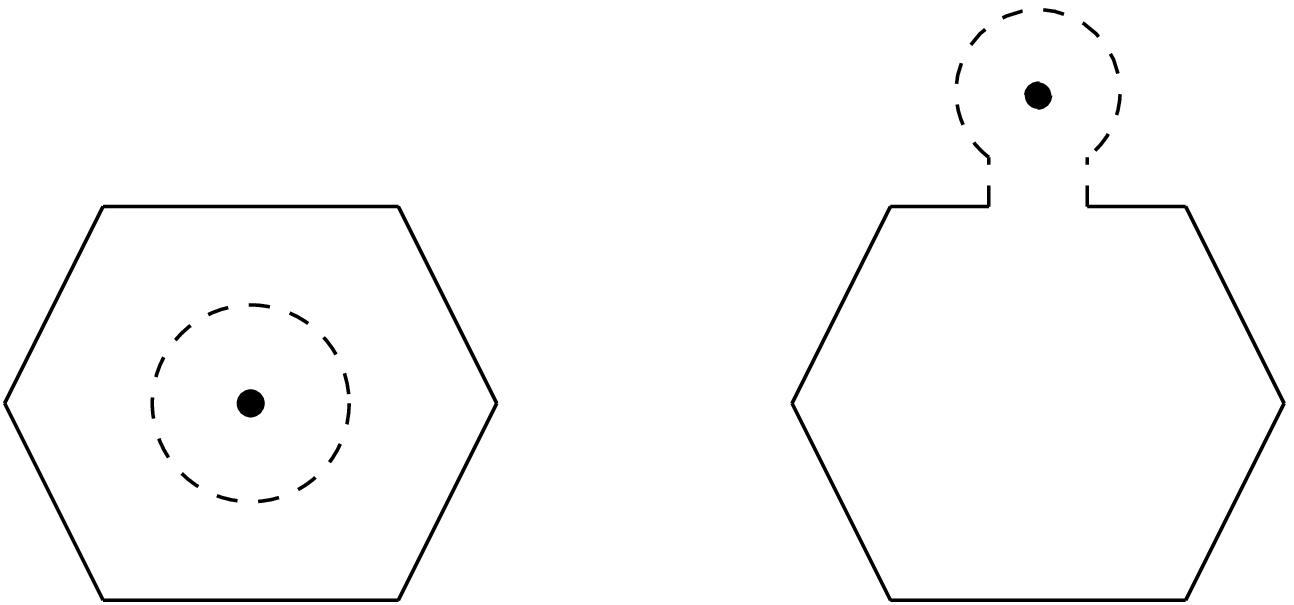}
\end{center}
\caption{Bubbling off sections of the local model near a double pole\label{Fig:GluingArgument}}
\end{figure}
 There is a Lagrangian boundary condition fibred over the dashed circle, and Lemma \ref{Lem:LocalNonzero2} asserts that the algebraic count of isolated holomorphic sections over this inner disk $D_{loc}$  is non-vanishing.   There is a degeneration of the restriction of the fibration to this WKB cell, indicated on the right of Figure \ref{Fig:GluingArgument}, to a pair of fibrations over disks with boundary marked points, namely the local model and the trivial fibration over a polygon $R_{triv}$ encircling no singular fibres.   In this degeneration, we push the double pole towards an edge $\gamma$ which occurs only once on the boundary on the given polygon, so the degeneration on the surface is locally schematically well depicted by Figure \ref{Fig:GluingArgument}.  The gluing theorem \cite[Proposition 2.14]{Seidel:LES} implies that
\begin{equation} \label{Eqn:AfterDegeneration}
\mathcal{M}^0(R) \ \cong \ \amalg_{p+q=2} \mathcal{M}^p(D_{loc}, e) \times_L \mathcal{M}^q (R_{triv}, e)
\end{equation}
where $\mathcal{M}^i$ denotes the $i$-dimensional component of a space of holomorphic sections over  the relevant domain, $e\in\gamma$ is the boundary marked point on the fibrations after degeneration, and the fibre product is taken over evaluation into the Lagrangian 2-sphere $L = L_{\gamma} \cap \pi^{-1}(e) \subset \pi^{-1}(e)$.  The evaluation maps are transverse for generic $J\in\scrJ_\phi$, see e.g. \cite[Proposition 3.4.2]{McD-S}.  On the right of Equation \ref{Eqn:AfterDegeneration}, we have computed already that $\mathcal{M}^p(D_{loc},e)$ is trivial except for $p=0$; on the other hand, Lemma \ref{Lem:Krelation} and \eqref{Eqn:NonzeroDegree} imply that the evaluation map $M^2(R_{triv},e)$ sweeps $L$ with multiplicity 1.  The fibre-product is therefore non-empty; the moduli space on the left of Equation \ref{Eqn:AfterDegeneration} is cobordant to two (transversely cut out) isolated points.  

Since the two holomorphic sections contribute to $\mathcal{M}^0(D_{loc},e)$ with the same sign by Lemma \ref{Lem:LocalNonzero2}, given our choice of background class $b_0$, the holomorphic polygons contribute to $\mu^k$ with the same sign.\end{proof}

If the conic fibres of $Y_{\phi} \rightarrow S$ over points in $M$ had a different topological type (smooth, Lefschetz singularities, higher multiplicity, empty etc),  or if we took $b=0$, it seems the analogue of Lemma \ref{Lem:LocalPole} would not hold.

\subsection{Summary}

Fix a non-degenerate triangulation $T$ of $\S$ which satisfies the conditions of Theorem \ref{Thm:GLFS1}.   Denote by $\CC(T) = \CC(Q(T),W(T))$ the associated Ginzburg category from Section \ref{Sec:QuiverPotential}, for the trivial signing $\epsilon \equiv 1$. Via Lemma \ref{Lem:GoodWKB}, pick a meromorphic quadratic differential $\phi \in H^0(K_S(M)^{\otimes 2})$ whose associated WKB triangulation is $T$.   Let $\{L_e\}$ denote the corresponding WKB Lagrangian  spheres, graded as in Lemmas \ref{Lem:LocalZero} and \ref{Lem:LocalPole}, and denote by $\scrA_{b_0}$ the total morphism algebra $\oplus_{e,f} HF^*(L_e,L_f)$ in the category $\scrF(Y_{\phi};b_0)$. 

\begin{Theorem} \label{Thm:FloerEqualsQuiver}
Suppose $|\M| \geq 3$.  The 
 $A_{\infty}$-algebra $\scrA_{b_0}$ is quasi-isomorphic to $\CC(T)$.  
\end{Theorem}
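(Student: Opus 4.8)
The plan is to recognise $\scrA_{b_0}$ as (the total morphism algebra of) $\CC(Q(T),\tilde W)$ for a reduced potential $\tilde W$ on $Q(T)$, to pin $\tilde W$ down precisely enough to feed it into Theorem~\ref{Thm:GLFS1}, and then to conclude via the invariance statements recalled in Section~\ref{Sec:QuiverPotential}.

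First I would put $\scrA_{b_0}$ into cyclic form. By \cite[Theorem~1.1]{Fukaya:cyclic}, over $\Lambda_{\bC}$ the category $\scrF(Y_\phi;b_0)$ may be built cyclic and strictly unital with respect to the Poincar\'e duality pairing; passing to the full subcategory on a WKB collection $\{L_e\}$ and to its (already minimal) total cohomology algebra, $\scrA_{b_0}$ acquires a cyclic, strictly unital, minimal $A_\infty$-structure. Lemma~\ref{Lem:ConcentrateGrading}, together with the trivialisations of the orientation lines $\mathrm{or}_x$ described just after its proof, identifies the underlying graded vector space and its pairing canonically with those of the category $\CC$ attached to $Q(T)$ in Section~\ref{Sec:QuiverPotential}; since $\mu^1=0$ on cohomology the quadratic part of the structure vanishes. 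Hence $\scrA_{b_0}\simeq\CC(Q(T),\tilde W)$ for a reduced potential $\tilde W$, normalised so that its cubic part records the Floer product $\mu^2$.

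The main work is then to compute $\tilde W$ modulo a \emph{disjoint} remainder. By Lemmas~\ref{Lem:GradingInputs} and \ref{Lem:All-inputs-degree-one}, every term of $\tilde W$ has all inputs in degree one, so a length-three term is an honest $3$-cycle of $Q(T)$; under the hypotheses of Theorem~\ref{Thm:GLFS1} (no loops or self-folded triangles, all valencies $\geq 4$, no double arrows) the only $3$-cycles are the clockwise face cycles $T(f)$. For each face $f$ the three spheres $L_e$ meet only at the node over the zero $v_f$ of $\phi$ dual to $f$, so any contributing disk is constant at $v_f$ and hence the constant triangle of Lemma~\ref{Lem:LocalZero}, which contributes $+1$; thus the cubic part of $\tilde W$ is exactly $\sum_f T(f)$. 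Next, for each puncture $p$ the coefficient of the anticlockwise cycle $C(p)$ in $\tilde W$ is computed by the higher product of Lemma~\ref{Lem:LocalPole}, whose boundary conditions run once around the WKB cell dual to $p$; by the Riemann mapping theorem every contributing disk projects with multiplicity one onto that cell, so Lemma~\ref{Lem:LocalPole} yields a coefficient $\lambda_p\neq 0$. I expect this non-vanishing to be the crux of the whole argument, and it is precisely here that the background class $b_0$ is indispensable: for $b=0$ the two local sections cancel by Lemma~\ref{Lem:LocalNonzero2} and the coefficient could vanish. Writing $\tilde W=\sum_f T(f)-\sum_p\lambda_p C(p)+W'$, the remainder $W'$ is automatically disjoint from the $T(f)$ and $C(p)$, since the $T(f)$ exhaust the $3$-cycles of $Q(T)$ and the full coefficient of each $C(p)$ has been accounted for.

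Finally, Theorem~\ref{Thm:GLFS1} (applicable to the triangulation $T$ fixed above, which requires $|\M|\geq 3$) shows that $\tilde W$ is weakly right equivalent to $W(T)=\sum_f T(f)-\sum_p C(p)$; and by the results of Keller--Yang \cite{KY} and Ladkani \cite{Ladkani} recalled in Section~\ref{Sec:QuiverPotential}, weakly right equivalent potentials yield $A_\infty$-equivalent categories $\CC$, the equivalence being assembled from a formal diffeomorphism together with the $k^*$-action. Therefore $\CC(Q(T),\tilde W)\simeq\CC(Q(T),W(T))=\CC(T)$, so $\scrA_{b_0}\simeq\CC(T)$, as claimed. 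The only points that will need real care are the identification of $\mu^2$ with the cubic part of $\tilde W$ under the chosen orientation-line trivialisations, and the verification that no disk other than those of Lemmas~\ref{Lem:LocalZero} and \ref{Lem:LocalPole} contributes to the $T(f)$- and $C(p)$-coefficients.
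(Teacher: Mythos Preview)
Your proposal is correct and follows essentially the same route as the paper's proof: identify the graded vector space via Lemma~\ref{Lem:ConcentrateGrading}, encode the $A_\infty$-structure by a potential, compute the $T(f)$- and $C(p)$-coefficients via Lemmas~\ref{Lem:LocalZero} and~\ref{Lem:LocalPole}, and feed the result into Theorem~\ref{Thm:GLFS1}. The only notable difference is that you invoke Fukaya's cyclic-symmetry theorem \cite{Fukaya:cyclic} to obtain cyclicity directly, whereas the paper deduces it from the grading constraints of Lemmas~\ref{Lem:GradingInputs} and~\ref{Lem:All-inputs-degree-one} together with Poincar\'e duality (and indeed remarks, just after Lemma~\ref{Lem:All-inputs-degree-one}, that Fukaya's theorem is an equivalent alternative).
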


\begin{proof}
We begin by recalling the discussion from the end of Section \ref{Subsec:Grading}. 
The $A_{\infty}$-algebra $\scrA_{b_0} = \scrA$ is the total morphism algebra of an $A_{\infty}$-category over $\Lambda_{\bC}$ whose objects are the Lagrangians $L_e$, which correspond one-to-one with the vertices of the quiver $Q(T)$. From Lemma \ref{Lem:ConcentrateGrading} we know that each $L_e$ has Floer cohomology isomorphic as a graded algebra to $H^*(S^3)$, and the morphisms between the different $L_e$ are based by the arrows in the quiver $Q(T)$,  graded as in the construction of the category $\CC(T)$ in Section \ref{Sec:QuiverPotential}. Thus, as  graded vector spaces, $\scrA \cong \CC(T)$ are isomorphic.

Floer theory equips $\scrA$ with an $A_{\infty}$-structure which, by homological perturbation, we can take to be minimal and strictly unital.  The structure coefficients of this $A_{\infty}$-structure count holomorphic polygons with boundary conditions the $L_e$, weighted by their symplectic areas.   Lemmas \ref{Lem:GradingInputs} and \ref{Lem:All-inputs-degree-one}, together with Poincar\'e duality as in Example \ref{Ex:PD},  imply that in the notation of Equation \ref{Eqn:CyclicVersion} the only non-trivial multiplications $m_{n-1}$ in $\scrA$ with $n\geq 3$ are exactly those recorded by the corresponding cyclic degree $-n$ maps $c_n$, i.e. the $A_{\infty}$-structure is in fact cyclic.   It follows that the $A_{\infty}$-structure on $\scrA$ may be encoded by the terms of a reduced potential $W$ on $Q(T)$.

Lemma \ref{Lem:LocalZero} implies that there is a non-trivial multiplication $m_2$ in $\scrA$, and hence non-trivial cubic term in the potential $W$, for each isolated intersection point of a triple of Lagrangians $L_e$; indeed, the corresponding Floer product counts a constant disc (of area zero) with co-efficient $+1$.   Such isolated triple intersections exactly correspond to the clockwise 3-cycles $T(f)$ for the faces $f$ of the triangulation $T$, and show that the potential $W = \sum_f T(f) + W''$, where by definition $W''$ is disjoint from $\sum_f T(f)$. More prosaically, disjointness asserts that none of the oriented 3-cycles corresponding to the oriented triangular faces of $Q(T)$ centred on vertices of $T$ occur in $W''$ (each triangular face of $Q(T)$ defines three such 3-cycles, pairwise cyclically equivalent).  

Lemma \ref{Lem:LocalPole} implies that there are further non-trivial multiplications $m_k$ in $\scrA$ for each $(k+1)$-dimensional face of the WKB Lagrangian cellulation on $S$ defined by $\phi$.  The corresponding coefficients $c_{k+1}$ in the potential $W$ are exactly the (anticlockwise-oriented) cycles $C(p)$ occuring in the quiver $Q(T)$ (cf. Figure \ref{fig1}). In the notation of Lemma \ref{Lem:HigherProduct},   Lemma \ref{Lem:LocalPole} implies that the Floer product $\mu^k(x_{k-1},\ldots,x_0)$ is a non-zero multiple of $\bar{x}_k$, where the multiple depends on the symplectic area of the polygons occuring in Lemma \ref{Lem:LocalPole} and the orientation (relative to the orientation lines associated to intersection points) of that moduli space: we have established (via Lemma \ref{Lem:LocalNonzero2}) that the two holomorphic sections which contribute to the given product count with the same sign, hence cannot cancel, but have not pinned down that sign.  At this stage, we may therefore write the potential $W$ as 
\begin{equation} \label{Eqn:GotThere}
W = \sum_f T(f) - \sum_p \lambda_p C(p) + W'
\end{equation}
where we sum over the faces respectively vertices of $Q(T)$, where $\lambda_p = \pm 2 q^{\int_u \omega} \in \Lambda_{\bC}^*$ records the area and orientation of the holomorphic discs $u$ of Lemma \ref{Lem:LocalPole}, and where $W'$ is disjoint from the various cycles $T(f)$ and $C(p)$. Since $[\omega] \in H^2(Y_{\phi}, \cup_e L_e) \cong H^0(S, \cup_e \gamma_e)$ (with the isomorphism arising from the Leray-Serre spectral sequence of the projection map), the choice of symplectic form not only determines but is essentially equivalent to a choice of (non-zero) scalar coefficients $\lambda_p$ above. 

At this point, we have determined the $A_{\infty}$-structure constants which arise either from constant holomorphic polygons or from polygons which project to a single face of the WKB cellulation of $S$ with multiplicity one.  In general there will be additional polygons, for instance those which project to a single cell with higher multiplicity (corresponding to powers $C(p)^j$ of the cycles $C(p)$) or to the union of two adjacent cells (corresponding to a concatenation $C(p)C(q)$), which give rise to the terms in $W'$. Rather than determining these by hand, we appeal to finite determinacy, via the work of Geiss, Labardini-Fragoso and Schr\"oer.   Equation \ref{Eqn:GotThere} shows that the potential $W$ governing the $A_{\infty}$-structure on $\scrA$ is exactly of the form given in Equation \ref{Eqn:General}. By Theorem \ref{Thm:GLFS1}, we infer that the potential $W$ is weakly right-equivalent to $W(T)$, which shows that $\scrA \simeq \CC(T)$ are $A_{\infty}$-quasi-isomorphic as required.
\end{proof}

We should point out that analogous finite determinacy arguments have been used to pin down $A_{\infty}$-structures in symplectic topology elsewhere, going back at least to Seidel's \cite{Seidel:HMSgenus2}.


Since the quiver category $\CC(T)$ is independent, up to derived equivalence, on the particular WKB triangulation $T$, one can infer the same result for the subcategories  $\Tw \scrA_{b_0}$ of $\scrF(Y;b_0)$ generated by the Lagrangians coming from different cellulations (strictly, provided those cellulations come from triangulations satisfying the hypotheses of Theorem \ref{Thm:GLFS1}).  Since $\D(T) = H^0 \Tw(\CC(T))$ is idempotent complete,  Theorem \ref{Thm:FloerEqualsQuiver} then implies that
\[
\Tw^{\pi} \scrA_{b_0} \ \simeq \ \Tw \, \scrA_{b_0}.
\]
Since $\scrA_{b_0}$ is tautologically a subcategory of the Fukaya category $\scrF(Y_{\phi};b_0)$, on 
passing to twisted complexes and then cohomology we see that $\D(\CC(T)) \hookrightarrow \D\scrF(Y_{\phi};b_0)$, which  completes the proof of (the first case of) Theorem \ref{Thm:Main2} as stated in the Introduction.


\subsection{Generation}\label{Sec:Generation}

Fix a non-degenerate triangulation $T$. This subsection outlines one approach to proving that $\scrA(T;b_0)$ generates the subcategory $\scrK(Y_{\phi};b_0) \subset \scrF(Y_{\phi};b_0)$ generated by all matching spheres. This sketch falls far short of a proof, but it seems worth including in part because one sees  the Harder-Narasimhan filtration of a matching sphere with respect to the stability condition determined by $\phi$ emerge as a natural ingredient in the argument.

There are two pieces of  background for the sketch.  Let $L$ and $L'$ denote two matching spheres which meet transversely once, at a vertex $p$ of the cellulation. Consider the Lagrange surgery $L \# L'$ of $L$ and $L'$ at $p$, which is equivalently  given by taking the Dehn twist of $L'$ about $L$.  (There are two local surgeries, corresponding to the positive and negative Dehn twists; here we are taking $p \in HF^1(L, L')$, a morphism from $L$ to $L'$.) Seidel's theorem \cite{Seidel:LES} yields a distinguished triangle in $\scrK(Y_{\phi};b)$
\begin{equation} \label{Eqn:LES}
\xymatrix{
L \ar[rr] & & L' \ar[dl] \\ & L\#L' \ar[ul]^{[1]} &
}
\end{equation}
This gives a long exact sequence of Floer cohomology groups in $Y_{\phi}$
\[
\cdots \rightarrow HF(L'',L) \xrightarrow{\mu^2(p,\cdot)} HF(L'', L') \rightarrow HF(L'', L\#L') \rightarrow \cdots 
\]
for any third Lagrangian submanifold $L'' \in \scrF(Y;b)$, and shows the Lagrange surgery $L \# L'$ is generated by the two constituent Lagrangians $L$ and $L'$.

The second piece of background concerns Floer theory for immersed Lagrangians \cite{Akaho}.  Consider a closed saddle trajectory $\gamma_0$ for $\phi$ of phase $\theta$, going from a zero to itself. If we assume that the residue of $\phi$ at each double pole does not belong to $e^{i\theta}\bR_{<0}$,  then $\gamma_0$ forms one boundary component of a ring domain, the other boundary of which is a union of straight arcs also of phase $\theta$ (the hypothesis on the residues rules out the degenerate case in which the other boundary component collapses onto a double pole).  The closed saddle $\gamma_0$ defines an immersed Lagrangian 3-sphere $L_{\gamma_0} \hookrightarrow Y_{\phi}$. Despite being immersed, this has well-defined Floer cohomology (indeed $L_{\gamma_0}$  is still strictly unobstructed, i.e. bounds no holomorphic disks, so this is straightforward), and
\[
HF(L_{\gamma_0}, L_{\gamma_0}) \cong H^*(S^3) \oplus k^{\oplus 2}
\]
where the second summand arises from the node of $L_{\gamma_0}$.

The suggested route to generation of $\scrK(Y_{\phi};b_0)$ by $\scrA(T;b_0)$ has three ingredients. 

\noindent \textbf{Step 1.}  Let $\gamma_0$ be as above, and suppose for simplicity that both boundaries of the ring domain containing $\gamma_0$ are composed of a single saddle (this is true for generic $\phi$, compare to the discussion of hat-proportional saddles in \cite[Section 5]{BrSm}). 
\begin{figure}[ht]
\begin{center}
\includegraphics[scale=0.4]{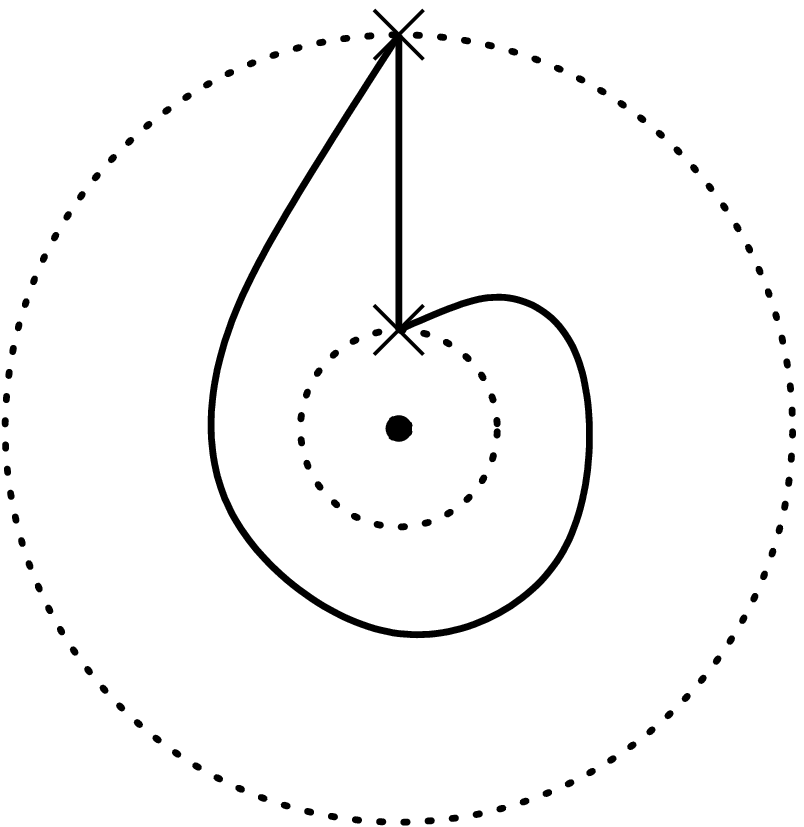}
\end{center}
\caption{A non-degenerate ring domain with a pair of  arcs  $\alpha, \beta$.  \label{Fig:ring}}
\end{figure}
 Let $\alpha, \beta$ be two $\phi$-geodesics inside the ring domain bound by $\gamma_0$, as in Figure \ref{Fig:ring}, defining embedded matching spheres $L_{\alpha}$, $L_{\beta}$ in $Y_{\phi}$ with
\[
HF(L_{\alpha}, L_{\beta}) \cong k\langle x\rangle \oplus k \langle y \rangle
\] 
where $\{x,y\}$ denote the zeroes on the boundary of the ring domain and $x$ belongs to $\gamma_0$.  It seems natural to conjecture that the immersed sphere is quasi-isomorphic to the cone
\begin{equation} \label{Eqn:ImmersedSurgery}
L_{\gamma_0} \simeq (L_{\alpha} \stackrel{y}{\longrightarrow} L_{\beta}).
\end{equation}
This is a version of \eqref{Eqn:LES}, in a setting where the surgery is unobstructed but immersed (in particular cannot be interpreted in terms of a Dehn twist).  One roundabout approach to a proof of \eqref{Eqn:ImmersedSurgery} would appeal to known results on homological mirror symmetry for the 3-dimensional affine $A_2$-singularity, which provides a local model for two spherical objects having a two-dimensional space of extensions, cf. the discussion of the representation theory of the quiver (J1) in \cite[Section 11]{BrSm}. 

\noindent \textbf{Step 2.}  The existence theorem for geodesics in flat surfaces implies that a given matching path $\gamma \subset S$ is homotopic to a concatenation of $\phi$-geodesics $\gamma \simeq \gamma_{j_0} \ast \cdots \ast \gamma_{j_k}$, where each $\gamma_{j_i}$ is embedded in its interior (hence is either a geodesic matching path or a closed geodesic from a zero to itself).  Given the correspondence between geodesics and stable objects \cite[Theorem 1.4]{BrSm}, this decomposition should reflect the Harder-Narasimham filtration of $L_{\gamma}$ in the stability condition associated to $\phi$, and (especially in light of \cite{ThomasYau}) there are obvious potential connections to mean curvature flow.  In particular, one expects that 
$L_{\gamma} \in \langle L_{\gamma_{j_0}}, \ldots, L_{\gamma_{j_k}}\rangle$, where $\langle \cdots \rangle$ denotes the extension-closed subcategory generated by the given objects.  An inductive argument might reduce that claim to the special cases of the surgery exact triangle  discussed above. 

\noindent \textbf{Step 3.} At this point, one would know that any Lagrangian matching sphere is generated by the subset of matching spheres coming from $\phi$-geodesics.   
An embedded $\phi$-geodesic $\tau$ of phase $\theta$ occurs as one of the WKB-Lagrangians for the cellulation associated to $e^{-i(\theta+\delta)}\phi$, for sufficiently small $\delta$,  compare to \cite[Figure 32]{BrSm}.  If the triangulation $T'$ associated to this rotated differential is non-degenerate, it is immediate from Theorem \ref{Thm:Main2} and the fact that $\D(T) \simeq \D(T')$ that $L_{\tau}$ is generated by the category $\scrA(T;b_0)$ associated to the triangulation $T$ defined by $\phi$ itself. 

However, if $e^{-i(\theta+\delta)}\phi$ has a triangulation $T'$ with self-folded triangles, this is more delicate. To conclude that $\scrA(T;b_0)$ generates $\scrK(Y_{\phi};b_0)$, at least by this method, one would need further to introduce the algebras $\scrA(T';b_0)$ for arbitrary, not necessarily non-degenerate triangulations $T'$, and to prove that $\scrA(T';b_0) \simeq \D(T')$ in this wider setting.  Note in particular that any such argument would seem to hinge on an extension of the results of \cite{GLFS} to this more general setting.  Assuming that such an extension had been carried out, however, one could then conclude the desired generation: $L_{\gamma}$ would be generated by geodesics,  geodesics arise as WKB-Lagrangians for some triangulation, and all the subcategories of $\scrF(Y_{\phi};b_0)$ associated to triangulations would be quasi-equivalent.

\section{Miscellany}

\subsection{Higher order poles}

Suppose we have a marked bordered surface $(\S,\M)$ with $\partial \S \neq \emptyset$, arising from a quadratic differential $\phi$ on a Riemann surface $S$ with a non-empty set of poles of order $\geq 3$.  We construct the 3-fold $Y_{\phi} \rightarrow S \backslash \Pol_{\geq 3}(\phi)$ as indicated in the Introduction.  

The preceding arguments need only minimal changes to yield the second case of Theorem \ref{Thm:Main2}. The locally free sheaf $W$ now has first Chern class $3K_S + 2M_{=2} + 3 M_{\geq 3}$, in the notation of \eqref{Eqn:divisorsplits}. The proof of Lemma \ref{Lem:FirstChernZero} then implies that the projective completion $X_{\phi}$ has canonical class $\mathcal{O}_{\bP}(-2)\otimes \pi^*(-M_{\geq 3})$, which again means that after removing $\Delta_{\infty} \cup \pi^{-1}(M_{\geq 3})$ the open 3-fold $Y_{\phi}$ is a smooth Calabi-Yau\footnote{Since the coefficients in the divisor $M_{\geq 3} = \sum_{p\in \mathrm{Pol}_{\geq 3}(\phi)} \lceil ord(p)/2 \rceil p$ are all $\geq 2$, the holomorphic volume form $\kappa_{\phi}$ again has poles of order at least 2 at infinity.}.  It is moreover affine, and one can work with a symplectic structure which is exact and contact type at infinity, which obviates the need to control rational curves via intersection theory at infinity. 

Lemma \ref{Lem:H3} changes.  We now view the differential $\phi$ as a section of the bundle  $K_S(M)^{\otimes 2}$, where the divisor $M = \sum_{p \in \Pol(\phi)} \lceil ord(p)/2 \rceil p$, which means $\phi$  has simple zeroes at odd order poles, and hence the local system $R^2\pi_*\bZ$ picks up monodromy at odd order poles. The identification of $H_3(Y_{\phi};\bZ)$ with the anti-invariant homology of a spectral double cover goes through only if that cover is branched at both zeroes and odd order poles.  The upshot is that $H_3(Y_{\phi};\bZ)$ is now free of rank
\[
n=6g-6+\sum_{p\in \Pol(\phi)} (ord(p)+1).
\] 
Lemma \ref{Lem:DivisorsIndpt} carries over unchanged.    The construction of symplectic forms, matching spheres, and the grading of the WKB algebra, were all  essentially local arguments on $S$, as were the computations of the contribution of constant holomorphic triangles and non-constant polygons covering WKB cells with multiplicity one. At this stage, one sees that the $A_{\infty}$-structure on the WKB algebra $\scrA$ is given by a potential of the shape $W(T, \epsilon) + W'$ as before.  Finally, the crucial result Theorem \ref{Thm:GLFS1}  of Geiss, Labardini-Fragoso and Schr\"oer  also holds in the non-empty boundary case (with no further constraints on the number of punctures), which enables one to conclude the proof as before.

\begin{Example}  Consider $(\S,\M)$ to be an annulus with $p$ respectively $q$ marked points on the two boundary components.  This corresponds to differentials on $\bP^1$ with poles of order $p+2$ and $q+2$ and $n=p+q$ simple zeroes.  The quiver $Q$ is a non-cyclic orientation of the affine $A_{n-1}$-Dynkin diagram, with trivial potential. The Lagrangian spheres of a WKB collection form a cycle of $n$ matching spheres encircling the origin; the vanishing of the potential reflects the fact that the threefold fibres over $\bC^*$, and there are no non-constant holomorphic polygons (by the maximum principle applied in $\bC^*$).  Note that the poles of the holomorphic volume form and  resulting grading of the endomorphism algebra of the WKB collection depend on the decomposition $n=p+q$. Compare to \cite[Section 12.3]{BrSm}.
\end{Example}

\subsection{Simple poles} \label{Sec:SimplePole}
Whilst one can construct the triangulated category  associated to $(\S,\M)$ by starting with a non-degenerate ideal triangulation defined by a point $\phi$ of the open stratum $\Quad(\S,\M)_0 \subset \Quad(\S,\M)$, the space of stability conditions on $\D\scrK((\S,\M);b))$ is (an orbifolded version of) the larger space $\Quad(\S,\M)$ in which the differentials may acquire simple poles by having zeroes collide with double poles.  The universal family $\scrY \rightarrow \Quad(\S,\M)_0$ 
of threefolds (which exists locally) does \emph{not} extend to $\Quad(\S,\M)$ as a locally trivial fibre bundle.   In any extension of the universal family there must be a non-trivial degeneration of $Y_{\phi}$ when $\phi$ acquires a simple pole, to account for the non-triviality of the monodromy action on $H^2(Y_{\phi};\bZ_2)$ described in Lemma \ref{Lem:MonodromySwaps}, which exchanges components of the reducible fibre. 

Return to the local model
\[
\{ a^2 - \delta\,bc = \phi(\delta) t^2\} \subset \bP^3_{[a,b,c,t]} \times \bC_{\delta} 
\]
for the neighbourhood of the reducible fibre $\{a^2-t^2=0\} \subset \bP^3\times\{0\}$ inside $X_{\phi}$, where $\phi \in H^0(K_S(M)^{\otimes 2})$ has a double pole at $\delta = 0\in S$ and $\phi(0)=1$.  Suppose instead that we have a differential $\phi_0$ with $\phi_0(0) = 0$, so the quadratic differential $\phi_0$ has a simple pole at the corresponding point of $S$.  The corresponding 3-fold 
\[
X_{\phi_0} \ = \ \{a^2 = \delta(bc+t^2\} \subset \bP^3\times\bC_{\delta} 
\]
has a multiple fibre $\{a^2=0\}$, and  is singular along a smooth conic curve $\{a=0=\delta, \, bc+t^2=0\}$.  The transversal singularity is a surface ordinary double point.  

It is tempting to associate to a quadratic differential $\phi_0$ with a simple pole at $p\in M$ a smooth 3-fold
\[
Y_{\phi_0} \rightarrow S
\]
with fibre $T^*\bR\bP^2$, with multiplicity 2, over $p$. For suitable symplectic forms, there is  a Lagrangian $\bR\bP^3 \subset Y_{\phi_0}$, fibred by a Morse-Bott function over an arc from a zero to $p$ (the vanishing cycle of the zero converges at $p$ to the $\bR\bP^2$  Bott minimum). The two Lagrangian spheres depicted on the right side of  Figure \ref{Fig:ImmersedLag}, which should persist in the category defined by $\phi_0$ since this does not change in passing from $\Quad(\S,\M)_0$ to $\Quad(\S,\M)$, would now be realised by this $\bR\bP^3$, equipped with either the trivial or the non-trivial spin structure or equivalently $\bZ_2$-valued local system.    It would be interesting to find a model (of this or another form) for the ``locally trivial family of categories"  over a point of the incomplete locus.

\subsection{No poles}\label{Sec:NoPoles}

Let $S$ be a Riemann surface of genus $\geq 2$. Consider again the 3-fold $Y'_{\psi}$ from \eqref{Eqn:Holo3fold} associated to a holomorphic quadratic differential $\psi$ on $S$ with distinct zeroes.  This has a fibrewise compactification which is a Lefschetz fibration over $S$ with generic fibre $\bP^1 \times \bP^1$.  According to \cite[Corollary 4.9]{Smith:Quadrics}, there is an equivalence of $\bZ_2$-graded split-closed categories 
\begin{equation} \label{Eqn:QuadricS0}
D^{\pi}\scrF(\bP^1\times\bP^1)_{\lambda = 0} \ \simeq \ D^{\pi}\scrF(S^0)
\end{equation}
between the nilpotent summand of the Fukaya category of the even-dimensional quadric, i.e. that corresponding to the zero eigenvalue of quantum product with the first Chern class, and the Fukaya category of a pair of points (both categories are semisimple).  The paper \cite{Smith:Quadrics} works over $\bC$, but the argument underlying the equivalence of \eqref{Eqn:QuadricS0} (which uses the computation of a certain Gromov-Witten invariant counting sections of a Lefschetz fibration with fibre $\bP^1\times \bP^1$ and some elementary deformation theory) goes through over any algebraically closed characteristic zero field.  Starting from this, one can show in the spirit of \cite[Section 5]{Smith:Quadrics} that, over the Novikov field $\Lambda_\bC$,  the $\bZ_2$-graded category underlying $\scrF(Y'_{\psi})$ admits a formal deformation which is split-closed derived equivalent to the $\bZ_2$-graded Fukaya category $\scrF(C)$ of the spectral cover $C \rightarrow S$, i.e. the double cover of $S$ branched at the zeroes of $\psi$.   

That equivalence takes the matching sphere $L_{\gamma}$ considered in this paper to the obvious circle lying over $\gamma$ in the double covering $C$. An essential simple closed curve $\sigma \subset S$ defines a Lagrangian $(S^1 \times S^2) \subset Y'_{\psi}$ and a  disjoint union of two circles in $C$.  The ring structures on the Floer cohomologies of these objects is different (only one has a non-trivial degree zero idempotent), which suggests that  the deformation of categories from $D^{\pi}\scrF(Y'_{\psi})$ to $D^{\pi}\scrF(C)$ induced by the compactification of the fibres is non-trivial. 

\subsection{The untwisted category}\label{Sec:Untwisted}

For each $b\in H^2(Y_{\phi};\bZ_2)$ there is a total morphism algebra $\scrA(T;b) = \oplus_{e, f \in T} HF^*(L_e, L_f)$ associated to a collection of Lagrangian matching spheres indexed by edges of a non-degenerate triangulation $T$.  The quasi-isomorphism type of this algebra \emph{will} depend on the choice of $b$, even though its idempotent pieces $HF(L_e, L_e)$ do not.  
Theorem \ref{Thm:Main2} concerns the case $b=b_0$ and the twisted Fukaya category $\scrF(Y_{\phi};b_0)$, and it is natural to consider the untwisted category $\scrF(Y_{\phi})$ corresponding to taking $b=0$.

 Lemma \ref{Lem:LocalNonzero2} determined the signs with which the two rigid disks with boundary on $\Gamma$ contribute to their moduli space.  These discs cancelled in the category $\scrF(Y_{\phi})$. Therefore, the category $\D(\scrA(T;0))$ is obtained by replacing the potential of \eqref{Eqn:PotentialForNondegenQuiver} by its first term $\sum_f T(f)$, killing the higher $A_{\infty}$-products;  alternatively, it arises if one constructs  the threefold $Y_{\phi}$ by removing the conic fibres over all poles of $\phi$, and not just those of order $>2$.  This category is drastically different to $\D(\scrA;b_0)$, see the  example considered in  Section \ref{Sec:Example}. 

Moreover, in contrast to Remark \ref{Rem:obstructed}, the Lagrangian cylinder $L_{\sigma} \cong S^1\times S^2$   does define an object of  the untwisted category $\scrF(Y_{\phi})$ if one allows unobstructed branes.  The category $\scrF(Y_{\phi})$ cannot be generated by matching spheres:  the Lagrangian cylinder $L_{\sigma}$ can be deformed through non-Hamiltonian deformations (shrinking the loop $\sigma$) to be disjoint from any given matching path in $S-M$, but (in the untwisted category) still with non-trivial Floer cohomology.

\subsection{An example}\label{Sec:Example}

Consider a differential on $\bP^1$ with poles of order $2$ and $4$; this case was considered in \cite[Section 12]{BrSm}.  The relevant quiver has two vertices and no arrows, and the category $\scrK((\S,\M);b_0)$ is generated by two Lagrangian spheres $L_1, L_2$ with $HF(L_1,L_2) = \{0\}$. All objects are isomorphic to direct sums of shifts of the $L_i$. These objects are depicted on the left of Figure \ref{Fig:Fake}, where the black dot denotes the double pole and the higher order pole lies at $\infty \in \bC\bP^1$.  
\begin{figure}[ht]
\begin{center}
\includegraphics[scale=0.3]{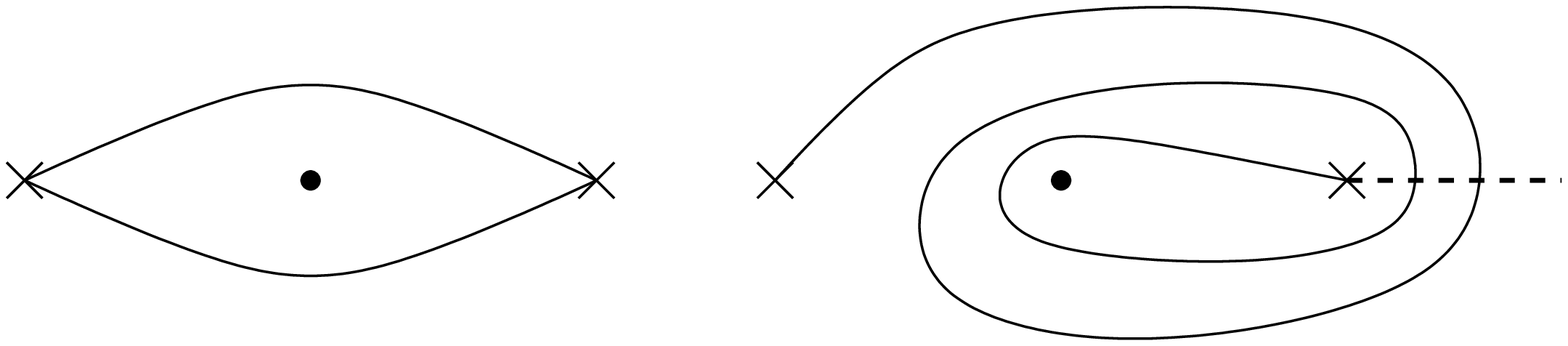}
\caption{The generators of $\scrF(Y_{\phi};b_0)$, and a non-isotopic sphere defining an equivalent object.\label{Fig:Fake}}
\end{center}
\end{figure}
The vanishing of Floer cohomology relies on the choice of the non-trivial background class, which ensures the two rigid disks which project onto the obvious region bound by these arcs count with the same sign: the differential
\begin{equation} \label{Eqn:FloerNontrivial}
\partial: CF^*(L_1,L_2) = k[1]\oplus k[2] \rightarrow k[1]\oplus k[2]  = CF^*(L_1,L_2)
\end{equation}
maps the degree one generator to a non-trivial multiple of the degree two generator. 

On the other hand, there are infinitely many pairwise non-Hamiltonian isotopic Lagrangian spheres in $Y_{\phi}$, distinguished in  the split-closed twisted Fukaya category $\Tw^{\pi}\scrF(Y_{\phi})$  by the rank of Floer cohomology with the non-compact Lefschetz thimble dotted in the right of Figure \ref{Fig:Fake} (with background class $b=0$ the differential in \eqref{Eqn:FloerNontrivial} vanishes, and spherical objects can then arise from non-trivial twisted complexes based on the $L_i$).  Turning on the background class $b_0$ collapses all of these distinct objects onto shifts of $L_1$ and $L_2$ (direct sums are excluded by considering the rank of self Floer cohomology). Therefore isomorphism in the category $\scrF(Y_{\phi};b_0)$ is very far from implying Hamiltonian isotopy.

\subsection{Open directions}

It is natural to wonder if \cite{BrSm} and Theorem \ref{Thm:Main2} have applications to the classical symplectic topology of $Y_{\phi}$, or to representation theory. The results established so far fall slightly short of what seems to be required, although there are several obvious avenues for further study.  

There are natural representations (the first defined by parallel transport)
\begin{equation} \label{mcg}
\pi_1(\Quad^{\pm}(\S,\M)_0) \longrightarrow \pi_0\, \Symp(Y_{\phi};b_0) \longrightarrow \mathrm{Auteq}\, \D\scrF(Y_{\phi};b_0) / \langle [2]\rangle
\end{equation}
where the middle term refers to mapping classes preserving $b_0 \in H^2(Y_{\phi};\bZ_2)$ and 
where on the right we quotient by the square of the shift functor.  It follows from results of \cite{BrSm} that \emph{if} the space of stability conditions $\Stab_{\Delta}(\S,\M)$ studied there is simply-connected, then the first arrow is injective: but simple-connectivity of $\Stab_{\Delta}(\S,\M)$ is currently unknown (see however \cite{YuQiu} for progress in this direction).

When $\partial \S \neq \emptyset$ one can kill the fundamental group of $Y_{\phi}$ by subcritical handle attachments at infinity, to obtain symplectic six-manifolds which are simply-connected but admit symplectomorphisms of positive Floer-theoretic entropy, and which have subgroups of autoequivalences of the Fukaya category which surject onto mapping class groups. This may be of some interest, since whilst such examples are known, they are not yet terribly numerous.

 In another direction, the second map of \eqref{mcg} does surject onto the quotient of the subgroup of autoequivalences which preserve $\Stab_{\Delta}(\S,\M)$ by those which act trivially (``negligible" autoequivalences in the terminology of \cite{BrSm}).  There are many situations in which autoequivalences of derived Fukaya categories have no geometric origin, i.e. do not arise from symplectomorphisms, see e.g. \cite{AbouzaidSmith}.

Finally, the existence of a smooth compactification divisor $\hat{\Delta}_{\infty}$ for $Y_{\phi}$ gives rise to a spectral sequence computing symplectic cohomology $SH^*(Y_{\phi})$,  see \cite{Seidel:bias}. That in turn leads to conjectural bounds on the ranks of the Hochschild cohomology groups of the Ginzburg algebra $A(Q,W)$ which might be of independent interest.  It would be instructive,  in this vein, to relate the wrapped Fukaya category of $Y_{\phi}$ to the derived category of all (not necessarily finite-dimensional) modules over the complete Ginzburg algebra.

\bibliographystyle{amsplain}

\end{document}